\documentclass[11pt]{amsart}
\usepackage{amsmath, amssymb, amsfonts, amsthm, xcolor}
\usepackage{scalerel}
\usepackage{stmaryrd}
\usepackage{mathabx}
\usepackage[numbers,comma,square,sort&compress]{natbib}
\usepackage{comment}
\definecolor{my-link}{rgb}{0.5,0.0,0.0}
\definecolor{my-blue}{rgb}{0.0,0.0,0.6}
\definecolor{my-red}{rgb}{0.5,0.0,0.0}
\definecolor{my-green}{rgb}{0.2,0.5,0.2}
\definecolor{darkgreen}{rgb}{0.0,0.5,0.0}
\definecolor{darkblue}{rgb}{0.0,0.0,0.3}
\definecolor{light-gray}{gray}{0.7}
\RequirePackage[colorlinks,urlcolor=my-blue,linkcolor=my-red,citecolor=my-green]{hyperref}

\usepackage{enumitem}

\usepackage[normalem]{ulem}

\usepackage{datetime}

\def\wh{\widehat} 
\def\wt{\widetilde}

\newcommand{\geo}{\Gamma}
\newcommand{\from}[1]{_{{#1}}}
\newcommand{\dir}[3]{^{{#1, #2#3}}}
\newcommand{\baddir}{\Theta^\w}
\newcommand{\tht}{\theta}
\newcommand{\sig}{{\raisebox{1.5pt}{\scaleobj{0.6}{\boxempty}}}}   
\newcommand{\ssig}{{\raisebox{0.8pt}{\scaleobj{0.6}{\boxempty}}}}   
\newcommand{\sigg}{{\raisebox{-0pt}{\scaleobj{1.1}{\boxempty}}}}  
\newcommand{\IG}[1]{{\mathcal S}^{#1}} 
\newcommand{\whIG}[1]{{\wh{\mathcal S}}^{#1}} 
\newcommand{\IGpns}[1]{{\mathcal S}^{#1}_{\mathrm{pns}}} 
\newcommand{\island}{\mathbb{I}^\tht}
\newcommand{\dust}{\mathcal D^\tht}
\newcommand{\Ipath}{\Psi}

\newcommand{\R}{\mathbb{R}}
\newcommand{\Z}{\mathbb{Z}}
\newcommand{\N}{\mathbb{N}}
\newcommand{\Q}{\mathbb{Q}}
\renewcommand{\P}{\mathbb{P}}

\newcommand{\ep}{\varepsilon}
\newcommand{\e}{\ep}

\newcommand{\NU}{{\mathrm{NU}}}

\newcommand{\w}{\omega}
\newcommand{\age}{{\mathfrak a}}

\newcommand\abullet{\hspace{0.6pt}{\raisebox{1pt}{\scaleobj{0.6}{\bullet}}}\hspace{0.8pt}}  

\newtheorem{thm}{Theorem}[section]
\newtheorem{lem}[thm]{Lemma}
\newtheorem{prop}[thm]{Proposition}
\newtheorem{cor}[thm]{Corollary}

\theoremstyle{definition}
\newtheorem{defn}[thm]{Definition}

\newtheorem{prob}{Open Problem}

\theoremstyle{remark}
\newtheorem{rmk}[thm]{Remark}

\numberwithin{figure}{section}
\numberwithin{equation}{section}

\usepackage[margin=1in]{geometry}

\title[Shocks, instability, and infinite geodesics in DL]{Shocks, instability, and the twenty networks\\
 of infinite geodesics in the Directed Landscape}

\author[F.~Rassoul-Agha]{Firas Rassoul-Agha}
\address{Firas Rassoul-Agha\\ University of Utah\\  Mathematics Department\\ 155S 1400E\\   Salt Lake City, UT 84112\\ USA.}
\email{firas@math.utah.edu}
\urladdr{http://www.math.utah.edu/~firas}
\thanks{F.\ Rassoul-Agha was partially supported by National Science Foundation grants DMS-2054630 and DMS-2450951}

\author[M.~Sweeney]{Mikhail Sweeney}
\address{Mikhail Sweeney\\ University of Utah\\  Mathematics Department\\ 155S 1400E\\   Salt Lake City, UT 84112\\ USA.}
\email{sweeney@math.utah.edu}
\thanks{M.\ Sweeney was partially supported by National Science Foundation grant DMS-2054630}

\keywords{Busemann function, Busemann process, directed landscape, instability graph, instability web, interface, Kardar-Parisi-Zhang, KPZ equation, KPZ fixed point, one force--one solution, semi-infinite geodesic, shock, stochastic Burgers' equation, stochastic Hamilton-Jacobi equation, stochastic synchronization.}
\subjclass[2020]{60K35, 60K37, 	37H05, 	37H30,	37L55, 35F21, 35R60.} 
\date{\usdate\today}

\begin{document}

\begin{abstract}
For stochastic Hamilton-Jacobi (SHJ) equations, instability points are the space-time locations where two eternal solutions with the same asymptotic velocity differ. Another fundamental structure in such equations is shocks, which are the space-time locations where the velocity field is discontinuous. In this work, we study the KPZ fixed point, the central object of the KPZ universality class, which can be viewed as a prototype---albeit degenerate---of an inviscid SHJ equation in one spatial dimension. We describe the geometric structure of the instability region and give a detailed and precise analysis of its interplay with the shock structures of the two eternal solutions. We show that these shock structures allow one to reconstruct the instability region. Along the way, we obtain a complete classification of all possible configurations of semi-infinite geodesics emanating from arbitrary space-time points, in the directed landscape---the random environment in which the KPZ fixed point evolves.
\end{abstract}

\maketitle

\setcounter{tocdepth}{1}
\tableofcontents

\section{Introduction}

Stochastic Hamilton-Jacobi (SHJ) equations form a broad family of randomly forced partial differential equations, for which understanding the impact of noise on long-time behavior is a central problem. A phenomenon of particular interest is \emph{stochastic synchronization}, originating in random dynamical systems and closely tied to the uniqueness of \emph{eternal solutions} corresponding to a given value of the conserved asymptotic velocity. This question has been studied for the Kardar-Parisi-Zhang (KPZ) equation, as well as for several related random polymer models in the KPZ universality class, in both zero- and positive-temperature settings, where it has been shown that while eternal solutions are almost surely unique for deterministic velocities, there almost surely exist exceptional velocities for which uniqueness breaks down. This raises the problem of describing the resulting instability regions---space-time locations where distinct eternal solutions differ. 
At the same time, inviscid SHJ equations are known to exhibit \emph{shock tree} structures. This leads naturally to the question of how instability regions are related to these shock trees. In this paper, we study the KPZ fixed point, a degenerate inviscid SHJ central to the KPZ universality class, and describe the geometry of its instability regions. We show that these regions are nowhere dense, have no isolated points, and form path-connected graphs (Figure \ref{fig:IG}). We further show that a distinguished portion of these regions can be reconstructed from the shock structures of the associated eternal solutions, revealing a precise connection between shocks and instability. As a consequence, we also obtain a complete description of all possible configurations of semi-infinite geodesics in the directed landscape (Figure \ref{fig:geodesics}).

We next briefly describe the KPZ fixed point and the associated directed landscape model, place them within the framework of stochastic Hamilton-Jacobi equations, and formulate our questions in that setting before summarizing our answers for the directed landscape.

\subsection{The KPZ fixed point and the directed landscape}
The \emph{KPZ equation} is the non-linear stochastic partial differential equation (SPDE)
\begin{align}\label{KPZ}
\partial_t h = \tfrac12 (\partial_x h)^2 + \tfrac{\nu}{2}\partial_{xx}h + \beta W,
\end{align}
where $W$ denotes space-time white noise, $\beta\in\R$ modulates the strength of the noise, and $\nu>0$ is a \emph{viscosity} parameter. Introduced in 1986 by Kardar, Parisi, and Zhang \cite{Kar-Par-Zha-86} as a model for the evolution of a one-dimensional random interface $h(x,t)$, its well-posedness was established more recently in \cite{Per-Ros-19}. 

It was shown in \cite{Vir-20-} that, when $\nu=\beta=1$, and $C$ is an appropriately chosen constant, the centered and rescaled process
$\e^{1/2}\bigl(h(\e^{-1}x,\e^{-3/2}t)-C\,\e^{-3/2}t\bigr)$
converges, as $\e\to0$, to a universal limit $\mathfrak h(t,x)$, known as the \emph{KPZ fixed point}. This limit was first constructed in \cite{Mat-Qua-Rem-21} via an explicit description of its transition probabilities in terms of Fredholm determinants. By the scaling properties of the KPZ equation (see, e.g., Remark 1.1 in \cite{Jan-Ras-Sep-23-1F1S-}), the rescaled field $(x,t)\mapsto \e^{1/2}h(\e^{-1}x,\e^{-3/2}t)$ solves \eqref{KPZ} with parameters $\nu=\e^{1/2}$ and $\beta=\e^{1/4}$. The KPZ fixed point may therefore be viewed as an inviscid limit of the KPZ equation.

Analogous to Brownian motion, the KPZ fixed point is expected to be the universal scaling limit of a broad class of planar random growth models, collectively referred to as the \emph{KPZ universality class}. This class is believed to encompass both viscous and inviscid SHJ equations, interacting particle systems, percolation and growth models, random polymer measures, driven diffusive systems, and random matrices. 
Convergence to the KPZ fixed point has been established for several models in 
\cite{Wu-23-,Agg-Cor-Heg-24-a-,Agg-Cor-Heg-24-b-,Wei-Fer-Spo-17,Vir-20-,Dau-Vir-21-,Zha-25-,Vet-Vir-26}.

A geometric interpretation of the KPZ fixed point is provided by the \emph{directed landscape}, introduced in \cite{Dau-Ort-Vir-22}. This is a random continuous process $\{\mathcal L(x,s;y,t):x,s,y,t\in\R,\ s<t\}$ that serves as the environment in which the KPZ fixed point evolves: the fixed point with a given terminal condition $\varphi$ at time $t$ is given by the variational formula
\begin{align}\label{KPZfp}
\mathfrak h(x,s)=\sup_{y\in\R}\bigl\{\mathcal L(x,s;y,t)+\varphi(y)\bigr\},\qquad s<t.
\end{align}

The process $\mathcal L$ admits a pathwise variational characterization. Given a continuous space-time path $\gamma:[s,t]\to\R^2$ with $\gamma(s)=(x,s)$ and $\gamma(t)=(y,t)$, define its \emph{passage time} by
\[
\mathcal L(\gamma)=\inf_{k\in\N}\inf_{s=r_0<\cdots<r_k=t}\sum_{i=1}^k \mathcal L\bigl(\gamma(r_{i-1});\gamma(r_i)\bigr).
\]
Then
\begin{align}\label{DL-LPP}
\mathcal L(x,s;y,t)=\sup_\gamma \mathcal L(\gamma),
\end{align}
where the supremum is taken over all such paths $\gamma$. Thus, $\mathcal L(x,s;y,t)$ can be viewed as a continuum point-to-point last-passage time, while the KPZ fixed point corresponds to a point-to-line last-passage time with terminal weights $(y,t)\mapsto\varphi(y)$.

We remark that, as we find it more natural to interpret the directed landscape as a percolation model with geodesics (maximizers in \eqref{DL-LPP}) evolving forward in time, the KPZ fixed point in \eqref{KPZfp} is correspondingly formulated to evolve backward in time.
	
\subsection{Stochastic Hamilton-Jacobi equations}\label{sec:SHJ}
The KPZ equation \eqref{KPZ} is a prominent example of a broader class of randomly forced partial differential equations, namely one-dimensional stochastic Hamilton-Jacobi (SHJ) equations:
\[\partial_t \Phi + H(\nabla \Phi) = \nu \Delta \Phi - F,\]
where $\Phi:\R\times\R\to\R$, the gradient $\partial_x\Phi$ represents momentum, $H:\R\to\R$ is a convex Hamiltonian, and $F=F_\w(x,t)$ is a random potential (with forcing $f=-\partial_x F$). The parameter $\nu\ge0$ controls viscosity, with $\nu>0$ corresponding to the viscous regime and $\nu=0$ to the inviscid one.
The scaling limit described above identifies the KPZ fixed point as a degenerate inviscid SHJ equation.

In the viscous case, solutions are unique and smooth for appropriate initial data. In contrast, the inviscid equation admits multiple solutions with discontinuities in the gradient, known as \emph{shocks}, which play a central role in Hamilton-Jacobi theory and in modeling nonlinear wave phenomena. In this case, the relevant notion of solution is the \emph{viscosity solution}, obtained as the limit of viscous solutions as $\nu\to0$. This solution admits the Hopf-Lax-Oleinik variational representation: for initial data $\varphi$ at time $s$,
\begin{align}\label{LO}
\Phi(x,t)=\inf_{\gamma:\gamma(t)=x}\Bigl\{\varphi(\gamma(s))+\int_s^t L_\w(\gamma'(r),\gamma(r),r)\,dr\Bigr\},
\end{align}
where $L_\w$ is the convex dual of $H_\w(p,x,t)=H(p)+F_\w(x,t)$, and the infimum is taken over absolutely continuous paths $\gamma$. Minimizing paths are called \emph{characteristics} or \emph{geodesics}, and shock points are those from which multiple distinct such paths emanate.

We remark here that combining \eqref{KPZfp} and \eqref{DL-LPP} yields a Hopf-Lax-Oleinik representation for the KPZ fixed point, with $\mathcal L(\gamma)$ playing the role of the path action functional $\int_s^t L_\w(\gamma'(r),\gamma(r),r)\,dr$. The appearance of a supremum instead of an infimum is a matter of sign convention and can be removed by considering $-\mathfrak h$ and $-\mathcal L$.

If the potential $F$ is white in time, then the representation \eqref{LO} endows the SHJ equation with a Markov structure, allowing it to be viewed as a random dynamical system (RDS) \cite{Arn-98}. A central concept in this framework is the one force--one solution (1F1S) principle \cite{E-etal-00}, which asserts the existence of a unique \emph{eternal solution}, defined for all time and progressively measurable with respect to the noise. When this holds, solutions with different initial data converge over time, a phenomenon known as \emph{stochastic synchronization}.

In one dimension, the asymptotic velocities $\lim_{x\to\pm\infty}x^{-1}\Phi(x,t)$ are conserved, and the 1F1S principle is typically formulated for fixed values of this quantity. While it is expected to hold for deterministic velocities, it may fail for exceptional values, leading to multiple eternal solutions and hence instability. Then the instability regions are the space-time points where these solutions differ.

This phenomenon arises in both viscous and inviscid settings, whereas shocks are intrinsic to the inviscid regime. Our interest is in understanding how the geometry of shocks relates to the structure of instability, and we therefore focus on the inviscid case.

The 1F1S principle has been established for a range of models, from compact or effectively compact settings with regular forcing to noncompact settings with discrete or semidiscrete forcing and the KPZ equation on the torus; see the end of Section 1.1 in \cite{Ras-Swe-24-a-} for references.
Instability for exceptional velocities was first demonstrated in discrete SHJ models \cite{Sep-Sor-23-pmp,Jan-Ras-Sep-23}, and subsequently in the fully continuous setting for both the KPZ equation \cite{Jan-Ras-Sep-23-1F1S-} and its inviscid counterpart, the KPZ fixed point \cite{Bus-Sep-Sor-24}.
For further discussion in the settings of discrete last-passage percolation and the KPZ equation, see Section 4.1 of \cite{Jan-Ras-Sep-23} and Section 3.5 of \cite{Jan-Ras-Sep-23-1F1S-}, respectively. This instability can also be interpreted as a \emph{phase transition} in the familiar setting of Gibbs measures. See Section 2.4 in \cite{Jan-Ras-18-arxiv} and the introduction and appendix B in \cite{Gro-Jan-Ras-25-} for details in the case of random polymers.

Within this broader SHJ framework, our interest centers on the following fundamental questions:

\begin{enumerate}
\item Do instability regions exhibit any coherent geometry? For instance, are they bounded or unbounded, sparse or dense, connected or disconnected?
\item Shocks and instability both represent forms of irregularity for solutions of the inviscid SHJ equation. Can a point exhibit both simultaneously? More generally, what is the relationship between these two sets of points, and to what extent can one be used to predict the other?
\item Can one classify the geometric structure of configurations of semi-infinite characteristics of eternal solutions?
\end{enumerate}

\subsection{Our contributions}
Although the questions above are of broad interest for general inviscid stochastic Hamilton-Jacobi equations, in this paper we address them for the KPZ fixed point. This is motivated in part by the fact that the KPZ fixed point is currently the only space-time continuum model in which instability has been rigorously established. 

Other models where these questions have been answered are directed last-passage percolation on $\Z^2$ with exponential weights and the Brownian last-passage percolation (BLPP) model. The former is fully discrete and therefore does not exhibit shocks. In contrast, BLPP is a discrete-time, continuous-space model, and hence both shocks and instability appear in it. However, the shock structures we observe in the directed landscape are significantly richer. Given the central role of the directed landscape and the KPZ fixed point within the KPZ universality class, we expect the phenomena uncovered here to be representative of a broad class of inviscid SHJ equations.

In light of the relationship between shocks and instability, a pertinent question for future work is whether instability points are associated with a notion of energy dissipation, analogous to the dissipation that occurs at shocks.

Our main results are presented in Section \ref{sec:setting} and summarized in Figures \ref{fig:geodesics}--\ref{fig:island-complete}. Figure \ref{fig:IG} displays a simulation of the instability region, a closed, fractal, path-connected, nowhere dense set with no isolated points, thereby addressing the first question above. Figure \ref{fig:geodesics} answers the third question by exhibiting all possible configurations of semi-infinite geodesics in the directed landscape, and in particular classifies all possible types of shocks. 
Finally, Figure \ref{fig:island-complete} locates these configurations within the instability region, addressing one direction of the second question. It also highlights a distinguished subset, namely the boundaries of ``stability islands''. We show that the union of these (countably many) boundaries is dense in the instability set, so that its closure coincides with the instability set. We therefore refer to this subset as the \emph{skeleton} of the instability graph. In particular, the instability region can be recovered from the shock trees of the two eternal solutions, thereby addressing the other direction of the second question.

At the technical level, our methods differ from those in \cite{Jan-Ras-Sep-23, Ras-Swe-24-a-} in that the directed landscape is continuous in both space and time, and its geodesics and shock interfaces are continuous. This invalidates several arguments used in those works, where one or both variables are discrete and the path structure is correspondingly simpler. Our approach to classifying semi-infinite geodesic configurations also differs from the classification of finite geodesic networks in \cite{Dau-25}, as we work directly with infinite objects---Busemann functions and their associated geodesics. While developed independently, our methods may be viewed as a more detailed counterpart to those in \cite{Dau-Pan-26-}, extending the classification beyond deterministic time levels.

\subsection{Notation}
$\Z$ is the set of integers, $\N$ is the set of positive integers, $\R$ is the set of real numbers, and $\Q$ is the set of rationals.
Given real numbers $x,s,y,t$, we use $s\wedge t=\min(s,t)$, $s\vee t=\max(s,t)$, and write $(x,s) \le (y,t)$ to mean $x \le y$ and $s \le t$. 
Then $(x,s) < (y,s)$ means $x < y$. 
Given an interval $I\subset\R$, a \emph{space-time path} is a  function $\gamma:I\to\R^2$ such that for all $t\in I$, the second coordinate of $\gamma(t)$ is equal to $t$. 
It will at times be convenient to abuse notation and regard $\gamma$
as the path $\gamma(I)$. Given a subinterval $J\subset I$, $\gamma|_J$ is the restriction of $\gamma$ to $J$.
Given two intervals $I_i\subset\R$ and two space-time paths $\gamma_i:I_i\to\R^2$, $i\in\{1,2\}$, $\gamma_1\preceq\gamma_2$, respectively $\gamma_1\prec\gamma_2$, means $\gamma_1(t)\le\gamma_2(t)$, respectively $\gamma_1(t)<\gamma_2(t)$, for all $t\in I_1\cap I_2$.

\section{Setting and main results}\label{sec:setting}

The directed landscape, constructed in \cite{Dau-Ort-Vir-22}, is a continuous stochastic process of nonnegative passage times 
$\{\mathcal L(x,s;y,t):x,y,s,t\in\R,s<t\}$, defined on a Polish probability space $(\Omega,\mathfrak S,\P)$ and satisfying the composition law 
	\begin{align}\label{composition}
    \mathcal L(x,s;y,t)=\sup_{z\in\R}\bigl(\mathcal L(x,s;z,r)+\mathcal L(z,r;y,t)\bigr).
    \end{align}

\begin{figure}[hpt]
    \includegraphics[width=12cm]{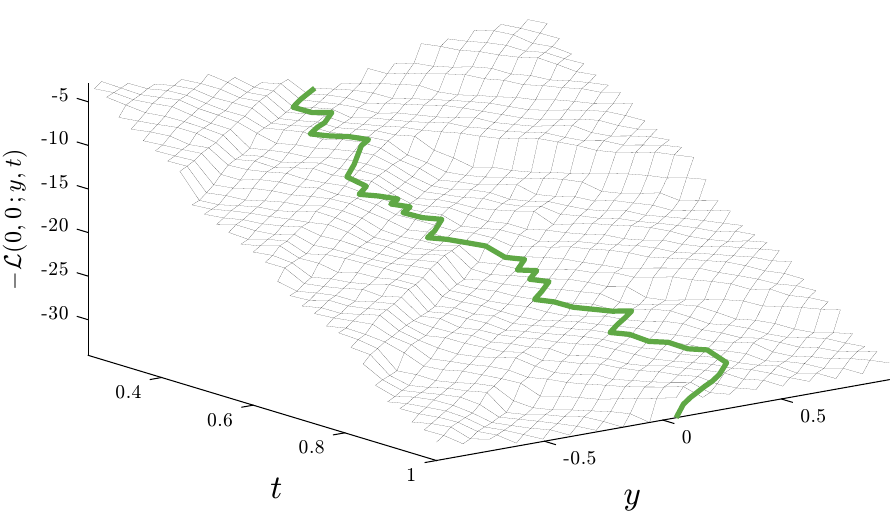}
    \caption{\small A simulation of the directed landscape map $(x,s)\mapsto -\mathcal L(0,0\,;x,s)$. The path depicts a geodesic, that is, a path that maximizes $\mathcal L$ (equivalently minimizes $-\mathcal L$) between its endpoints.} 
    \label{fig:DL}
\end{figure}
    
Given a terminal condition $\varphi$ at time $t$, the KPZ fixed point is given by 
     \begin{align}\label{fixedpt}
     \mathfrak h(x,s)=\sup_{y\in\R}\bigl(\mathcal L(x,s;y,t)+\varphi(y)\bigr),\quad x\in\R,\ s<t.
     \end{align}
The composition law \eqref{composition} ensures that this gives a Markov process (backward in time).  
    
\subsection{Infinite geodesics}
Given a continuous space-time path $\gamma:[s,t]\to\R^2$, define the passage time along $\gamma$ by
\begin{align}\label{L(gamma)}
\mathcal L(\gamma)
=\inf_{k\in\N}\;\inf_{s=r_0<r_1<\cdots<r_{k-1}<r_k=t}
\sum_{i=1}^k \mathcal L\bigl(\gamma(r_{i-1})\,;\gamma(r_i)\bigr).
\end{align}
Then
\begin{align}\label{LPP}
\mathcal L(x,s\,;y,t)= \sup_{\gamma}\mathcal L(\gamma),
\end{align}
where the supremum is taken over all continuous space-time paths $\gamma:[s,t]\to\R^2$ with $\gamma(s)=(x,s)$ and $\gamma(t)=(y,t)$. This formulation interprets the directed landscape as a continuous space-time last-passage percolation model.

Accordingly, a path that achieves the supremum in \eqref{LPP} is called a \emph{point-to-point geodesic} from $(x,s)$ to $(y,t)$. See Figure \ref{fig:DL} for an illustration. Our interest lies in the large-scale structure of the directed landscape and its geodesic paths. As the time interval between $s$ and $t$ increases, this leads to the consideration of infinite geodesics. A \emph{semi-infinite geodesic} from $(x,s)\in\R^2$ is a continuous space-time path $\gamma:[s,\infty)\to\R^2$ with $\gamma(s)=(x,s)$, such that $\gamma$ is a geodesic between any two of its points. A \emph{bi-infinite geodesic} is defined analogously as a continuous space-time path $\gamma:\R \to \R^2$ with the same property. Proposition 34 of \cite{Bha-24} shows that bi-infinite geodesics do not exist in the directed landscape. 

Lemma 13.2 in \cite{Dau-Ort-Vir-22} 
asserts that, almost surely, for any $x,s,y,t\in\R$ with $s<t$, there exists at least one geodesic from $(x,s)$ to $(y,t)$. Furthermore, by Theorem 12.1 in that paper, for any given such space-time points, there exists almost surely a unique geodesic path from $(x,s)$ to $(y,t)$. By Fubini's theorem, this means that almost surely, unique geodesics exist between Lebesgue-almost every pair of points $(x,s)$ and $(y,t)$ (with $t>s$). However, Theorem 1.10 in \cite{Bat-Gan-Ham-22} shows that there exist exceptional points between which there are multiple geodesics (see also \cite{Gan-Zha-22-}). \cite{Dau-25} describes all the possible configurations of such point-to-point geodesics. One goal of our work is to give an analogous description of all the possible configurations of semi-infinite geodesics. This is stated as Theorem \ref{main:geodesics} below.

Theorem 2.5(i) of \cite{Bus-Sep-Sor-24} asserts that, almost surely, every semi-infinite geodesic $\gamma$ is $\tht$-directed for some $\tht\in\R$, that is, $t^{-1}\gamma(t)\to(\tht,1)$ as $t\to\infty$.
Combining results from \cite{Bus-Sep-Sor-24,Bus-25-,Dau-25}, one can organize the family of all semi-infinite geodesics into a process
\begin{align}\label{geodesics.intro}
\bigl\{\geo\from{(x,s)}\dir{S}{\tht}{\sig}:(x,s)\in\R^2,\,S\in\{L,M,R\},\,\tht\in\R,\,\sigg\in\{-,+\}\bigr\}.
\end{align}
Almost surely, for any $x,s,\tht\in\R$, the geodesics 
$\geo\from{(x,s)}\dir{L}{\tht}{-}$ and $\geo\from{(x,s)}\dir{R}{\tht}{+}$ 
are, respectively, the leftmost and rightmost $\tht$-directed geodesics from $(x,s)$. Moreover, for each $\sigg\in\{-,+\}$, $\geo\from{(x,s)}\dir{L}{\tht}{\sig}
\preceq\geo\from{(x,s)}\dir{M}{\tht}{\sig}
\preceq\geo\from{(x,s)}\dir{R}{\tht}{\sig}$,
and
\begin{align}\label{geoorder}
\geo\from{(x,s)}\dir{S}{\tht}{-}\preceq\geo\from{(x,s)}\dir{S}{\tht}{+},
\qquad S\in\{L,M,R\}.
\end{align}
Depending on the configuration, the three geodesics $\geo\from{(x,s)}\dir{S}{\tht}{\sig}$, $S\in\{L,M,R\}$, may coincide, only two may be distinct, or all three may be distinct. All $\tht-$ geodesics coalesce, as do all $\tht+$ geodesics.
Furthermore, there exists an $\w$-dependent countable dense set $\baddir\subset\R$ such that if $\tht\notin\baddir$ there is no sign distinction, whereas for $\tht\in\baddir$ the $\tht-$ and $\tht+$ geodesics eventually diverge. See Section \ref{sec:Busgeo} for precise statements and additional properties of the geodesic process used in this work.

It should be emphasized that the existence of the set $\baddir$ highlights that the sign distinction is not merely a technicality but is essential to the structures analyzed in this paper. Likewise, we see in Section \ref{sec:shocks.intro} that the distinction between leftmost and rightmost semi-infinite geodesics is both necessary and central to these structures.

Our first main theorem describes all the possible configurations of semi-infinite geodesics out of a point in $\R^2$.

\begin{thm}\label{main:geodesics}
The following hold for all $\w$ in a full $\P$-probability event. 
\begin{enumerate}[label={\rm(\alph*)}, ref={\rm\alph*}] \itemsep=1pt
\item\label{geodesics.a} For each $\tht\in\R$ and $(x,s)\in\R^2$, the $\tht$-directed geodesics from $(x,s)$ realize exactly one of the twenty configurations shown in Figure \ref{fig:geodesics}.
\item\label{geodesics.b} If $\tht\notin\baddir$, each of the first four configurations in the top row of Figure \ref{fig:geodesics} occurs for a dense set of starting points in $\R^2$.
\item\label{geodesics.c} If $\tht\in\baddir$, each of the last four configurations in the top row of Figure \ref{fig:geodesics} occurs for a dense set of starting points in $\R^2$. Furthermore, each of the twelve configurations on the next three rows arises from infinitely many starting points. See Theorems \ref{main:IGgeods} and \ref{main:shocks} for further details on the location and density of their occurrence.
\end{enumerate}
\end{thm}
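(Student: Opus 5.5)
Part (\ref{geodesics.a}) reduces to the six geodesics $\geo\from{(x,s)}\dir{S}{\tht}{\sig}$, $S\in\{L,M,R\}$, $\sigg\in\{-,+\}$, which realize every $\tht$-directed geodesic from $(x,s)$ via the Busemann construction. Each configuration is recorded by two pieces of combinatorial data. The first is the partition of $\{L,M,R\}$ into equal geodesics for each sign $\sigg$. The second is the pattern of coincidences and splittings between the $-$ and $+$ families near the starting point.

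\textbf{Case $\tht\notin\baddir$.} There is no sign distinction, so only the ordered triple $L\preceq M\preceq R$ matters. The possibilities are: all equal, $L=M<R$, $L<M=R$, or all distinct. Distinctness must be understood as splitting immediately at $(x,s)$ and then coalescing. This gives the first four pictures.

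\textbf{Case $\tht\in\baddir$.} I would use \eqref{geoorder} together with three facts: geodesics within each sign coalesce, the $-$ and $+$ geodesics eventually diverge, and no bi-infinite geodesics exist. Together these imply two things.
\begin{itemize}
\item[(i)] Two geodesics of the same sign that separate must rejoin, so they bound a bounded ``bubble''.
\item[(ii)] $\geo\dir{R}{\tht}{-}$ and $\geo\dir{L}{\tht}{+}$ either split at $(x,s)$ or share an initial segment and then split forever.
\end{itemize}
I would then enumerate the admissible planar arrangements: the number of distinct geodesics in each family, and whether the $-$ and $+$ families share an initial segment or split at the root. Planarity and the ordering restrict how the bubbles of the two families can interleave. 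A key restriction is that a $-$ geodesic cannot cross to the right of a $+$ geodesic. The main additional input is an a priori bound: from any point, at most three distinct geodesics emanate in any one direction with a given sign. I would import this from the point-to-point classification of \cite{Dau-25} (the $\{L,M,R\}$ labelling already encodes it). Carrying out the enumeration should produce exactly sixteen configurations, making twenty in total.

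\textbf{Main obstacle.} The hardest step is proving that every configuration on the lists actually occurs, and that nothing outside the lists occurs. For parts (\ref{geodesics.b}) and (\ref{geodesics.c}) I would rely on Theorems \ref{main:IGgeods} and \ref{main:shocks}, which locate configurations on the instability graph and on the shock trees.

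\begin{itemize}
\item \textbf{Top-row configurations.} For each fixed $\tht$, the single-geodesic configuration occurs at Lebesgue-a.e. point. The two-geodesic configurations occur densely because the shock tree of the Busemann function is dense. The three-geodesic (``$M$'') configurations occur at branching points of the shock tree, which are also dense. For $\tht\in\baddir$, the analogous statements hold at points off the instability set, or with disjoint $\pm$ families.
\item \textbf{Remaining twelve configurations.} These should correspond to points where shocks of the $-$ and $+$ Busemann functions meet the boundaries of stability islands. There are countably many islands, each with nondegenerate boundary, which gives infinitely many occurrences.
\end{itemize}

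The delicate part is ruling out extra configurations. The tool I would use is the fact that the $-$ and $+$ Busemann shock interfaces cannot cross. I expect this step to need the continuity of interfaces and careful local analysis near the root.
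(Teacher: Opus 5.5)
The main gap is in part \eqref{geodesics.a}. The sentence ``carrying out the enumeration should produce exactly sixteen configurations'' is the entire content of the theorem, and it is not carried out. The constraints you list (the ordering \eqref{geoorder}, same-sign coalescence, eventual divergence of the $\pm$ families, at most three geodesics of each sign) do not reduce the list to sixteen. Each of the following is compatible with all of them and is absent from Figure \ref{fig:geodesics}:
(1) $\geo\from{p}\dir{L}{\tht}{-}\prec\geo\from{p}\dir{R}{\tht}{-}\prec\geo\from{p}\dir{L}{\tht}{+}\prec\geo\from{p}\dir{R}{\tht}{+}$ immediately after $p$. This is excluded by the cross-sign no-four-star input \eqref{no4stars}, not by any per-sign bound.
(2) A ``double hugging'' point, with $\geo\from{p}\dir{L}{\tht}{-}=\geo\from{p}\dir{L}{\tht}{+}$ and $\geo\from{p}\dir{R}{\tht}{-}=\geo\from{p}\dir{R}{\tht}{+}$ initially but $\geo\from{p}\dir{L}{\tht}{-}\cap\geo\from{p}\dir{R}{\tht}{+}=\{p\}$. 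This is excluded by Lemma \ref{LRisolgeo}\eqref{LRisolgeo.b}. That lemma rests on the analytic fact (Lemma \ref{no-isolated}) that a point of increase of the continuous nondecreasing profile $W^{\tht-}-W^{\tht+}$ cannot be isolated on both sides.
(3) A proper double shock at $p$ whose bubble closes at a point $q$ from which $\geo\from{q}\dir{L}{\tht}{-}$ and $\geo\from{q}\dir{R}{\tht}{+}$ separate immediately. This is excluded by Lemma \ref{lem:stableshocks}, whose proof imports Lemma \ref{lm:Duncan}\eqref{Duncan.a} from a rational time.
(4) A same-sign triple whose middle geodesic merges with both outer ones simultaneously. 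This is excluded by \eqref{Duncan-deg3} (Lemma \ref{aux:1651}).

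Item (4) also shows your list for $\tht\notin\baddir$ is wrong, even though it has four entries. $L=M\prec R$ and $L\prec M=R$ are the same two-geodesic picture, since the $M$ label is only a convention (Remark \ref{rk:Mgeostable}). Meanwhile ``all distinct'' is two pictures, according to which side the middle geodesic joins first.

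The tool you propose for excluding extra configurations is false. $\tht-$ and $\tht+$ shock interfaces do change order: every stability island is bounded by a misordered pair $\Upsilon\from{q}\dir{L}{\tht}{-}$, $\Upsilon\from{q}\dir{R}{\tht}{+}$ out of its tip $q$ (Lemmas \ref{lem:shockstoisland} and \ref{lem:islandtoshocks}), while \eqref{+<=-} eventually forces the $+$ interface weakly to the left. The paper does not enumerate globally. It sorts points by location (stable direction, stable point, dust point, island tip, bottom or side) and classifies each case separately in Propositions \ref{prop:geodconfigstable}--\ref{prop:islandconfigs}. Location-specific facts are what cut the possibilities to twenty: shocks at stable points are proper double shocks, interior boundary points of an island are hugging shocks of exactly one sign, and dust points carry no double shocks.

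The occurrence arguments also have gaps.
\begin{itemize}
\item A fixed-$\tht$ statement holding at Lebesgue-a.e.\ point does not help. The exceptional null set depends on $\tht$, so it says nothing about the random directions in $\baddir$, nor about all $\tht\notin\baddir$ at once. The paper instead uses that rational points are, for every direction simultaneously, neither shocks \eqref{ratnoshock} nor instability points \eqref{QnotIG}.
\item Density of shocks does not yield the two-geodesic picture, because a middle geodesic must also be excluded. The paper works on rational time levels, where Lemma \ref{lm:Duncan}\eqref{Duncan.b} forbids middle geodesics, and combines this with Lemma \ref{lem:shocksdense}.
\item Your branching-point idea for three geodesics is right (Lemma \ref{lm:intcoal}), but it must produce both mirror pictures. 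The paper does this by starting a new interface on either side of the interface through a two-geodesic point.
\item Among the twelve instability pictures, the second and third of the second row never occur on island boundaries; they occur only at dust points (Propositions \ref{prop:geods-stability}--\ref{prop:islandconfigs}). So ``shocks meeting island boundaries'' cannot produce them. The paper obtains them from \eqref{Hausdorff} at rational levels, which supplies dust pns points, and from Sierpi\'nski's theorem (Theorem \ref{thm:sierpinski-interval}), which finds points on their shock interfaces off the closures of all islands.
\item On island sides, each of the four pictures in a row needs its own construction: rational times, Lemmas \ref{aux:-shocksalongRboundary} and \ref{aux:shockconfluenceexist}, and the $r_0$ analysis in Proposition \ref{prop:islandconfigs}.
\end{itemize}
Deferring to Theorems \ref{main:IGgeods} and \ref{main:shocks} is not circular. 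However, those theorems are derived from the same propositions, so the deferral does not supply the missing argument. Moreover, neither theorem covers part \eqref{geodesics.b} or the density in $\R^2$ of the last four top-row pictures.
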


\begin{figure}[hpt]
    \includegraphics[height=2cm]{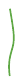}\ \ \ \ \ \ 
    \includegraphics[height=2cm]{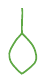}\ \ \ \ \ 
    \includegraphics[height=2cm]{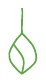}\ \ \ \ \ 
    \includegraphics[height=2cm]{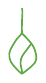}\ \ \ \ \ 
    \includegraphics[height=2cm]{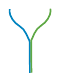}\ \ \ \ \ 
    \includegraphics[height=2cm]{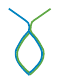}\ \ \ \ \ 
    \includegraphics[height=2cm]{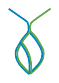}\ \ \ \ \ 
    \includegraphics[height=2cm]{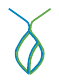}\smallskip

    \includegraphics[width=1.7cm]{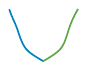}\ \ \ \ \ \ 
    \includegraphics[width=1.7cm]{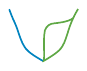}\ \ \ \ \ 
    \includegraphics[width=1.7cm]{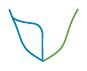}\ \ \ \ \ 
    \includegraphics[width=1.7cm]{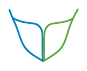}\smallskip
    
    \includegraphics[width=1.7cm]{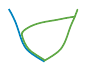}\ \ \ \ \ \ 
    \includegraphics[width=1.7cm]{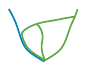}\ \ \ \ \ \ 
    \includegraphics[width=1.7cm]{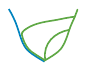}\ \ \ \ \ \ 
    \includegraphics[width=1.7cm]{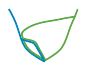} 
    \smallskip

    \includegraphics[width=1.7cm]{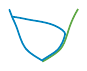}\ \ \ \ \ \ 
    \includegraphics[width=1.7cm]{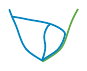}\ \ \ \ \ \ 
    \includegraphics[width=1.7cm]{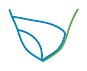}\ \ \ \ \ \ 
    \includegraphics[width=1.7cm]{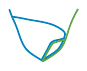}

    \caption{\small All 20 possible configurations of $\tht$-directed geodesics out of a point $p$. The first four configurations are when $\tht\notin\baddir$ and hence there is no sign distinction. For the rest, $\tht\in\baddir$ and then $W^{\tht+}$-geodesics are green and $W^{\tht-}$-geodesics are blue. The last four configurations on the first row are for the case $p\notin\IG\tht$. The four configurations on the second row are when $p\in\IG\tht$ is neither right-isolated nor left-isolated (among instability points on its time level). The first three are configurations that occur when $p$ is a \emph{dust} instability point, i.e.\  not on the boundary of any stability island. The first one is called a \emph{proper non-shock instability} (pns) point. It is also the configuration at tips of islands. The last one is the \emph{snowbird shock}, which occurs exactly at the bottom point of each island. The third row depicts configurations when $p$ is left-isolated. This is when $p$ is on the right boundary of an island and is then a $\tht+$ \emph{hugging shock}. The last row is configurations where $p$ is right-isolated and is hence on the left boundary of an island and is a $\tht-$ \emph{hugging shock}.}
    \label{fig:geodesics}
\end{figure}

The statements of Theorem \ref{main:geodesics} are formulated more precisely in Propositions \ref{prop:geodconfigstable}-\ref{prop:islandconfigs}, whose combined conclusions yield the theorem.

Lemma 8.1 of \cite{Dau-Pan-26-} establishes our theorem for time $s=0$. By the shift invariance in \cite[Lemma 10.2]{Dau-Ort-Vir-22}, this extends to all rational times $s$ simultaneously. For such times, the number of possible configurations reduces to seven. These are precisely the configurations among our twenty in which no three geodesics emanating from the starting point are initially distinct: namely, the first two and the fifth and sixth configurations in the top row, together with the first configuration in each of the second, third, and bottom rows. Our theorem shows that thirteen additional configurations can occur at exceptional times.

\subsection{Stability and instability} 
Recall that for each $\tht\in\R$ and $\sigg\in\{-,+\}$, the $\tht\sigg$ semi-infinite geodesics in \eqref{geodesics.intro} coalesce. Given two points $(x,s)$ and $(y,t)$ in $\R^2$, choose one such geodesic from each point and let $(z,r)$ denote their coalescence point. Define
\begin{align}\label{Wdef}
W^{\tht\sig}(x,s;y,t)=\mathcal L(x,s;z,r)-\mathcal L(y,t;z,r).
\end{align}
By coalescence, this definition does not depend on the choice of $S\in\{L,M,R\}$ for the two $\tht\sigg$ geodesics, nor on the particular point $(z,r)$ along their common tail. The function $W^{\tht\sig}$ is called the \emph{Busemann function}, borrowing terminology from metric geometry. See Section \ref{sec:Busproc} for precise statements and a summary of the properties of these functions used in this paper.

Using \eqref{composition} together with \eqref{Wdef}, one obtains that, almost surely, for all $x,s,x_0,s_0,\tht\in\R$, $t>s$, and $\sigg\in\{-,+\}$,
\begin{align}\label{update0}
W^{\tht\sig}(x,s;x_0,s_0)=\sup_{y\in\R}\{\mathcal L(x,s;y,t)+W^{\tht\sig}(y,t;x_0,s_0)\}
\end{align}
and, for any $S\in\{L,M,R\}$, $y=\geo\from{(x,s)}\dir{S}{\tht}{\sig}(t)$ maximizes the above supremum.
Thus $W^{\tht\sig}$ solves the KPZ fixed point update equation \eqref{fixedpt} for all times $t\in\R$. In the terminology of PDEs, such a solution is called \emph{eternal}.

As discussed in Section \ref{sec:SHJ}, the KPZ fixed point can be viewed as a degenerate inviscid stochastic Hamilton-Jacobi equation, with its Hopf-Lax-Oleinik representation obtained by combining \eqref{fixedpt} and \eqref{LPP}. In this analogy, point-to-point geodesics play the role of characteristic lines. Accordingly, the semi-infinite geodesics $\geo\from{(x,s)}\dir{S}{\tht}{\sig}$ may be interpreted as characteristic curves of $W^{\tht\sig}$ traced backward from $(x,s)$ into the distant past (recall that in this analogy time runs backward, while geodesics evolve forward).

The $\tht$-directedness of the semi-infinite geodesics translates into a spatial growth rate for $W^{\tht\sig}$. Namely, for any $x_0,s_0,s\in\R$,
$x^{-1}W^{\tht\sig}(x,s;x_0,s_0)\to 2\tht$
as $|x|\to\infty$
\cite[Lemma 5.12(iv)]{Bus-Sep-Sor-24}. Thus, this eternal solution has a conserved asymptotic velocity equal to $2\tht$.

Recall that if $\tht\notin\baddir$, the sign distinction in the semi-infinite geodesics disappears. In this case $W^{\tht-}=W^{\tht+}$, yielding a unique eternal solution with asymptotic velocity $2\tht$. As explained in Section \ref{sec:SHJ}, this corresponds to stability and stochastic synchronization for solutions initiated in the basin of attraction of the eternal solution.

In contrast, when $\tht\in\baddir$, the non-coalescence of the $\tht-$ and $\tht+$ geodesics implies that $W^{\tht-}\ne W^{\tht+}$, giving rise to two distinct eternal solutions with the same velocity $2\tht$. In this regime, the system exhibits multiple pullback attractors, leading to instability and the breakdown of stochastic synchronization. We therefore refer to directions $\tht\in\baddir$ as \emph{directions of instability}.

Motivated by the above, we analyze the region $\IG\tht$ defined via
\begin{align}\label{IGdef1}
\R^2\setminus\IG\tht=\bigl\{(x,s)\in\R^2:\exists\text{ open }O\ni(x,s)\ \text{s.t. }W^{\tht-}=W^{\tht+}\text{ on }O\times O\bigr\}.
\end{align}
We call $\IG\tht$ the \emph{instability graph} (or \emph{web of instability}) because of its graph-like structure. 
Our next main theorem, proved in Section \ref{sec:mainproofs}, summarizes the  structural properties of this set. See also Figure \ref{fig:IG} for a simulation of $\IG\tht$.

\begin{thm}\label{main:IG}
The following hold for all $\w$ in a full $\P$-probability event and for all $\tht\in\baddir$.
\begin{enumerate}[label={\rm(\alph*)}, ref={\rm\alph*}] \itemsep=1pt

\item\label{main:IG.closed}
$\IG\tht$ is closed, nowhere dense, and has no isolated points.

\item\label{main:IG.islands}
The connected components of the open set $\R^2\setminus\IG\tht$ are all bounded, and distinct components have disjoint closures. Moreover, there are countably infinitely many such components. We call these components \emph{stability islands}.

\item\label{main:IG.boundary} Each island has a unique boundary point with minimal time, called the \emph{bottom}, and a unique boundary point with maximal time, called the \emph{tip}. The boundary of the island consists of two continuous space-time paths, the \emph{left boundary} and the \emph{right boundary}, each connecting the bottom to the tip. Except at the bottom and the tip, the left boundary lies strictly to the left of the right boundary.

\item\label{main:IG.islandsdense} The islands are dense in $\IG\tht$: for any $(x,s)\in\IG\tht$ and $\rho>0$, there exists an island whose closure is inside the ball of radius $\rho$ and center $(x,s)$.\vspace{2pt}

Let $\dust$ denote the subset of $\IG\tht$ consisting of points that are not on the boundary of any stability island. Points in $\dust$ are called \emph{dust points}.\vspace{4pt}

\item\label{main:IG.tbi}
$\dust$ {\rm(}and hence also $\IG\tht${\rm)} is temporally bi-infinite: there exists an uncountable family of continuous bi-infinite space-time paths $\tau_\alpha:\R\to\R^2$ such that $\tau_\alpha(\R)\subset\dust$ for all $\alpha$, the paths are pairwise disjoint
{\rm(}$\tau_\alpha(\R)\cap\tau_\beta(\R)=\varnothing$ for $\alpha\ne\beta${\rm)}, and each path is $\tht$-directed in both time directions, i.e.\ $t^{-1}\tau_\alpha(t)\to(\tht,1)$ as $|t|\to\infty$.

\item\label{main:IG.xbi} For any $s\in\R$, the set $\dust\cap(\R\times\{s\})$ has no isolated points and is unbounded above and below. Moreover, for any $a<b$ such that $\IG\theta\cap((a,b)\times\{s\})\neq\varnothing$, the set $\dust\cap((a,b)\times\{s\})$ is uncountable.

\item\label{main:IG.directed} For each $(x,s)\in\IG\tht$, there exists at least one continuous bi-infinite space-time path through $(x,s)$ that remains on $\IG\tht$ and is $\tht$-directed in both time directions.

\item\label{main:IG.graph}
$\IG\tht$ forms a path-connected graph: for any $(x,s),(y,t)\in\IG\tht$, there exist $z,z'\in\R$ and times $r<s\wedge t\le s\vee t<r'$ such that $(z,r),(z',r')\in\IG\tht$, and there are continuous space-time paths contained in $\IG\tht$ from $(z,r)$ to each of $(x,s)$ and $(y,t)$, and from each of these two points to $(z',r')$.
\end{enumerate}
\end{thm}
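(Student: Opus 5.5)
The plan is to describe $\IG\tht$ through geodesic disjointness and then read off each item. Fix $\tht\in\baddir$. By the Busemann-process properties in Section~\ref{sec:Busproc}, $W^{\tht-}(x,s;y,s)=W^{\tht+}(x,s;y,s)$ for $x<y$ exactly when the $\tht-$ and $\tht+$ geodesics from $(x,s)$ and $(y,s)$ coalesce in a way that forces equality of the increments. The first step is to prove the characterization
\[
(x,s)\in\IG\tht\iff \geo\from{(x,s)}\dir{L}{\tht}{+}\ \text{and}\ \geo\from{(x,s)}\dir{R}{\tht}{-}\ \text{are not eventually equal},
\]
in the sense that no neighbourhood of $(x,s)$ sends all its $\tht-$ and $\tht+$ geodesics to a common tail. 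Closedness in (a) is immediate from \eqref{IGdef1}. For the remaining items I would work with the \emph{competition interfaces} between the $\tht-$ and $\tht+$ geodesic trees. A point is unstable iff, arbitrarily close to it, a $\tht-$ geodesic and a $\tht+$ geodesic separate forever. Because $\tht-$ geodesics coalesce with each other and lie to the left of the $\tht+$ geodesics by \eqref{geoorder}, this set is the closure of a family of interfaces. Each interface is a continuous bi-infinite path that is $\tht$-directed, by comparison with geodesics in nearby directions $\tht\pm\ep$.

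With that description in hand, I would take the items in order.

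(a) Nowhere density: $\tht$ is not in $\baddir$ for a fixed deterministic time's Lebesgue-typical point. More precisely, for a.e.\ point the geodesics $\geo^{\tht\pm}$ coalesce, by the rational-time results of \cite{Dau-Pan-26-} together with Fubini. Absence of isolated points follows from continuity of the interfaces.

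(b) A stability island is a region where the $\tht\pm$ trees agree. It is enclosed between two interfaces that start at a common point (the bottom) and merge at a common point (the tip). Boundedness holds because interfaces are $\tht$-directed and cannot remain apart forever inside a coalescing region. Countability comes from each island containing a rational point. Infinitely many islands exist by shift invariance.

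(c) The two boundary paths are precisely the two interfaces. Uniqueness of the bottom and tip follows from the ordering of geodesics and from the fact that interfaces can only meet and merge, never cross.

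(d) Density of islands: near any unstable point, use a rational time slightly earlier and the fact that the $\tht-$ and $\tht+$ geodesics from nearby points coalesce only after small loops. I expect this to be the main obstacle. It needs a quantitative local statement that small islands appear near every instability point. I would attempt it by combining the 7-configuration classification at rational times with continuity of Busemann functions in $(x,s)$, producing two nearby interfaces that meet both above and below.

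(e)--(h) Dust paths arise as limits of nested interfaces. Uncountably many distinct limits come from a binary branching at island bottoms, which gives a Cantor-set coding and hence (f). Item (g) follows by passing along interface boundaries and around islands. Item (h) uses the fact that any two bi-infinite directed paths in $\IG\tht$ are joined through island boundaries in the past and in the future, because islands are bounded and dense.
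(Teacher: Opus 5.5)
Your proposal has genuine gaps, beginning with the criterion it is built on.

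\textbf{The starting criterion and nowhere density.} For $\tht\in\baddir$, all $W^{\tht-}$-geodesics coalesce, and by \eqref{sign} we have $\geo\from{(x,s)}\dir{S}{\tht}{-}(t)<\geo\from{(x,s)}\dir{S}{\tht}{+}(t)$ for all large $t$. Hence $\geo\from{(x,s)}\dir{R}{\tht}{-}$ and $\geo\from{(x,s)}\dir{L}{\tht}{+}$ are \emph{never} eventually equal, and no $\tht-$ geodesic ever shares a tail with a $\tht+$ geodesic. As stated, your characterization therefore makes every point unstable. Your claim in (a) that ``the geodesics $\geo^{\tht\pm}$ coalesce'' at a.e.\ point is also false.

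Instability is about whether the extreme geodesics touch again after time $s$, not about tails: $(x,s)\in\IG\tht$ if and only if $\geo\from{(x,s)}\dir{L}{\tht}{-}\cap\geo\from{(x,s)}\dir{R}{\tht}{+}=\{(x,s)\}$ (Lemma \ref{def:geoinstabpt}, from \eqref{Wjump}).

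Nowhere density cannot come from the rational-time classification of \cite{Dau-Pan-26-}. Instability points exist on every time level, rational ones included (Lemma \ref{kS inf}, \eqref{Hausdorff}). The pns and hugging configurations are among the seven allowed there. Because $\baddir$ is random, you need an input that holds, for each deterministic point, simultaneously for all $\tht$. That input is \eqref{Q notin IG}: it gives $\IG\tht\cap\Q^2=\varnothing$, and together with closedness this yields nowhere density. Likewise, shift invariance is the wrong tool for a statement about every $\tht$ in the random set $\baddir$. Infinitude of islands follows deterministically from their boundedness and the density of $\R^2\setminus\IG\tht$.

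\textbf{The interfaces.} You never define your interfaces, and ``$\IG\tht$ is the closure of a family of interfaces'' is unproved. Even if granted, it is too weak for (g), since a point of the closure need not lie on any interface. What is needed, for every base point $(x_0,s_0)$, is an ordered pair of continuous bi-infinite paths in $\IG\tht$ such that:
\begin{itemize}
\item at least one of the two passes through $(x_0,s_0)$ whenever $(x_0,s_0)\in\IG\tht$;
\item same-sign paths from different base points are identical or disjoint;
\item the region strictly between the two paths is stable.
\end{itemize}
The paper obtains these as the edges $\Ipath^{\tht\pm}$ of the zero set of $y\mapsto W^{\tht-}(y,t;x_0,s_0)-W^{\tht+}(y,t;x_0,s_0)$ (see \eqref{I}). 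It gets their continuity by identifying them, through Lemma \ref{lm:1356}, with Rahman--Vir\'ag competition interfaces for the eternal solution $W^{\tht-}\vee W^{\tht+}$.

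Without the identical-or-disjoint dichotomy, (e) fails. Two paths distinguished by their left/right choice at an island both pass through its bottom and tip. So a binary coding at islands never yields pairwise disjoint paths, let alone paths in $\dust$. The paper instead takes uncountably many non-right-isolated points on one time level, each carrying its own $\Ipath^{\tht+}$, and discards the countably many that enclose islands.

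\textbf{Boundedness, connectivity, density, disjoint closures.} The phrases ``directed paths cannot remain apart forever'' and ``islands are bounded and dense'' supply no mechanism for boundedness or for (h); distinct $\Ipath^{\tht+}$ never meet. The tool you are missing is Lemma \ref{lem:IGgodown}: a $\tht\pm$ shock interface emanating from an instability point stays in $\IG\tht$.
\begin{itemize}
\item Combined with coalescence of shock interfaces \eqref{Itree}, it gives common descendants.
\item A last intersection point of a $W^{\tht+}$-geodesic from one point with a $W^{\tht-}$-geodesic from the other is an instability point, and gives common ancestors.
\item An island is then bounded because it is enclosed by a loop in $\IG\tht$ through two instability points on its time level, which exist by \eqref{Wunbounded}.
\end{itemize}
For (d), which you leave open, proceed as follows. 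Follow $\Upsilon\from{(x,s)}\dir{L}{\tht}{+}\subset\IG\tht$ back to a rational time. There, pick a rational (hence stable) point between two nearby instability points. The island containing it has shock interfaces as boundaries. Confine it between $\Upsilon\from{(x,s)}\dir{L}{\tht}{+}$ and nearby geodesics controlled by Theorem \ref{th:convanydir}, using \eqref{no-intersection}.

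Finally, the disjointness of island closures in (b) is not addressed at all. It requires the hugging-shock analysis (Lemma \ref{LRisolgeo}) together with the disjointness of the $\Ipath^{\tht+}$.
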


\begin{figure}[hpt]
    \includegraphics[width=10cm]{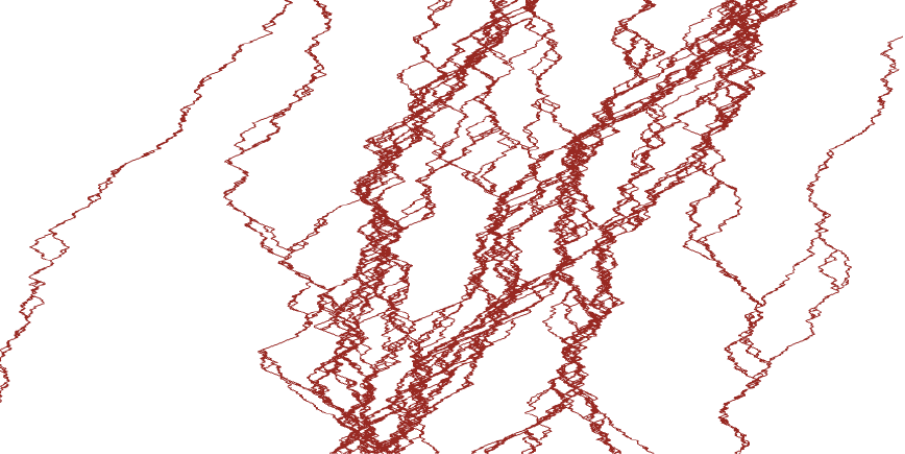}
    \caption{\small A simulation of a portion of the \emph{instability graph} $\IG\tht$. The set is a closed, nowhere dense, connected graph (Theorem \ref{main:IG}\eqref{main:IG.graph}) with no isolated points, and is bi-infinite in both space and time. It consists of the boundaries of countably many bounded \emph{stability islands} with disjoint closures, together with the remaining \emph{dust} instability points that fill the regions between islands and make up the bulk of $\IG\tht$. The set exhibits a fractal structure: islands occur at all scales and are dense in $\IG\tht$ (Theorem \ref{main:IG}\eqref{main:IG.islandsdense}).}
    \label{fig:IG}
\end{figure}

Proposition \ref{prop:IGgoup} has more details about the paths in part \eqref{main:IG.tbi}.
The next theorem complements Theorem \ref{main:geodesics}. Its proof appears in Section \ref{sec:mainproofs}.

\begin{thm}\label{main:IGgeods} The following holds for all $\w$ in a full $\P$-probability event and any $\tht\in\baddir$. 
\begin{enumerate}[label={\rm(\alph*)}, ref={\rm\alph*}] \itemsep=1pt
\item\label{IGgeods.a} The last four configurations on the top row of Figure \ref{fig:geodesics} only occur at stability points, i.e.\ points in $\R^2\setminus\IG\tht$.
\item\label{IGgeods.b} The twelve configurations appearing on the second, third, and fourth rows of Figure \ref{fig:geodesics} only occur at instability points, i.e.\ points in $\IG\tht$. Furthermore, each of them arises from a set that is dense in $\IG\tht$. For more details on the exact locations of occurence, see Theorem \ref{main:shocks}.
\end{enumerate}
\end{thm}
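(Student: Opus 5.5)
The plan is to characterize $\IG\tht$ through the geodesics and then read off where each configuration can occur. The basic tool is a local criterion. I would first prove that $(x,s)\notin\IG\tht$ if and only if there is a neighborhood $O$ of $(x,s)$ in which, for every $p\in O$, the $\tht-$ and $\tht+$ geodesics from $p$ meet. By \eqref{Wdef}, $W^{\tht-}(p;q)=W^{\tht+}(p;q)$ whenever the $\tht\pm$ geodesics from $p$ and from $q$ all pass through a common point. Conversely, if the $\tht-$ and $\tht+$ geodesics from $p$ are disjoint, then comparing $W^{\tht\pm}$ between $p$ and nearby points on the same time level forces a difference. This uses the monotonicity $W^{\tht-}(y,t;x,t)\le W^{\tht+}(y,t;x,t)$ for $x<y$, together with the fact that strict inequality on an interval is equivalent to disjointness of the $\tht-$ geodesic from the left point and the $\tht+$ geodesic from the right point (Section \ref{sec:Busgeo}). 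Call this criterion (C).

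For part \eqref{IGgeods.a}: the last four top-row configurations are exactly those in which every $\tht-$ geodesic from $p$ shares an initial segment with some $\tht+$ geodesic from $p$, or at least meets one immediately, with no separated region between them. I would show such a configuration is stable under small perturbations of the starting point. Take $q$ close to $p$. By the ordering \eqref{geoorder} and the fact that geodesics from nearby points are sandwiched between geodesics from $p$ and from points slightly to the left and right, the geodesics from $q$ are trapped. Concretely, I would place $q$ between $p$ and a point on the common segment of the $\tht-$ and $\tht+$ geodesics a short time above $s$. Planarity then forces the $\tht\pm$ geodesics of $q$ to touch the common segment, so they share a point. The trapping requires controlling geodesics from points slightly below time $s$. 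For this I would use the convergence of geodesics under perturbation of the starting point (the leftmost and rightmost limits from Section \ref{sec:Busgeo}). By (C), $p\notin\IG\tht$.

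For part \eqref{IGgeods.b}, first direction: each of the twelve configurations has a $\tht-$ geodesic and a $\tht+$ geodesic from $p$ that separate immediately, enclosing a region. Pick points $q_n\to p$ on the time levels $s+1/n$, lying strictly inside that region. Because the geodesics from $q_n$ are trapped between the $\tht-$ and $\tht+$ geodesics of $p$, which never meet again, the $\tht-$ and $\tht+$ geodesics of $q_n$ do not meet. By (C), $q_n\in\IG\tht$, and closedness (Theorem \ref{main:IG}\eqref{main:IG.closed}) gives $p\in\IG\tht$.

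For part \eqref{IGgeods.b}, density: I would use Theorem \ref{main:IG}\eqref{main:IG.islandsdense}. Every ball around a point of $\IG\tht$ contains a whole island. Its bottom realizes the snowbird configuration and its tip realizes the pns configuration; both identifications come from the proofs of Propositions \ref{prop:geodconfigstable}--\ref{prop:islandconfigs}, assumed as established. Its left and right boundaries, by those propositions, contain $\tht-$ and $\tht+$ hugging shocks of each of the four types in the third and fourth rows. For the three remaining second-row dust configurations, I would use Theorem \ref{main:IG}\eqref{main:IG.xbi}: between islands in any ball there are uncountably many dust points on some time level, and the argument establishing Theorem \ref{main:geodesics}\eqref{geodesics.c} can be localized to produce each dust type inside any ball.

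The hardest step will be this density of each specific dust and boundary configuration. Locating a given configuration inside an arbitrary small ball requires localizing the existence arguments, and I would try to get this from translation and scaling invariance applied to countably many rational balls.
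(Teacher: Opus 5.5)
Your location arguments for (a) and for the first half of (b) rely on the wrong feature of the configurations, and one step is false. In (b) you claim that points $q_n$ strictly between $\geo\from{p}\dir{L}{\tht}{-}$ and $\geo\from{p}\dir{R}{\tht}{+}$ lie in $\IG\tht$ because their geodesics are trapped there. But being trapped between two non-meeting geodesics does not stop the $\tht-$ and $\tht+$ geodesics of $q_n$ from meeting each other. In fact this open region contains points of $\Q^2$, which are stable by \eqref{Q notin IG} and \eqref{QnotIG}. In (a), the property you isolate (every $\tht-$ geodesic shares an initial segment with some $\tht+$ geodesic) also holds at the $\tht+$ hugging shocks of the third row. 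For the first configuration there, $\geo\from{p}\dir{L}{\tht}{-}=\geo\from{p}\dir{R}{\tht}{-}$ runs initially with $\geo\from{p}\dir{L}{\tht}{+}$. Yet $p\in\IG\tht$, and by Lemmas \ref{LRisolgeo} and \ref{no-isolated} instability points accumulate at $p$ from the right, so no perturbation argument built on that property can work. What separates the four top-row configurations from the other twelve is whether the extreme pair $\geo\from{p}\dir{L}{\tht}{-}$, $\geo\from{p}\dir{R}{\tht}{+}$ reintersects after time $s$. Lemma \ref{def:geoinstabpt} turns this into a pointwise test: $p\in\IG\tht$ if and only if $\geo\from{p}\dir{L}{\tht}{-}\cap\geo\from{p}\dir{R}{\tht}{+}=\{p\}$. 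This is your criterion (C), once ``the $\tht\pm$ geodesics'' is read as this pair and applied to $p$ itself rather than to the $q_n$. Reading the test off each configuration gives (a) and the first half of (b) with no neighborhood control. The paper reaches the same conclusion by checking which configurations appear in the exhaustive lists of Propositions \ref{prop:geods-stability}, \ref{prop:geods-dust}, and \ref{prop:islandconfigs}.

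For density, your island argument is the paper's. By Proposition \ref{prop:islandsdense}, every ball about a point of $\IG\tht$ contains a whole island. Its closure carries the pns tip, the snowbird bottom, and, by Proposition \ref{prop:islandconfigs}, every hugging configuration of the third and fourth rows (the isolated type at least at the time $r_0$). The gap is the two single-shock configurations of the second row. These occur only at dust points (Theorem \ref{main:shocks}\eqref{shocks.e}), so islands cannot supply them. Theorem \ref{main:geodesics}\eqref{geodesics.c} only gives infinitely many occurrences, so ``localizing'' it is exactly what has to be proved. Invariance in law under translations and scalings cannot do this: it gives almost-sure statements about fixed deterministic regions, whereas you need occurrences near every point of a random set, for all $\tht$ in the random countable set $\baddir$ at once. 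The missing ingredient is the density statement of Proposition \ref{prop:geods-dust}, which is proved deterministically in two steps. First, pns dust points are dense in $\IG\tht$: at rational levels, \eqref{Hausdorff} gives dimension $1/2$ in every interval meeting $\IG\tht$, while shocks \eqref{NUcountable} and island tips are countable. Second, from such a point $(x,s)$, the interfaces $\Upsilon\from{(x,s)}\dir{L}{\tht}{\pm}$ stay in $\IG\tht$ (Lemma \ref{lem:IGgodown}). Applying Sierpi\'nski's theorem (Theorem \ref{thm:sierpinski-interval}) to the countably many disjoint island closures, these interfaces pass through points arbitrarily close to $(x,s)$ that lie in no island closure. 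Those points are dust $\tht-$ or $\tht+$ single shocks.
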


We leave two open problems for future work.

\begin{prob}\label{prob1}
Is it the case that, almost surely, for every $\tht\in\baddir$ and every $s\in\R$, the set $\dust\cap(\R\times\{s\})$ (and hence also $\IG\tht\cap(\R\times\{s\})$) is nowhere dense?
\end{prob}

\begin{prob}\label{prob2}
Is it the case that, almost surely, for every $\tht\in\baddir$, the set $\IG\tht$ is $\tht$-directed? That is, for any continuous space-time path $\gamma:[s,\infty)\to\R^2$ with $\gamma([s,\infty)])\subset\IG\tht$, one has
$t^{-1}\gamma(t)\to(\tht,1)$ as $t\to\infty$.
\end{prob}

Both problems admit affirmative answers in directed last-passage percolation with exponential weights and in the Brownian last-passage percolation model \cite{Jan-Ras-Sep-23,Ras-Swe-24-a-}, where the path structure is simpler. The two questions are related to the following more basic problem. 

\begin{prob}\label{prob3}
Consider the event that there exist $y>x$ and $t\ge s>0$ in $\R$, and a continuous function $\gamma:[x,y]\to\R^2$ such that $\gamma(x)=(x,s)$, $\gamma(y)=(y,t)$, and the first coordinate of $\gamma(z)$ is $z$ for all $z\in[x,y]$. Moreover, for every $z\in[x,y]$, there exist geodesics from $(-1,0)$ to $\gamma(z)$ and from $(1,0)$ to $\gamma(z)$ that intersect only at $\gamma(z)$ and lie entirely below $\gamma$. See Figure \ref{fig:open} for an illustration. Does this event have zero probability?
\end{prob}

\begin{figure}[hpt]
    \includegraphics[width=2.75cm]{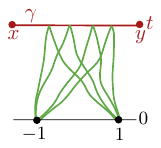}\qquad
    \includegraphics[width=3cm]{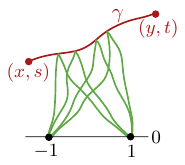}\qquad
    \includegraphics[width=3cm]{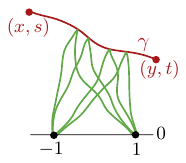}
    \caption{\small An illustration of Open Problem \ref{prob3}. Left: the case of a horizontal line segment. Middle and right: the case of a space-time path.}
    \label{fig:open}
\end{figure}

Theorem 1 in \cite{Bat-Gan-Ham-22} shows that for all rational $t=s>0$, the Hausdorff dimension of the set of points $z\in[x,y]$ as in Problem \ref{prob3} is $1/2$, thereby yielding a positive answer to Problem \ref{prob1} for rational times (see also \eqref{Hausdorff} and Remark \ref{rmk:Haus0}). A corresponding result for arbitrary $t=s>0$ would imply a positive answer to Problem \ref{prob1}; see Remark \ref{rmk:Hausdorff}. A positive answer to Problem \ref{prob3} for arbitrary $t>s>0$ implies a positive answer to Problem \ref{prob2};
see Remark \ref{rk:ancestry}. 

\subsection{Shocks}\label{sec:shocks.intro} Viewing semi-infinite geodesics as characteristic lines leads to the following notion of a shock point for the eternal solution $W^{\tht\sig}$. A point $(x,s)\in\R^2$ is called a $\tht\sigg$ \emph{shock} if the two geodesics $\geo\from{(x,s)}\dir{L}{\tht}{\sig}$ and $\geo\from{(x,s)}\dir{R}{\tht}{\sig}$ immediately separate. More precisely, there exists $t>s$ such that
$\geo\from{(x,s)}\dir{L}{\tht}{\sig}(r)<\geo\from{(x,s)}\dir{R}{\tht}{\sig}(r)$ for all $r\in(s,t)$.
Note that the geodesic $\geo\from{(x,s)}\dir{M}{\tht}{\sig}$ plays no role in this definition. 

As in the classical Burgers' equation, $\tht\sigg$ shocks form continuous interfaces that organize into a tree structure, which is dual to the tree formed by the relative interiors of the semi-infinite $W^{\tht\sig}$-geodesics $\geo\from{(x,s)}\dir{S}{\tht}{\sig}((s,\infty))$, $S\in\{L,M,R\}$. 
The precise definitions and properties of shock points and interfaces needed in this work are summarized in Section \ref{sec:cif+shock}. 

Shock points mark locations where multiple semi-infinite geodesics correspond to the same Busemann function (i.e.\ the same eternal solution). By Theorem 6.1(ii) of \cite{Bus-Sep-Sor-24}, such points exist for all $\tht\in\R$ and $\sigg\in\{-,+\}$ (in fact, our Lemma \ref{lem:shocksdense} below shows that they are dense in $\R^2$). However, since the geodesics $\geo\from{(x,s)}\dir{S}{\tht}{\sig}$, $S\in\{L,R\}$ and $\sigg\in\{-,+\}$, are all $\tht$-directed, there are also points that are not shock points but still emit multiple $\tht$-directed geodesics, and are hence  instability points. This naturally raises the question of how shock points relate to instability points. Understanding this relationship is the main objective of the present paper and is summarized in the next main theorem. 

To state the next theorem, we first introduce the various types of shock points, classified according to the configuration of geodesics emanating from the point.

When $(x,s)\notin\IG\tht$, there is a single type of shock, which we call a \emph{proper double shock}, since the absence of a sign distinction makes the point simultaneously a $\tht-$ and a $\tht+$ shock. These correspond to the configurations in the top row of Figure \ref{fig:geodesics}, excluding the first and fifth, which are not shocks.

When $(x,s)\in\IG\tht$, there are four types of shock configurations (up to $+/-$ symmetry), all depicted in Figure \ref{fig:geodesics}. The first configuration in the second row is not a shock; we refer to such a point as a \emph{proper non-shock instability} point (\emph{pns} point). The second and third configurations in that row are, respectively, $\tht+$ and $\tht-$ \emph{single shocks}, while the fourth configuration is a \emph{snowbird\footnote{The term is a reference to the Salt Lake City ski resort logo: \url{https://www.snowbird.com/}} shock}. 

All configurations in the third and fourth rows correspond to shocks. Those in the third row are called $\tht+$ \emph{hugging shocks}, while those in the fourth row are $\tht-$ \emph{hugging shocks}. We prove, in Lemma \ref{LRisolgeo}\eqref{LRisolgeo.b} below, that no point can be simultaneously a $\tht-$ and a $\tht+$ hugging shock. Among these configurations, the first three in each of the third and fourth rows are single shocks, whereas the last configuration in each row is an \emph{improper double shock}.

Before stating our main theorem, we introduce one final piece of terminology. The duality between shock interfaces and geodesics, together with the ordering \eqref{geoorder}, induces an ordering on shock interfaces: $\tht+$ interfaces eventually lie to the left of $\tht-$ interfaces. Accordingly, if a $\tht-$ interface emanating from $(x,s)\in\R^2$ lies strictly to the left of the corresponding $\tht+$ interface over some interval $(r,s)$, we say that the pair of interfaces is \emph{misordered}.

We are now ready to state our main theorem; see Figure \ref{fig:island-complete} for an illustration. The proof is given in Section \ref{sec:mainproofs}.

\begin{figure}[hpt]
    \includegraphics[width=7cm]{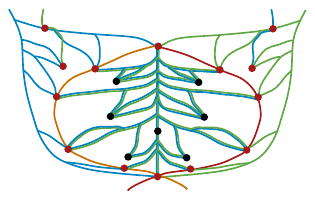}
    \caption{\small All 20 possible configurations of $\tht$-directed geodesics, in relation to the instability graph.  
    Compare with Figure \ref{fig:geodesics}. 
The stability island is the region between the misordered $\tht+$ shock interface (in red) and $\tht-$ shock interface (in orange).
Points in the interior of the island are stable and configurations arising from them correspond to the last four on the top row of Figure \ref{fig:geodesics}. 
Points on the right boundary are left-isolated and configurations from them correspond to those in the third row, while those on the left boundary are right-isolated and their configurations correspond to the last row. 
The top point of the island is a pns point and corresponds to the first configuration on the second row, and the bottom point is a snowbird shock and corresponds to the last configuration in the second row. Both are neither left- nor right-isolated. Dust instability points, not contained in the closure of any island, are also neither left- nor right-isolated and correspond to the first three configurations on the second row.}
    \label{fig:island-complete}
\end{figure}

\begin{thm}\label{main:shocks}
The following hold for all $\w$ in a full $\P$-probability event and all $\tht\in\baddir$. 
\begin{enumerate}[label={\rm(\alph*)}, ref={\rm\alph*}] \itemsep=1pt
\item\label{shocks.a} If $(x,s)\notin\IG\tht$, then it is either not a shock or is a proper double shock. Proper double shocks can only occur at stability points.
\item\label{shocks.b} A point $(x,s)\in\R^2$ is the tip of a stability island if and only if there exists a misordered pair of $\tht-$ and $\tht+$ shock interfaces $\tau^-$ and $\tau^+$ emanating from $(x,s)$ such that, for every $r<s$,
$\tau^-([r,s])\setminus\NU_1^{\tht+}\neq\varnothing$
and $\tau^+([r,s])\setminus\NU_1^{\tht-}\neq\varnothing$.
Moreover, every such tip is a \emph{pns} point. 
\item\label{shocks.c} A point $(x,s)$ is a \emph{snowbird shock} if and only if it is the bottom of a stability island. In that case, the tip of the island is the point at which the geodesics $\geo\from{(x,s)}\dir{R}{\tht}{-}$ and $\geo\from{(x,s)}\dir{L}{\tht}{+}$ first separate. Furthermore, $(x,s)$ is the point where the misordered pair of $\tht\pm$ interfaces from the tip reintersect.
\item\label{shocks.d} A point $(x,s)$ is a $\tht-$ {\rm(}respectively $\tht+${\rm)} hugging shock if and only if it lies on the left {\rm(}respectively right{\rm)} boundary of a stability island and is neither the tip nor the bottom of that island.
\item\label{shocks.e} If $(x,s)$ is a dust point {\rm(}i.e.\ $(x,s)\in\dust${\rm)}, then it is either a \emph{pns} point or a single shock. Conversely, single shocks occur only at dust points. If $(x,s)$ is a pns point, then it is either the tip of a stability island or a dust point. 
\item\label{shocks.f} Fix a stability island. Then each of the first, third, and fourth configurations on the third {\rm(}respectively fourth{\rm)} row in Figure \ref{fig:geodesics} occurs on a set of starting points that is dense in the right {\rm(}respectively left{\rm)} boundary of the island.  The second configuration in that row occurs on a set of isolated points along the same boundary, which accumulates at the tip of the island.
\end{enumerate}
\end{thm}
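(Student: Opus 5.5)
The plan is to prove Theorem \ref{main:shocks} by reducing each part to the local geodesic configuration at $(x,s)$, using two facts taken from Section \ref{sec:cif+shock}. The first is the duality between $\tht\sigg$ shock interfaces and the tree of $W^{\tht\sig}$ geodesics. The second is the characterization of instability: $(x,s)\in\IG\tht$ if and only if, for every neighborhood, the $\tht-$ and $\tht+$ geodesics out of some nearby points fail to agree immediately. Equivalently, stability at $(x,s)$ means that $\geo\from{(y,t)}\dir{S}{\tht}{-}$ and $\geo\from{(y,t)}\dir{S}{\tht}{+}$ share an initial segment for all $(y,t)$ near $(x,s)$. The statements would then follow by matching this with the configuration classification of Propositions \ref{prop:geodconfigstable}--\ref{prop:islandconfigs} and with the island structure of Theorem \ref{main:IG}.

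For part \eqref{shocks.a}, I would argue as follows. If $(x,s)\notin\IG\tht$, then $W^{\tht-}=W^{\tht+}$ on a neighborhood. The leftmost and rightmost geodesics are determined locally by the Busemann function through the maximizers in \eqref{update0}, so the $\tht-$ and $\tht+$ geodesics from $(x,s)$ coincide up to some positive time. Any immediate separation of $L$ and $R$ is therefore a separation for both signs, which makes the point a proper double shock. Conversely, a proper double shock has geodesics that show no sign distinction near $(x,s)$. Together with the monotonicity \eqref{geoorder} and the planarity of geodesics, this forces the $\tht\pm$ geodesics from all nearby points to agree initially, hence stability.

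Parts \eqref{shocks.b}--\eqref{shocks.d} form the core of the proof. I would describe an island as the open region bounded on the left by a $\tht-$ interface and on the right by a $\tht+$ interface emanating backward from the tip; this is the misordered pair. Inside the region the $\tht-$ and $\tht+$ geodesics agree, because the interfaces prevent either geodesic family from crossing. Going backward in time, the two interfaces meet again at a point, and that point is the bottom. From the bottom, $\geo\from{(x,s)}\dir{R}{\tht}{-}$ and $\geo\from{(x,s)}\dir{L}{\tht}{+}$ run up along the two boundaries and first separate at the tip, which gives the snowbird configuration. Boundary points other than the tip and bottom are one-sided isolated and see the island on one side only, so they are hugging shocks of the corresponding sign. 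In the converse direction, a hugging shock or snowbird shock produces geodesics that enclose a region in which the $\tht\pm$ geodesics coincide, and this region is an island.

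The condition $\tau^\mp([r,s])\setminus\NU_1^{\tht\pm}\neq\varnothing$ is what excludes the degenerate case in which the misordered interfaces merely trace along existing geodesics, so that no open stable region is enclosed. I expect this to be the main obstacle, namely proving that the pair genuinely bounds a nonempty open stable set exactly under this condition. I would attempt it by contradiction: if one interface lies in $\NU_1$ near the tip, then the interfaces coincide with geodesics there, and those geodesics witness instability arbitrarily close to any point between them.

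Part \eqref{shocks.e} would then follow by elimination. Dust points are neither left- nor right-isolated, so by the configuration list they fall among the first three configurations of the second row. Single shocks appear only there, and a pns point is either a tip or a dust point. For part \eqref{shocks.f}, I would perturb along the boundary using the density of shocks (Lemma \ref{lem:shocksdense}). The second configuration of each row requires a three-geodesic branching, which can occur only at isolated times. These points accumulate at the tip because the boundary interfaces branch infinitely often as they approach it.
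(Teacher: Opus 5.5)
The main gap is in part (b). You misidentify what the extra condition does, and the contradiction you propose cannot occur.

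By the duality \eqref{no-intersection}, the relative interior of a $\tht\sigg$ shock interface never meets the relative interior of a $W^{\tht\sig}$-geodesic, so interfaces never ``coincide with geodesics''. Also, a $\tht\sigg$ interface lies in $\NU_1^{\tht\sig}$ by definition, so the condition only concerns shocks of the opposite sign.

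Moreover, the region strictly between \emph{any} misordered pair out of \emph{any} point is automatically nonempty, open and stable. For $(y,r)$ in it, $\geo\from{(y,r)}\dir{L}{\tht}{-}$ cannot cross $\tau^-$ and $\geo\from{(y,r)}\dir{R}{\tht}{+}$ cannot cross $\tau^+$. Both are therefore forced through the apex, and Lemma \ref{def:geoinstabpt} applies, exactly as in Lemma \ref{lem:shockstoisland}.

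What can fail is that the apex is itself stable. If $(x,s)$ is a stable point in the relative interior of a geodesic along which the $\tht-$ and $\tht+$ geodesics agree, Lemma \ref{lm:geosplitsshocks} makes $\Upsilon_{(x,s)}\dir{L}{\tht}{-}$ and $\Upsilon_{(x,s)}\dir{R}{\tht}{+}$ misordered, yet $(x,s)$ is not a tip. The condition excludes exactly this, provided it is read on $[r,s)$; the apex is in general not a shock, so on $[r,s]$ it would hold trivially in this example.

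The argument then runs as follows. By Lemma \ref{lem:stableshocks}, near a stable point every $\tht-$ shock is a $\tht+$ shock and vice versa, so the condition forces $(x,s)\in\IG\tht$. Then \eqref{shocksmono} makes $\Upsilon_{(x,s)}\dir{L}{\tht}{-}$ and $\Upsilon_{(x,s)}\dir{R}{\tht}{+}$ misordered, Lemma \ref{lem:shockstoisland} makes $(x,s)$ a tip, and Lemma \ref{lm:islandboundary} shows it is pns.

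You also give no argument for the forward direction: that near the tip, neither boundary consists entirely of shocks of the opposite sign. The paper proves this in Lemma \ref{doubleisolareisol}, by a coalescing-interface argument combining Lemma \ref{lm:intcoal}, \eqref{no4stars}, and the absence of double hugging shocks. It can also be read off from the density of type (c.i) points in Proposition \ref{prop:islandconfigs}.

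Parts (c), (d) and (f) have related problems.

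In (c) the picture is off. From the bottom, $\geo\from{(x,s)}\dir{R}{\tht}{-}=\geo\from{(x,s)}\dir{L}{\tht}{+}$ run together through the \emph{interior} of the island. By \eqref{shocksplitsgeo}, $\geo\from{(x,s)}\dir{L}{\tht}{-}$ and $\geo\from{(x,s)}\dir{R}{\tht}{+}$ pass outside the two boundary interfaces. More importantly, the substantive step in ``bottom implies snowbird'' is ruling out that the bottom is a hugging shock. Lemma \ref{lm:snowbird} does this using the disjointness of island closures (Lemma \ref{lm:disjointislands}); only then does \eqref{no4stars} force the snowbird configuration.

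In the converses of (c) and (d), the island is bounded by the misordered interfaces out of the separation point (Lemmas \ref{lm:isotoisland} and \ref{lem:SBisisland}), not by geodesics. The equivalence between hugging shocks and one-sided isolated points is the nontrivial Lemma \ref{LRisolgeo}\eqref{LRisolgeo.a}, which your sketch asserts rather than derives.

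In (f) your heuristic is false. Type (c.iii) points whose middle $W^{\tht+}$-geodesic first merges with the right one also have three initially distinct geodesics, and they are dense on the boundary. Only the pattern in which the middle geodesic first merges with the left one is isolated. That comes from the interface arguments in the proof of Proposition \ref{prop:islandconfigs}; Lemma \ref{lem:shocksdense} gives density only along time levels, not along a boundary curve.

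Your local-maximizer argument for (a) does work: at times close to $s$, the leftmost $\tht\pm$ maximizers lie in a slice where the two Busemann profiles agree up to a constant. Part (e) follows from Proposition \ref{prop:geods-dust} together with (b)--(d), as you indicate.
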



One of the key messages of the theorem is the following. Shock points correspond to locations where the uniqueness of semi-infinite geodesics fails only locally, whereas instability points mark regions where this non-uniqueness persists globally. It is not {\it a priori} clear that the $\tht$-instability regions can be determined solely from the $\tht\pm$ shock interfaces. The theorem shows that this is indeed the case: the stability islands form a kind of \emph{skeleton} of the instability graph, which can be completely reconstructed from the $\tht\pm$ shock interfaces, and whose closure yields the full instability graph.
The prominence of stability islands, as opposed to dust points, is further underscored by Lemma \ref{shocksintersect}, which shows that $\tht\sigg$ shock interfaces never coalesce at dust points. In PDE terms, coalescence of shock interfaces corresponds to a redistribution of entropy (or energy) dissipation. This suggests that such dissipative interactions are confined to the boundaries of stability islands, highlighting a possibly distinguished structural role.

\section{Properties of Busemann functions, geodesics, and shock interfaces}\label{sec:inputs}

This section introduces the Busemann process, its semi-infinite geodesics, and the associated shock interfaces. We also collect the structural inputs from the literature that we rely on, and conclude the section with a number of new properties whose proofs are deferred to Appendix \ref{sec:auxlems}. All statements in this section hold on a single full-probability event, specified at the end of the section. From that point onward, we fix $\w$ in this event, and the remainder of the paper proceeds deterministically, relying solely on geometric and analytic arguments.

There are three reasons for presenting the inputs in this consolidated form. First, once this section is complete, all subsequent arguments refer only to results stated here, without further citation. Second, we isolate and make explicit the structural properties needed for our analysis---namely continuity, monotonicity, coalescence, and duality properties of the directed landscape, the Busemann process, the geodesics, and the interfaces. Many of these properties are already known in broader contexts, and the remaining ones are expected to hold in related settings, such as one-dimensional stochastic Burgers' equations. Finally, organizing the material in this way allows us to work on a single full-probability event on which all required properties hold, so that the remainder of the paper is entirely deterministic.

It is worth noting that, although the high-level overview in Section \ref{sec:setting} begins with semi-infinite geodesics and defines the Busemann functions from them (see \eqref{Wdef}), the exposition here proceeds in the opposite direction, starting with the Busemann process.

\subsection{The Busemann process}\label{sec:Busproc}
We begin with the directed landscape $\{\mathcal L(x,s;y,t):x,s,y,t\in\R^2,\ t>s\}$. By Lemma 10.3 in \cite{Dau-Ort-Vir-22}, there exists an event $\Omega_1$ such that $\P(\Omega_1)=1$ and for all $\w\in\Omega_1$,
\begin{align}\label{L:cont}
\mathcal L\text{ is continuous on }\{(x,s,y,t)\in\R^4:s<t\}.
\end{align}

The paper \cite{Dau-25} analyzes the possible configurations of point-to-point geodesics in the directed landscape. For our purposes, we require the fact that two specific configurations cannot occur.
More precisely, by Theorems 1.2 (requirement~4) and 1.7 of that paper, there exists an event $\Omega_2 \subset \Omega_1$ with $\P(\Omega_2)=1$ such that the following statements hold for all $\w\in \Omega_2$ and $(x,s) \in \R^2$:
\begin{enumerate}[label={\rm(\alph*)}, ref={\rm\alph*}] \itemsep=1pt
\item For all $y,r,t \in \R$ with $s<r<t$, 
\begin{align}\label{Duncan-deg3}
\begin{split}
&\text{there do not exist three geodesics from $(x,s)$ to $(y,t)$ that are}\\
&\text{disjoint on the time interval $(s,r)$ and coincide on $[r,t]$.}
\end{split}
\end{align}
\item For any $t_i > s$ and $y_i \in \R$, $i \in \{1,2,3,4\}$, and any choice of geodesics $\gamma_i$ from $(x,s)$ to $(y_i,t_i)$, $i \in \{1,2,3,4\}$, 
\begin{align}\label{no4stars}
\text{there exist $t \in \bigl(s,\min(t_1,t_2,t_3,t_4)\bigr)$ and $i\ne j$ in $\{1,2,3,4\}$}:\,\gamma_i|_{[s,t]}=\gamma_j|_{[s,t]}.
\end{align}
In words, there do not exist four geodesics emanating from $(x,s)$ that separate immediately. 
\end{enumerate}

The next object we require is the associated Busemann process 
\begin{align}
\{ W^{\tht\sig}(x,s;y,t) : \tht,x,s,y,t \in \R,\ \sigg \in \{-,+\} \}
\end{align}
of eternal solutions. This process was constructed in \cite[Theorem 5.1]{Bus-Sep-Sor-24}. The exceptional directions of instability are those for which there exist two distinct eternal solutions.

\begin{defn}\label{def:baddir}
    $\tht\in\R$ is called an \emph{exceptional} direction if $W^{\tht-}\neq W^{\tht+}$. The random set of exceptional directions is denoted by $\baddir$. When $\tht\notin\baddir$, we may drop the sign distinction and write $W^\tht$ for both $W^{\tht-}=W^{\tht+}$.
\end{defn}

For $s, \theta \in \R$, let
\begin{align}\label{def:kSs}
\IG\theta_s=\bigl\{\text{points of increase of the function }y \mapsto W^{\theta-}(0,s; y,s) - W^{\theta+}(0,s; y,s)\bigr\}.
\end{align}
We refer to Definition \ref{inc-pts} below for the precise meaning of a point of increase. By Lemma \ref{ptinc}, the set $\IG\tht_s$ is closed.

We now summarize the properties of the Busemann process that are used in this work. There exists an event $\Omega_3\subset\Omega_2$ with $\P(\Omega_3)=1$ and such that the following statements hold for all $\w\in\Omega_3$. 

\begin{enumerate} [label={\rm(\alph*)}, ref={\rm\alph*}] \itemsep=1pt 
\item{\rm(Update)}\cite[Theorem 5.1(iv)]{Bus-Sep-Sor-24} For any $x,y,\tht$ and $t>s$ in $\R$ and any $\sigg\in\{-,+\}$,
    \begin{align}\label{update}
        \sup_{z\in\R} \{\mathcal{L}(x,s;z,t)+W^{\tht\sig}(z,t;y,t)\} = W^{\tht\sig}(x,s;y,t).
    \end{align}
\item{\rm(Continuity)}\cite[Theorem 5.1(i)]{Bus-Sep-Sor-24} For all $\tht\in\R$ and $\sigg\in\{-,+\}$, 
\begin{align}\label{W:cont}
\text{$W^{\tht\sig}(x,s;y;t)$ is continuous in $x$, $s$, $y$, and $t$.}
\end{align}
\item{\rm(Cocycle)}\cite[Theorem 5.1(ii)]{Bus-Sep-Sor-24} For all $x$, $s$, $y$, $t$, $z$, $r$, and $\tht$ in $\R$, and all $\sigg\in\{-,+\}$,
\begin{align}\label{cocycle}
W^{\tht\sig}(x,s;y,t)+W^{\tht\sig}(y,t;z,r)=W^{\tht\sig}(x,s;z,r).
\end{align}
\item{\rm(Montonicity)}\cite[Theorem 5.1(iii)]{Bus-Sep-Sor-24} For all $s$, $x<y$, and $\tht$ in $\R$,
\begin{align}\label{W:mono}
W^{\tht-}(x,s;y,s)\ge W^{\tht+}(x,s;y,s).
\end{align}
\item{\rm(Growth)}\cite[Lemma 5.12(iv)]{Bus-Sep-Sor-24} For all $x,s,t,\tht\in\R$ and $\sigg\in\{-,+\}$,
\begin{align}\label{Wgrowth}
\lim_{|y|\to\infty} y^{-1}W^{\tht\sig}(y,t;x,s)=2\tht.
\end{align}
\item{\rm(Instability)}\cite[(5.7) and Theorem 5.5(iii)]{Bus-Sep-Sor-24}  For any $x,s\in\R$, we have the equivalence
\begin{align}\label{exceptional}
\tht\in\baddir\ \Longleftrightarrow\ \exists y\in\R:W^{\tht-}(x,s;y,s)\ne W^{\tht+}(x,s;y,s).
\end{align}
Furthermore,
\begin{align}\label{countable}
\baddir\text{ is countable and dense}.
\end{align}
\item{\rm(Unboundedness)}\cite[(5.6)]{Bus-Sep-Sor-24} For all $\tht\in\baddir$, $s\in\R$, and $\sigg\in\{-,+\}$,
\begin{align}\label{Wunbounded}
\lim_{x\to\ssig\infty}\bigl(W^{\tht-}(0,s;x,s)-W^{\tht+}(0,s;x,s)\bigr)=\sigg\infty.
\end{align}
\item{\rm(Dimension)}\cite{Bus-Sep-Sor-24} For all $\tht\in\baddir$ and any $a<b$ and $s$ in $\Q$,
\begin{align}\label{Hausdorff}
\text{$\IG\tht_s\cap(a,b)$ is either empty or has Hausdorff dimension $1/2$.}
\end{align}
\end{enumerate}

\begin{rmk}\label{rmk:Haus0}
Theorem 2.10(iii) in \cite{Bus-Sep-Sor-24} implies that, $\P$-almost surely, for every $\tht \in \baddir$ and every rational $s \in \Q$, the set $\IG\tht_s$ has Hausdorff dimension $1/2$. In fact, the same argument yields the stronger statement \eqref{Hausdorff}. More precisely, combining Theorems 5.3(iii), 5.5(ii), and 8.9 of that paper reduces the claim to the corresponding classical fact for the zero set of Brownian motion.
\end{rmk}

\begin{rmk}\label{rmk:Hausdorff}
It is believed that, on an event of full probability, \eqref{Hausdorff} holds for all times $s \in \R$. In any case, we invoke \eqref{Hausdorff} (with $s \in \Q$) only to show that the set of points in $\IG\tht_s$ that do not lie on the boundary of any connected component of the open set $\R \setminus \IG\tht_s$ is nonempty and has no isolated points. See Section \ref{sec:sketch} for more on this.
\end{rmk}

The update equation \eqref{update} and the cocycle property \eqref{cocycle} give that for any  $\w\in\Omega_3$ and any $x_0,s_0,y,\tht\in\R$, $s<t$ in $\R$, and $\sigg\in\{-,+\}$, $(y,t)\mapsto W^{\tht\sig}(y,t;x_0,s_0)$ is an eternal solution for the KPZ fixed point:
\begin{align}\label{eternal}
\begin{split}
\sup_{z\in\R}\{\mathcal{L}(x,s;z,t)+W^{\tht\sig}(z,t;x_0,s_0)\}
=W^{\tht\sig}(x,s;x_0,s_0).
\end{split}
\end{align}

\subsection{Busemann semi-infinite geodesics}\label{sec:Busgeo}
Next, we define the semi-infinite geodesics corresponding to the Busemann eternal solutions. From \eqref{eternal} and the cocycle property \eqref{cocycle}, we get that for $\w\in\Omega_3$, $\sigg\in\{-,+\}$, and any $\tht,x,s,z,t\in\R$ with $s<t$,
\begin{align}\label{L<W}
\mathcal L(x,s;z,t)\le W^{\tht\sig}(x,s;z,t).
\end{align}
A $W^{\tht\sig}$-geodesic emanating from $(u,r)\in\R^2$ is a continuous space-time path $\gamma:[r,\infty)\to\R^2$ such that $\gamma(r)=(u,r)$ and, for all $t>s\ge r$,
\begin{align}\label{rec}
\mathcal L(\gamma(s);\gamma(t)) = W^{\tht\sig}(\gamma(s);\gamma(t)).
\end{align}

By \cite[Theorem 5.9]{Bus-Sep-Sor-24}, there exists an event $\Omega_4\subset\Omega_3$ with $\P(\Omega_4)=1$ such that, for every $\w\in\Omega_4$, every $u,r,\tht\in\R$, and every $\sigg\in\{-,+\}$, there exists at least one $W^{\tht\sig}$-geodesic emanating from $(u,r)$, and any semi-infinite geodesic from $(u,r)$ that coalesces with it is itself a $W^{\tht\sig}$-geodesic. 
\eqref{L<W} and \eqref{rec} imply that  
\begin{align}\label{geomaximizes}
\text{$(z,t)=\gamma(t)$ maximizes the supremum in the update rule \eqref{eternal} with $(x,s)=\gamma(s)$.} 
\end{align}
(Note that $(x_0,s_0)$ is an arbitrary base point that can be changed to any other point in $\R^2$ using the cocycle property \eqref{cocycle}.)
Consequently, for every $t>s\ge r$, the segment $\gamma|_{[s,t]}$ is a characteristic line for the KPZ fixed point, transporting the initial data $W^{\tht\sig}(\,\abullet\,,t;x_0,s_0)$ at time $t$ to the point $(x,s)$, and $\gamma$ may be interpreted as a characteristic line of the eternal solution $W^{\tht\sig}$, propagating information from the remote past $t\to\infty$ (recall that time runs backward for the PDE analogy) to the point $(x,s)$. In percolation terminology, $\gamma$ is a geodesic between any two of its points and is hence a semi-infinite geodesic.

Both Theorem 1 in \cite{Bha-24} and Lemma 3.3 in \cite{Dau-25} independently imply that there exists an event $\Omega_5 \subset \Omega_4$ with $\P(\Omega_5)=1$ such that for all $\w\in\Omega_5$, all $x,s,y_1,y_2,t\in\R$ with $t>s$, and any geodesics $\gamma_i$ from $(x,s)$ to $(y_i,t)$, $i\in\{1,2\}$,
\begin{align}\label{no bubble}
\forall r'<r''\text{ in }(s,t):\bigl[\gamma_1(r')=\gamma_2(r')\text{ and }
\gamma_1(r'')=\gamma_2(r'')\bigr]\Longrightarrow \gamma_1(r)=\gamma_2(r)\ \ \forall r\in[r',r''].
\end{align}
This and Theorem 7.1(ii) in \cite{Bus-Sep-Sor-24} (see their Remark 7.2) imply the existence of an event $\Omega_6 \subset \Omega_5$ with $\P(\Omega_6)=1$ such that for all $\w\in\Omega_6$, $x,s,\tht\in\R$, $\sigg\in\{-,+\}$, any two distinct $W^{\tht\sig}$-geodesics $\gamma_1,\gamma_2:[s,\infty)\to\R^2$ out of $(x,s)$ must immediately split and then later reunite and coalesce. Precisely, there exists an $r>s$ such that $\gamma_1(t)=\gamma_1(t)$ for all $t\ge r$ and $\gamma_1(t)\ne\gamma_2(t)$ for all $t\in(s,r)$.
With \eqref{no4stars}, we get that for any $\w\in\Omega_6$, $x,s,\tht\in\R$, and $\sigg\in\{-,+\}$, there are at most three distinct $W^{\tht\sig}$-geodesics out of $(x,s)$. These geodesics are called the \emph{Busemann geodesics} and we denote them by 
\begin{align}\label{geodesics}
\bigl\{\geo\from{(x,s)}\dir{S}{\tht}{\sig}:(x,s)\in\R^2,\,S\in\{L,M,R\},\,\tht\in\R,\,\sigg\in\{-,+\}\bigr\}.
\end{align}

\begin{rmk}\label{rk:Mgeo}
When there is a unique $W^{\tht\sig}$-geodesic out of $(x,s)$, we have $\geo\from{(x,s)}\dir{L}{\tht}{\sig}=\geo\from{(x,s)}\dir{M}{\tht}{\sig}=\geo\from{(x,s)}\dir{R}{\tht}{\sig}$. When there are exactly two distinct $W^{\tht+}$-geodesics out of $(x,s)$, we use the convention that $\geo\from{(x,s)}\dir{M}{\tht}{+}=\geo\from{(x,s)}\dir{R}{\tht}{+}$. When there are exactly two distinct $W^{\tht-}$-geodesics out of $(x,s)$, we use the convention that $\geo\from{(x,s)}\dir{M}{\tht}{-}=\geo\from{(x,s)}\dir{L}{\tht}{-}$. 
\end{rmk}

\begin{rmk}
In what follows, we adopt the convention of using the same notation for a geodesic and its image: for a space-time path $\gamma:I\to\R^2$, we may write $\gamma$ to denote both the map and its range $\gamma(I)$.
\end{rmk}

Now, we summarize the properties of the Busemann geodesics \eqref{geodesics}.
There exists an event $\Omega_7\subset\Omega_6$ such that $\P(\Omega_7)=1$ and the following hold for all $\w\in\Omega_7$.
\begin{enumerate} [label={\rm(\alph*)}, ref={\rm\alph*}] \itemsep=1pt 
\item{\rm(Continuity)} 
For all $x$, $s$, and $\tht$ in $\R$, and for all $\sigg\in\{-,+\}$ and $S\in\{L,M,R\}$,
\begin{align}\label{geo:cont}
\text{$\geo\from{(x,s)}\dir{S}{\tht}{\sig}:[s,\infty)\longrightarrow\R^2$ is  a continuous space-time path.}
\end{align}
\item{\rm(Directedness)}\cite[Theorem 5.9(ii)(d)]{Bus-Sep-Sor-24} For all $x$, $s$, and $\tht$ in $\R$, and for all $\sigg\in\{-,+\}$ and $S\in\{L,M,R\}$,
\begin{align}\label{geo:directed}
\lim_{t\to\infty}t^{-1}\geo\from{(x,s)}\dir{S}{\tht}{\sig}(t)=(\tht,1).
\end{align}
\item{\rm(Point-to-point extremality)}\cite[Theorem 5.9(iv)]{Bus-Sep-Sor-24} For all $x$, $t>r\ge s$,  $\tht$ in $\R$, and all $\sigg\in\{-,+\}$,
\begin{align}\label{p2pleftmost}
\text{$\geo\from{(x,s)}\dir{L}{\tht}{\sig}([r,t])$ is the leftmost geodesic between $\geo\from{(x,s)}\dir{L}{\tht}{\sig}(r)$ and $\geo\from{(x,s)}\dir{L}{\tht}{\sig}(t)$.}
\end{align}
Similarly, 
\begin{align}\label{p2prightmost}
\text{$\geo\from{(x,s)}\dir{R}{\tht}{\sig}([r,t])$ is the rightmost geodesic between $\geo\from{(x,s)}\dir{R}{\tht}{\sig}(r)$ and $\geo\from{(x,s)}\dir{R}{\tht}{\sig}(t)$.}
\end{align}
\item{\rm(Point-to-line extremality)}\cite[Theorem 5.9(iii)]{Bus-Sep-Sor-24} For all $x$, $t>r\ge s$,  $\tht$ in $\R$, and all $\sigg\in\{-,+\}$,
\begin{align}\label{p2lleftmost}
\text{$y=\geo\from{(x,s)}\dir{L}{\tht}{\sig}(t)$ is the leftmost maximizer of $\mathcal L\bigl(\geo\from{(x,s)}\dir{L}{\tht}{\sig}(r);y,t\bigr)+W^{\tht\sig}(y,t;x_0,s_0)$ over $y\in\R$.}
\end{align}
Similarly, 
\begin{align}\label{p2lrightmost}
\text{$y=\geo\from{(x,s)}\dir{R}{\tht}{\sig}(t)$ is the rightmost maximizer of $\mathcal L\bigl(\geo\from{(x,s)}\dir{R}{\tht}{\sig}(r);y,t\bigr)+W^{\tht\sig}(y,t;x_0,s_0)$ over $y\in\R$.}
\end{align}
(Here, $(x_0,s_0)$ is an arbitrary point that can be changed to any other point in $\Z^2$ by the cocycle property \eqref{cocycle}.)
\item{\rm(Global extremality)}\cite[Theorem 6.5(i)]{Bus-Sep-Sor-24} For all $x,s,\tht\in\R$ and any semi-infinite geodesic $\gamma:[s,\infty]\to\R^2$ such that $\gamma(s)=(x,s)$ and $t^{-1}\gamma(t)\to(\tht,1)$ as $t\to\infty$,
\begin{align}\label{extreme}
\geo\from{(x,s)}\dir{L}{\tht}{-}\preceq\gamma\preceq\geo\from{(x,s)}\dir{R}{\tht}{+}.
\end{align}
\item{\rm(Ordering)}[Lemma \ref{lm:geo-ordering}] For all $s$, $x<y<z$, and $\tht$ in $\R$, 
and all $\sigg\in\{-,+\}$ and $S\in\{L,M,R\}$,
\begin{align}
\geo\from{(x,s)}\dir{R}{\tht}{\sig}\preceq\geo\from{(y,s)}\dir{L}{\tht}{\sig}
\preceq\geo\from{(y,s)}\dir{M}{\tht}{\sig}
\preceq\geo\from{(y,s)}\dir{R}{\tht}{\sig}
\preceq\geo\from{(z,s)}\dir{L}{\tht}{\sig}
\quad\text{and}\quad
\geo\from{(x,s)}\dir{S}{\tht}{-}\preceq\geo\from{(x,s)}\dir{S}{\tht}{+}.\label{geo:mono}
\end{align}
\item{\rm(Sign distinction)}[Remark \ref{rk:sign}] For all $x,s\in\R$ and $S\in\{L,M,R\}$,
\begin{align}\label{sign}
\begin{split}
&\tht\notin\baddir\Longrightarrow\geo\from{(x,s)}\dir{S}{\tht}{-}=\geo\from{(x,s)}\dir{S}{\tht}{+}\text{ (in which case we may drop the sign and write $\geo\from{(x,s)}\dir{S}{\tht}{}$)},\\
&\tht\in\baddir\Longrightarrow\geo\from{(x,s)}\dir{S}{\tht}{-}(t)<\geo\from{(x,s)}\dir{S}{\tht}{+}(t)\quad\text{for all large enough }t.
\end{split}
\end{align}
\item{\rm(Limits)}\cite[Theorem 6.3(v)]{Bus-Sep-Sor-24} For all $x$, $s$, and $\tht$ in $\R$, and for all $\sigg\in\{-,+\}$ and $S\in\{L,M,R\}$,
\begin{align}\label{geo:lim}
  \forall t\ge s,\ \lim_{z\nearrow x}\geo\from{(z,s)}\dir{S}{\tht}{\sig}(t)=\geo\from{(x,s)}\dir{L}{\tht}{\sig}(t)\quad\text{and}\quad
   \lim_{z\searrow x}\geo\from{(z,s)}\dir{S}{\tht}{\sig}(t)=\geo\from{(x,s)}\dir{R}{\tht}{\sig}.
\end{align}
\item{\rm(Jumps)}\cite[Theorem 7.9]{Bus-Sep-Sor-24} For all $\tht\in\baddir$ and $s$, and $x<y$ in $\R$,
\begin{align}\label{Wjump}
W^{\tht-}(x,s;y,s)>W^{\tht+}(x,s;y,s)\ \Longleftrightarrow\ 
\geo\from{(x,s)}\dir{R}{\tht}{-}\cap\geo\from{(y,s)}\dir{L}{\tht}{+}=\varnothing.
\end{align}
\item{\rm(Disjointness)}\cite[Theorem 2.10(ii)]{Bus-Sep-Sor-24} For any $(x,s)\in\Q^2$ and $\tht\in\R$, 
\begin{align}\label{Q notin IG}
\geo\from{(x,s)}\dir{L}{\tht}{-}\cap\geo\from{(x,s)}\dir{R}{\tht}{+}\ne\{(x,s)\}.
\end{align}
\item{\rm(Coalescence)}\cite[Theorem 7.1 and Remark 7.2]{Bus-Sep-Sor-24}  For all $x, s, y, t, \tht \in \R$,  $\sigg \in \{-,+\}$, and $S, S' \in \{L, M, R\}$, we have that
\begin{align}
\begin{split}
&\text{if }\geo\from{(x,s)}\dir{S}{\tht}{\sig}(r)
=
\geo\from{(y,t)}\dir{S'}{\tht}{\sig}(r)
\text{ for }r \ge s \vee t \text{ with }r> s\wedge t\text{, then }
\geo\from{(x,s)}\dir{S}{\tht}{\sig}\big|_{[r,\infty)}
=\geo\from{(y,t)}\dir{S'}{\tht}{\sig}\big|_{[r,\infty)}.
\end{split}\label{geo:coal1}\\[4pt]
&\text{Furthermore, there exists an $r$ as in \eqref{geo:coal1}.}
\label{geo:coal2}
\end{align}
When $r$ is the minimal time for which \eqref{geo:coal1} holds, we say the geodesics \emph{coalesce at time $r$}.\end{enumerate}

\begin{rmk}\label{rk:sign}
The first line in \eqref{sign} follows from the fact that $\tht\notin\baddir$ implies $W^{\tht-}=W^{\tht+}$, by Definition \ref{def:baddir}. When $S\in\{L,R\}$, the second line in \eqref{sign} follows from \cite[Theorem 7.3]{Bus-Sep-Sor-24}. 
This and the coalescence \eqref{geo:coal1}-\eqref{geo:coal2} prove the second implication in \eqref{sign} also for $S=M$. 
\end{rmk}

\begin{rmk}\label{rk:Mgeostable}
When $\tht \notin \baddir$, there is no sign distinction in the Busemann geodesics.  However, our convention from Remark \ref{rk:Mgeo} for defining $\geo\from{(x,s)}\dir{M}{\tht}{}$ becomes ambiguous when exactly two $W^{\tht}$-geodesics emanate from $(x,s)$. In this case, we adopt the convention $\geo\from{(x,s)}\dir{M}{\tht}{} = \geo\from{(x,s)}\dir{L}{\tht}{}$.
This choice is purely notational and has no effect on the arguments or results.
\end{rmk}

The coalescence property \eqref{geo:coal1} has the following consequence: for any $\w\in\Omega_7$, any $x$, $t>s$, and $\tht$ in $\R$, and any $\sigg\in\{-,+\}$ and $S\in\{L,M,R\}$,  
\begin{align}
    \geo\from{\geo\from{(x,s)}\dir{S}{\tht}{\sig}(t)}\dir{L}{\tht}{\sig}(r)=\geo\from{\geo\from{(x,s)}\dir{S}{\tht}{\sig}(t)}\dir{M}{\tht}{\sig}(r)=\geo\from{\geo\from{(x,s)}\dir{S}{\tht}{\sig}(t)}\dir{R}{\tht}{\sig}(r)=\geo\from{(x,s)}\dir{S}{\tht}{\sig}(r),\quad\text{for all }r\ge t.\label{geo:restart}
\end{align}

Theorem 2.5(i) in \cite{Bus-Sep-Sor-24} says that there exists an event $\Omega_8\subset\Omega_7$ with $\P(\Omega_8)=1$ and such that for all $\w\in\Omega_8$ and $x,s\in\R$, every semi-infinite geodesic $\gamma$ out of $(x,s)$ is directed: there exists a $\tht\in\R$ such that $t^{-1}\gamma(t)\to(\tht,1)$ as $t\to\infty$.
Next, Theorem 1.5 in \cite{Bus-25-} states that there exists an event $\Omega_9\subset\Omega_8$ with $\P(\Omega_9)=1$ and such that for all $\w\in\Omega_9$, all $\tht\in\R$, any $x_i,s_i\in\R$ and geodesics $\gamma_i:[s_i,\infty)\to\R^2$, $i\in\{1,2,3\}$, that are $\tht$-directed,
\begin{align}\label{no3geo}
\exists i\ne j\text{ in }\{1,2,3\},\ \exists r>\max\{s_1,s_2,s_3\}\,:\,\gamma_i(t)=\gamma_j(t)\ \ \forall t\ge r.
\end{align}
Together with \eqref{extreme}, this implies that any $\tht$-directed geodesic $\gamma$ emanating from $(x,s)$ must coalesce with either $\geo\from{(x,s)}\dir{L}{\tht}{-}$ or $\geo\from{(x,s)}\dir{R}{\tht}{+}$. Consequently, for some $\sigg\in\{-,+\}$, $\gamma$ is a $W^{\tht\sig}$-geodesic and therefore coincides with one of the three $W^{\tht\sig}$-geodesics.
In short, for every $\w \in \Omega_9$, 
\begin{align}\label{allgeo}
\text{the process in \eqref{geodesics} exhausts all semi-infinite geodesics.}
\end{align}

By Proposition 34 in \cite{Bha-24}, there exists an event $\Omega_{10} \subset \Omega_9$ with $\P(\Omega_{10})=1$ such that, for all $\w \in \Omega_{10}$,
\begin{align}\label{nobiinfinite}
\text{there exist no bi-infinite geodesics.}
\end{align}
Consequently, on $\Omega_{10}$, the process \eqref{geodesics} contains all infinite geodesics of the model.

\subsection{Competition interfaces and the shocks tree}\label{sec:cif+shock}
Finally, we turn to the set of shocks.

\begin{defn}\label{def:shock}
For $\tht\in\R$ and $\sigg\in\{-,+\}$, a point $(x,s)$ such that $\geo\from{(x,s)}\dir{L}{\tht}{\sig}$ and $\geo\from{(x,s)}\dir{R}{\tht}{\sig}$ immediately separate is called a $\tht\sigg$ \emph{shock}. Following (6.2) in \cite{Bus-Sep-Sor-24}, we denote the set of $\tht\sigg$ shocks by $\NU_1^{\tht\sig}$. When $\tht\notin\baddir$, we drop the sign distinction and write $\NU_1^\tht$.
\end{defn}

\begin{figure}[hpt]
    \includegraphics[width=3.5cm]{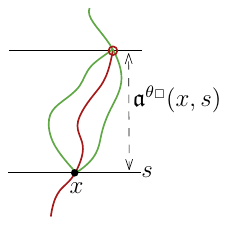}
    \caption{\small A shock point $(x,s)$.}
    \label{fig:shock}
\end{figure}

By \eqref{geo:coal2}, we have that for any $\w\in\Omega_{10}$, $x,s,\tht\in\R$, and any $\sigg\in\{-,+\}$, 
\begin{align}\label{age}
\begin{split}
   (x,s)\in\NU_1^{\tht\sig}\ \Longrightarrow\ 
       &\exists \age^{\tht\sig}(x,s)>0:\geo\from{(x,s)}\dir{L}{\tht}{\sig}(r)<\geo\from{(x,s)}\dir{R}{\tht}{\sig}(r)\ \forall r\in(s,s+\age^{\tht\sig}(x,s))\\
       &\qquad\qquad\qquad\qquad\text{ and }
    \geo\from{(x,s)}\dir{L}{\tht}{\sig}(r)=\geo\from{(x,s)}\dir{R}{\tht}{\sig}(r)\ \forall r\ge s+\age^{\tht\sig}(x,s).
\end{split}
\end{align}
Then we say that the shock at $(x,s)$ \emph{resolved} at time $s+\age^{\tht\sig}(x,s)$ and $\age^{\tht\sig}(x,s)$ is called the \emph{age} of the shock. See Figure \ref{fig:shock}. See also Remark \ref{rk:age} for a justification of the terminology. 

The coalescence \eqref{geo:coal1} implies that for any $\w\in\Omega_{10}$, $x,s,\tht\in\R$, and any $\sigg\in\{-,+\}$,
\begin{align}
    \geo\from{(x,s)}\dir{L}{\tht}{\sig}\ne\geo\from{(x,s)}\dir{R}{\tht}{\sig}\Longleftrightarrow (x,s)\in\NU_1^{\tht\sig}.\label{geo:no-lollipop}
\end{align}

By Theorem 6.1(ii) in \cite{Bus-Sep-Sor-24}, there exists an event $\Omega_{11}\subset\Omega_{10}$ with $\P(\Omega_{11})=1$ and such that for all $\w\in\Omega_{11}$, $s,\tht\in\R$, and $\sigg\in\{-,+\}$, 
\begin{align}\label{NUcountable}
  \text{$\NU_1^{\tht\sig}\cap\bigl(\R\times\{s\}\bigr)$ is countably infinite.}  
\end{align}
Our Lemma \ref{lem:shocksdense} below says that the above set is also dense in $\R\times\{s\}$. 

By Theorem 6.1(i) in \cite{Bus-Sep-Sor-24}, for any $(x,s)\in\R^2$, 
\[\P\bigl\{\exists\tht\in\R,\sigg\in\{-,+\}:(x,s)\in\NU_1^{\tht\sig}\bigr\}=0.\]
Therefore, there exists an event $\Omega_{12}\subset\Omega_{11}$ with $\P(\Omega_{12})=1$ such that for each $\w\in\Omega_{12}$, 
\begin{align}\label{ratnoshock}
\bigcup_{\substack{\tht\in\R\\ \sig\in\{-,+\}}}\NU_1^{\tht\sig}\subset\R^2\setminus\Q^2.
\end{align}

We need the following result, eliminating two specific configurations of semi-infinite geodesics. 

\begin{lem}\label{lm:Duncan}{\rm[\citenum{Dau-Ort-Vir-22}, Lemma 10.2(1-2) and \citenum{Dau-Pan-26-}, Lemma 8.1]}
There exists an event $\Omega_{13}\subset\Omega_{12}$ with $\P(\Omega_{13})=1$ and such that for any $\w\in\Omega_{13}$ and any $x,\tht\in\R$ and $s\in\Q$, we have the following.
\begin{enumerate} [label={\rm(\alph*)}, ref={\rm\alph*}] \itemsep=1pt 
\item\label{Duncan.a} If $\gamma^L$ and $\gamma^R$ are two geodesics out of $(x,s)$ that are $\tht$-directed {\rm(}i.e.\ satisfy \eqref{geo:directed}{\rm)} and the two geodesics separate immediately at $(x,s)$, then reunite at $(y,t)$, for the first time after time $s$, and then separate again at some later time $t'>t$, then there exists a $t''\in(t,t')$ such that $\gamma^L(r)=\gamma^R(r)$ for $r\in[t,t'']$. See Figure \ref{fig:tail}.
\item\label{Duncan.b} 
For any $\tht$-directed geodesic $\gamma$ out of $(x,s)$, there exists an $r>s$  such that either $\gamma|_{[s,r]}=\geo\from{(x,s)}\dir{L}{\tht}{-}\big|_{[s,r]}$ or $\gamma|_{[s,r]}=\geo\from{(x,s)}\dir{R}{\tht}{+}\big|_{[s,r]}$. In particular, 
there are no distinct middle $W^{\tht-}$ or $W^{\tht+}$ geodesics from $(x,s)$. 
In the notation of Remarks \ref{rk:Mgeo} and \ref{rk:Mgeostable}, if $\tht \notin \baddir$, then $\geo\from{(x,s)}\dir{M}{\tht}{} = \geo\from{(x,s)}\dir{L}{\tht}{}$, while if $\tht \in \baddir$, then $\geo\from{(x,s)}\dir{M}{\tht}{-} = \geo\from{(x,s)}\dir{L}{\tht}{-}$ and $\geo\from{(x,s)}\dir{M}{\tht}{+} = \geo\from{(x,s)}\dir{R}{\tht}{+}$.
\end{enumerate}
\end{lem}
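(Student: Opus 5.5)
The lemma is attributed to \cite[Lemma 10.2(1-2)]{Dau-Ort-Vir-22} and \cite[Lemma 8.1]{Dau-Pan-26-}, so the plan is to import both statements at a fixed deterministic time and then extend them to all rational times simultaneously.

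\emph{Step 1: time $0$.} First I would check that \cite[Lemma 8.1]{Dau-Pan-26-} gives, on a full-probability event, both conclusions for $s=0$ and all $x,\tht\in\R$ at once. That lemma classifies the $\tht$-directed geodesic configurations out of every point of the line $\R\times\{0\}$ into the seven configurations listed after Theorem \ref{main:geodesics}. In none of these seven do three geodesics separate immediately, and in none does a pair split, reunite, and re-split without sharing a nondegenerate segment after reuniting. Part \eqref{Duncan.a} is the statement that after a first reunion the two geodesics share a segment $[t,t'']$. Part \eqref{Duncan.b} is the statement that every $\tht$-directed geodesic initially agrees with $\geo\from{(x,0)}\dir{L}{\tht}{-}$ or with $\geo\from{(x,0)}\dir{R}{\tht}{+}$.

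The main care here is translating Dauvergne--Pan's notation into the Busemann geodesics of \eqref{geodesics}. For this I would use \eqref{allgeo}, which says every $\tht$-directed geodesic is one of the $\geo\from{(x,s)}\dir{S}{\tht}{\sig}$, and \eqref{extreme}, which says $\geo\from{(x,s)}\dir{L}{\tht}{-}$ and $\geo\from{(x,s)}\dir{R}{\tht}{+}$ are the extreme directed geodesics.

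\emph{Step 2: rational times.} Next I would apply the space-time shift invariance of the directed landscape \cite[Lemma 10.2]{Dau-Ort-Vir-22}: $\mathcal L(x,s+q;y,t+q)$ has the same law as $\mathcal L$ as a process. The event in Step 1 is a measurable function of $\mathcal L$. This is because geodesics, $\tht$-directedness, and the Busemann process are all determined by $\mathcal L$; for the Busemann process one uses \eqref{Wdef}, or the construction in \cite{Bus-Sep-Sor-24} as a measurable limit. So for each fixed $q$ the corresponding statement at time $q$ holds with probability one. Intersecting over the countably many $q\in\Q$, and with $\Omega_{12}$, gives $\Omega_{13}$.

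\emph{Step 3: the ``in particular'' clause of \eqref{Duncan.b}.} Suppose there were a middle $W^{\tht-}$ geodesic distinct from $\geo\from{(x,s)}\dir{L}{\tht}{-}$. By the splitting property from $\Omega_6$, it separates immediately from $\geo\from{(x,s)}\dir{L}{\tht}{-}$. By \eqref{geo:mono}, it lies weakly left of $\geo\from{(x,s)}\dir{R}{\tht}{-}\preceq\geo\from{(x,s)}\dir{R}{\tht}{+}$.

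I would show it cannot agree initially with $\geo\from{(x,s)}\dir{R}{\tht}{+}$. If it did, then $\geo\from{(x,s)}\dir{R}{\tht}{-}$, which lies between them, would agree with both on an initial segment. Then the middle geodesic and $\geo\from{(x,s)}\dir{R}{\tht}{-}$ would be two $W^{\tht-}$ geodesics sharing an initial segment. By the split-or-equal dichotomy of $\Omega_6$, they would be equal. So the middle geodesic is $\geo\from{(x,s)}\dir{R}{\tht}{-}$, which means there are only two distinct $W^{\tht-}$ geodesics, and by Remark \ref{rk:Mgeo} the convention then sets $\geo\from{(x,s)}\dir{M}{\tht}{-}=\geo\from{(x,s)}\dir{L}{\tht}{-}$.

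The $+$ case and the stable case ($\tht\notin\baddir$, using Remark \ref{rk:Mgeostable}) are symmetric.

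\emph{Main obstacle.} I expect the hardest step to be Step 1: verifying that the cited time-$0$ result genuinely covers \emph{all} $\tht$ and $x$ simultaneously, and matching its notion of geodesic networks to the Busemann-indexed family. Measurability in Step 2 is routine by comparison.
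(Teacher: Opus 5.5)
Your proposal follows the same route as the paper. The paper gives no separate proof: it attributes the lemma to \cite[Lemma 8.1]{Dau-Pan-26-} at time $0$ and, as in the paragraph after Theorem \ref{main:geodesics}, transfers it to all rational times at once via the stationarity in \cite[Lemma 10.2]{Dau-Ort-Vir-22} and a countable intersection of full-measure events. Your Step 3 derives the ``no distinct middle geodesic'' clause from the first sentence of part \eqref{Duncan.b}, the immediate-splitting property on $\Omega_6$, and the ordering \eqref{geo:mono}; it is a correct elaboration of a point the paper leaves implicit.
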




\begin{figure}[hpt]
    \includegraphics[width=2.5cm]{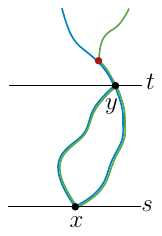}
    \qquad\qquad
    \includegraphics[width=2.5cm]{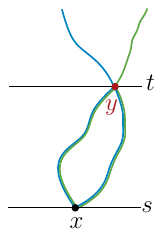}
    \caption{\small Illustration of Lemma \ref{lm:Duncan}\eqref{Duncan.a}: the configuration on the right never happens for geodesics directed in the same direction and starting at a rational (or deterministic) time.}
    \label{fig:tail}
\end{figure}

\begin{rmk}
An immediate consequence of our Theorem \ref{main:geodesics} is that part \eqref{Duncan.a} of Lemma \ref{lm:Duncan} holds almost surely for every time $s$. In contrast, the same theorem shows that, unlike the statement of Lemma \ref{lm:Duncan}\eqref{Duncan.b} for deterministic times, there almost surely exists a dense set of exceptional times at which distinct middle geodesics do occur.
\end{rmk}

Similarly to one-dimensional Burgers' equations, the shocks $\NU_1^{\tht\sig}$ form a tree. To see this, we first recall the notion of competition interfaces, first introduced in \cite{Rah-Vir-25} for the directed landscape. 

\begin{defn}\label{def:init}
A \emph{boundary condition} is an upper semi-continuous function $f:\R\to\R$ that is linearly bounded above:
\[\exists C:\forall x\in\R\ \ f(x)\le C(1+|x|).\]
\end{defn}

Given a boundary condition $f$, a reference space point $a$, and a time level $t$, define the \emph{competition function}
\begin{align}\label{ddef}
d_{(a,t)}(x,r)=\sup_{y\ge a}\{\mathcal L(x,r;y,t)+f(y)\}-\sup_{y\le a}\{\mathcal L(x,r;y,t)+f(y)\},\ r<t,\ x\in\R.
\end{align}
Corollary 10.7 in \cite{Dau-Ort-Vir-22} says that there exists an event $\Omega_{14}\subset\Omega_{13}$ with $\P(\Omega_{14})=1$ and such that for all $\w\in\Omega_{14}$ and all $\e>0$, there exist positive finite constants $C_\e(\w)$ and $C'_\e(\w)$ such that for all $r\le t-\e$ and $x,y$ in $\R$, 
\[\mathcal L(x,r;y,t)\le C_\e(\w)(|x|+|r|+|y|+|t|)-C'_\e(\w)(y-x)^2.\]
This and the continuity of $\mathcal L$ imply that $d_{(a,t)}$ is continuous on $\R\times(-\infty,t)$.
A now-standard paths-crossing argument shows that  $x\mapsto d_{(a,t)}(x,r)$ is nondecreasing; see  \cite[Proposition 4.1]{Rah-Vir-25}. 

Next, define the left and right \emph{competition interfaces}: for $r<t$,
\begin{align}\label{CIs}
I^-_{(a,t)}(r)=\inf\{x\in\R:d_{(a,t)}(x,r)\ge0\}\quad\text{and}\quad
I^+_{(a,t)}(r)=\sup\{x\in\R:d_{(a,t)}(x,r)\le0\}.
\end{align}
Set $I^-_{(a,t)}(t)=I^+_{(a,t)}(t)=a$.

By the continuity and monotonicity of the competition function, we get that 
\begin{align}\label{Iorder}
-\infty\le I^-_{(a,t)}(r)\le I^+_{(a,t)}(r)\le\infty\quad\forall r<t.
\end{align}
The same holds for $r=t$ by the definition of interfaces at time $t$.

There exists an event $\Omega_{15}\subset\Omega_{14}$\label{Om15} with $\P(\Omega_{15})=1$ and such that for any $\w\in\Omega_{15}$, any boundary condition $f$ (as in Definition \ref{def:init}), and any reference point $a$ and time level $t$, the interfaces $I^{-}_{(a,t)}$ and $I^{+}_{(a,t)}$ have the following properties. 

\begin{enumerate} [label={\rm(\alph*)}, ref={\rm\alph*}] \itemsep=1pt 
\item{\rm(Continuity)}\cite[Propositions 4.4, 4.5, and 4.6]{Rah-Vir-25} For each $\sigg\in\{-,+\}$, 
\begin{align}\label{I-cont}
\text{the interface $I^{\sig}_{(a,t)}$ is continuous on $(-\infty,t]$.} 
\end{align}
In particular, $I^{\sig}_{(a,t)}(r)\to a$ as $r\nearrow t$.
\item{\rm(Limits)}\cite[Lemma 5.2]{Rah-Vir-25} There exists a sequence $a_n$ that strictly increases to $a$ and another sequence $a'_n$ that strictly decreases to $a$ such that
\begin{align}\label{Icv}
\forall r\le t\,:\,I^-_{(a,t)}(r)=\lim_{n\to\infty}I^+_{(a_n,t)}(r) \quad\text{and}\quad I^+_{(a,t)}(r)=\lim_{n\to\infty}I^-_{(a'_n,t)}(r).
\end{align}
\item{\rm(Directedness)}\cite[Corollary 4.14]{Rah-Vir-25} If for some $\alpha\in\R$, $x^{-1} f(x)\to\alpha$ as $|x|\to\infty$, then for both $\sigg\in\{-,+\}$,
\begin{align}\label{Idir}
    \lim_{t\to-\infty} t^{-1} I^\sig_{(a,t)}=\tfrac{\alpha}2.
\end{align}
\end{enumerate}

\begin{rmk}
In \cite{Rah-Vir-25}, competition interfaces were constructed in the forward (positive) time direction, that is, their interfaces $I^{\pm}_{(a,t)}(r)$ were defined for $r>t$. The interfaces considered here are obtained by reflecting both space and time. This transformation is justified by the invariance in distribution of the directed landscape under such reflections; see \cite[Lemma 10.2(3)]{Dau-Ort-Vir-22}. More importantly, the construction in \cite{Rah-Vir-25} was done for fixed given boundary conditions $f$, reference points $a$, and time level $t=0$. By the temporal shift-invariance of the directed landscape \cite[Lemma 10.2(1-2)]{Dau-Ort-Vir-22}, the same holds for any fixed time level $t$. What is more is that the construction and results in \cite{Rah-Vir-25} are in fact deterministic, once one restricts to a full measure event for which certain almost sure properties of the directed landscape are satisfied. Thus, the results hold in fact simultaneously for all boundary conditions $f$, reference points $a$, and time levels $t$, on a single full probability event.
\end{rmk}

We now describe the construction of the shock interfaces and the associated tree structure developed in \cite{Bha-24}. 

\begin{rmk}
Below we invoke results from \cite{Bha-24} that were originally stated under the assumption $\tht\notin\baddir$. However, their proofs extend verbatim to the case $\tht\in\baddir$, once $W^\tht$ and its geodesics are replaced by $W^{\tht\sig}$ and its geodesics. The arguments in \cite{Bha-24} rely only on the  properties of the Busemann process, its semi-infinite geodesics, and the competition interfaces from \cite{Rah-Vir-25}, all of which hold for $\w\in\Omega_{15}$ as described above. Accordingly, throughout what follows we apply the results of \cite{Bha-24} for all $\tht\in\R$, provided $\w\in\Omega_{15}$.
\end{rmk}

\begin{defn}\label{def:shockint}
A $\tht\sigg$ \emph{shock interface} is a continuous semi-infinite space-time path $\tau:(-\infty,t]\to\R^2$ such that $\tau(s)\in\NU_1^{\tht\sig}$ for all $s<t$.
\end{defn}

From the coalescence \eqref{geo:coal1}-\eqref{geo:coal2}), we see that, for $\w\in\Omega_{15}$, $\tht\in\R$ and $\sigg\in\{-,+\}$, the collection 
\begin{equation}\label{Ttree}
    \mathcal T^{\tht\sig}=\bigcup_{\substack{x,s\in\R\\ S\in\{L,M,R\}}}\geo\from{(x,s)}\dir{S}{\tht}{\sig}((s,\infty))
\end{equation}
of $W^{\tht\sig}$-geodesics, with their initial points removed, forms a tree. The next lemma says that the collection of $\tht\sigg$ shock interfaces, with their initial points removed, is exactly the dual of $\mathcal T^{\tht\sig}$. 

\begin{lem}\label{lm:shockint}
For any $\w\in\Omega_{15}$, $t,\tht\in\R$, $\sigg\in\{-,+\}$, 
$\tau:(-\infty,t]\to\R^2$ is a $\tht\sigg$ shock interface if, and only if, it is a continuous semi-infinite space-time path such that $\tau(s)\notin\mathcal T^{\tht\sig}$ for all $s<t$.
\end{lem}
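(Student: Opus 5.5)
The plan is to prove the two implications separately. The forward direction is pointwise: shocks never lie in $\mathcal T^{\tht\sig}$. The converse uses the continuity of $\tau$ together with a squeezing argument.

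\emph{Shock interfaces avoid the tree.} It suffices to show $\NU_1^{\tht\sig}\cap\mathcal T^{\tht\sig}=\varnothing$. Suppose $(x,s)\in\NU_1^{\tht\sig}$ and also $(x,s)=\geo\from{(y,r)}\dir{S}{\tht}{\sig}(s)$ for some $(y,r)$ with $r<s$ and some $S$. Apply \eqref{geo:restart} with $t=s$. It gives $\geo\from{(x,s)}\dir{L}{\tht}{\sig}=\geo\from{(x,s)}\dir{R}{\tht}{\sig}=\geo\from{(y,r)}\dir{S}{\tht}{\sig}|_{[s,\infty)}$, which contradicts \eqref{geo:no-lollipop}. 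Hence if $\tau$ is a $\tht\sigg$ shock interface, then $\tau(s)\notin\mathcal T^{\tht\sig}$ for every $s<t$.

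\emph{Paths avoiding the tree consist of shocks.} Let $\tau:(-\infty,t]\to\R^2$ be continuous with $\tau(u)\notin\mathcal T^{\tht\sig}$ for all $u<t$. Suppose, for contradiction, that some $s<t$ has $\tau(s)\notin\NU_1^{\tht\sig}$. By \eqref{geo:no-lollipop}, $\gamma:=\geo\from{\tau(s)}\dir{L}{\tht}{\sig}=\geo\from{\tau(s)}\dir{R}{\tht}{\sig}$ is then the unique $W^{\tht\sig}$-geodesic out of $\tau(s)$.

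\emph{Geodesics cannot cross $\tau$.} Fix $r<s$ and a point $(z,r)$ with $z<\tau(r)$. For $u\in(r,t)$, the point $\geo\from{(z,r)}\dir{S}{\tht}{\sig}(u)$ lies in $\mathcal T^{\tht\sig}$, so it cannot equal $\tau(u)$. The geodesic starts strictly left of $\tau$, and both paths are continuous, so by the intermediate value theorem it stays strictly left of $\tau$ on $[r,t)$. Symmetrically, geodesics from points $(z',r)$ with $z'>\tau(r)$ stay strictly right of $\tau$ on $[r,t)$.

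\emph{The squeeze.} Take $r_n\nearrow s$ and $z_n<\tau(r_n)<z'_n$ with $z_n,z'_n\to\tau(s)$; such points exist by continuity of $\tau$. Let $\gamma_n=\geo\from{(z_n,r_n)}\dir{L}{\tht}{\sig}$ and $\gamma'_n=\geo\from{(z'_n,r_n)}\dir{R}{\tht}{\sig}$. I would then show that along subsequences $\gamma_n$ and $\gamma'_n$ converge locally uniformly on $[s,\infty)$ to $W^{\tht\sig}$-geodesics from $\tau(s)$. This uses the standard precompactness of geodesics with converging endpoints (via the quadratic bound on $\mathcal L$ recalled before \eqref{Iorder}, together with ordering \eqref{geo:mono} sandwiching them between fixed Busemann geodesics). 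Passing \eqref{rec} to the limit, using continuity \eqref{L:cont} and \eqref{W:cont}, shows the limits are $W^{\tht\sig}$-geodesics. By uniqueness, both limits equal $\gamma$. Since $\gamma_n(u)<\tau(u)<\gamma'_n(u)$ for $u\in(s,t)$, letting $n\to\infty$ gives $\gamma(u)=\tau(u)$. Hence $\tau(u)\in\mathcal T^{\tht\sig}$ for $u\in(s,t)$, a nonempty interval, which is a contradiction.

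I expect the main obstacle to be this limiting step: justifying the precompactness of the $\gamma_n,\gamma'_n$ and the identification of their limits as $W^{\tht\sig}$-geodesics out of $\tau(s)$. One could instead try to handle it using the limit statements \eqref{geo:lim} together with monotonicity in the starting time, but those are stated only along a fixed time level.
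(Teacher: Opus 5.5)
Your forward direction is the paper's argument: \eqref{geo:restart} with \eqref{geo:no-lollipop} gives $\NU_1^{\tht\sig}\cap\mathcal T^{\tht\sig}=\varnothing$. For the converse, the paper simply cites \cite[Lemma 33]{Bha-24}. Your squeeze has the right shape, but its key step is not established, for the reason you suspected.

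To invoke uniqueness, you need the limit of $\gamma_n=\geo\from{(z_n,r_n)}\dir{L}{\tht}{\sig}$ to start at $\tau(s)$. That is, you need $\gamma_n(s)\to\tau(s)$, even though $\gamma_n$ starts at time $r_n<s$. This is a statement about geodesic displacement over vanishing time spans, and the tools you cite do not control it:
\begin{itemize}
\item the quadratic bound recalled before \eqref{Iorder} holds only for time gaps at least $\varepsilon$;
\item \eqref{geo:mono} compares geodesics from a common time level.
\end{itemize}
Without this, the sandwich only gives $\pi(s)\le\tau(s)\le\pi'(s)$ for the limits, and uniqueness cannot be applied. Closing the gap this way would need an external precompactness or modulus-of-continuity result for directed-landscape geodesics, which is not among the inputs listed on $\Omega_{15}$. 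Note also that Theorem \ref{th:convanydir} and Corollary \ref{cor:geo:lim} would be circular here, since their proofs use \eqref{no-intersection}, which is a consequence of this very lemma.

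The obstacle is self-inflicted. Put the approximating points on the level $s$ itself, which is exactly the setting of \eqref{geo:lim}. This input holds on $\Omega_7\supset\Omega_{15}$ and is independent of the lemma.
\begin{itemize}
\item Write $\tau(s)=(x,s)$ and take $z<x$. The geodesic $\geo\from{(z,s)}\dir{S}{\tht}{\sig}$ starts strictly left of $\tau(s)$, and its points at times $u\in(s,t)$ lie in $\mathcal T^{\tht\sig}$. So your intermediate-value argument keeps it strictly left of $\tau$ on $[s,t)$.
\item Let $z\nearrow x$. The pointwise limits \eqref{geo:lim} give $\geo\from{(x,s)}\dir{L}{\tht}{\sig}\preceq\tau$ on $[s,t)$. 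Symmetrically, $\tau\preceq\geo\from{(x,s)}\dir{R}{\tht}{\sig}$.
\item Points of these two geodesics at times in $(s,t)$ belong to $\mathcal T^{\tht\sig}$, while $\tau$ avoids it there. Hence both inequalities are strict on $(s,t)$, and $\tau(s)\in\NU_1^{\tht\sig}$ by Definition \ref{def:shock}.
\end{itemize}
This even yields the splitting property \eqref{shocksplitsgeo}, which is essentially the content of the cited Lemma 33. It needs neither the uniqueness of the geodesic out of a non-shock nor any compactness of geodesics.
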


\begin{proof}
   \eqref{geo:restart} implies that the relative interior of any $W^{\tht\sig}$-geodesic has no $\tht\sigg$ shock points:
\begin{align}\label{noshcoksongeo}
\NU_1^{\tht\sig}\subset\R^2\setminus\mathcal T^{\tht\sig}.
\end{align}
This gives one direction.  The other direction is in  \cite[Lemma 33]{Bha-24}.
\end{proof}

A consequence of this lemma is that given any $W^{\tht\sig}$-geodesic $\gamma:[s,\infty)\to\R^2$ and any $\tht\sigg$ shock interface $\tau:(-\infty,t]\to\R^2$, 
\begin{align}\label{no-intersection}
\gamma((s,\infty))\cap\tau((-\infty,t))=\varnothing.
\end{align}

By the continuity \eqref{W:cont} and the growth rate \eqref{Wgrowth} of $W^{\tht\sig}$, $f(y)=W^{\tht\sig}(y,s;x,s)$ satisfies Definition \ref{def:init}.
Lemma 31 in \cite{Bha-24} shows that for each point $(x,s)\in\R^2$, taking $f$ as a boundary condition at time $s$ and defining $I^-_{(x,s)}$ and $I^+_{(x,s)}$ as in \eqref{CIs} makes the space-time paths
\begin{align}\label{Upsilondef}
\Upsilon_{(x,s)}\dir{L}{\tht}{\sig}(r)=\bigl(I^-_{(x,s)}(r),r\bigr)\quad\text{and}\quad
\Upsilon_{(x,s)}\dir{R}{\tht}{\sig}(r)=\bigl(I^+_{(x,s)}(r),r\bigr),\quad r\le s,
\end{align}
avoid $\mathcal T^{\tht\sig}$. 
By \eqref{I-cont}, they are also continuous. Thus,
by Lemma \ref{lm:shockint}, these are $\tht\sigg$ shock interfaces.
This shows that $\tht\sigg$ shock interfaces do exist from every point in $\R^2$. Furthermore, by \eqref{Wgrowth} and \eqref{Idir}, we have for each $S\in\{L,R\}$,
\begin{align}\label{Upsilondir}
\lim_{r\to-\infty} r^{-1}\Upsilon_{(x,s)}\dir{S}{\tht}{\sig}(r)=(\tht,1). 
\end{align}

The next lemma says that every $\tht\sigg$ shock is in the relative interior of some $\tht\sigg$ shock interface. The proof is left to Appendix \ref{sec:auxlems}. 

\begin{lem}\label{lm:shockintallshocks}
For all $\w\in\Omega_{15}$, $\tht\in\R$, and $\sigg\in\{-,+\}$,
\[\mathcal I^{\tht\sig}=\bigcup\bigl\{\tau((-\infty,s)):\tau:(-\infty,s]\to\R^2\text{ is a $\tht\sigg$ shock interface}\bigr\}=\NU_1^{\tht\sig}.\]
\end{lem}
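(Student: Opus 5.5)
The statement has two inclusions. The inclusion $\mathcal I^{\tht\sig}\subset\NU_1^{\tht\sig}$ is immediate from Definition \ref{def:shockint}: if $\tau:(-\infty,s]\to\R^2$ is a $\tht\sigg$ shock interface, then $\tau(r)\in\NU_1^{\tht\sig}$ for every $r<s$, so $\tau((-\infty,s))\subset\NU_1^{\tht\sig}$. The work is in the reverse inclusion. Given $(x,s)\in\NU_1^{\tht\sig}$, we must produce a shock interface $\tau$ defined on $(-\infty,t]$ for some $t>s$ that passes through $(x,s)$ at time $s$.

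The plan is to go forward in time between the two geodesics out of $(x,s)$, then come back down using the interfaces $\Upsilon$ of \eqref{Upsilondef}.

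First, by \eqref{age}, the geodesics $\geo\from{(x,s)}\dir{L}{\tht}{\sig}$ and $\geo\from{(x,s)}\dir{R}{\tht}{\sig}$ are strictly separated on $(s,s+\age)$. Pick $t\in(s,s+\age)$ and a space point $a$ strictly between $\geo\from{(x,s)}\dir{L}{\tht}{\sig}(t)$ and $\geo\from{(x,s)}\dir{R}{\tht}{\sig}(t)$. Then $(a,t)$ lies between the two geodesics. Consider the shock interface $\tau=\Upsilon_{(a,t)}\dir{L}{\tht}{\sig}$, defined on $(-\infty,t]$ with $\tau(t)=(a,t)$. It is continuous, it avoids $\mathcal T^{\tht\sig}$ on $(-\infty,t)$, and by \eqref{no-intersection} it cannot touch the relative interiors of the two geodesics.

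Next, a continuity and intermediate-value argument shows that on $(s,t]$ the interface stays strictly between the two geodesics: it starts between them at time $t$, and it can never equal either of them at a time in $(s,t)$. Letting $r\searrow s$, both geodesics converge to $x$, so $\tau(s)=(x,s)$. Since $t>s$, the point $(x,s)=\tau(s)$ lies in $\tau((-\infty,t))$, which gives $(x,s)\in\mathcal I^{\tht\sig}$.

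The main obstacle is a subtlety in the trapping step. The interface is a priori only known to avoid $\mathcal T^{\tht\sig}$, and the geodesics' initial point $(x,s)$ is excluded from $\mathcal T^{\tht\sig}$. So the trapping is legitimate on $(s,t)$, and continuity of $\tau$ at time $s$ then pins $\tau(s)=x$. I expect this to go through cleanly once the squeeze between the two continuous geodesics is justified using \eqref{geo:cont} and \eqref{I-cont}.
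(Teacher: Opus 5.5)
Your proposal is correct and takes the same route as the paper's proof. Both choose $t\in(s,s+\age^{\tht\sig}(x,s))$ and a point strictly between $\geo\from{(x,s)}\dir{L}{\tht}{\sig}(t)$ and $\geo\from{(x,s)}\dir{R}{\tht}{\sig}(t)$, take a $\tht\sigg$ shock interface from that point, and use \eqref{no-intersection} together with continuity to force it through $(x,s)$. Your explicit choice of $\Upsilon_{(a,t)}\dir{L}{\tht}{\sig}$ just makes concrete the existence of such an interface, which the paper takes for granted.
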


Since $\mathcal T^{\tht\sig}$ is a tree, it contains no cycles; consequently, $\mathcal I^{\tht\sig}$ also contains no cycles and therefore forms a forest (that is, a disjoint union of trees).
 For the details, see Lemma 30 in \cite{Bha-24}. Since the existence of two trees in this forest would imply the existence of a bi-infinite geodesic, \eqref{nobiinfinite} implies that 
\begin{align}\label{Itree}
\mathcal I^{\tht\sig}\text{ is a tree. Hence, any two $\tht\sigg$ shock interfaces must coalesce.}
\end{align}
By coalescence we mean that if two interfaces start from distinct points, they eventually meet and merge at their first intersection, after which they coincide. This includes the case where one interface starts at a relative interior point of the other, in which case that point is the first intersection and hence the coalescence point, so the former simply follows the latter from that time onward. If the interfaces start from the same point, then either they coincide identically, or they initially separate and later reintersect, merging from that point onward.

We now state a number of new preliminary results about shock interfaces, the proofs of which are deferred to  Appendix \ref{sec:auxlems}.

Lemma 33 in \cite{Bha-24} states that the left and right $W^{\tht\sig}$-geodesics emanating from any point in the relative interior of a $\tht\sigg$ shock interface $\tau:(-\infty,t]\to\R^2$ are split by $\tau$: 
\begin{align}\label{shocksplitsgeo}
\forall r<s<t,\ \geo\from{\tau(r)}\dir{L}{\tht}{\sig}(s)<\tau(r)<\geo\from{\tau(r)}\dir{R}{\tht}{\sig}(s).
\end{align}

Conversely, Lemma 32 in \cite{Bha-24} says that if $(x,s)\in\mathcal T^{\tht\sig}$, then the interfaces $\Upsilon_{(x,s)}\dir{L}{\tht}{\sig}$ and $\Upsilon_{(x,s)}\dir{R}{\tht}{\sig}$ are distinct. The following strengthens this result.

\begin{lem}\label{lm:geosplitsshocks}
  For each $\w\in\Omega_{15}$, $x,s,\tht\in\R$, and $\sigg\in\{-,+\}$, for any $W^{\tht\sig}$-geodesic $\gamma$ out of $(x,s)$, for all $t>r>s$,
  $\Upsilon_{\gamma(t)}\dir{L}{\tht}{\sig}(r)<\gamma(r)<\Upsilon_{\gamma(t)}\dir{R}{\tht}{\sig}(r)$.
\end{lem}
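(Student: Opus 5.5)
We argue by contradiction, treating the right interface; the left one is symmetric (interchange left/right and use $I^-$ instead of $I^+$). Fix $t>r>s$ and write $(y,t)=\gamma(t)$ and $(z,r)=\gamma(r)$. Recall that $\Upsilon_{(y,t)}\dir{R}{\tht}{\sig}(r)=(I^+_{(y,t)}(r),r)$, where the competition function $d_{(y,t)}$ is built from the boundary condition $f(u)=W^{\tht\sig}(u,t;y,t)$ at time $t$ and reference point $y$. Suppose $z\ge I^+_{(y,t)}(r)$.

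First we record what $\gamma$ gives. By \eqref{geomaximizes} and \eqref{rec}, $y$ maximizes $u\mapsto \mathcal L(z,r;u,t)+f(u)$. Hence both suprema in \eqref{ddef} at $(z,r)$ equal this maximum, since $y$ lies in both ranges $u\ge y$ and $u\le y$, and so $d_{(y,t)}(z,r)=0$. By the definition \eqref{CIs}, this already gives $z\le I^+_{(y,t)}(r)$, so the only case to exclude is $z=I^+_{(y,t)}(r)$. Moreover $\gamma(r)$ lies in the relative interior of $\gamma$ because $r>s$, so $\gamma(r)\in\mathcal T^{\tht\sig}$. On the other hand, $\Upsilon_{(y,t)}\dir{R}{\tht}{\sig}$ is a $\tht\sigg$ shock interface, and by Lemma \ref{lm:shockint} it avoids $\mathcal T^{\tht\sig}$ at all times strictly before $t$. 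Since $r<t$, the point $\Upsilon_{(y,t)}\dir{R}{\tht}{\sig}(r)$ cannot equal $\gamma(r)$. This is the same statement as \eqref{no-intersection}, applied to $\gamma$ and this interface. Hence $z\ne I^+_{(y,t)}(r)$, and combining with the previous inequality gives $z<I^+_{(y,t)}(r)$. The same argument with $I^-$ gives $I^-_{(y,t)}(r)<z$.

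The key input is that $d_{(y,t)}(\gamma(r))=0$ forces $\gamma(r)$ to lie weakly between the two interfaces. Strictness then comes entirely from the fact that the interfaces avoid the relative interior of the geodesic tree, which is Lemma \ref{lm:shockint}, resting on \cite[Lemma 31]{Bha-24}.

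The one point needing care is that $r$ must be strictly less than $t$, so that Lemma \ref{lm:shockint} applies to the interface at time $r$, and strictly greater than $s$, so that $\gamma(r)\in\mathcal T^{\tht\sig}$. Both conditions are guaranteed by $t>r>s$. I expect the main, though mild, obstacle to be checking that $\Upsilon_{\gamma(t)}\dir{R}{\tht}{\sig}$ is defined with the base point $(x_0,s_0)=\gamma(t)$. This is harmless, because changing the base point adds a constant to $f$ by the cocycle property \eqref{cocycle}, and a constant shift leaves $d_{(y,t)}$ unchanged.
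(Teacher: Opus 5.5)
Your proposal is correct and follows essentially the same route as the paper's proof: you use \eqref{geomaximizes} to get $d_{\gamma(t)}(\gamma(r))=0$, which by \eqref{CIs} and \eqref{Upsilondef} places $\gamma(r)$ weakly between the two interfaces. You then upgrade to strict inequalities via the duality \eqref{no-intersection} (Lemma \ref{lm:shockint}). The contradiction framing and the base-point remark are harmless additions.
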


Our next theorem uses \eqref{shocksplitsgeo} to 
strengthen the convergences in \eqref{geo:lim}. 

\begin{thm}\label{th:convanydir}
   There exists an event $\Omega_0\subset\Omega_{15}$ such that $\P(\Omega_0)=1$ and for all $\w\in\Omega_0$, $x,s,\tht\in\R$, and $\sigg\in\{+,-\}$, the following holds. For any sequences $(x_n,s_n)\to(x,s)$ and $S_n\in\{L,M,R\}$, there exists a subsequence $n_k$ and an $S\in\{L,M,R\}$ such that for any $t>s$, there exists a $k_0$ such that 
   \[\geo\from{(x_{n_k},s_{n_k})}\dir{S_{n_k}}{\tht}{\sig}(r) = \geo\from{(x,s)}\dir{S}{\tht}{\sig}(r),\quad\text{for all }k\ge k_0\text{ and all }r\ge t.
   \]
\end{thm}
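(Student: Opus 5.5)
Fix $\w\in\Omega_{15}$ (the event $\Omega_0$ is simply taken equal to it, possibly intersected with the events used in the cited inputs), and fix $x,s,\tht,\sigg$ and sequences $(x_n,s_n)\to(x,s)$, $S_n$. Write $\gamma_n=\geo\from{(x_n,s_n)}\dir{S_n}{\tht}{\sig}$. The plan is to trap every $\gamma_n$, for large $n$, between two $\tht\sigg$ shock interfaces through suitable points just above time $s$. These interfaces are then separated by at most the three Busemann geodesics out of $(x,s)$, and coalescence forces $\gamma_n$ to merge with one of them.

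First, for a given $t>s$, pick $t'\in(s,t)$. Consider the at most three geodesics $\geo\from{(x,s)}\dir{S}{\tht}{\sig}$, $S\in\{L,M,R\}$. By \eqref{geo:no-lollipop} and \eqref{age}, any two distinct ones are disjoint on $(s,s+\age)$ and coincide afterwards, so we may take $t'$ smaller than all relevant ages. At time $t'$ choose points $u_0<\geo\from{(x,s)}\dir{L}{\tht}{\sig}(t')$ and $u_3>\geo\from{(x,s)}\dir{R}{\tht}{\sig}(t')$, plus points strictly between consecutive distinct geodesics. All of them should lie on $\tht\sigg$ shock interfaces through time $t'$; for instance, use the interfaces $\Upsilon\dir{L}{\tht}{\sig}_{\gamma(t)}$ from Lemma \ref{lm:geosplitsshocks}, which strictly separate the geodesic points at time $t'$. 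The interfaces $\tau$ chosen this way are continuous on $(-\infty,t']$, and at time $s$ they satisfy strict inequalities $\tau_0(s)<x<\tau_3(s)$. Here \eqref{shocksplitsgeo} and Lemma \ref{lm:geosplitsshocks} give strictness, since the geodesics out of $(x,s)$ pass strictly between them for $r\in(s,t')$. At time $s$ itself the strictness needs care, because two interfaces might touch $(x,s)$; I handle this by using interfaces emanating from points on the geodesics $\geo\from{(x,s)}\dir{L}{\tht}{\sig}$ and $\geo\from{(x,s)}\dir{R}{\tht}{\sig}$ themselves, shifted left or right.

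Second, by continuity of the interfaces and $(x_n,s_n)\to(x,s)$, for large $n$ the point $(x_n,s_n)$ lies strictly between $\tau_0$ and $\tau_3$, after slightly adjusting $t'$ so that $s_n<t'$. By \eqref{no-intersection}, $\gamma_n$ cannot cross any $\tht\sigg$ shock interface on $(s_n,t')$. Hence at time $t'$ it lies in one of the finitely many open gaps determined by the interfaces. Each gap contains exactly one of the geodesics $\geo\from{(x,s)}\dir{S}{\tht}{\sig}$. Passing to a subsequence, all $\gamma_{n_k}$ lie in the same gap, the one containing $\geo\from{(x,s)}\dir{S}{\tht}{\sig}$. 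A diagonal argument over $t\downarrow s$ makes the choice of $S$ independent of $t$; consistency holds because gaps at smaller $t'$ refine those at larger $t'$.

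Third, inside a gap, $\gamma_{n_k}$ and $\geo\from{(x,s)}\dir{S}{\tht}{\sig}$ are both $W^{\tht\sig}$-geodesics confined between two shock interfaces that coalesce (\eqref{Itree}) at some time. The interfaces start at time $t'$ and go backward, but the region between them at times $r\le t'$ is a bounded strip. Two $W^{\tht\sig}$-geodesics from points of such a region must meet by time $t'$: otherwise a shock interface would separate them there, and by the tree property \eqref{Itree} that interface would have to cross one of the bounding interfaces or terminate inside the region, contradicting the choice of the gap. Once they meet, \eqref{geo:coal1} gives agreement for all $r\ge t'$, and in particular for $r\ge t$. The main obstacle is this last step. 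It requires carefully ruling out a third shock interface inside the gap that separates $\gamma_{n_k}(t')$ from $\geo\from{(x,s)}\dir{S}{\tht}{\sig}(t')$. I would attack it with the duality of Lemma \ref{lm:shockint}: any separating point is a non-$\mathcal T^{\tht\sig}$ point whose interface, traced back to time $s$, must land in the closed gap at $(x,s)$. That makes $(x,s)$ split into more geodesics than counted, contradicting the at-most-three bound.
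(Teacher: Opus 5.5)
\textbf{Where the proof fails: your third step.} Your third step carries the whole proof, and it does not hold. The claim that two $W^{\tht\sig}$-geodesics started in the same gap must meet by time $t'$ is false. The justification misreads \eqref{Itree}: shock interfaces never cross, but they do coalesce. Take an interface out of a point strictly between $\gamma_n(t')$ and $\geo\from{(x,s)}\dir{S}{\tht}{\sig}(t')$. It is pinned between the two geodesics only down to time $s\vee s_n$. Below that it may simply merge into $\tau_i$ or $\tau_{i+1}$, which is exactly what the tree structure permits.

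Here is an explicit failure. Take a $\tht\sigg$ shock $(w,t')$ inside a gap (Lemma \ref{lem:shocksdense}). Take points of the gap region slightly below time $t'$, one just left and one just right of an interface out of $(w,t')$. Their geodesics are separated by that interface, and they cannot pass through $(w,t')\notin\mathcal T^{\tht\sig}$. So they do not meet by time $t'$.

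Your proposed patch does not repair this. At time $s\vee s_n$ the separating interface only has to lie weakly between $\gamma_n$ and $\geo\from{(x,s)}\dir{S}{\tht}{\sig}$; nothing forces it through $(x,s)$. Even an interface passing through $(x,s)$ only makes $(x,s)$ a shock. An extra geodesic would require two interfaces coalescing there (Lemma \ref{lm:intcoal}). In short, you use $n\to\infty$ only to put $(x_n,s_n)$ into a fixed neighborhood of $(x,s)$, and that cannot suffice.

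\textbf{What is missing, and how the paper supplies it.} Two ingredients are missing: (i) actual convergence of $\gamma_{n_k}$ to a Busemann geodesic out of $(x,s)$, not just membership in its gap; and (ii) a mechanism that turns closeness into intersection.

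For (i), the paper proceeds in three moves.
\begin{enumerate}
\item It fixes $S_n$ along a subsequence and passes to a monotone subsequence; same-sign Busemann geodesics are either ordered or coalesce.
\item It traps the pointwise limit between $\geo\from{(x,s)}\dir{L}{\tht}{\sig}$ and $\geo\from{(x,s)}\dir{R}{\tht}{\sig}$, using shock interfaces and \eqref{geo:lim}.
\item It identifies the limit as a $W^{\tht\sig}$-geodesic out of $(x,s)$.
\end{enumerate}
For (ii), it upgrades to uniform convergence on $[r,t]$ via Dini's theorem. It then invokes Theorem 1.18 of \cite{Bat-Gan-Ham-22}: geodesics with endpoints in a compact set that stay $\e$-close must intersect. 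This is why $\Omega_0$ is $\Omega_{15}$ intersected with that event. Coalescence \eqref{geo:coal1} then finishes.

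Alternatively, once (i) is in hand, your wedge idea does work if both bounding interfaces emanate from the point $p=\geo\from{(x,s)}\dir{S}{\tht}{\sig}(t)$ itself. By Lemma \ref{lm:geosplitsshocks}, for any $r\in(s,t)$,
\[
\Upsilon\from{p}\dir{L}{\tht}{\sig}(r)<\geo\from{(x,s)}\dir{S}{\tht}{\sig}(r)<\Upsilon\from{p}\dir{R}{\tht}{\sig}(r).
\]
So for large $k$, $\gamma_{n_k}(r)$ lies strictly between these two interfaces. Then \eqref{no-intersection} and continuity squeeze $\gamma_{n_k}(t)=p$, and \eqref{geo:coal1} gives agreement on $[t,\infty)$.
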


As an immediate corollary, we get that the convergences in \eqref{geo:lim} hold in the overlap topology.

\begin{cor}\label{cor:geo:lim}
Let $\w\in\Omega_0$, $x,s,\tht\in\R$, $\sigg\in\{-,+\}$, and $S\in\{L,M,R\}$. Then for each $r>s$ there exists $\e>0$ such that 
\begin{align}\label{geo:lim2}
  x-\e<z<x\Rightarrow\geo\from{(z,s)}\dir{S}{\tht}{\sig}\big|_{[r,\infty)}=\geo\from{(x,s)}\dir{L}{\tht}{\sig}\big|_{[r,\infty)}\quad\text{and}\quad
  x<y<x+\e\Rightarrow\geo\from{(y,s)}\dir{S}{\tht}{\sig}\big|_{[r,\infty)}=\geo\from{(x,s)}\dir{R}{\tht}{\sig}\big|_{[r,\infty)}.
\end{align}
\end{cor}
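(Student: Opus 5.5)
The corollary is Theorem \ref{th:convanydir} applied to monotone one-sided sequences, combined with the ordering \eqref{geo:mono} and the limits \eqref{geo:lim}. I treat the left-hand statement in \eqref{geo:lim2}; the right-hand one is symmetric. Fix $r>s$ and argue by contradiction. Suppose no $\e$ works. Then there are $z_n\nearrow x$ with $z_n<x$ and labels $S_n$ (we may take $S_n=S$) such that
\[
\geo\from{(z_n,s)}\dir{S}{\tht}{\sig}\big|_{[r,\infty)}\neq\geo\from{(x,s)}\dir{L}{\tht}{\sig}\big|_{[r,\infty)}
\quad\text{for all } n .
\]

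Apply Theorem \ref{th:convanydir} to the sequence $(z_n,s)\to(x,s)$ with labels $S_n=S$. It yields a subsequence $n_k$ and a label $S'\in\{L,M,R\}$ such that, for the time $t=r>s$, there is $k_0$ with
\[
\geo\from{(z_{n_k},s)}\dir{S}{\tht}{\sig}(u)=\geo\from{(x,s)}\dir{S'}{\tht}{\sig}(u)\quad\text{for all } k\ge k_0,\ u\ge r .
\]
We must show that this tail equals the tail of $\geo\from{(x,s)}\dir{L}{\tht}{\sig}$ on $[r,\infty)$.

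To do so, use the pointwise limit \eqref{geo:lim}: for every $u\ge s$,
\[
\geo\from{(z_{n_k},s)}\dir{S}{\tht}{\sig}(u)\to\geo\from{(x,s)}\dir{L}{\tht}{\sig}(u).
\]
For $u\ge r$ and $k\ge k_0$, the left-hand side is constant in $k$ and equals $\geo\from{(x,s)}\dir{S'}{\tht}{\sig}(u)$. Hence
\[
\geo\from{(x,s)}\dir{S'}{\tht}{\sig}(u)=\geo\from{(x,s)}\dir{L}{\tht}{\sig}(u)\quad\text{for all } u\ge r .
\]
Therefore, for all $k\ge k_0$, the tail of $\geo\from{(z_{n_k},s)}\dir{S}{\tht}{\sig}$ on $[r,\infty)$ equals that of $\geo\from{(x,s)}\dir{L}{\tht}{\sig}$. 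This contradicts the choice of $z_n$.

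If one prefers a direct argument to one by contradiction, monotonicity gives it. By \eqref{geo:mono}, for $z<z'<x$,
\[
\geo\from{(z,s)}\dir{S}{\tht}{\sig}\preceq\geo\from{(z',s)}\dir{S}{\tht}{\sig}\preceq\geo\from{(x,s)}\dir{L}{\tht}{\sig} .
\]
So once some $z'$ has its $[r,\infty)$ tail equal to that of $\geo\from{(x,s)}\dir{L}{\tht}{\sig}$, every $z\in(z',x)$ is sandwiched between two paths that agree on $[r,\infty)$, and hence agrees with them there. This gives the interval $(x-\e,x)$ with $\e=x-z'$.

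The only point needing care is that Theorem \ref{th:convanydir} identifies the limit only as some $S'$ geodesic. The pointwise limit \eqref{geo:lim} is what pins down $S'$ as $L$ (or $R$ for approach from the right) on the relevant tail. Everything else is routine.
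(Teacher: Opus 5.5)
Your proof is correct and follows the paper's route. The paper states the corollary as immediate from Theorem \ref{th:convanydir}: that theorem gives a subsequence converging in the overlap topology to some $\geo\from{(x,s)}\dir{S'}{\tht}{\sig}$, and the pointwise limits \eqref{geo:lim} pin down the tail on $[r,\infty)$ as that of $\geo\from{(x,s)}\dir{L}{\tht}{\sig}$ (respectively $\geo\from{(x,s)}\dir{R}{\tht}{\sig}$), which is exactly what you do. Your optional sandwich argument via \eqref{geo:mono} is also valid.
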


Returning to shock interfaces, we prove the following ordering and directedness result.

\begin{lem}\label{lm:shocksandwich}
  For all $\w\in\Omega_0$, $x,s,\tht\in\R$, $\sigg\in\{-,+\}$, and any $\tht\sigg$ shock interface $\tau$ from $(x,s)$, 
  \begin{align}\label{shocksmono}
  \Upsilon_{(x,s)}\dir{L}{\tht}{\sig}\preceq\tau\preceq\Upsilon_{(x,s)}\dir{R}{\tht}{\sig}.
  \end{align}
  Consequently, 
  \begin{align}\label{shocksdir}
  \lim_{r\to-\infty}r^{-1}\tau(r)=(\tht,1).
  \end{align}
\end{lem}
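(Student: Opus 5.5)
We prove the left inequality $\Upsilon_{(x,s)}\dir{L}{\tht}{\sig}\preceq\tau$ in \eqref{shocksmono}; the right inequality is symmetric. The directedness \eqref{shocksdir} then follows by sandwiching $\tau$ between the two paths in \eqref{shocksmono} and applying \eqref{Upsilondir}.

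Suppose, for contradiction, that $\tau(r_0)<\Upsilon_{(x,s)}\dir{L}{\tht}{\sig}(r_0)$ for some $r_0<s$. Write $I^-=I^-_{(x,s)}$ for the left competition interface with boundary condition $f(y)=W^{\tht\sig}(y,s;x,s)$. Let $u$ be the first coordinate of $\tau(r_0)$, so that $u<I^-(r_0)$. The definition \eqref{CIs} then gives $d_{(x,s)}(u,r_0)<0$. This means
\[
\sup_{y\le x}\{\mathcal L(u,r_0;y,s)+f(y)\}>\sup_{y\ge x}\{\mathcal L(u,r_0;y,s)+f(y)\}.
\]
By \eqref{geomaximizes} and the update rule \eqref{eternal}, the points $\geo\from{(u,r_0)}\dir{S}{\tht}{\sig}(s)$ maximize $\mathcal L(u,r_0;\,\abullet\,,s)+f$. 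The strict inequality therefore forces every $W^{\tht\sig}$-geodesic from $(u,r_0)$, and in particular $\geo\from{(u,r_0)}\dir{R}{\tht}{\sig}$, to satisfy $\geo\from{(u,r_0)}\dir{R}{\tht}{\sig}(s)<x$.

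Now use that $\tau(r_0)$ lies in the relative interior of $\tau$. By \eqref{shocksplitsgeo}, $\geo\from{\tau(r_0)}\dir{R}{\tht}{\sig}$ lies strictly to the right of $\tau$ on $(r_0,s)$. By \eqref{no-intersection}, this geodesic never meets $\tau((-\infty,s))$. The two paths are continuous, so $\geo\from{\tau(r_0)}\dir{R}{\tht}{\sig}(r)>\tau(r)$ for all $r\in(r_0,s)$. Letting $r\nearrow s$ gives $\geo\from{\tau(r_0)}\dir{R}{\tht}{\sig}(s)\ge x$, which contradicts the previous paragraph.

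The main obstacle is the boundary case $r_0$ where strictness is lost, together with making sure the definitions of $I^-$ via $\inf$ and $\ge0$ line up correctly. It is possible that $u<I^-(r_0)$ while $d(u,r_0)=0$ for $u$ close to $I^-(r_0)$. This cannot happen, however, because $I^-(r_0)=\inf\{d\ge 0\}$ and $u$ lies strictly below this infimum, so $d(u,r_0)<0$ does hold. One should also confirm that at time $s$ itself, $\Upsilon$ and $\tau$ both equal $x$, so no issue arises there. If a subtlety remains, for instance that the geodesic could touch $x$ at time $s$ from the left and equal exactly $x$, I would instead compare at an intermediate time. In that approach I would apply the restarted interface $\Upsilon_{\tau(r)}$ and use that $\tau$ and $\Upsilon_{(x,s)}\dir{L}{\tht}{\sig}$ are both shock interfaces, together with the tree property \eqref{Itree}: once the two paths cross they must coalesce, and shock points must avoid the geodesic $\geo\from{(u,r_0)}\dir{R}{\tht}{\sig}$, which gives the same contradiction.
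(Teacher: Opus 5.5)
Your argument is correct and is essentially the paper's proof: the paper notes that $\geo\from{\tau(r)}\dir{L}{\tht}{\sig}(s)\le(x,s)\le\geo\from{\tau(r)}\dir{R}{\tht}{\sig}(s)$, by \eqref{shocksplitsgeo} and continuity, and that both endpoints are maximizers. From this it concludes $d_{(x,s)}(\tau(r))=0$ directly, which is your contradiction step run forwards, and the sandwich then gives \eqref{shocksdir} via \eqref{Upsilondir}. The fallback in your final paragraph is unnecessary, because a geodesic ending exactly at $x$ already contradicts the strict inequality $<x$ that you derived from $d_{(x,s)}(u,r_0)<0$.
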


By the directedness property \eqref{shocksdir}, if $\tht<\tht'$, then for any $\sigg,\sigg'\in\{-,+\}$, and any $\tht\sigg$ shock interface $\tau$ and $\tht'\sigg'$ shock interface $\tau'$, we have
$\tau(r)>\tau'(r)$ for all sufficiently large (negative) $r$.
The next lemma gives a similar asymptotic ordering for $\tht-$ and $\tht+$ shock interfaces.

\begin{lem}\label{lm:shockasymorder}
  For all $\w\in\Omega_0$, $s$, $\tht$, and $x<y$ in $\R$, and any $\tht+$ shock interface $\tau^+$ from $(x,s)$ and $\tht-$ shock interface $\tau^-$ from $(y,s)$, 
  \begin{align}\label{+<-}
  \tau^+(r)\le\tau^-(r)\quad\forall r\le s.
  \end{align}
  If $x=y$, then there exists an $r_0\le s$ such that
  \begin{align}\label{+<=-}
  \tau^+(r)\le\tau^-(r)\quad\forall r\le r_0.
  \end{align}
\end{lem}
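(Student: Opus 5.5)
The plan is to use the duality between shock interfaces and geodesics, via \eqref{no-intersection}, \eqref{shocksplitsgeo}, and the geodesic ordering \eqref{geo:mono}. For \eqref{+<-}, take $x<y$ and suppose, to the contrary, that $\tau^+(r_1)>\tau^-(r_1)$ for some $r_1<s$. Since $\tau^+(s)=x<y=\tau^-(s)$, continuity of the interfaces gives a last crossing time $r_2\in(r_1,s)$ with $\tau^+(r_2)=\tau^-(r_2)=:(z,r_2)$, and $\tau^+>\tau^-$ just below $r_2$. I would first reduce to the case where the two interfaces are not equal on an interval around $r_2$. In any case, pick a point $q=(u,r_3)$ with $r_3<r_2$ slightly, $\tau^-(r_3)<u<\tau^+(r_3)$, chosen in $\mathcal T^{\tht+}\cap\mathcal T^{\tht-}$; this is possible because points of $\Q^2$ are not shocks by \eqref{ratnoshock}, and a point that is not a shock lies in the tree of its own geodesic continuations.

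The key step is to follow the geodesics from $q$. Consider $\gamma^-=\geo\from{q}\dir{R}{\tht}{-}$ and $\gamma^+=\geo\from{q}\dir{L}{\tht}{+}$. By \eqref{no-intersection}, $\gamma^-$ cannot cross $\tau^-$ in its relative interior, and $\gamma^+$ cannot cross $\tau^+$. The region between $\tau^-$ and $\tau^+$ on $(r_1,r_2)$ is pinched at $(z,r_2)$, so $\gamma^-$ stays to the right of $\tau^-$ up to time $r_2$ and $\gamma^+$ stays to the left of $\tau^+$. For $r\in(r_2,s)$, $\tau^+(r)<\tau^-(r)$, so $\gamma^+$ is confined to the left of $\tau^+$ and $\gamma^-$ to the right of $\tau^-$. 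This forces $\gamma^+(r)<\gamma^-(r)$ after $r_2$, unless one of them passes through the point $(z,r_2)$. Passing through the interior of an interface is excluded, so it remains to use the endpoint issue: $\tau^\pm$ are interfaces on $(-\infty,s]$ and $(z,r_2)$ is an interior point, hence excluded as well. Then \eqref{geo:mono} gives $\gamma^-\preceq\gamma^+$, because $\geo\dir{R}{\tht}{-}\preceq\geo\dir{R}{\tht}{+}$ and $\geo\from{q}\dir{L}{\tht}{+}=\geo\from{q}\dir{R}{\tht}{+}$ for $q$ not a $\tht+$ shock. So the two geodesics must meet exactly at $(z,r_2)$, which is a contradiction. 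I expect the main obstacle to be exactly this topological pinching argument: one must make sure that a geodesic continuing past time $r_2$ inside the wedge must touch the meeting point, using continuity and \eqref{no-intersection} carefully, including the degenerate case where the interfaces touch repeatedly.

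For \eqref{+<=-} when $x=y$, I would shift to nearby starting points. Choose $r'<s$ and note that the interfaces coalesce with interfaces from other points by \eqref{Itree}. Concretely, pick $x'<x<y'$ with $\tht+$ shocks at $(x',s')$ and $\tht-$ shocks at $(y',s')$ for some slightly earlier time $s'$ with $\tau^\pm(s')$ between them; this uses density of shocks (Lemma \ref{lem:shocksdense}) or simply $\tau^+(s')$ and $\tau^-(s')$ themselves. If $\tau^+(s')<\tau^-(s')$ at some $s'<s$, apply the first part from time $s'$. Otherwise $\tau^+\ge\tau^-$ on all of $(-\infty,s)$. In that case, use directedness \eqref{shocksdir} together with \eqref{sign}: take a point $q$ strictly between them at a very negative time. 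Its $\tht-$ and $\tht+$ geodesics eventually satisfy $\geo\dir{R}{\tht}{-}<\geo\dir{L}{\tht}{+}$, yet both are trapped between the interfaces in the wedge closing at $(x,s)$, so they meet there. This contradicts the eventual strict separation in \eqref{sign} when $\tht\in\baddir$. When $\tht\notin\baddir$, the interfaces are interfaces of the same tree, and \eqref{+<=-} is trivial from coalescence \eqref{Itree} after they merge.
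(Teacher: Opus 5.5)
Your argument for \eqref{+<-} is sound and is essentially the paper's. You take a point between the interfaces at a time where $\tau^+>\tau^-$, confine each geodesic from it by the interface of its own sign via \eqref{no-intersection}, and get a clash with \eqref{geo:mono} at a time $r_2\in(r_1,s)$ where $\tau^+(r_2)=\tau^-(r_2)$. The ``pinching'' you worry about is not an issue. Any such $r_2$ works, so you need neither a last crossing time nor $\tau^+>\tau^-$ just below it (which a last crossing time need not give anyway). The reason is that the difference between a geodesic and an interface is continuous, nonzero at the starting time, and never zero on the relative interiors. If you use $\geo\from{q}\dir{L}{\tht}{-}$ and $\geo\from{q}\dir{R}{\tht}{+}$, which \eqref{geo:mono} orders directly, you also no longer need $q\in\Q^2$ or membership in $\mathcal T^{\tht\pm}$. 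Your justification of that membership is unsupported in any case.

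The case $x=y$ has a genuine gap. In your residual case ($\tau^+\ge\tau^-$ on $(-\infty,s)$), the geodesics from $q$ are not trapped in the wedge. \eqref{no-intersection} keeps $\geo\from{q}\dir{R}{\tht}{-}$ to the right of $\tau^-$ and $\geo\from{q}\dir{L}{\tht}{+}$ to the left of $\tau^+$, and nothing more. With $q\in\Q^2$ you can still conclude that both pass through $(x,s)$, but that contradicts nothing. \eqref{sign} only asserts \emph{eventual} separation, and geodesics through $(x,s)$ may separate immediately after it; this happens at every point of $\IG\tht$ by Lemma \ref{def:geoinstabpt}. So this case remains unproved.

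The fix is the idea you state and then abandon. Let $\sigma^+$ be any $\tht+$ shock interface from $(x',s)$ with $x'<x$, e.g.\ $\Upsilon_{(x',s)}\dir{L}{\tht}{+}$. The first part gives $\sigma^+(r)\le\tau^-(r)$ for all $r\le s$. By \eqref{Itree}, $\sigma^+$ coalesces with $\tau^+$ at some time $r_0$, hence $\tau^+(r)=\sigma^+(r)\le\tau^-(r)$ for all $r\le r_0$. This is the paper's argument; it needs no case split and no use of \eqref{sign} or \eqref{shocksdir}.
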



Next, we use \eqref{Icv} to deduce the following convergence result.

\begin{lem}\label{lm:shocklims}
  For all $\w\in\Omega_0$, $x,s,\tht\in\R$, $S\in\{L,R\}$, and $\sigg\in\{-,+\}$, 
  \begin{align}\label{shocklims}
  \forall r\le t\,:\,\Upsilon_{(x,s)}\dir{L}{\tht}{\sig}(r)=\lim_{z\nearrow x}\Upsilon_{(z,s)}\dir{S}{\tht}{\sig}(r)
  \quad\text{and}\quad
  \Upsilon_{(x,s)}\dir{R}{\tht}{\sig}(r)=\lim_{z\searrow x}\Upsilon_{(z,s)}\dir{S}{\tht}{\sig}(r).
  \end{align}
\end{lem}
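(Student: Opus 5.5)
The plan is to deduce Lemma \ref{lm:shocklims} from the interface limits \eqref{Icv}, the monotonicity of $a\mapsto I^\pm_{(a,s)}$, and the fact that interfaces are nested once the boundary condition is normalized. Fix $\tht$, $\sigg$ and $s$, and let $f(y)=W^{\tht\sig}(y,s;0,s)$ be the boundary condition at time $s$. By the cocycle property \eqref{cocycle}, changing the base point from $(z,s)$ to $(0,s)$ shifts $f$ by a constant. A constant shift leaves the competition function \eqref{ddef} unchanged, so $\Upsilon_{(z,s)}\dir{L}{\tht}{\sig}(r)=(I^-_{(z,s)}(r),r)$ and $\Upsilon_{(z,s)}\dir{R}{\tht}{\sig}(r)=(I^+_{(z,s)}(r),r)$ for every $z$, all computed with the single boundary condition $f$. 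This reduces the lemma to a statement about the family $a\mapsto I^\pm_{(a,s)}$ built from one $f$. (I read the ``$r\le t$'' in the statement as $r\le s$.)

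The first step is monotonicity in the reference point. If $a<b$, then for each $(x,r)$ we have $d_{(a,s)}(x,r)\ge d_{(b,s)}(x,r)$, because the first supremum in \eqref{ddef} can only shrink and the second can only grow as $a$ increases to $b$. The definitions \eqref{CIs} then give $I^+_{(a,s)}(r)\le I^-_{(b,s)}(r)$; if this fails, a point strictly between the two values yields $d_{(a,s)}\le 0$ and $d_{(b,s)}<0$, which I will check contradicts the definitions. Together with \eqref{Iorder}, this shows that all four maps $z\mapsto I^\pm_{(z,s)}(r)$ are nondecreasing. Moreover, for $a<b$ we get $I^-_{(a,s)}\le I^+_{(a,s)}\le I^-_{(b,s)}\le I^+_{(b,s)}$.

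The second step identifies the one-sided limits. Since the maps are monotone, the limits $\lim_{z\nearrow x}I^{\pm}_{(z,s)}(r)$ exist and are the same for both signs, because the values interlace. They are bounded above by $I^-_{(x,s)}(r)$. By \eqref{Icv}, there is a sequence $a_n\nearrow x$ with $I^+_{(a_n,s)}(r)\to I^-_{(x,s)}(r)$. So the common left limit equals $I^-_{(x,s)}(r)$ for both $S\in\{L,R\}$. The right limit is handled symmetrically using $a'_n\searrow x$. This proves \eqref{shocklims}.

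The main obstacle is the strict/nonstrict bookkeeping in the monotonicity step, since $d$ may vanish on intervals. I would handle it by taking $x_0$ strictly between the claimed values and using the continuity of $d$ together with the inf/sup definitions in \eqref{CIs}. The possibility $I=\pm\infty$ is harmless here, because every interface is finite by \eqref{I-cont}.
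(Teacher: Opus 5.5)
Your reduction to a single boundary condition via \eqref{cocycle} is correct, and so is $d_{(a,s)}\ge d_{(b,s)}$ for $a<b$. That inequality gives $\{d_{(b,s)}(\cdot,r)\ge0\}\subset\{d_{(a,s)}(\cdot,r)\ge0\}$ and $\{d_{(a,s)}(\cdot,r)\le0\}\subset\{d_{(b,s)}(\cdot,r)\le0\}$. Hence $z\mapsto I^-_{(z,s)}(r)$ and $z\mapsto I^+_{(z,s)}(r)$ are each nondecreasing. With \eqref{Icv} this already gives $\lim_{z\nearrow x}I^+_{(z,s)}(r)=I^-_{(x,s)}(r)$ and $\lim_{z\searrow x}I^-_{(z,s)}(r)=I^+_{(x,s)}(r)$.

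The gap is in the other two cases ($S=L$ on the left, $S=R$ on the right). They need the interlacing $I^+_{(a,s)}(r)\le I^-_{(b,s)}(r)$, and your argument for it fails. Take $x_0$ strictly between $I^-_{(b,s)}(r)$ and $I^+_{(a,s)}(r)$. Monotonicity of $d$ in $x$ gives $d_{(b,s)}(x_0,r)\ge0\ge d_{(a,s)}(x_0,r)$, not $d_{(b,s)}<0$. So both vanish, which is fully consistent with $d_{(a,s)}\ge d_{(b,s)}$. What you must exclude is that $d_{(a,s)}(\cdot,r)$ and $d_{(b,s)}(\cdot,r)$ vanish together on an interval, i.e.\ that every point of an interval at time $r$ has point-to-line maximizers both $\le a$ and $\ge b$. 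Continuity of $d$ and \eqref{CIs} cannot rule this out. For instance, replace $\mathcal L(x,r;y,s)$ by $\phi(x)y$ with $\phi$ nondecreasing and vanishing on an interval, and take $f$ with equal maxima at a point $<a$ and a point $>b$. The interfaces are finite, yet the interlacing fails. So this is a property of the landscape that has to be imported.

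The simplest repair is to use \eqref{Icv} once more, at $b$. Pick $b_n\nearrow b$ with $I^+_{(b_n,s)}(r)\to I^-_{(b,s)}(r)$. For large $n$ we have $b_n>a$, so monotonicity of $I^+$ gives $I^-_{(b,s)}(r)=\lim_n I^+_{(b_n,s)}(r)\ge I^+_{(a,s)}(r)$. With this inserted, your proof is complete.

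The paper gets its ordering differently: $\Upsilon^{R}_{(z_{n-1},s)}\preceq\Upsilon^{S}_{(z,s)}\preceq\Upsilon^{R}_{(z_{n+1},s)}$ follows from continuity and the coalescence of shock interfaces \eqref{Itree}. It then squeezes with \eqref{Icv} exactly as you do. Your repaired route instead gets the ordering from the competition function and \eqref{Icv} alone. It therefore does not need the tree structure, nor the absence of bi-infinite geodesics behind \eqref{Itree}.
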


The next two lemmas assert that coalescence points of geodesics, respectively of interfaces, are precisely the points from which three distinct interfaces, respectively geodesics, emanate.

\begin{lem}\label{lm:intcoal}
   Let $\w\in\Omega_0$, $\tht\in\R$, and $\sigg\in\{-,+\}$. Let $t,t'\in\R$ and suppose $\tau$ and $\tau'$ are two $\tht\sigg$ shock interfaces that start at levels $t$ and $t'$, respectively, and coalesce at $(x,s)$ with $s<t\wedge t'$. Then $\geo\from{(x,s)}\dir{M}{\tht}{\sig}$ is distinct from both $\geo\from{(x,s)}\dir{S}{\tht}{\sig}$, $S\in\{L,R\}$, and is the unique $W^{\tht\sig}$-geodesic  out of $(x,s)$ that remains strictly between $\tau$ and $\tau'$ on the time interval $(s,t\wedge t')$. Consequently, there does not exist a point $(x,s)$ at which three $\tht\sigg$ shock interfaces coalesce. 
\end{lem}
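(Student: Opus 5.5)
We first use the duality between the tree $\mathcal T^{\tht\sig}$ and the interface tree $\mathcal I^{\tht\sig}$. Let $t_0=t\wedge t'$. Since $\tau$ and $\tau'$ coalesce at $(x,s)$, they are disjoint on $(s,t_0)$; after relabeling, $\tau(r)<\tau'(r)$ for $r\in(s,t_0)$. By \eqref{shocksplitsgeo}, applied to $\tau$ at the interior point $(x,s)$, we have $\geo\from{(x,s)}\dir{L}{\tht}{\sig}(r)<\tau(r)$ for $r$ slightly above $s$. Applying the same property to $\tau'$ gives $\geo\from{(x,s)}\dir{R}{\tht}{\sig}(r)>\tau'(r)$. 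Note that $(x,s)$ is a shock, because it lies in the relative interior of $\tau$.

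Next we produce a geodesic between the two interfaces. Take a point $(y,r)$ with $r\in(s,t_0)$ and $\tau(r)<y<\tau'(r)$. Follow a $W^{\tht\sig}$-geodesic from $(y,r)$ backward? That is not possible, since geodesics only run forward.

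Instead, we use the region $U=\{(z,q):s<q<t_0,\ \tau(q)<z<\tau'(q)\}$. This region is open and is bounded on its sides by the two interfaces. Its closure meets the level $s$ only at $(x,s)$. Choose points $(z_n,q_n)\in U$ with $q_n\searrow s$; then $(z_n,q_n)\to(x,s)$. By Theorem \ref{th:convanydir}, along a subsequence the geodesics $\geo\from{(z_n,q_n)}\dir{L}{\tht}{\sig}$ eventually agree on $[q,\infty)$ with some $\geo\from{(x,s)}\dir{S}{\tht}{\sig}$, for every $q>s$.

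By \eqref{no-intersection}, a geodesic started in $U$ cannot cross $\tau$ or $\tau'$ before time $t_0$. Each such geodesic therefore stays in $U$ until time $t_0$, so the limit geodesic $\gamma=\geo\from{(x,s)}\dir{S}{\tht}{\sig}$ satisfies $\tau\preceq\gamma\preceq\tau'$ on $(s,t_0)$. The limit path itself is also a geodesic, so by \eqref{no-intersection} it cannot touch the interfaces at interior times. Hence $\tau(r)<\gamma(r)<\tau'(r)$ for $r\in(s,t_0)$.

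Combining this with the first step, $\gamma$ differs from both $\geo\from{(x,s)}\dir{L}{\tht}{\sig}$ and $\geo\from{(x,s)}\dir{R}{\tht}{\sig}$ near $s$. It follows that there are three distinct $W^{\tht\sig}$-geodesics from $(x,s)$. By the ordering convention and the bound of at most three geodesics, this third one is $\geo\from{(x,s)}\dir{M}{\tht}{\sig}$.

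Uniqueness holds because at most three geodesics exist: the $L$ and $R$ geodesics lie outside the strip near $s$, while the $M$ geodesic lies inside it.

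For the consequence, suppose three interfaces coalesce at $(x,s)$. Ordering them as $\tau_1<\tau_2<\tau_3$ on a short interval above $s$, the pairs $(\tau_1,\tau_2)$ and $(\tau_2,\tau_3)$ each trap a distinct geodesic strictly between them. Together with the $L$ and $R$ geodesics, this gives four distinct $W^{\tht\sig}$-geodesics, which contradicts the bound of three.

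The main obstacle is the limiting step. We must check that every geodesic started in $U$ really stays in $U$, given that the interfaces' endpoints sit at different levels. We also need a precise "trapping" argument to show the limit has not collapsed onto $L$ or $R$. I would handle both by working only on $[q,t_0-\e]$ and applying \eqref{no-intersection} together with continuity.
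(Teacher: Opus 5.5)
Your proposal is correct and follows essentially the same route as the paper. You approximate $(x,s)$ by points strictly between $\tau$ and $\tau'$ and take an overlap limit via Theorem~\ref{th:convanydir}. Then \eqref{no-intersection} gives a $W^{\tht\sig}$-geodesic strictly between the interfaces on $(s,t\wedge t')$, and \eqref{shocksplitsgeo} together with the at-most-three bound identifies it as the middle geodesic, yielding both uniqueness and the no-triple-coalescence consequence. The two ``obstacles'' you flag at the end are already handled by \eqref{no-intersection} on $(s,t\wedge t')$, exactly as you indicate.
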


\begin{lem}\label{lm:geocoal}
   Let $\w\in\Omega_0$, $\tht\in\R$, and $\sigg\in\{-,+\}$. Let $r,r'\in\R$ and suppose $\gamma$ and $\gamma'$ are two $W^{\tht\sig}$-geodesics that start at levels $r$ and $r'$, respectively, and coalesce at $(x,s)$ with $s>r\vee r'$. Then there exists a $\tht\sigg$ shock interface $\tau$ out of $(x,s)$ that remains strictly between $\gamma$ and $\gamma'$ on the time interval $(r\vee r',s)$. 
\end{lem}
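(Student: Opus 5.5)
We prove Lemma~\ref{lm:geocoal} by building the interface from the duality between geodesics and shock interfaces. Write $\gamma\prec\gamma'$ on $(r\vee r',s)$ without loss of generality. Two distinct $W^{\tht\sig}$-geodesics cannot cross and later reunite without coalescing, by \eqref{no bubble} and \eqref{geo:coal1}, and $(x,s)$ is their first meeting point. So the region strictly between them on $(r\vee r',s)$ is a nonempty open strip that closes up at $(x,s)$. The natural candidates for $\tau$ are the extremal interfaces $\Upsilon_{(x,s)}\dir{L}{\tht}{\sig}$ and $\Upsilon_{(x,s)}\dir{R}{\tht}{\sig}$. We will show that one of them stays in this strip.

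Step 1. Consider points $(z,s)$ slightly to the right of $x$. We will use $\tau=\Upsilon_{(x,s)}\dir{R}{\tht}{\sig}$, or a limit of interfaces from nearby points. By Lemma~\ref{lm:geosplitsshocks} applied to the geodesic $\gamma$ and the point $\gamma(t)$ with $t>s$, which lies on the common tail, we get $\gamma(q)<\Upsilon_{\gamma(t)}\dir{R}{\tht}{\sig}(q)$ for $q<t$. The same lemma applied to $\gamma'$ gives $\Upsilon_{\gamma(t)}\dir{L}{\tht}{\sig}(q)<\gamma'(q)$. These bounds alone do not put a single interface between $\gamma$ and $\gamma'$.

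Step 2, the key step. Choose $\tau$ as follows. By \eqref{no-intersection}, any $\tht\sigg$ interface from $(x,s)$ avoids the relative interiors of $\gamma$ and $\gamma'$, so on $(r\vee r',s)$ it lies entirely in one of three regions: left of $\gamma$, between $\gamma$ and $\gamma'$, or right of $\gamma'$. Continuity rules out switching regions. The left interface $\Upsilon_{(x,s)}\dir{L}{\tht}{\sig}$ cannot be left of $\gamma$. To see this, approximate $(x,s)$ by points $\gamma'(q_n)$ with $q_n\nearrow s$, or by points just below $(x,s)$ inside the strip. Using Lemma~\ref{lm:geosplitsshocks} at points of $\gamma'$ near $(x,s)$, combined with the limits in Lemma~\ref{lm:shocklims} and the monotonicity \eqref{shocksmono}, every interface from $(x,s)$ lies weakly right of $\gamma$ near time $s$. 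Symmetrically, $\Upsilon_{(x,s)}\dir{R}{\tht}{\sig}$ lies weakly left of $\gamma'$ near time $s$. It follows that $\Upsilon\dir{L}{}{}$ is not left of $\gamma$ and $\Upsilon\dir{R}{}{}$ is not right of $\gamma'$.

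Step 3. If $\Upsilon\dir{L}{}{}$ is between the geodesics, we are done. Otherwise it is right of $\gamma'$, which forces $\Upsilon\dir{R}{}{}\succeq\Upsilon\dir{L}{}{}$ to be right of $\gamma'$ as well, contradicting Step 2.

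The main obstacle is the near-$s$ comparison in Step 2. It requires showing that the competition function $d_{(x,s)}$, built from the boundary condition $W^{\tht\sig}(\cdot,s;x,s)$, has the correct sign along $\gamma$ and along $\gamma'$. Along $\gamma$, the $W^{\tht\sig}$-geodesic arrives at $(x,s)$ from the left, so its point-to-line maximizers at level $s$ can be taken to equal $x$. Expressed through $I^\pm$, this gives $d_{(x,s)}(\gamma(q),q)\le0$ and $d_{(x,s)}(\gamma'(q),q)\ge0$ for $q<s$, using \eqref{geomaximizes} together with the extremality \eqref{p2lleftmost}--\eqref{p2lrightmost}. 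I expect the main care to go into upgrading this weak inequality to strict separation, and that upgrade comes from \eqref{no-intersection}.
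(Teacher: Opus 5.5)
Your Step 2 is false, and with it the whole strategy of taking $\tau$ to be one of the extremal interfaces out of $(x,s)$. Since $\gamma$ is a $W^{\tht\sig}$-geodesic passing through $(x,s)$ at time $s>r$, Lemma \ref{lm:geosplitsshocks} with $t=s$ gives $\Upsilon_{(x,s)}\dir{L}{\tht}{\sig}(q)<\gamma(q)$ for all $q\in(r,s)$. Applied to $\gamma'$, the same lemma gives $\gamma'(q)<\Upsilon_{(x,s)}\dir{R}{\tht}{\sig}(q)$ for all $q\in(r',s)$. So the leftmost interface lies left of $\gamma$, the rightmost lies right of $\gamma'$, and neither enters the strip.

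Your own competition-function computation shows this once it is done exactly. Since $(x,s)$ maximizes the point-to-line problem from both $\gamma(q)$ and $\gamma'(q)$, both half-suprema in \eqref{ddef} coincide, so $d_{(x,s)}(\gamma(q),q)=d_{(x,s)}(\gamma'(q),q)=0$. That means $I^-_{(x,s)}(q)\le\gamma(q)<\gamma'(q)\le I^+_{(x,s)}(q)$: both geodesics lie inside the two extremal interfaces, the opposite of what Step 2 asserts. Step 3's dichotomy therefore has no contradiction to reach. The interface you need is a third, non-extremal one out of $(x,s)$, and it cannot be obtained from $\Upsilon_{(x,s)}\dir{L}{\tht}{\sig}$, $\Upsilon_{(x,s)}\dir{R}{\tht}{\sig}$ and the ordering \eqref{shocksmono} alone.

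The missing idea is a limiting construction from points strictly before coalescence, not on the common tail as in your Step 1.
\begin{enumerate}
\item Take $s_n\nearrow s$ in $(r\vee r',s)$. Since $\gamma(s_n)<\gamma'(s_n)$, Lemma \ref{lm:geosplitsshocks} and \eqref{no-intersection} trap $\Upsilon_{\gamma(s_n)}\dir{R}{\tht}{\sig}$ strictly between $\gamma$ and $\gamma'$ on $(r\vee r',s_n)$.
\item Evaluating at time $s_{n-1}$ and using the coalescence \eqref{Itree}, these interfaces are nondecreasing in $n$. They are bounded by $\gamma'$, so they converge pointwise to some $\tau(t)$ for each $t<s$.
\item Via \eqref{Itree} and the limits \eqref{shocklims}, the restriction of $\tau$ to $(-\infty,t]$ is identified with a $\tht\sigg$ shock interface out of $\tau(t)$. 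Hence $\tau$ is continuous below $s$.
\item The squeeze $\gamma\preceq\tau\preceq\gamma'$ forces $\tau(t)\to(x,s)$ as $t\nearrow s$, so setting $\tau(s)=(x,s)$ gives a shock interface out of $(x,s)$. Finally, \eqref{no-intersection} upgrades the weak squeeze to strict separation.
\end{enumerate}
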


The last result of the section says that shocks are dense at every time level. This stands in sharp contrast to Lemma 8.1 in \cite{Ras-Swe-24-a-}, which establishes that in the Brownian last-passage percolation model, shocks are nowhere dense at any time level.

\begin{lem}\label{lem:shocksdense}
Let $\w\in\Omega_0$, $s,\tht\in\R$, and $\sigg\in\{+,-\}$. Then $\NU_1^{\tht\sig}\cap\bigl(\R\times\{s\}\bigr)$ is dense in $\R\times\{s\}$.
\end{lem}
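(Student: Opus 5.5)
Fix $\w\in\Omega_0$, $s,\tht\in\R$, $\sigg\in\{-,+\}$, and an interval $(a,b)$; the task is to find a $\tht\sigg$ shock in $(a,b)\times\{s\}$. The plan is to use the duality between geodesics and shock interfaces (Lemma \ref{lm:shockint}), through the coalescence picture. Pick $a<x<y<b$ and look at the rightmost geodesic $\geo\from{(x,s)}\dir{R}{\tht}{\sig}$ and the leftmost geodesic $\geo\from{(y,s)}\dir{L}{\tht}{\sig}$. By \eqref{geo:mono} these are ordered, and by \eqref{geo:coal2} they coalesce at some point $(z,r)$ with $r\ge s$.

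\emph{First case: $r>s$.} Lemma \ref{lm:geocoal} gives a $\tht\sigg$ shock interface $\tau$ out of $(z,r)$ that lies strictly between the two geodesics on $(s,r)$. By continuity \eqref{I-cont}, $\tau(s)$ is defined and lies in $[x,y]$. Since $\tau$ is an interface on all of $(-\infty,r]$, we have $\tau(s)\in\NU_1^{\tht\sig}$ whenever $s<r$ (Definition \ref{def:shockint} and Lemma \ref{lm:shockintallshocks}). So $\tau(s)$ is a shock in $[x,y]\subset(a,b)$. Some care is needed at the endpoints. Strictness of the inequalities holds only on the open interval $(s,r)$, so a priori $\tau(s)$ could equal $x$ or $y$. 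Because $x,y$ are arbitrary in $(a,b)$, a closed subinterval suffices, and this causes no problem.

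\emph{Second case: $r=s$.} Here the two geodesics meet at time $s$, which forces $x=y$ by the ordering. This contradicts $x<y$, so the first case always applies, provided coalescence happens strictly after time $s$. That is automatic because the starting points differ.

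The main subtlety is exactly this step: confirming that Lemma \ref{lm:geocoal} applies with start levels $r=r'=s$ and coalescence time strictly greater than $s$, and that the resulting interface extends continuously to time $s$ as a shock point. If Lemma \ref{lm:geocoal} only yields an interface on the open interval $(s,r)$, I would instead take $\tau=\Upsilon_{(z,r)}\dir{L}{\tht}{\sig}$ or its $R$ counterpart. This interface is defined on $(-\infty,r]$, and every point of it at times below $r$ is a shock. By Lemma \ref{lm:geosplitsshocks} and the fact that interfaces never cross geodesic interiors \eqref{no-intersection}, it is trapped between the two geodesics for times in $(s,r)$, and continuity then places $\tau(s)$ in $[x,y]$.

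Density on $\R\times\{s\}$ follows.
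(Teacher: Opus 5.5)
Your main argument is correct. Lemma \ref{lm:geocoal} applies verbatim with both start levels equal to $s$. The coalescence time is automatically $>s$, since the geodesics start at $x<y$ and are continuous. A shock interface is, by Definition \ref{def:shockint}, a continuous path on all of $(-\infty,r]$, so $\tau(s)\in[x,y]$ is a shock, and your worry about getting a path only on $(s,r)$ is unfounded.

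The paper traps an interface between the same two geodesics $\geo\from{(x,s)}\dir{R}{\tht}{\sig}$ and $\geo\from{(y,s)}\dir{L}{\tht}{\sig}$, but more cheaply. It picks a point $(z,\tfrac{s+t}2)$ strictly between them at an intermediate time and follows $\Upsilon_{(z,\frac{s+t}{2})}\dir{L}{\tht}{\sig}$ down to time $s$, using only \eqref{no-intersection} and continuity. Your route instead invokes Lemma \ref{lm:geocoal}, whose proof needs the interface limits \eqref{shocklims}. In exchange you get an interface emanating from the coalescence point itself, which is more than this lemma requires.

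Your fallback, however, would not work. Apply Lemma \ref{lm:geosplitsshocks} at the coalescence point $(z,r)$ to each of the two geodesics. This gives $\Upsilon_{(z,r)}\dir{L}{\tht}{\sig}(u)<\geo\from{(x,s)}\dir{R}{\tht}{\sig}(u)$ and $\Upsilon_{(z,r)}\dir{R}{\tht}{\sig}(u)>\geo\from{(y,s)}\dir{L}{\tht}{\sig}(u)$ for all $u\in(s,r)$. So both extreme interfaces from $(z,r)$ lie outside the gap, not inside it. That is exactly why Lemma \ref{lm:geocoal} has to construct a separate middle interface, and why the paper starts its interface from an interior point of the gap instead.
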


Henceforth, we fix $\w \in \Omega_0$. All subsequent statements and proofs are therefore deterministic in nature, relying on a combination of analytical and geometric arguments.

\section{Characterization of instability points}\label{sec:inst}

We begin the development by giving two equivalent characterizations of instability points: an analytic one, which we adopt as the definition, and a geometric one. 

\begin{defn}\label{inc-pts}
For a non-decreasing function $f:\R\to\R$, we call $x\in\R$ a \emph{point of increase} of $f$ if, for all $z<x$ and $y>x$, $f(z)<f(y)$.  
\end{defn}

By the cocycle property \eqref{cocycle} and the monotonicity \eqref{W:mono}, for $s,\tht$ and $y>x$ in $\R$,  
\begin{align*}
&\bigl(W^{\tht-}(0,s;y,s)-W^{\tht+}(0,s;y,s))-
\bigl(W^{\tht-}(0,s;x,s)-W^{\tht+}(0,s;x,s))\\
&\qquad\qquad\qquad
=W^{\tht-}(x,s;y,s)-W^{\tht+}(x,s;y,s)\ge0.
\end{align*}
Thus, the difference profile $y\to W^{\tht-}(0,s;y,s)-W^{\tht+}(0,s;y,s)$ is nondecreasing. By \eqref{W:cont}, it is also continuous in $y$.

\begin{defn}\label{def:buseinstab}
	For $\w\in\Omega_0$, $\tht,s,x\in\R$, we call $(x,s)$ a \emph{$\tht$-instability} point if $x$ is a point of increase of the function
	$y\mapsto W^{\tht-}(0,s;y,s)-W^{\tht+}(0,s;y,s)$. 
\end{defn}

Recalling \eqref{def:kSs}, the set of points $x$ such that $(x,s)$ is an instability point is denoted by $\IG\tht_s$. By Lemma \ref{ptinc}, it is closed.
    
\begin{defn}\label{def:stht}
    For $\tht \in \R$, we define the \emph{instability graph} $\IG{\tht}$ to be the union over all $\tht$-instability points: $\IG{\tht} = \bigcup_{s\in\R}\IG\tht_s$. See Figure \ref{fig:IG} for a simulation.
\end{defn}

The term ``graph'' is justified by the connectivity property in Theorem \ref{main:IG}\eqref{main:IG.graph}.  

\begin{rmk}
Definition \ref{def:stht} differs from \eqref{IGdef1} and is the formulation we adopt for $\IG\tht$. In the proof of Theorem \ref{main:IG}, given in Appendix \ref{sec:mainproofs}, we show that the two definitions are equivalent.
\end{rmk}

Recall Definition \ref{def:baddir} of the set $\baddir$ of exceptional directions. Recall from \eqref{countable} that $\baddir$ is a countable dense subset of $\R$.
The following is immediate from \eqref{exceptional} and the cocycle property \eqref{cocycle}.

\begin{lem}
    For all $\w\in\Omega_0$, $\IG\tht\ne\varnothing$ if and only if $\tht\in\baddir$.
\end{lem}

The next lemma implies that when $\tht\in\baddir$, $\tht$-instability points not only exist in $\R^2$ but occur at every time level. It follows from \eqref{Wunbounded}.

\begin{lem}\label{kS inf}
Let $\w\in\Omega_0$ and $\tht\in\baddir$. Then for each $s\in\R$, $\inf\IG\tht_s=-\infty$ and $\sup\IG\tht_s=\infty$.
\end{lem}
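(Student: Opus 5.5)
Write $D_s(y)=W^{\tht-}(0,s;y,s)-W^{\tht+}(0,s;y,s)$. By the computation before Definition \ref{def:buseinstab}, $D_s$ is continuous and nondecreasing in $y$. By \eqref{Wunbounded} with $\sigg=+$ and $\sigg=-$ (which applies since $\tht\in\baddir$), $D_s(y)\to+\infty$ as $y\to+\infty$ and $D_s(y)\to-\infty$ as $y\to-\infty$. We argue by contradiction, and treat only $\sup\IG\tht_s=\infty$; the infimum statement is symmetric.

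Suppose $\IG\tht_s\cap(M,\infty)=\varnothing$ for some $M\in\R$. We first show that $D_s$ is then constant on $[M,\infty)$. Assume not, so there are $M\le a<b$ with $D_s(a)<D_s(b)$. Set
\[
c=\sup\{y\in[a,b]: D_s(y)=D_s(a)\}, \qquad d=\inf\{y\in[a,b]: D_s(y)=D_s(b)\}.
\]
By continuity $D_s(c)=D_s(a)<D_s(b)=D_s(d)$, so $c<d$.

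Now take $x=(c+d)/2$, or more carefully choose $x\in(c,d)$ as follows. Since $D_s$ is nondecreasing, for $x\in(c,d)$, $z<x$ and $y>x$ we have $D_s(z)\le D_s(x)\le D_s(y)$. So $x$ fails to be a point of increase only if $D_s$ is constant on some interval $[z,y]$ with $z<x<y$. Hence, if no point of $(c,d)$ were a point of increase, each $x\in(c,d)$ would have an open neighbourhood on which $D_s$ is constant. Since $(c,d)$ is connected, $D_s$ would then be constant on $(c,d)$, and by continuity on $[c,d]$. This contradicts $D_s(c)<D_s(d)$.

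So some $x\in(c,d)\subset(M,\infty)$ is a point of increase, i.e.\ $x\in\IG\tht_s\cap(M,\infty)$, contradicting the assumption. Therefore $D_s$ is constant on $[M,\infty)$, which contradicts $D_s(y)\to\infty$. Hence $\sup\IG\tht_s=\infty$. The same argument on $(-\infty,M)$, using the limit $-\infty$, gives $\inf\IG\tht_s=-\infty$.

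The only real step is the elementary fact that a continuous nondecreasing function which is not locally constant on an interval has a point of increase there. I expect no obstacle beyond stating it cleanly; the earlier results needed are only \eqref{Wunbounded} and the monotonicity and continuity of $D_s$.
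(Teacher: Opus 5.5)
Your proof is correct and follows the paper's route. The paper proves this lemma only by citing \eqref{Wunbounded}; you have supplied the elementary step it leaves implicit, namely that a continuous nondecreasing function which is not constant on $[a,b]$ has a point of increase in $(a,b)$, via the same locally-constant/connectedness argument used in Lemma \ref{ptinc}. The passage to $c$ and $d$ is harmless but unnecessary, since the same argument applies directly on $(a,b)$.
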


\begin{defn}\label{def:isolstht}
    For $\tht \in \baddir$ and $(x,s)\in \IG{\tht}$, we call $(x,s)$ \emph{left-isolated} if there exists an $\ep>0$ such that for all $y\in (x-\ep,x)$, $(y,s)\notin\IG{\tht}$. Similarly, we call $(x,s)$ \emph{right-isolated} if there exists an $\ep>0$ such that for all $y\in (x,x+\ep)$, $(y,s)\notin\IG{\tht}$.
\end{defn}

The next lemma follows from Lemma \ref{ptinc}\eqref{ptinc.c} and the continuity \eqref{W:cont}.

\begin{lem}\label{no-isolated}
Let $\w\in\Omega_0$ and $\tht\in\baddir$. Then there are no points $(x,s)\in\IG\tht$ that are both left- and right-isolated. 
\end{lem}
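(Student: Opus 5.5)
We argue by contradiction, working on the fixed time level $s$ with the continuous nondecreasing function $g(y)=W^{\tht-}(0,s;y,s)-W^{\tht+}(0,s;y,s)$. Monotonicity follows from \eqref{cocycle} and \eqref{W:mono}, and continuity from \eqref{W:cont}. By Definition \ref{def:buseinstab}, $\IG\tht_s$ is exactly the set of points of increase of $g$. So suppose $(x,s)\in\IG\tht$ is both left- and right-isolated. Then there is $\ep>0$ such that no point of $(x-\ep,x)\cup(x,x+\ep)$ is a point of increase of $g$.

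The first step is to show that $g$ is constant on $(x-\ep,x)$, and likewise on $(x,x+\ep)$. This is the content one expects from Lemma \ref{ptinc}\eqref{ptinc.c}: a continuous nondecreasing function with no points of increase in an open interval is constant there. To prove it directly, suppose $g(a)<g(b)$ for some $a<b$ in the interval. Let $c=\sup\{y\in[a,b]:g(y)=g(a)\}$. Then $c<b$, $g(c)=g(a)$ by continuity, and $g(y)>g(c)$ for all $y>c$. Symmetrically, let $c'=\inf\{y\in[c,b]:g(y)=g(b)\}$. Then $c'>c$, and $g(z)<g(c')$ for all $z<c'$. We now need a single point of increase in $[c,c']$; note that $c$ alone only gives strict increase to the right. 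Here is the standard fix. Take $m=\sup\{y: g(y)\le \tfrac12(g(c)+g(c'))\}$. For $z<m<y$ we have $g(z)\le \tfrac12(g(c)+g(c'))$. Also, $g(y)>\tfrac12(g(c)+g(c'))$, and by continuity $g(m)=\tfrac12(g(c)+g(c'))$. This still might not give strictness on the left.

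A cleaner route is to note that the set of points of increase of $g$ supports the Lebesgue–Stieltjes measure $dg$. Indeed, if $x$ is not a point of increase, there are $z<x<y$ with $g(z)=g(y)$, so $dg$ vanishes on the neighbourhood $(z,y)$ of $x$. Hence $dg$ vanishes on the complement of the set of points of increase. On an interval containing no points of increase, $dg=0$, so $g$ is constant there.

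The second step uses continuity. From the first step, $g$ is constant on $(x-\ep,x)$ and on $(x,x+\ep)$, and by continuity of $g$ at $x$ both constants equal $g(x)$. Thus $g$ is constant on $(x-\ep,x+\ep)$. Taking $z=x-\ep/2$ and $y=x+\ep/2$ gives $g(z)=g(y)$, so $x$ is not a point of increase, contradicting $(x,s)\in\IG\tht$.

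The only point requiring care is the measure-support argument in the first step, in particular the check that $dg$ has no mass at a single non-increase point. This holds because $g$ is continuous, so $dg$ has no atoms. Everything else is immediate.
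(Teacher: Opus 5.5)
Your proof is correct and follows the paper's route: the paper applies Lemma~\ref{ptinc}\eqref{ptinc.c} (through part~\eqref{ptinc.b}) to the continuous nondecreasing profile $y\mapsto W^{\tht-}(0,s;y,s)-W^{\tht+}(0,s;y,s)$, using a connectedness argument where you use the Lebesgue--Stieltjes support argument to get constancy on each one-sided punctured interval, and then uses continuity at $x$ to glue the two sides, exactly as in your second step. Drop the abandoned direct attempt, and note that continuity is not what gives $dg$ no mass near a non-increase point (such a point already has a $dg$-null neighbourhood); continuity is essential only at $x$ itself.
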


We next describe the geometric characterization of instability points.
The following lemma is immediate from Definition \ref{def:buseinstab} and \eqref{Wjump}. See the left panel in Figure \ref{fig:instint}. 

\begin{figure}[hpt]
    \includegraphics[width=3.5cm]{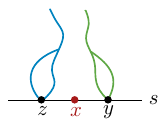}
    \qquad\qquad
    \includegraphics[width=3.5cm]{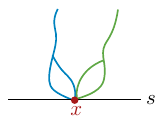}
    \caption{\small Left: $\geo\from{(z,t)}\dir{R}{\tht}{-}\cap \geo\from{(y,t)}\dir{L}{\tht}{+}=\varnothing$. Right: $\geo\from{(x,s)}\dir{L}{\tht}{-}\cap \geo\from{(x,s)}\dir{R}{\tht}{+}=\{(x,s)\}$. The right panel also depicts a snowbird shock (Definition \ref{def:snowbird}).}
    \label{fig:instint}
\end{figure}

\begin{lem}\label{def:geoinstabint}
	For all $\w\in\Omega_0$, $(x,s)\in\R^2$, and $\tht\in\baddir$, 
    $(x,s)\in\IG{\tht}$ if and only if for any $z<x$ and $y>x$, $\geo\from{(z,s)}\dir{R}{\tht}{-}\cap \geo\from{(y,s)}\dir{L}{\tht}{+}=\varnothing$.
\end{lem}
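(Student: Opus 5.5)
The plan is to unwind Definition \ref{def:buseinstab} through Definition \ref{inc-pts} and then translate each strict increment into a geodesic statement using the jump characterization \eqref{Wjump}.

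Write $D(y)=W^{\tht-}(0,s;y,s)-W^{\tht+}(0,s;y,s)$. By definition, $(x,s)\in\IG\tht$ means $x\in\IG\tht_s$, that is, $x$ is a point of increase of $D$. Unwinding Definition \ref{inc-pts}, this says $D(z)<D(y)$ for all $z<x<y$.

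The displayed computation preceding Definition \ref{def:buseinstab}, which uses the cocycle property \eqref{cocycle}, gives
\[
D(y)-D(z)=W^{\tht-}(z,s;y,s)-W^{\tht+}(z,s;y,s).
\]
Hence $x$ is a point of increase exactly when $W^{\tht-}(z,s;y,s)>W^{\tht+}(z,s;y,s)$ for every $z<x<y$.

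Since $\tht\in\baddir$ and $z<y$, the jump characterization \eqref{Wjump} applies to each such pair. It states that this strict inequality holds if and only if $\geo\from{(z,s)}\dir{R}{\tht}{-}\cap\geo\from{(y,s)}\dir{L}{\tht}{+}=\varnothing$. Quantifying over all $z<x$ and $y>x$ gives both directions of the equivalence at once.

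There is no real obstacle here. The only care needed is to check that the base point $0$ in $D$ is immaterial, which the cocycle identity above handles, and that \eqref{Wjump} is available for every level $s$ simultaneously on $\Omega_0\subset\Omega_7$, which is how it was stated.
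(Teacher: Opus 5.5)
Your proposal is correct and follows exactly the route the paper indicates: the lemma is stated there as immediate from Definition~\ref{def:buseinstab} and \eqref{Wjump}. You unwind the point-of-increase condition, use the cocycle identity to write the increment as $W^{\tht-}(z,s;y,s)-W^{\tht+}(z,s;y,s)$, and apply \eqref{Wjump} to each pair $z<y$, which is precisely what that one-line justification amounts to.
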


The next lemma improves the above criterion to one about geodesics from the same point. See the left panel in Figure \ref{fig:instint}.

\begin{lem}\label{def:geoinstabpt}
	For $\w\in\Omega_0$, $(x,s)\in\R^2$, and $\tht\in\baddir$, $(x,s)\in\IG{\tht}$ if and only if $\geo\from{(x,s)}\dir{L}{\tht}{-}\cap \geo\from{(x,s)}\dir{R}{\tht}{+} = \{(x,s)\}$.
\end{lem}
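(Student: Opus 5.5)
We prove the two directions separately, in each case reducing to Lemma \ref{def:geoinstabint} via the overlap-topology limits of Corollary \ref{cor:geo:lim}.

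\emph{($\Leftarrow$).} Assume $\geo\from{(x,s)}\dir{L}{\tht}{-}\cap\geo\from{(x,s)}\dir{R}{\tht}{+}=\{(x,s)\}$, and fix $z<x<y$. By the ordering \eqref{geo:mono},
\[
\geo\from{(z,s)}\dir{R}{\tht}{-}\preceq\geo\from{(x,s)}\dir{L}{\tht}{-}
\quad\text{and}\quad
\geo\from{(x,s)}\dir{R}{\tht}{+}\preceq\geo\from{(y,s)}\dir{L}{\tht}{+}.
\]
The middle two geodesics meet only at time $s$, and there they occupy the single point $(x,s)$. The outer two start at the distinct points $(z,s)$ and $(y,s)$, so they do not meet at time $s$. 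For $r>s$, the strict inequality $\geo\from{(x,s)}\dir{L}{\tht}{-}(r)<\geo\from{(x,s)}\dir{R}{\tht}{+}(r)$ holds: the weak inequality comes from \eqref{extreme}, and strictness from the hypothesis. Sandwiching then gives
\[
\geo\from{(z,s)}\dir{R}{\tht}{-}(r)<\geo\from{(y,s)}\dir{L}{\tht}{+}(r)\qquad\text{for all } r>s.
\]
Hence the two geodesics are disjoint, and Lemma \ref{def:geoinstabint} yields $(x,s)\in\IG\tht$.

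\emph{($\Rightarrow$).} We argue by contraposition. Suppose the two geodesics share a point $(u,r)$ with $r>s$ (for $r=s$ both are at $x$, so a second common point must have $r>s$). By Corollary \ref{cor:geo:lim}, applied with some fixed $r'\in(s,r)$, there is $\e>0$ such that for $z\in(x-\e,x)$,
\[
\geo\from{(z,s)}\dir{R}{\tht}{-}\big|_{[r',\infty)}=\geo\from{(x,s)}\dir{L}{\tht}{-}\big|_{[r',\infty)},
\]
and for $y\in(x,x+\e)$,
\[
\geo\from{(y,s)}\dir{L}{\tht}{+}\big|_{[r',\infty)}=\geo\from{(x,s)}\dir{R}{\tht}{+}\big|_{[r',\infty)}.
\]
Since $r>r'$, both $\geo\from{(z,s)}\dir{R}{\tht}{-}$ and $\geo\from{(y,s)}\dir{L}{\tht}{+}$ pass through $(u,r)$, so they intersect. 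By Lemma \ref{def:geoinstabint}, $(x,s)\notin\IG\tht$.

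The main subtlety is the choice of $r'\in(s,r)$ in the ($\Rightarrow$) direction, which is needed to invoke the corollary. The corollary controls the geodesics only after an arbitrarily small positive time, and that is exactly what is used here.
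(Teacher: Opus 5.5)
Your proof is correct and is essentially the paper's argument. The forward direction goes by contraposition: Corollary \ref{cor:geo:lim} transfers a common point of $\geo\from{(x,s)}\dir{L}{\tht}{-}$ and $\geo\from{(x,s)}\dir{R}{\tht}{+}$ onto $\geo\from{(z,s)}\dir{R}{\tht}{-}$ and $\geo\from{(y,s)}\dir{L}{\tht}{+}$. The reverse direction is the ordering \eqref{geo:mono} sandwich, and both directions are closed off by Lemma \ref{def:geoinstabint}. The only difference is that you spell out the strict inequality for times $r>s$, which the paper leaves implicit.
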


\begin{proof}
Suppose $(u,t)\in \geo\from{(x,s)}\dir{L}{\tht}{-}\cap \geo\from{(x,s)}\dir{R}{\tht}{+}$.  
Take $z<x$ and $y>x$, and define $(u',r')$ and $(u'',r'')$ to be the coalescence points of 
$\geo\from{(z,s)}\dir{R}{\tht}{-}$ with $\geo\from{(x,s)}\dir{L}{\tht}{-}$ 
and of 
$\geo\from{(y,s)}\dir{L}{\tht}{+}$ with $\geo\from{(x,s)}\dir{R}{\tht}{+}$, respectively.  
Corollary \ref{cor:geo:lim}  
implies that  
$r',r''<t$ for $z$ and $y$ sufficiently close to $x$.  
This implies $(u,t)\in \geo\from{(z,s)}\dir{R}{\tht}{-}\cap \geo\from{(y,s)}\dir{L}{\tht}{+}$.
By Lemma \ref{def:geoinstabint}, $(x,s)\notin \IG{\tht}$.  

For the converse direction, suppose that 
$\geo\from{(x,s)}\dir{L}{\tht}{-}\cap \geo\from{(x,s)}\dir{R}{\tht}{+} = \{(x,s)\}$.
Take $z<x$ and $y>x$.  
By the geodesic ordering \eqref{geo:mono}, we have
$\geo\from{(z,s)}\dir{R}{\tht}{-}\preceq \geo\from{(x,s)}\dir{L}{\tht}{-}$ and $\geo\from{(x,s)}\dir{R}{\tht}{+}\preceq \geo\from{(y,s)}\dir{L}{\tht}{+}$ and hence $\geo\from{(z,s)}\dir{R}{\tht}{-}\cap \geo\from{(y,s)}\dir{L}{\tht}{+} = \varnothing$.
Then Lemma \ref{def:geoinstabint} gives $(x,s)\in\IG{\tht}$.
\end{proof}

\begin{rmk}\label{Rk8.3}
The above lemma resolves the question posed in Remark 8.3 of \cite{Bus-Sep-Sor-24}.  
In their notation, it asserts that, for $\P$-almost every $\w$, and for all $s\in\R$ and $\tht\in\baddir$,  
the set of instability points $\mathcal D_{s,\tht}$ coincides with the set $\mathfrak S_{s,\tht}$ of points $x$ for which there exist two disjoint geodesics starting from $(x,s)$ and going in direction $\tht$. 
\end{rmk}

\section{Shock interfaces}\label{sec:shocks}

In this section, we present the first results concerning the interaction between shock interfaces and instability regions. Lemma \ref{lem:stableshocks} shows that, within stability regions, shocks exhibit no sign distinction. Then, Lemma \ref{lem:IGgodown} demonstrates that once a shock interface passes through an instability point, it subsequently propagates along instability points. Finally, Lemma \ref{LRisolgeo} establishes a connection between left- and right-isolated instability points and a specific class of shock points.

Recall that the age $\age^{\tht\sig}(x,s)$ of a shock $(x,s)\in\NU_1^{\tht\sig}$ is defined in \eqref{age}.

\begin{lem}\label{lem:stableshocks}
	Let $\w\in\Omega_0$ and $\tht\in\baddir$. 
    If $(x,s) \in(\NU_1^{\tht-}\cup\,\NU_1^{\tht+})\setminus\IG\tht$, then
    $(x,s)\in \NU_1^{\tht-}\cap \NU_1^{\tht+}$, $\age^{\tht-}(x,s)=\age^{\tht+}(x,s)=a$,  
    $\geo\from{(x,s)}\dir{S}{\tht}{-}\big|_{[s,s+a]}=\geo\from{(x,s)}\dir{S}{\tht}{+}\big|_{[s,s+a]}$ for both $S\in\{L,R\}$, and the point $\geo\from{(x,s)}\dir{S}{\tht}{\sig}(s+a)$ at which this double shock resolves is not in $\IG\tht$. We call this shock a \emph{proper double shock}. See Figure \ref{fig:proper} and the last three configurations on the top row of Figure \ref{fig:geodesics}.
\end{lem}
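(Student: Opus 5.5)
The approach is to work in a small stable neighborhood of $(x,s)$ and reduce everything to geodesics that start at nearby rational points. There the sign distinction disappears locally.

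\emph{Step 1: a stable neighborhood.} Since $(x,s)\notin\IG\tht$ and $\IG\tht_s$ is closed (Lemma \ref{ptinc}), there exist $z<x<y$ with $[z,y]\cap\IG\tht_s=\varnothing$. So the nondecreasing difference profile $u\mapsto W^{\tht-}(0,s;u,s)-W^{\tht+}(0,s;u,s)$ is constant on $[z,y]$. I would actually choose $z<x<y$ with the profile constant on a slightly larger interval, so that $W^{\tht-}(z,s;y,s)=W^{\tht+}(z,s;y,s)$ and every point of $[z,y]$ is a non-point-of-increase. By \eqref{Wjump}, $\geo\from{(z,s)}\dir{R}{\tht}{-}$ and $\geo\from{(y,s)}\dir{L}{\tht}{+}$ intersect, say at $(u_0,t_0)$.

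\emph{Step 2: the $\tht-$ and $\tht+$ geodesics agree.} By the ordering \eqref{geo:mono}, every Busemann geodesic from $(x,s)$, of either sign and any $S$, is sandwiched between $\geo\from{(z,s)}\dir{R}{\tht}{-}$ and $\geo\from{(y,s)}\dir{L}{\tht}{+}$. The ordering also gives $\geo\from{(z,s)}\dir{R}{\tht}{-}\preceq\geo\from{(z,s)}\dir{R}{\tht}{+}\preceq\geo\from{(y,s)}\dir{L}{\tht}{+}$, so all these geodesics pass through $(u_0,t_0)$. By \eqref{geo:coal1} they coincide after time $t_0$; in particular $\geo\from{(x,s)}\dir{S}{\tht}{-}$ and $\geo\from{(x,s)}\dir{S'}{\tht}{+}$ coalesce.

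Next I claim $\geo\from{(x,s)}\dir{L}{\tht}{-}=\geo\from{(x,s)}\dir{L}{\tht}{+}$. Both are $\tht$-directed geodesics with the same tail. By \cite[Theorem 5.9]{Bus-Sep-Sor-24} (recalled before \eqref{geodesics}), any semi-infinite geodesic coalescing with a $W^{\tht-}$-geodesic is itself a $W^{\tht-}$-geodesic, so $\geo\from{(x,s)}\dir{L}{\tht}{+}$ is a $W^{\tht-}$-geodesic. By \eqref{extreme} it therefore lies weakly to the right of $\geo\from{(x,s)}\dir{L}{\tht}{-}$. Symmetrically, $\geo\from{(x,s)}\dir{L}{\tht}{-}$ is a $W^{\tht+}$-geodesic, and the leftmost $W^{\tht+}$-geodesic is $\geo\from{(x,s)}\dir{L}{\tht}{+}$, so $\geo\from{(x,s)}\dir{L}{\tht}{+}\preceq\geo\from{(x,s)}\dir{L}{\tht}{-}$. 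Together with \eqref{geo:mono} this forces equality. The same argument handles $R$. Here I use that $\geo\from{(x,s)}\dir{L}{\tht}{\sig}$ is the leftmost $W^{\tht\sig}$-geodesic, which follows from \eqref{p2lleftmost}.

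\emph{Step 3: conclusions.} By \eqref{geo:no-lollipop}, $(x,s)\in\NU_1^{\tht\sig}$ for one sign means $\geo\from{(x,s)}\dir{L}{\tht}{\sig}\ne\geo\from{(x,s)}\dir{R}{\tht}{\sig}$, and by Step 2 the same then holds for the other sign. So $(x,s)$ is a double shock. The ages coincide, since $\age$ is the first meeting time of the same two paths, and the restrictions to $[s,s+a]$ agree.

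Finally, let $(v,s+a)$ be the resolution point. I would show it lies outside $\IG\tht$ via Lemma \ref{def:geoinstabpt}: both $\geo\from{(v,s+a)}\dir{L}{\tht}{-}$ and $\geo\from{(v,s+a)}\dir{R}{\tht}{+}$ should contain the common tail. By \eqref{geo:restart}, every $\tht\sigg$ geodesic from $(v,s+a)$ equals the continuation $\geo\from{(x,s)}\dir{L}{\tht}{\sig}$ after $s+a$. Since $\geo\from{(x,s)}\dir{L}{\tht}{-}=\geo\from{(x,s)}\dir{L}{\tht}{+}$, the two geodesics from $(v,s+a)$ coincide, so their intersection is not a single point. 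Hence $(v,s+a)\notin\IG\tht$.

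\emph{Main obstacle.} The delicate step is the extremality claim in Step 2: that a geodesic with the same tail as a $W^{\tht-}$-geodesic is a $W^{\tht-}$-geodesic, and that $\geo\from{(x,s)}\dir{L}{\tht}{\sig}$ is the leftmost among all $W^{\tht\sig}$-geodesics from $(x,s)$. I would derive the latter from \eqref{p2lleftmost} applied with $r=s$ at each time $t$.
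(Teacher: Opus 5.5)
Your Step~2 contains a false claim. From the fact that every Busemann geodesic out of $(x,s)$ passes through $(u_0,t_0)$, you conclude via \eqref{geo:coal1} that $\geo\from{(x,s)}\dir{S}{\tht}{-}$ and $\geo\from{(x,s)}\dir{S'}{\tht}{+}$ coalesce. You then conclude that $\geo\from{(x,s)}\dir{L}{\tht}{-}=\geo\from{(x,s)}\dir{L}{\tht}{+}$ as entire paths.

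But \eqref{geo:coal1} only concerns geodesics of the same sign. Moreover, since $\tht\in\baddir$, \eqref{sign} gives $\geo\from{(x,s)}\dir{S}{\tht}{-}(t)<\geo\from{(x,s)}\dir{S}{\tht}{+}(t)$ for all large $t$. So the $\tht-$ and $\tht+$ geodesics never share a tail, the appeal to \cite[Theorem 5.9]{Bus-Sep-Sor-24} has no valid premise, and the global equality is false. The obstacle you flag is therefore not the real one.

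What is true is local agreement, and it suffices for the first three assertions. Both $\geo\from{(x,s)}\dir{L}{\tht}{\pm}$ pass through $(u_0,t_0)$ and are leftmost geodesics between any two of their points \eqref{p2pleftmost}, so they agree on $[s,t_0]$; likewise for $R$ via \eqref{p2prightmost}. The left and right $\tht-$ geodesics meet at time $t_0$, so $s+a\le t_0$. This yields the double shock, equal ages, and agreement on $[s,s+a]$. This is the paper's argument; it obtains the meeting point directly from Lemma \ref{def:geoinstabpt} rather than through $z,y$ and \eqref{Wjump}.

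The genuine gap is the last assertion. With agreement only on $[s,t_0]$, your Step~3 shows $(v,s+a)\notin\IG\tht$ only if $t_0>s+a$. That is, you need $\geo\from{(x,s)}\dir{L}{\tht}{-}$ and $\geo\from{(x,s)}\dir{R}{\tht}{+}$ not to split again immediately after reuniting at $(v,s+a)$, which is exactly what must be proved.

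Changing $z,y$ cannot help. By \eqref{geo:mono}, any intersection point of $\geo\from{(z,s)}\dir{R}{\tht}{-}$ and $\geo\from{(y,s)}\dir{L}{\tht}{+}$ is also a common point of $\geo\from{(x,s)}\dir{L}{\tht}{-}$ and $\geo\from{(x,s)}\dir{R}{\tht}{+}$. A priori, the latter two could meet after time $s$ only at $(v,s+a)$: a ``split, reunite, split again immediately'' configuration.

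Excluding that configuration needs Lemma \ref{lm:Duncan}\eqref{Duncan.a}, which is available only at rational times. The paper's route is as follows:
\begin{itemize}
\item Take a $\tht-$ shock interface $\tau$ out of $(v,s+a)$ lying strictly between $\geo\from{(x,s)}\dir{L}{\tht}{-}$ and $\geo\from{(x,s)}\dir{R}{\tht}{-}$ (Lemma \ref{lm:geocoal}), and pick a rational $r\in(s,s+a)$.
\item By \eqref{shocksplitsgeo} and \eqref{geo:mono}, $\geo\from{\tau(r)}\dir{L}{\tht}{-}$ and $\geo\from{\tau(r)}\dir{R}{\tht}{+}$ split immediately and first reunite at $(v,s+a)$.
\item Lemma \ref{lm:Duncan}\eqref{Duncan.a} then forces them to stay together for a while after time $s+a$.
\item \eqref{geo:restart} and Lemma \ref{def:geoinstabpt} give $(v,s+a)\notin\IG\tht$.
\end{itemize}
Your proposal has no substitute for this step.
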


\begin{figure}[hpt]
    \includegraphics[width=3.5cm]{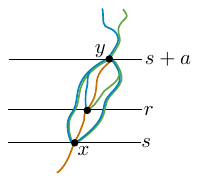}
    \caption{\small An illustration of the proof of Lemma \ref{lem:stableshocks}. $(x,s)$ is a stable point that is a double shock. Taking a point on the shock interface from the coalescence point creates two geodesics that separate immediately, then rejoin at $(y,s+a)$.}    
    \label{fig:proper}
\end{figure}

\begin{proof}
    By Lemma \ref{def:geoinstabpt}, there exists a $t>s$ such that $\geo\from{(x,s)}\dir{L}{\tht}{-}(t)=\geo\from{(x,s)}\dir{R}{\tht}{+}(t)$. The geodesic ordering \eqref{geo:mono} implies then that, for both $S\in\{L,R\}$, $\geo\from{(x,s)}\dir{S}{\tht}{-}(t)=\geo\from{(x,s)}\dir{S}{\tht}{+}(t)$ and, by the extremality  \eqref{p2pleftmost}-\eqref{p2prightmost}, $\geo\from{(x,s)}\dir{S}{\tht}{-}$ and $\geo\from{(x,s)}\dir{S}{\tht}{+}$ match on the time interval $[s,t]$. See Figure \ref{fig:proper}. The claims of the lemma follow, except for the last one. 

    Let $(y,s+a)=\geo\from{(x,s)}\dir{S}{\tht}{\sig}(s+a)$. As we have just shown, this is the same point for all $S\in\{L,R\}$ and $\sigg\in\{-,+\}$. We prove that $(y,s+a)\notin\IG\tht$.

    By Lemma \ref{lm:geocoal}, there exists a $\tht-$ shock interface $\tau$ out of $(y,s+a)$ that remains strictly between $\geo\from{(x,s)}\dir{L}{\tht}{-}$ and $\geo\from{(x,s)}\dir{R}{\tht}{-}$ on the time interval $(s,s+a)$.  Take a rational time $r\in(s,s+a)$. 
    By \eqref{shocksplitsgeo}, on the interval $(r,s+a)$, $\geo\from{\tau(r)}\dir{L}{\tht}{-}$ must remain strictly left of $\tau$ and  $\geo\from{\tau(r)}\dir{R}{\tht}{-}$ goes strictly right of it. By the ordering \eqref{geo:mono}, $\geo\from{\tau(r)}\dir{R}{\tht}{+}\succeq\geo\from{\tau(r)}\dir{R}{\tht}{-}$. Consequently, 
    $\geo\from{\tau(r)}\dir{L}{\tht}{-}$ and $\geo\from{\tau(r)}\dir{R}{\tht}{+}$ cannot reintersect on $(r,s+a)$.
    On the other hand, the coalescence \eqref{geo:coal1} keeps the two geodesics weakly between $\geo\from{(x,s)}\dir{L}{\tht}{-}$ and $\geo\from{(x,s)}\dir{R}{\tht}{+}$, and hence forces them to intersect at $(y,s+a)$.  Now Lemma \ref{lm:Duncan}\eqref{Duncan.a} prohibits the two geodesics from immediately separating at $(y,s+a)$. By \eqref{geo:restart}, we get that $\geo\from{(y,s+a)}\dir{L}{\tht}{-}$ and $\geo\from{(y,s+a)}\dir{R}{\tht}{+}$ separate at a time $>s+a$ and, by Lemma \ref{def:geoinstabint}, $(y,s+a)\notin\IG\tht$.
\end{proof}

Next, we show that shock interfaces emanating from instability points continue running along instability points.

\begin{lem}\label{lem:IGgodown}
    Let $\w\in\Omega_0$, $\tht\in\baddir$, $(x,s)\in\IG\tht$, and $\sigg \in \{+,-\}$. Then for any $\tht\sigg$ shock interface $\gamma$ out of $(x,s)$, $\gamma\subset\IG\tht$. 
\end{lem}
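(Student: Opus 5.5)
The plan is to argue by contradiction using the geometric characterization in Lemma \ref{def:geoinstabint}, playing the no-crossing property \eqref{no-intersection} of $\tht\sigg$ shock interfaces against the characterization of instability at the starting point $(x,s)$. By the $\pm$ symmetry (reflection) it suffices to treat $\sigg=-$; the case $\sigg=+$ is the mirror image, with the roles of $\geo\dir{R}{\tht}{-}$ and $\geo\dir{L}{\tht}{+}$ swapped.

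Fix a $\tht-$ shock interface $\gamma:(-\infty,s]\to\R^2$ from $(x,s)$, and suppose that $\gamma(r)=(u,r)\notin\IG\tht$ for some $r<s$. By Lemma \ref{def:geoinstabint} there are $z<u<y$ such that $\geo\from{(z,r)}\dir{R}{\tht}{-}$ and $\geo\from{(y,r)}\dir{L}{\tht}{+}$ intersect. The first step is to localize both geodesics relative to $\gamma$.

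1. Since $\geo\from{(z,r)}\dir{R}{\tht}{-}$ is a $W^{\tht-}$-geodesic starting strictly left of $\gamma(r)$, \eqref{no-intersection} and continuity keep it strictly left of $\gamma$ on $[r,s)$. Hence it reaches time $s$ at some $w\le x$.
2. By the same argument, $\geo\from{(y,r)}\dir{L}{\tht}{-}$ stays strictly right of $\gamma$ on $[r,s)$ and reaches time $s$ at some $w'\ge x$.
3. By the ordering \eqref{geo:mono}, $\geo\from{(y,r)}\dir{L}{\tht}{+}\succeq\geo\from{(y,r)}\dir{L}{\tht}{-}$. So the $\tht+$ geodesic is also weakly right of $x$ at time $s$, and strictly right of $\gamma$ on $(r,s)$.

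In particular, the two geodesics cannot meet before time $s$, so any intersection occurs at a time $\ge s$.

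Next I transfer the problem to time $s$ using the restart property \eqref{geo:restart}: after time $s$ the two geodesics are $W^{\tht-}$-, respectively $W^{\tht+}$-, geodesics out of $(w,s)$ and $(w'',s)$, with $w\le x\le w''$.

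If $w<x<w''$, pick $z'\in(w,x)$ and $y'\in(x,w'')$. The ordering \eqref{geo:mono} gives $\geo\from{(z,r)}\dir{R}{\tht}{-}|_{[s,\infty)}\preceq\geo\from{(z',s)}\dir{R}{\tht}{-}$ and $\geo\from{(y,r)}\dir{L}{\tht}{+}|_{[s,\infty)}\succeq\geo\from{(y',s)}\dir{L}{\tht}{+}$. Since $(x,s)\in\IG\tht$, Lemma \ref{def:geoinstabint} makes these two bounding geodesics disjoint. This contradicts the assumed intersection.

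The main obstacle is the boundary cases $w=x$ or $w''=x$, where a geodesic runs exactly into the tip of the interface. Here the key claim I would establish is the following: a $W^{\tht-}$-geodesic that approaches $(x,s)$ strictly from the left of $\gamma$ continues after time $s$ as $\geo\from{(x,s)}\dir{L}{\tht}{-}$. Symmetrically, one approaching strictly from the right continues as $\geo\from{(x,s)}\dir{R}{\tht}{-}$.

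My first attempt at the claim is to approximate by points $(z',s')$ on the geodesic at times $s'\nearrow s$. These points lie strictly left of $\gamma(s')$. I would compare them with points $(x',s)$, $x'\nearrow x$, using Theorem \ref{th:convanydir} and Corollary \ref{cor:geo:lim}: the limits of geodesics from such points must be a Busemann geodesic from $(x,s)$, and it cannot cross to the right of the interface direction. Lemma \ref{lm:geosplitsshocks} and \eqref{shocksplitsgeo} should rule out the limit being $\geo\from{(x,s)}\dir{M}{\tht}{-}$ or $\geo\from{(x,s)}\dir{R}{\tht}{-}$.

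With the claim in hand, the case $w=x$ is handled as follows. The $\tht-$ geodesic is bounded by $\geo\from{(x,s)}\dir{L}{\tht}{-}$, which is the limit from the left of $\geo\from{(z',s)}\dir{R}{\tht}{-}$ by \eqref{geo:lim2}. Choose $z'<x$ close enough to $x$ that it agrees with $\geo\from{(x,s)}\dir{L}{\tht}{-}$ beyond any prescribed time. Then the disjointness from Lemma \ref{def:geoinstabint} still yields a contradiction, because the intersection point lies at a time strictly greater than $s$ (or equals $(x,s)$ itself, which is excluded by step 3 combined with the approach from the right).

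The case $w''=x$ is handled the same way, with $\geo\from{(y',s)}\dir{L}{\tht}{+}$, $y'\searrow x$, in place of $\geo\from{(z',s)}\dir{R}{\tht}{-}$, and with \eqref{geo:lim2} applied from the right.
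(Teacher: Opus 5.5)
Your handling of the cases where the intersection occurs strictly after time $s$ is fine. Your ``key claim'' also needs no approximation argument: by \eqref{geo:restart}, a $W^{\tht-}$-geodesic passing through $(x,s)$ after its starting time coincides from time $s$ on with all three $W^{\tht-}$-geodesics out of $(x,s)$, and the same holds for $W^{\tht+}$.

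The genuine gap is the case $w=w''=x$, which you dismiss in a parenthetical. In that case \eqref{geo:restart} makes the tails after time $s$ equal to $\geo\from{(x,s)}\dir{L}{\tht}{-}$ and $\geo\from{(x,s)}\dir{R}{\tht}{+}$. By Lemma \ref{def:geoinstabpt} these two meet only at $(x,s)$, and before time $s$ the two geodesics are separated by $\gamma$. So the intersection supplied by Lemma \ref{def:geoinstabint} is exactly the single point $(x,s)$, and nothing you have derived is contradicted. Step 3 only gives $\geo\from{(y,r)}\dir{L}{\tht}{+}(s)\ge(x,s)$. Approaching $(x,s)$ from the right is entirely compatible with equality.

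This is not a corner case; it is the heart of the lemma. It is the situation in which $\geo\from{\gamma(r)}\dir{L}{\tht}{-}$ and $\geo\from{\gamma(r)}\dir{R}{\tht}{+}$ split immediately at $\gamma(r)$ (they are separated by $\gamma$), first reunite at $(x,s)$, and split immediately again because $(x,s)\in\IG\tht$. That is the bubble configuration of Figure \ref{fig:tail}. Ordering, duality \eqref{no-intersection}, restart, and overlap convergence are all consistent with it, so no combination of the tools you use can close the argument.

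The paper excludes this configuration as follows. In this case $\geo\from{\gamma(r)}\dir{R}{\tht}{-}$ is also squeezed onto $(x,s)$, and \eqref{p2prightmost} makes it match $\geo\from{\gamma(r)}\dir{R}{\tht}{+}$ on $[r,s]$. Hence $\gamma(r)\in\NU_1^{\tht-}\setminus\IG\tht$ is a shock of age $s-r$ resolving at $(x,s)$. Lemma \ref{lem:stableshocks} then says the resolution point of such a stable shock is not in $\IG\tht$, a contradiction.

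That lemma rests on a genuinely new input, the deterministic-time fact Lemma \ref{lm:Duncan}\eqref{Duncan.a}. It is applied at a rational-time point on a $\tht-$ interface running between the two geodesics, and it forbids exactly this immediate re-split after reuniting. Some input of this kind is indispensable.
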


\begin{proof}
The claim is clear for $r=s$, since $\gamma(s)=(x,s)$. Fix $r<s$.  
Suppose that $\gamma(r)\notin \IG\tht$.  
By the geodesic ordering \eqref{geo:mono}, we have 
$\geo\from{\gamma(r)}\dir{L}{\tht}{-} \preceq \geo\from{\gamma(r)}\dir{L}{\tht}{\sig}\preceq\geo\from{\gamma(r)}\dir{R}{\tht}{\sig} \preceq \geo\from{\gamma(r)}\dir{R}{\tht}{+}$.
By \eqref{shocksplitsgeo}, the interface $\gamma$ separates 
$\geo\from{\gamma(r)}\dir{L}{\tht}{\sig}$ and 
$\geo\from{\gamma(r)}\dir{R}{\tht}{\sig}$ 
and prevents them from touching over the time interval $(r,s)$.  
It follows that 
$\geo\from{\gamma(r)}\dir{L}{\tht}{-}$ and 
$\geo\from{\gamma(r)}\dir{R}{\tht}{+}$ 
cannot intersect on $(r,s)$, and moreover,
$\geo\from{\gamma(r)}\dir{L}{\tht}{-}(s) \le (x,s) \le 
\geo\from{\gamma(r)}\dir{R}{\tht}{+}(s)$.

Now, by \eqref{geo:restart}, the geodesic ordering \eqref{geo:mono}, the fact that $(x,s)\in\IG\tht$, and the characterization in Lemma \ref{def:geoinstabpt}, we obtain
\[
\geo\from{\gamma(r)}\dir{L}{\tht}{-}(t)
\le \geo\from{(x,s)}\dir{L}{\tht}{-}(t)
< \geo\from{(x,s)}\dir{R}{\tht}{+}(t)
\le \geo\from{\gamma(r)}\dir{R}{\tht}{+}(t)
\quad \text{for all } t > s.
\]
But then since $\gamma(r)\notin\IG\tht$, Lemma \ref{def:geoinstabpt} implies that we must have
$\geo\from{\gamma(r)}\dir{L}{\tht}{-}(s)
= (x,s) 
= \geo\from{\gamma(r)}\dir{R}{\tht}{+}(s)$.
Together with the geodesic ordering \eqref{geo:mono}, this forces 
$\geo\from{\gamma(r)}\dir{L}{\tht}{-}(s)=\geo\from{\gamma(r)}\dir{R}{\tht}{-}(s)=\geo\from{\gamma(r)}\dir{R}{\tht}{+}(s)=(x,s)$. Then \eqref{p2prightmost} implies that $\geo\from{\gamma(r)}\dir{R}{\tht}{-}$ and $\geo\from{\gamma(r)}\dir{R}{\tht}{+}$ match on the time interval $[r,s]$. In particular,  $\geo\from{\gamma(r)}\dir{L}{\tht}{-}$ and $\geo\from{\gamma(r)}\dir{R}{\tht}{-}$ do not intersect on $(r,s)$ and meet at $(x,s)$.

We have thus shown that if $\gamma(r)\notin\IG\tht$, then $\gamma(r)\in\NU_1^{\tht-}\setminus\IG\tht$ and $\age^{\tht-}(\gamma(r))=s-r$. By Lemma \ref{lem:stableshocks}, we get that $(x,s)\notin\IG\tht$, which contradicts the assumption that $(x,s)$ is unstable. Thus, $\gamma(r)\in\IG\tht$. 
\end{proof}

Now, we characterize left- and right-isolated instability points in terms of hugging shocks. See the third row of Figure \ref{fig:geodesics} for an illustration of $\tht+$ hugging shocks and the fourth row of that figure for $\tht-$ hugging shocks. 

\begin{defn}\label{def:hugging}
For $\tht\in\baddir$, a point $(x,s)\in\R^2$ is a \emph{$\tht+$ hugging shock} if $(x,s)\in\IG\tht$ and $\geo\from{(x,s)}\dir{L}{\tht}{-}\cap\geo\from{(x,s)}\dir{L}{\tht}{+}\ne\{(x,s)\}$. Similarly, $(x,s)$ is a \emph{$\tht-$ hugging shock} if $(x,s)\in\IG\tht$ and $\geo\from{(x,s)}\dir{R}{\tht}{-}\cap\geo\from{(x,s)}\dir{R}{\tht}{+}\ne\{(x,s)\}$. 
\end{defn}

The next lemma explains the term ``hugging''. 


\begin{lem}\label{lm:hugging}
   Let $\w\in\Omega_0$, $\tht\in\baddir$, and $(x,s)\in\IG\tht$. 
   \begin{enumerate} [label={\rm(\alph*)}, ref={\rm\alph*}] \itemsep=1pt 
   \item\label{lm:hugging.a} For each $\sigg\in\{-,+\}$, a $\tht\sigg$ hugging shock is in $\NU_1^{\tht\sig}$.
   \item\label{lm:hugging.b} $(x,s)$ is a $\tht+$ hugging shock if and only if there exists a $\delta>0$ such that  $\geo\from{(x,s)}\dir{L}{\tht}{-}(t)=\geo\from{(x,s)}\dir{L}{\tht}{+}(t)$ for all $t\in[s,s+\delta]$.  
    \item\label{lm:hugging.c} $(x,s)$ is a $\tht-$ hugging shock if and only if there exists a $\delta>0$ such that  $\geo\from{(x,s)}\dir{R}{\tht}{-}(t)=\geo\from{(x,s)}\dir{R}{\tht}{+}(t)$ for all $t\in[s,s+\delta]$.
    \end{enumerate}
\end{lem}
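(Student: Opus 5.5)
I would prove part (b) first, derive (c) by the symmetric argument, and then deduce (a) from (b) and (c).

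For (b), suppose $(x,s)\in\IG\tht$ is a $\tht+$ hugging shock. Set $\gamma^-=\geo\from{(x,s)}\dir{L}{\tht}{-}$ and $\gamma^+=\geo\from{(x,s)}\dir{L}{\tht}{+}$. By Definition \ref{def:hugging}, there is a point $(u,t)\neq(x,s)$ with $t>s$ lying on both paths. Each of them is the leftmost point-to-point geodesic between any two of its points, by \eqref{p2pleftmost}. Hence both restrictions $\gamma^\pm|_{[s,t]}$ are the leftmost geodesic from $(x,s)$ to $(u,t)$, so they coincide on $[s,t]$, and $\delta=t-s$ works.

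The converse of (b) is immediate: agreement on $[s,s+\delta]$ supplies a common point other than $(x,s)$. Part (c) is identical, using \eqref{p2prightmost} for the right geodesics $\geo\from{(x,s)}\dir{R}{\tht}{\pm}$.

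For (a), take $\sigg=+$; the case $\sigg=-$ is symmetric. By Lemma \ref{def:geoinstabpt}, $(x,s)\in\IG\tht$ gives
\[
\geo\from{(x,s)}\dir{L}{\tht}{-}\cap\geo\from{(x,s)}\dir{R}{\tht}{+}=\{(x,s)\}.
\]
By (b), $\geo\from{(x,s)}\dir{L}{\tht}{+}=\geo\from{(x,s)}\dir{L}{\tht}{-}$ on $[s,s+\delta]$. Combining these, $\geo\from{(x,s)}\dir{L}{\tht}{+}(r)\neq\geo\from{(x,s)}\dir{R}{\tht}{+}(r)$ for every $r\in(s,s+\delta]$, so the two $W^{\tht+}$ geodesics from $(x,s)$ separate immediately. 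By Definition \ref{def:shock}, $(x,s)\in\NU_1^{\tht+}$. An alternative route is to note that the two geodesics are distinct and apply \eqref{geo:no-lollipop}.

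The only step needing care is the uniqueness of leftmost/rightmost geodesics between two given points, which is what \eqref{p2pleftmost}--\eqref{p2prightmost} provide. Beyond that, the argument is routine, and I expect no real obstacle.
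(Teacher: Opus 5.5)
Your proof is correct and follows the paper's argument: parts (b) and (c) come straight from the extremality properties \eqref{p2pleftmost}--\eqref{p2prightmost}, and part (a) comes from Lemma \ref{def:geoinstabpt} combined with the hugging condition. The only difference is the order. The paper proves (a) directly, by observing that $\geo\from{(x,s)}\dir{L}{\tht}{+}\ne\geo\from{(x,s)}\dir{R}{\tht}{+}$ and invoking \eqref{geo:no-lollipop}, which is exactly your stated alternative; your main route instead derives the immediate separation from (b).
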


\begin{proof}
 Part \eqref{lm:hugging.a}. Since $\geo\from{(x,s)}\dir{L}{\tht}{-}\cap\geo\from{(x,s)}\dir{R}{\tht}{+}=\{(x,s)\}$ while $\geo\from{(x,s)}\dir{L}{\tht}{-}\cap\geo\from{(x,s)}\dir{L}{\tht}{+}\neq\{(x,s)\}$, we have $\geo\from{(x,s)}\dir{L}{\tht}{+}\ne\geo\from{(x,s)}\dir{R}{\tht}{+}$ and, by \eqref{geo:no-lollipop}, $(x,s)\in\NU_1^{\tht+}$.
 The case $\sigg=-$ is similar.\smallskip

 Parts \eqref{lm:hugging.b} and \eqref{lm:hugging.c} follow from the extremality in, respectively, \eqref{p2pleftmost} and \eqref{p2prightmost}.
\end{proof}

Recall Definition \ref{def:isolstht} of left- and right-isolated instability points.

\begin{lem}\label{LRisolgeo}
    Let $\w\in\Omega_0$, $\tht\in\baddir$, and $(s,x)\in\IG\tht$. 
    \begin{enumerate} [label={\rm(\alph*)}, ref={\rm\alph*}] \itemsep=1pt \item\label{LRisolgeo.a} $(x,s)$ is left-isolated, respectively right-isolated, if and only if $(x,s)$ is a $\tht+$, respectively $\tht-$, hugging shock. 
    \item\label{LRisolgeo.b} The point $(x,s)$ cannot simultaneously be a $\tht-$ hugging shock and a $\tht+$ hugging shock.
    \end{enumerate}
\end{lem}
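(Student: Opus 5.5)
The plan is to prove part (a) for left-isolated points and $\tht+$ hugging shocks; the right-isolated case is the mirror image, with $L\leftrightarrow R$ and $-\leftrightarrow+$. Part (b) will then follow from (a) together with Lemma \ref{no-isolated}: a point that is both a $\tht-$ and a $\tht+$ hugging shock would be both left- and right-isolated, which that lemma excludes.

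\emph{Hugging $\Rightarrow$ left-isolated.} Let $(x,s)\in\IG\tht$ be a $\tht+$ hugging shock. By Lemma \ref{lm:hugging}\eqref{lm:hugging.b}, $\geo\from{(x,s)}\dir{L}{\tht}{-}$ and $\geo\from{(x,s)}\dir{L}{\tht}{+}$ agree on $[s,s+\delta]$. Fix $r\in(s,s+\delta)$. By Corollary \ref{cor:geo:lim} there is $\e>0$ such that for every $z\in(x-\e,x)$ and every $S$,
\[
\geo\from{(z,s)}\dir{S}{\tht}{\sig}\big|_{[r,\infty)}=\geo\from{(x,s)}\dir{L}{\tht}{\sig}\big|_{[r,\infty)},\qquad \sigg\in\{-,+\}.
\]
Hence $\geo\from{(z,s)}\dir{L}{\tht}{-}(r)=\geo\from{(x,s)}\dir{L}{\tht}{-}(r)=\geo\from{(x,s)}\dir{L}{\tht}{+}(r)=\geo\from{(z,s)}\dir{R}{\tht}{+}(r)$. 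This shows $\geo\from{(z,s)}\dir{L}{\tht}{-}\cap\geo\from{(z,s)}\dir{R}{\tht}{+}\neq\{(z,s)\}$, so $(z,s)\notin\IG\tht$ by Lemma \ref{def:geoinstabpt}. Therefore $(x,s)$ is left-isolated.

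\emph{Left-isolated $\Rightarrow$ hugging.} Suppose $(y,s)\notin\IG\tht$ for all $y\in(x-\e,x)$. I would pass to the difference profile $D(y)=W^{\tht-}(0,s;y,s)-W^{\tht+}(0,s;y,s)$, which is constant on $(x-\e,x]$ since no point there is a point of increase (and $D$ is continuous). Then \eqref{Wjump} gives, for every $z<y$ in $(x-\e,x]$,
\[
\geo\from{(z,s)}\dir{R}{\tht}{-}\cap\geo\from{(y,s)}\dir{L}{\tht}{+}\neq\varnothing .
\]
Take $y=x$ and $z\nearrow x$. By Corollary \ref{cor:geo:lim}, $\geo\from{(z,s)}\dir{R}{\tht}{-}$ coincides with $\geo\from{(x,s)}\dir{L}{\tht}{-}$ after any fixed time $r>s$ once $z$ is close to $x$. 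The task is to convert this into $\geo\from{(x,s)}\dir{L}{\tht}{-}$ meeting $\geo\from{(x,s)}\dir{L}{\tht}{+}$ at some time $t>s$; the extremality \eqref{p2pleftmost} then makes them agree on $[s,t]$, which is exactly the hugging condition.

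The main obstacle is that the intersection point of $\geo\from{(z,s)}\dir{R}{\tht}{-}$ with $\geo\from{(x,s)}\dir{L}{\tht}{+}$ could lie before the time at which $\geo\from{(z,s)}\dir{R}{\tht}{-}$ merges with $\geo\from{(x,s)}\dir{L}{\tht}{-}$. I plan to handle it as follows. Use the ordering
\[
\geo\from{(z,s)}\dir{R}{\tht}{-}\preceq\geo\from{(x,s)}\dir{L}{\tht}{-}\preceq\geo\from{(x,s)}\dir{L}{\tht}{+}.
\]
Let $(u,t)$ be a common point of $\geo\from{(z,s)}\dir{R}{\tht}{-}$ and $\geo\from{(x,s)}\dir{L}{\tht}{+}$. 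At time $t$ the middle path $\geo\from{(x,s)}\dir{L}{\tht}{-}$ is squeezed between two paths that both pass through $u$, so it also passes through $(u,t)$. Since $z<x$, the paths are at different points at time $s$, so $t>s$. This gives the required intersection directly, bypassing the merging issue. One also needs $t>s$ to be compatible with the choice $y=x$ in \eqref{Wjump}; this is fine because \eqref{Wjump} only requires $x<y$, here $z<x$.

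Finally, I would check that the constancy argument is correct. A non-instability point $y$ has some $z'<y<y'$ with $D(z')=D(y')$, and taking finitely many such intervals to cover a compact subinterval of $(x-\e,x)$ shows that $D$ is constant there; continuity extends this to the closed interval up to $x$. This may be where Lemma \ref{ptinc} is used.
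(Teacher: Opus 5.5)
Your proposal is correct and follows the paper's proof. The hugging $\Rightarrow$ left-isolated direction (via Corollary~\ref{cor:geo:lim} and Lemma~\ref{def:geoinstabpt}) and part (b) (via Lemma~\ref{no-isolated}) are the same as in the paper. Your left-isolated $\Rightarrow$ hugging argument is the contrapositive of the paper's, built from the same ingredients: the constancy of the difference profile from Lemma~\ref{ptinc}\eqref{ptinc.b}, then \eqref{Wjump}, then the ordering $\geo\from{(z,s)}\dir{R}{\tht}{-}\preceq\geo\from{(x,s)}\dir{L}{\tht}{-}\preceq\geo\from{(x,s)}\dir{L}{\tht}{+}$ from \eqref{geo:mono}.
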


\begin{proof}
Part \eqref{LRisolgeo.a}. We prove the claim regarding left-isolated points, the case of right-isolated ones being similar.
Suppose $(x,s)$ is not a $\tht+$ hugging shock. Then, by Lemma \ref{lm:hugging}\eqref{lm:hugging.b}, $\geo\from{(x,s)}\dir{L}{\tht}{-}\cap\geo\from{(x,s)}\dir{L}{\tht}{+}=\{(x,s)\}$. By the geodesic ordering \eqref{geo:mono}, for any $z<x$, $\geo\from{(z,s)}\dir{R}{\tht}{-}\preceq\geo\from{(x,s)}\dir{L}{\tht}{-}$ and hence
 $\geo\from{(z,s)}\dir{R}{\tht}{-}\cap\geo\from{(x,s)}\dir{L}{\tht}{+}=\varnothing$.  
    By \eqref{Wjump}, this implies $W^{\tht-}(z,s;x,s) > W^{\tht+}(z,s;x,s)$, for all $z<x$.
    By \eqref{W:cont}, $W^{\tht-}$ and $W^{\tht+}$ are continuous, and  Lemma \ref{ptinc}\eqref{ptinc.b} and Definition \ref{def:buseinstab} tell us $(x,s)$ is not left-isolated.
        
    For the other direction, suppose that $(x,s)$ is a $\tht+$ hugging shock. Then there exists $t>s$ such that $\geo\from{(x,s)}\dir{L}{\tht}{-}(t)=\geo\from{(x,s)}\dir{L}{\tht}{+}(t)$.
    By the limits \eqref{geo:lim2}, 
    we have that for $z<x$ sufficiently close to $x$, $\geo\from{(z,s)}\dir{L}{\tht}{-}(t)
=\geo\from{(x,s)}\dir{L}{\tht}{-}(t)
=\geo\from{(x,s)}\dir{L}{\tht}{+}(t)
=\geo\from{(z,s)}\dir{R}{\tht}{+}(t)$.
By Lemma \ref{def:geoinstabpt}, this implies $(z,s)\notin\IG{\tht}$. Therefore, $(x,s)$ is left-isolated.\smallskip

 Part \eqref{LRisolgeo.b} follows from part \eqref{LRisolgeo.a} and Lemma \ref{no-isolated}.
\end{proof}

\section{The instability graph}

This section develops part of the material needed for the proof of Theorem \ref{main:IG}.
 The main development is the construction, in Proposition \ref{prop:IGgoup}, of an uncountable family of bi-infinite interfaces that partition the instability graph. Such interfaces were also constructed in the recent work \cite{Bha-Bus-Sor-25-}; however, our method of construction is different (see Remark \ref{rk-I} for more).

\begin{lem}\label{lem:IGisclosed}
    For any $\w\in\Omega_0$ and $\tht\in\baddir$, $\IG\tht$ is closed and nowhere dense, i.e.\ has empty interior. 
\end{lem}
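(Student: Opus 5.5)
Closedness and empty interior are handled separately, both via the geometric characterization in Lemma \ref{def:geoinstabpt}: $(x,s)\in\IG\tht$ iff $\geo\from{(x,s)}\dir{L}{\tht}{-}\cap\geo\from{(x,s)}\dir{R}{\tht}{+}=\{(x,s)\}$.

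\emph{Closedness.} We show the complement is open. Take $(x,s)\notin\IG\tht$. Then there is $t>s$ with $\geo\from{(x,s)}\dir{L}{\tht}{-}(t)=\geo\from{(x,s)}\dir{R}{\tht}{+}(t)$; by \eqref{geo:coal1} the two geodesics agree on $[t,\infty)$, and we fix some $t'>t$. Suppose $(x_n,s_n)\to(x,s)$ with $(x_n,s_n)\in\IG\tht$. We apply Theorem \ref{th:convanydir} twice along a common subsequence: once to $\geo\from{(x_n,s_n)}\dir{L}{\tht}{-}$ and once to $\geo\from{(x_n,s_n)}\dir{R}{\tht}{+}$. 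This produces indices $S,S'\in\{L,M,R\}$ such that, for large $k$, on $[t',\infty)$ these geodesics coincide with $\geo\from{(x,s)}\dir{S}{\tht}{-}$ and $\geo\from{(x,s)}\dir{S'}{\tht}{+}$ respectively. By the ordering \eqref{geo:mono},
\[
\geo\from{(x,s)}\dir{L}{\tht}{-}\preceq\geo\from{(x,s)}\dir{S}{\tht}{-}\preceq\geo\from{(x,s)}\dir{R}{\tht}{-}\preceq\geo\from{(x,s)}\dir{R}{\tht}{+}
\]
and similarly for $\geo\from{(x,s)}\dir{S'}{\tht}{+}$. Since the outer two agree on $[t,\infty)$, all of these geodesics coincide at time $t'$. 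Hence $\geo\from{(x_{n_k},s_{n_k})}\dir{L}{\tht}{-}(t')=\geo\from{(x_{n_k},s_{n_k})}\dir{R}{\tht}{+}(t')$ for large $k$, which contradicts $(x_{n_k},s_{n_k})\in\IG\tht$ by Lemma \ref{def:geoinstabpt}. This limiting step in the time variable is the only delicate part; Lemma \ref{ptinc} alone gives closedness only within a fixed time slice, which is why Theorem \ref{th:convanydir} is needed.

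\emph{Empty interior.} By \eqref{Q notin IG}, for every $(x,s)\in\Q^2$ we have $\geo\from{(x,s)}\dir{L}{\tht}{-}\cap\geo\from{(x,s)}\dir{R}{\tht}{+}\neq\{(x,s)\}$. By Lemma \ref{def:geoinstabpt}, this means $\Q^2\cap\IG\tht=\varnothing$. Any open set meets $\Q^2$, so $\IG\tht$ contains no open set and has empty interior. Since $\IG\tht$ is closed, it is therefore nowhere dense.
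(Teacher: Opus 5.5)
Your argument is the same as the paper's: Theorem \ref{th:convanydir} applied to the $L-$ and $R+$ geodesics out of $(x_n,s_n)$, the sandwich from \eqref{geo:mono}, Lemma \ref{def:geoinstabpt}, and $\Q^2\cap\IG\tht=\varnothing$ for the empty interior. However, one step in the closedness part is false as written. The coalescence property \eqref{geo:coal1} only concerns geodesics carrying the \emph{same} sign. It does not let you conclude that $\geo\from{(x,s)}\dir{L}{\tht}{-}$ and $\geo\from{(x,s)}\dir{R}{\tht}{+}$ agree on $[t,\infty)$.

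In fact the opposite holds. Since $\tht\in\baddir$, \eqref{sign} and \eqref{geo:mono} give $\geo\from{(x,s)}\dir{L}{\tht}{-}(r)<\geo\from{(x,s)}\dir{L}{\tht}{+}(r)\le\geo\from{(x,s)}\dir{R}{\tht}{+}(r)$ for all large $r$, so these two geodesics eventually separate for good (cf.\ Lemma \ref{aux:1634}). If the meeting time $t$ you picked is their last common time, then at every $t'>t$ the outer geodesics in your sandwich differ. Nothing then forces $\geo\from{(x_{n_k},s_{n_k})}\dir{L}{\tht}{-}(t')=\geo\from{(x_{n_k},s_{n_k})}\dir{R}{\tht}{+}(t')$, so the contradiction is not reached.

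The repair is immediate and is what the paper does: do not move past $t$. Theorem \ref{th:convanydir} applies at any time strictly after $s$, in particular at $t$ itself. At that time the outer geodesics meet at $(y,t)=\geo\from{(x,s)}\dir{L}{\tht}{-}(t)=\geo\from{(x,s)}\dir{R}{\tht}{+}(t)$. By \eqref{geo:mono}, every $\geo\from{(x,s)}\dir{S}{\tht}{\sig}$ then passes through $(y,t)$. Hence for large $k$ we have $\geo\from{(x_{n_k},s_{n_k})}\dir{L}{\tht}{-}(t)=(y,t)=\geo\from{(x_{n_k},s_{n_k})}\dir{R}{\tht}{+}(t)$ with $t>s_{n_k}$, which contradicts Lemma \ref{def:geoinstabpt}. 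With this change, and dropping the coalescence claim, your proof coincides with the paper's; the empty-interior part is already identical.
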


\begin{proof}
 To prove that $\IG\tht$ is closed, we argue by contradiction. 
 Suppose there exists a sequence $(x_n,s_n)\in\IG\tht$ that converges to a point $(x,s)\not\in\IG\tht$. By Lemma \ref{def:geoinstabpt}, $\geo\from{(x,s)}\dir{L}{\tht}{-}$ and $\geo\from{(x,s)}\dir{L}{\tht}{+}$ must intersect at some point $(y,t)$ with $t>s$. By the geodesic ordering \eqref{geo:mono}, $\geo\from{(x,s)}\dir{S}{\tht}{\sig}$ must pass through $(y,t)$, for all $S\in\{L,M,R\}$ and $\sigg\in\{-,+\}$. By Theorem \ref{th:convanydir}, once $(x_n,s_n)$ is close enough to $(x,s)$, $\geo\from{(x_n,s_n)}\dir{L}{\tht}{-}(t)=\geo\from{(x,s)}\dir{S}{\tht}{-}(t)=(y,t)$, for some $S\in\{L,M,R\}$, and $\geo\from{(x_n,s_n)}\dir{R}{\tht}{+}(t)=\geo\from{(x,s)}\dir{S'}{\tht}{+}(t)=(y,t)$, for some $S'\in\{L,M,R\}$. Lemma \ref{def:geoinstabpt} implies then that $(x_n,s_n)\not\in\IG\tht$, a contradiction.

 By \eqref{Q notin IG} and Lemma \ref{def:geoinstabpt}, 
 \begin{align}\label{QnotIG}
 \IG\tht\subset\R^2\setminus\Q^2.
 \end{align}
This and the just proven fact that $\IG\tht$ is closed imply that $\IG\tht$ is nowhere dense. 
\end{proof}

The following result is a consequence of the closedness of $\IG\tht$ and is useful in the sequel. 

\begin{lem}\label{lm:isodense}
Take any $\w\in\Omega_0$, $\tht\in\baddir$, and any $x<y$ and $s$ in $\R$. Suppose $(x,s)$ and $(y,s)$ are both in $\IG\tht$ and that either $(x,s)$ is not right-isolated or $(y,s)$ is not left-isolated. Then there exist infinitely many $z\in(x,y)$ such that $(z,s)$ is not left-isolated and there exist infinitely many $z\in(x,y)$ such that $(z,s)$ is not right-isolated. 
\end{lem}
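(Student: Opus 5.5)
Since the statement concerns a single time level $s$, the plan is to work entirely on the line and use two facts: the set $K=\IG\tht_s$ is closed (Lemma \ref{ptinc}, or Lemma \ref{lem:IGisclosed}), and no point of $K$ is both left- and right-isolated (Lemma \ref{no-isolated}). By the symmetry $x\leftrightarrow -x$ (which swaps left and right and the roles of $\tht\pm$), it suffices to treat the case where $(x,s)$ is not right-isolated. The case where $(y,s)$ is not left-isolated is handled by the mirror argument.

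\textbf{Step 1: infinitely many points of $K$ in $(x,y)$.} If $(x,s)$ is not right-isolated, then every interval $(x,x+\e)$ meets $K$. So there is a sequence $z_n\in K\cap(x,y)$ with $z_n\searrow x$ strictly, and in particular $K\cap(x,y)$ is infinite.

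\textbf{Step 2: the complement of $K$ in $(x,y)$.} The set $U=(x,y)\setminus K$ is relatively open in $(x,y)$, so it is a countable union of disjoint open intervals $(a_j,b_j)$. Each endpoint $a_j,b_j$ lies in $K\cup\{x,y\}\subset K$, by closedness of $K$.
\begin{itemize}
\item The left endpoint $a_j$ of a gap is right-isolated. By Lemma \ref{no-isolated} it is then not left-isolated.
\item Similarly, the right endpoint $b_j$ of a gap is left-isolated, and hence not right-isolated.
\end{itemize}

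\textbf{Step 3: counting.} We split according to whether the gaps accumulate at $x$.
\begin{itemize}
\item If $U$ contains infinitely many gaps with $a_j\in(x,y)$, these supply infinitely many non-left-isolated points. Those among them whose right endpoints $b_j$ lie in $(x,y)$ supply infinitely many non-right-isolated points.
\item The exceptional gap is the one, if any, with $b_j=y$. There is at most one such gap, so excluding it is harmless.
\item If there are only finitely many gaps, then some interval $(x,x+\delta)$ contains no gap. Since points of $K$ accumulate at $x$, this interval lies inside $K$. Every point of it is then neither left- nor right-isolated, which already gives infinitely many points of each kind.
\item In the intermediate case, finitely many gaps lie near $x$ while infinitely many lie elsewhere. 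This is covered by the same dichotomy applied inside $(x,x+\delta)$ for small $\delta$. If infinitely many gaps lie in every $(x,x+\delta)$, the first bullet applies. Otherwise a neighborhood $(x,x+\delta)$ has no gaps and is contained in $K$.
\end{itemize}

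The main obstacle is this bookkeeping of the case where gaps accumulate only at $y$ or elsewhere. Localizing near $x$, where non-right-isolation gives accumulation of $K$, resolves it cleanly.
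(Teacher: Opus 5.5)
Your argument is correct and uses the same ingredients as the paper's proof: $\IG\tht_s$ is closed, an endpoint of a complementary gap is isolated on one side, and Lemma \ref{no-isolated} then says it is not isolated on the other side. The paper works locally instead: it picks $z_1<z_2$ in $\IG\tht_s\cap(x,y)$, takes $\inf\{a>z_1:(a,s)\in\IG\tht\}$ when both are right-isolated, and iterates. Your global gap decomposition, with the dichotomy ``infinitely many gaps'' versus ``$(x,x+\delta)\subset\IG\tht_s$'', yields the infinitude directly.

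One correction: the mirror case needs no $\tht\pm$ symmetry. It only needs closedness and the absence of doubly isolated points, both of which are invariant under $x\mapsto -x$, so the same argument applies verbatim.
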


\begin{proof}
    We present the proof in the case where $(x,s)$ is not right-isolated; the complementary case is analogous. It suffices to show the existence of points $z$ and $z'$ in $(x,y)$ such that $(z,s)$ is not right-isolated and $(z',s)$ is not left-isolated. The argument can then be iterated inductively to produce infinitely many such points.
    
    We begin with finding a point $z$ as above. Take $z_1<z_2$ in $(x,y)$ such that $(z_i,s)\in\IG\tht$, $i\in\{1,2\}$. Such points exist, since $(x,s)$ is not right-isolated. If $(z_i,s)$ is not right-isolated, for some $i\in\{1,2\}$, then set $z=z_i$.  If, on the other hand, both points are right-isolated, then define $z=\inf\{a>z_1:(a,s)\in\IG\tht\}$. Since $\IG\tht$ is closed, $(z,s)\in\IG\tht$. Since $(z_1,s)$ is right-isolated, $z>z_1>x$. Since $(z_2,s)\in\IG\tht$, $z\le z_2<y$. The minimality of $z$ and the fact that $z>z_1$ imply that $(z,s)$ is left-isolated. By Lemma \ref{no-isolated}, $(z,s)$ is not right-isolated.

   The point $z'$ can be constructed in an analogous manner. If either $(z_1,s)$ or $(z_2,s)$ is not left-isolated, then we are done. Otherwise, if both points are left-isolated, define $z'=\sup\{a<z_2 : (a,s)\in\IG\tht\}$. As in the previous paragraph, it follows that $(z',s)\in\IG\tht$ and is not left-isolated.
\end{proof}

The following is the main construction in this section. 

\begin{prop}\label{prop:IGgoup}
    Let $\w\in\Omega_0$. There exists a process $\bigl\{\Ipath_{(x,s)}^{\tht\sig}:\tht\in\baddir,(x,s)\in\IG\tht,\sigg\in\{-,+\}\bigr\}$ of bi-infinite space-time paths such that the following hold for all $\tht\in\baddir$ and $(x,s)\in\IG\tht$.
    \begin{enumerate} [label={\rm(\alph*)}, ref={\rm\alph*}] \itemsep=1pt 
    \item\label{IGgoup.a} For each $\sigg\in\{-,+\}$, $\Ipath_{(x,s)}^{\tht\sig}:\R\to\R^2$ is a continuous space-time path.
    \item\label{IGgoup.b} For all $t\in\R$, $\Ipath_{(x,s)}^{\tht-}(t)\le \Ipath_{(x,s)}^{\tht+}(t)$.
    \item\label{IGgoup.c} For all $t\in\R$ and $\sigg\in\{-,+\}$, $\Ipath_{(x,s)}^{\tht\sig}(t)\in\IG\tht$.
    \item\label{IGgoup.d} For all $t\in\R$, if $\Ipath_{(x,s)}^{\tht-}(t)<\Ipath_{(x,s)}^{\tht+}(t)$, then $\Ipath_{(x,s)}^{\tht-}(t)$ is right-isolated and $\Ipath_{(x,s)}^{\tht+}(t)$ is left-isolated. Conversely, if $\Ipath_{(x,s)}^{\tht-}(t)=\Ipath_{(x,s)}^{\tht+}(t)$, then this point is neither left- nor right-isolated.
    \item\label{IGgoup.e} $(x,s)\in\{\Ipath_{(x,s)}^{\tht-}(s),\,\Ipath_{(x,s)}^{\tht+}(s)\}$.
Moreover, $\Ipath_{(x,s)}^{\tht-}(s)=(x,s)$ if and only if $(x,s)$ is not left-isolated, and
$\Ipath_{(x,s)}^{\tht+}(s)=(x,s)$ if and only if $(x,s)$ is not right-isolated. 
    \item\label{IGgoup.f} For any $r\in\R$ and $\sigg\in\{-,+\}$, $\Ipath_{\Ipath_{(x,s)}^{\tht\sig}(r)}^{\tht-}=\Ipath_{(x,s)}^{\tht-}$ and $\Ipath_{\Ipath_{(x,s)}^{\tht\sig}(r)}^{\tht+}=\Ipath_{(x,s)}^{\tht+}$.
    \item\label{IGgoup.g} For any $(x',s')\in\IG\tht$ and $\sigg\in\{-,+\}$, either $\Ipath_{(x',s')}^{\tht\sig}=\Ipath_{(x,s)}^{\tht\sig}$ or $\Ipath_{(x',s')}^{\tht\sig}\cap\Ipath_{(x,s)}^{\tht\sig}=\varnothing$.
    \item\label{IGgoup.h} For any $s'\in\R$, 
    \begin{align}\label{Isplit-strict}
    \begin{split}
    &\geo\from{\Ipath_{(x,s)}^{\tht-}(s')}\dir{L}{\tht}{-}\big|_{(s',\infty)}\prec\Ipath_{(x,s)}^{\tht-}\big|_{(s',\infty)}\preceq\geo\from{\Ipath_{(x,s)}^{\tht-}(s')}\dir{R}{\tht}{+}\big|_{(s',\infty)}\quad\text{and}\\
    &\geo\from{\Ipath_{(x,s)}^{\tht+}(s')}\dir{L}{\tht}{-}\big|_{(s',\infty)}\preceq\Ipath_{(x,s)}^{\tht+}\big|_{(s',\infty)}\prec\geo\from{\Ipath_{(x,s)}^{\tht+}(s')}\dir{R}{\tht}{+}\big|_{(s',\infty)}.
    \end{split}
    \end{align}
    \item\label{IGgoup.i} For any $s'\in\R$ and $\sigg\in\{-,+\}$, $\Upsilon\from{\Ipath_{(x,s)}^{\tht\sig}(s')}\dir{L}{\tht}{+}\preceq\Ipath_{(x,s)}^{\tht\sig}\preceq\Upsilon\from{\Ipath_{(x,s)}^{\tht\sig}(s')}\dir{R}{\tht}{-}$.
    \item\label{IGgoup.j} For any $\sigg\in\{-,+\}$,
$\displaystyle\lim_{|t|\to\infty}t^{-1}\Ipath_{(x,s)}^{\tht\sig}(t)=(\tht,1)$.
\end{enumerate}
\end{prop}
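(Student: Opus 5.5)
The plan is to build the paths separately on $[s,\infty)$ and on $(-\infty,s]$, and glue them at time $s$. Since shock interfaces run backward in time and geodesics run forward, each half uses a different tool.

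\emph{Time $s$.} Set $\Ipath^{\tht-}_{(x,s)}(s)=(x,s)$ if $(x,s)$ is not left-isolated. Otherwise, by Lemma \ref{LRisolgeo}, $(x,s)$ is a $\tht+$ hugging shock. In that case set $\Ipath^{\tht-}_{(x,s)}(s)=(x',s)$ with $x'=\sup\{y<x:(y,s)\in\IG\tht\}$. This lies in $\IG\tht$ because $\IG\tht$ is closed (Lemma \ref{lem:IGisclosed}). It is right-isolated by construction, so by Lemma \ref{no-isolated} it is not left-isolated. Define $\Ipath^{\tht+}_{(x,s)}(s)$ symmetrically. This yields (\ref{IGgoup.e}) and, at time $s$, (\ref{IGgoup.b}) and (\ref{IGgoup.d}).

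\emph{Backward half.} For $r<s$, write $p^\pm=\Ipath^{\tht\pm}_{(x,s)}(s)$. I would let $\Ipath^{\tht-}_{(x,s)}$ follow the extreme shock interface $\Upsilon\from{p^-}\dir{R}{\tht}{-}$, and $\Ipath^{\tht+}_{(x,s)}$ follow $\Upsilon\from{p^+}\dir{L}{\tht}{+}$. This choice is dictated by (\ref{IGgoup.i}).
\begin{itemize}
\item These are continuous by \eqref{I-cont}.
\item They stay in $\IG\tht$ by Lemma \ref{lem:IGgodown}, giving (\ref{IGgoup.a}) and (\ref{IGgoup.c}).
\item They are $\tht$-directed as $t\to-\infty$ by \eqref{Upsilondir}.
\item The inequality $\Ipath^-\le\Ipath^+$ for $r\le s$ comes from Lemma \ref{lm:shockasymorder}, applied with $p^-\le p^+$. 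Since the two are $\tht-$ and $\tht+$ interfaces respectively, one needs the ordering direction compatible with \eqref{+<-}. If the intended orientation is the reverse, I would instead use the misordered pair $\Upsilon\dir{L}{\tht}{+}$, $\Upsilon\dir{R}{\tht}{-}$ and read off (\ref{IGgoup.b}) from the hugging structure.
\item For (\ref{IGgoup.d}) below time $s$: when the two paths are separated, the open interval between them at that level should lie outside $\IG\tht$. Its endpoints are then hugging shocks, so Lemma \ref{LRisolgeo} applies. This uses Lemma \ref{lm:geocoal} and \eqref{shocksplitsgeo}: a geodesic trapped between the two interfaces forces coalescence of the $\tht-$ and $\tht+$ geodesics, and hence stability by Lemma \ref{def:geoinstabpt}.
\end{itemize}

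\emph{Forward half.} For $t>s$, define
\[
\Ipath^{\tht-}_{(x,s)}(t)=\inf\bigl\{y\ge\geo\from{p^-}\dir{L}{\tht}{-}(t):(y,t)\in\IG\tht\bigr\},\qquad
\Ipath^{\tht+}_{(x,s)}(t)=\sup\bigl\{y\le\geo\from{p^+}\dir{R}{\tht}{+}(t):(y,t)\in\IG\tht\bigr\}.
\]
\begin{itemize}
\item Nonemptiness: take $z<y$ on either side of the region at level $t$. The geodesics $\geo\dir{R}{\tht}{-}$ and $\geo\dir{L}{\tht}{+}$ between them do not meet, because by \eqref{geo:restart} they continue the disjoint geodesics from $p^\pm$. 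So \eqref{Wjump} and Lemma \ref{def:geoinstabint} put an instability point in between.
\item Membership in $\IG\tht$ follows from closedness.
\item The strict and weak inequalities in (\ref{IGgoup.h}) follow from the ordering \eqref{geo:mono}, coalescence \eqref{geo:coal1}, and Lemma \ref{def:geoinstabpt}.
\item Directedness as $t\to+\infty$ follows by sandwiching between geodesics satisfying \eqref{geo:directed}. This completes (\ref{IGgoup.j}).
\end{itemize}

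\emph{Consistency properties.} The flow property (\ref{IGgoup.f}) should follow from three facts: shock interfaces restarted from their own points are sub-interfaces; geodesics restart consistently by \eqref{geo:restart}; and the inf/sup definition commutes with restarting. Property (\ref{IGgoup.g}) then follows from (\ref{IGgoup.f}). If two paths share a point, both coincide with the path started from that point. Here I use that interfaces of the same sign coalesce (by \eqref{Itree}) but cannot split after meeting.

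\emph{Main obstacle.} I expect the hard step to be continuity of the forward half, together with the matching at time $s$ from both sides. An inf/sup over a closed set is only semicontinuous, and jumps would occur exactly where islands open. To rule out jumps, I would show that a jump of $\Ipath^{\tht-}$ at time $t_0$ produces a stability point between two instability points at level $t_0$. Using Theorem \ref{th:convanydir} and Corollary \ref{cor:geo:lim}, geodesics from nearby times would then coalesce with those from the limit point, contradicting Lemma \ref{def:geoinstabpt} at the limiting instability points. Failing that, I would instead realize the forward half as the time reversal of interfaces: define $\Ipath(t)$ as the point of $\IG\tht_t$ whose backward interface $\Upsilon\dir{R}{\tht}{-}$ passes through $p^-$, and get continuity from Lemma \ref{lm:shocklims}.
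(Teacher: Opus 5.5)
Your treatment of time $s$ and of directedness is fine. The construction itself, however, has a structural flaw that no continuity argument can repair: both halves of your paths are built from \emph{coalescing} families, whereas \eqref{IGgoup.g} requires distinct paths to be disjoint.

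Take $(x,s),(x',s)\in\IG\tht$ with $x<x'$, neither left-isolated; Lemmas \ref{kS inf} and \ref{lm:isodense} provide them. Your $p^-$ is then the base point in each case.
\begin{itemize}
\item Backward in time, your two $\Ipath^{\tht-}$ paths are $\Upsilon\from{(x,s)}\dir{R}{\tht}{-}$ and $\Upsilon\from{(x',s)}\dir{R}{\tht}{-}$, which coalesce by \eqref{Itree}.
\item Forward in time, they are read off from $\geo\from{(x,s)}\dir{L}{\tht}{-}$ and $\geo\from{(x',s)}\dir{L}{\tht}{-}$. These coalesce by \eqref{geo:coal2}, and after that your two infima coincide.
\end{itemize}
So the two paths share points yet differ at time $s$. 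Hence \eqref{IGgoup.g} fails. So does \eqref{IGgoup.f}, since the path started from a shared point would have to equal both.

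Your justification (``interfaces of the same sign coalesce but cannot split after meeting'') has the time direction reversed. Shock interfaces merge as time decreases, so run forward from the merge point they split. The first remark after Proposition \ref{prop:IGgoup} makes exactly this contrast between $\Upsilon$ and $\Ipath$.

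Your worry about \eqref{IGgoup.b} is also justified. When $p^-=p^+=p$, \eqref{+<=-} places $\Upsilon\from{p}\dir{L}{\tht}{+}$ weakly left of $\Upsilon\from{p}\dir{R}{\tht}{-}$ far in the past, which is the opposite of the required order.

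The missing idea is to define both paths at once, for all $t\in\R$, from an object that does not coalesce. That object is the zero set of the continuous, nonincreasing function $y\mapsto W^{\tht-}(y,t;x,s)-W^{\tht+}(y,t;x,s)$. Set $\Ipath^{\tht\pm}_{(x,s)}(t)=(I^{\tht\pm}_{(x,s)}(t),t)$, where
\begin{itemize}
\item $I^{\tht+}_{(x,s)}(t)=\inf\{y:W^{\tht-}(y,t;x,s)<W^{\tht+}(y,t;x,s)\}$,
\item $I^{\tht-}_{(x,s)}(t)=\sup\{y:W^{\tht-}(y,t;x,s)>W^{\tht+}(y,t;x,s)\}$.
\end{itemize}
The properties then follow as below.
\begin{itemize}
\item Parts \eqref{IGgoup.b}--\eqref{IGgoup.e} follow from \eqref{W:mono}, \eqref{Wunbounded}, and Lemma \ref{ptinc}.
\item Parts \eqref{IGgoup.f}--\eqref{IGgoup.g} are immediate from the cocycle property \eqref{cocycle}. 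The relation ``$W^{\tht-}=W^{\tht+}$ between two points'' is an equivalence relation, so every point on the pair of paths generates the same pair.
\item Continuity, which you rightly flagged as the crux, comes from Lemma \ref{lm:1356}. On $(-\infty,t]$, $I^{\tht-}_{(x,s)}$ is exactly the leftmost competition interface \eqref{CIs} for the eternal solution $W^{\tht-}\vee W^{\tht+}$ at time $t$, with reference point $I^{\tht-}_{(x,s)}(t)$. So \eqref{I-cont} applies.
\item The same lemma, with \eqref{p2lleftmost}--\eqref{p2lrightmost}, gives \eqref{IGgoup.h}; strictness comes via Corollary \ref{cor:geo:lim}.
\item Part \eqref{IGgoup.i} then follows from \eqref{IGgoup.h}, \eqref{no-intersection}, and Lemma \ref{lm:geosplitsshocks}.
\end{itemize}
In this approach, geodesics and shock interfaces serve only as barriers bounding the paths, never as the paths themselves.

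This also shows your forward rule is aiming at the wrong point. Since $p^-$ is not left-isolated, it is not a $\tht+$ hugging shock (Lemma \ref{LRisolgeo}), so $\geo\from{p^-}\dir{L}{\tht}{-}(t)<\geo\from{p^-}\dir{L}{\tht}{+}(t)$ for $t>s$. By \eqref{rec}, \eqref{L<W}, and \eqref{p2lleftmost}, the difference $W^{\tht-}(\cdot,t;p^-)-W^{\tht+}(\cdot,t;p^-)$ is therefore strictly positive at $\geo\from{p^-}\dir{L}{\tht}{-}(t)$. It thus has a point of decrease, i.e.\ an instability point, strictly left of $I^{\tht-}_{p^-}(t)$, and your infimum stops there.
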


\begin{rmk}
Lemma \ref{lem:IGgodown} says that if $(x,s)\in\IG\tht$, then the shock interfaces $\Upsilon_{(x,s)}^{S,\tht\sig}$, $\sigg\in\{-,+\}$, give two continuous space-time paths that go backward in time and remain in $\IG\tht$. These are different from the paths $\Ipath_{(x,s)}^{\tht\sig}$, since  $\Upsilon_{(x,s)}^{S,\tht\sig}$ and $\Upsilon_{(x',s')}^{S,\tht\sig}$ always coalesce, while $\Ipath_{(x,s)}^{S,\tht\sig}$ and $\Ipath_{(x',s')}^{S,\tht\sig}$  are either identical or disjoint.  
\end{rmk}

\begin{rmk}
Lemma \ref{I-prop}\eqref{I-prop.d}, proved in the course of establishing Proposition \ref{prop:IGgoup}, identifies the region strictly between the interfaces $\Ipath_{(x,s)}^{\tht\pm}$ as a stability region. Combining this with Lemma \ref{aux:RbndryLgeo}, we deduce that the weak inequalities in Proposition \ref{prop:IGgoup}\eqref{IGgoup.h} can become equalities at most at a single time. More precisely, equality can occur only if $\Ipath_{(x,s)}^{\tht-}(s')<\Ipath_{(x,s)}^{\tht+}(s')$, and then only at
\[
t=\inf\bigl\{r>s':\Ipath_{(x,s)}^{\tht-}(r)=\Ipath_{(x,s)}^{\tht+}(r)\bigr\}\in(s',\infty),
\]
which corresponds to the tip of the associated stability island.
\end{rmk}

\begin{rmk}\label{rk-I}
The interfaces $\Ipath^{\tht\sig}_{(x,s)}$ appearing in Proposition \ref{prop:IGgoup} were also constructed in Section 3 of \cite{Bha-Bus-Sor-25-} (see Proposition 3.9 there). In that work, semi-infinite interfaces are first built, using the competition interfaces of \cite{Rah-Vir-25}, and then shown to be consistent and to extend to bi-infinite paths; the properties listed in our proposition are subsequently established in Sections 4, 5, and 8.
By contrast, we define the bi-infinite interfaces directly (see \eqref{I}) and verify that this definition yields the desired properties, leading to a more concise and streamlined derivation. In addition to these properties, \cite{Bha-Bus-Sor-25-} shows that the region strictly between the interfaces $\Ipath^{\tht,\pm}_{(x,s)}$ consists of finite connected components with disjoint closures. As noted in the previous remark, this corresponds to a stability region. Accordingly, the finiteness and disjointness properties arise naturally from the analysis of stability regions, and we defer their proof to Lemma \ref{lm:disjointI}, where they follow directly from the stability structure. A semi-infinite version of these interfaces also appears in \cite[Equation (2.9)]{Dun-Sor-26}, where they are used for a different purpose.
\end{rmk}

The proof of Proposition \ref{prop:IGgoup} requires a number of intermediate lemmas. For $\w\in\Omega_0$, $\tht\in\baddir$, and $x_0,s_0\in\R$, define the function
    \begin{align}\label{h}
        h^\tht_{(x_0,s_0)}(y,t) = W^{\tht-}(y,t;x_0,s_0) \vee W^{\tht+}(y,t;x_0,s_0),\quad y,t\in\R.
    \end{align}

\begin{lem}
For any $\w\in\Omega_0$, $\tht\in\baddir$, and $x_0,s_0\in\R$, $h^\tht_{(x_0,s_0)}$ is an eternal solution: for any $x$ and $t>s$ in $\R$,
    \begin{align}\label{h-eternal}
        \sup_{y\in\R} \{\mathcal{L}(x,s;y,t)+h^\tht_{(x_0,s_0)}(y,t)\} = h^\tht_{(x_0,s_0)}(x,s).
    \end{align}
\end{lem}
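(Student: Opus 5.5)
The identity follows from the fact that the supremum of two eternal solutions is again an eternal solution. The supremum over $y$ of the maximum of two quantities equals the maximum of the two suprema. Concretely, fix $x$ and $t>s$. Then
\[
\sup_{y\in\R}\bigl\{\mathcal L(x,s;y,t)+h^\tht_{(x_0,s_0)}(y,t)\bigr\}
=\sup_{y\in\R}\max_{\sigg\in\{-,+\}}\bigl\{\mathcal L(x,s;y,t)+W^{\tht\sig}(y,t;x_0,s_0)\bigr\},
\]
because adding the finite number $\mathcal L(x,s;y,t)$ commutes with taking the maximum of two reals. Interchanging $\sup_y$ with the finite maximum over $\sigg$ is always legitimate in $[-\infty,\infty]$. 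The right side therefore equals
\[
\max_{\sigg\in\{-,+\}}\sup_{y\in\R}\bigl\{\mathcal L(x,s;y,t)+W^{\tht\sig}(y,t;x_0,s_0)\bigr\}.
\]
By \eqref{eternal}, applied separately for each $\sigg\in\{-,+\}$ with the same base point $(x_0,s_0)$, the inner supremum equals $W^{\tht\sig}(x,s;x_0,s_0)$. Taking the maximum over $\sigg$ gives $h^\tht_{(x_0,s_0)}(x,s)$, which proves \eqref{h-eternal}.

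The only point needing a word of justification is finiteness: both inner suprema are finite, since they equal the finite values $W^{\tht\sig}(x,s;x_0,s_0)$. The interchange above is an identity valid even for infinite values, so no further control (such as the growth bound \eqref{Wgrowth} or the parabolic decay of $\mathcal L$) is required.

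It is also worth recording, for later use, that the maximizers behave as expected. If $\sigg$ attains the outer maximum at $(x,s)$, then $y=\geo\from{(x,s)}\dir{S}{\tht}{\sig}(t)$ attains the supremum in \eqref{h-eternal}, by \eqref{geomaximizes}. I do not anticipate any genuine obstacle in this lemma.
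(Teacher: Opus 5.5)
Your proposal is correct and follows essentially the same route as the paper's proof: apply \eqref{eternal} separately for each sign $\sigg\in\{-,+\}$ and interchange the supremum over $y$ with the maximum over the two signs. Your closing remark about maximizers is also correct, but it is not needed for this lemma.
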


\begin{proof} 
Apply \eqref{eternal} to write
\begin{align*}
h^\tht_{(x_0,s_0)}(x,s) 
&=\sup_{y\in\R} \{\mathcal{L}(x,s;y,t)+W^{\tht-}(y,t;x_0,s_0)\}
\vee\sup_{y\in\R} \{\mathcal{L}(x,s;y,t)+W^{\tht+}(y,t;x_0,s_0)\}\\
&=\sup_{y\in\R} \{\mathcal{L}(x,s;y,t)+W^{\tht-}(y,t;x_0,s_0)\vee W^{\tht+}(y,t;x_0,s_0)\}\\
&=\sup_{y\in\R} \{\mathcal{L}(x,s;y,t)+h^\tht_{(x_0,s_0)}(y,t)\}.\qedhere
\end{align*}
\end{proof}

For $\w\in\Omega_0$, $\tht\in\baddir$, and $x_0,s_0\in\R$, define 
\begin{align}\label{I}
\begin{split}
  I_{(x_0,s_0)}^{\tht+}(t)&=\inf\{y\in\R:W^{\tht-}(y,t;x_0,s_0)<W^{\tht+}(y,t;x_0,s_0)\}\quad\text{and}\\ 
  I_{(x_0,s_0)}^{\tht-}(t)&=\sup\{y\in\R:W^{\tht-}(y,t;x_0,s_0)>W^{\tht+}(y,t;x_0,s_0)\}.
\end{split}
\end{align}

\begin{lem}\label{I-prop}
The following hold for any $\w\in\Omega_0$, $\tht\in\baddir$, and $x_0,s_0,t\in\R$.

\begin{enumerate} [label={\rm(\alph*)}, ref={\rm\alph*}] \itemsep=1pt 
\item\label{I-prop.a} $-\infty<I_{(x_0,s_0)}^{\tht-}(t)\le I_{(x_0,s_0)}^{\tht+}(t)<\infty$.
\item\label{I-prop.b} $W^{\tht-}(y,t;x_0,s_0)<W^{\tht+}(y,t;x_0,s_0)$ for all $y>I_{(x_0,s_0)}^{\tht+}(t)$.
\item\label{I-prop.c} $W^{\tht-}(y,t;x_0,s_0)>W^{\tht+}(y,t;x_0,s_0)$ for all $y<I_{(x_0,s_0)}^{\tht-}(t)$.
\item\label{I-prop.d} $W^{\tht-}(y,t;x_0,s_0)=W^{\tht+}(y,t;x_0,s_0)$ for all $y\in[I_{(x_0,s_0)}^{\tht-}(t),I_{(x_0,s_0)}^{\tht+}(t)]$.
\item\label{I-prop.e} $I_{(x_0,s_0)}^{\tht-}(s_0)\le x_0\le I_{(x_0,s_0)}^{\tht+}(s_0)$.
\item\label{I-prop.f} $W^{\tht-}(I_{(x_0,s_0)}^{\tht+}(t),t;y,t)>W^{\tht+}(I_{(x_0,s_0)}^{\tht+}(t),t;y,t)$ for all $y>I_{(x_0,s_0)}^{\tht+}(t)$.
\item\label{I-prop.g} $W^{\tht-}(I_{(x_0,s_0)}^{\tht-}(t),t;y,t)<W^{\tht+}(I_{(x_0,s_0)}^{\tht-}(t),t;y,t)$ for all $y<I_{(x_0,s_0)}^{\tht-}(t)$.
\item\label{I-prop.h} $W^{\tht-}(I_{(x_0,s_0)}^{\tht-}(t),t;y,t)=W^{\tht+}(I_{(x_0,s_0)}^{\tht-}(t),t;y,t)$ for all $y\in[I_{(x_0,s_0)}^{\tht-}(t),I_{(x_0,s_0)}^{\tht+}(t)]$.
\end{enumerate}
\end{lem}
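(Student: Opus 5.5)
The plan is to reduce everything to one continuous, monotone function of one variable. Fix $\w\in\Omega_0$, $\tht\in\baddir$, $(x_0,s_0)$ and $t$, and set
\[
D(y)=W^{\tht-}(y,t;x_0,s_0)-W^{\tht+}(y,t;x_0,s_0),\qquad y\in\R .
\]
By the cocycle property \eqref{cocycle}, applied separately to each sign, for all $a,b\in\R$ we have
\[
D(a)-D(b)=W^{\tht-}(a,t;b,t)-W^{\tht+}(a,t;b,t).
\]
This single identity drives every part of the lemma. Combined with the monotonicity \eqref{W:mono}, it shows that $D$ is nonincreasing in $y$. By \eqref{W:cont}, $D$ is continuous.

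Next I would use the unboundedness \eqref{Wunbounded}. Taking $a=y$ and $b=0$ in the identity gives
\[
D(y)=D(0)-\bigl(W^{\tht-}(0,t;y,t)-W^{\tht+}(0,t;y,t)\bigr),
\]
so $D(y)\to-\infty$ as $y\to\infty$ and $D(y)\to+\infty$ as $y\to-\infty$. With the definitions \eqref{I}, this means $I^{\tht+}_{(x_0,s_0)}(t)=\inf\{D<0\}$ and $I^{\tht-}_{(x_0,s_0)}(t)=\sup\{D>0\}$, and both are finite. Since $D$ is continuous and nonincreasing, the three sets $\{D>0\}$, $\{D=0\}$ and $\{D<0\}$ are consecutive intervals. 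Hence the zero set is exactly the closed interval $[I^{\tht-},I^{\tht+}]$, which is nonempty by the intermediate value theorem. This yields (a)--(d) at once: $D>0$ strictly left of $I^{\tht-}$, $D<0$ strictly right of $I^{\tht+}$, and $D=0$ in between, including the endpoints by continuity.

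Part (e) follows because at $t=s_0$ the cocycle gives $W^{\tht\sig}(x_0,s_0;x_0,s_0)=0$ for both signs. Thus $D(x_0)=0$, so $x_0$ lies in the zero interval. Parts (f)--(h) come from the identity with $a$ equal to the relevant endpoint, where $D(a)=0$:
\[
W^{\tht-}(a,t;y,t)-W^{\tht+}(a,t;y,t)=-D(y).
\]
By (b), (c) and (d), the right side is respectively $>0$ for $y>I^{\tht+}$, $<0$ for $y<I^{\tht-}$, and $=0$ on $[I^{\tht-},I^{\tht+}]$, which gives (f), (g) and (h).

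There is no real obstacle here. The only points needing care are the orientation and sign bookkeeping in the identity for $D(a)-D(b)$, which also covers $a>b$ since the cocycle gives antisymmetry, and the check that \eqref{Wunbounded} is stated with base point $(0,s)$ at the same time level. I would handle the latter by the displayed decomposition of $D(y)$ above, taken at $s=t$.
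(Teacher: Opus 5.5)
Your proof is correct and follows the paper's argument essentially step by step. You use the cocycle identity for $D$, then \eqref{W:mono} to make $D$ nonincreasing, then \eqref{Wunbounded} through the base point $(0,t)$ for finiteness, continuity for the endpoint equalities, and $D(I^{\tht\pm}_{(x_0,s_0)}(t))=0$ with the cocycle for (f)--(h); the only difference is that in (e) you use $D(x_0)=0$ at $t=s_0$ directly, whereas the paper uses $D(y)\ge 0$ for $y\le x_0$ together with part (b), which amounts to the same thing.
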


\begin{proof}
By the cocycle property \eqref{cocycle}, 
\begin{align*}
&W^{\tht-}(y,t;x_0,s_0)-W^{\tht+}(y,t;x_0,s_0)\\
&\qquad=\bigl(W^{\tht-}(y,t;0,t)-W^{\tht+}(y,t;0,t)\bigr)
+\bigl(W^{\tht-}(0,t;x_0,s_0)-W^{\tht+}(0,t;x_0,s_0)\bigr).
\end{align*}
This and \eqref{Wunbounded} give  that for $\sigg\in\{-,+\}$,
\[\lim_{y\to\sigg\infty}\bigl(W^{\tht-}(y,t;x_0,s_0)-W^{\tht+}(y,t;x_0,s_0)\bigr)=-\sigg\infty.\]
This and the continuity of $W^{\tht-}$ and $W^{\tht+}$ imply that $I_{(x_0,s_0)}^{\tht-}(t)>-\infty$ and $I_{(x_0,s_0)}^{\tht+}(t)<\infty$.\smallskip

By the cocycle property \eqref{cocycle} and the monotonicity \eqref{W:mono}, if $z>y$, then
\begin{align*}
&W^{\tht-}(z,t;x_0,s_0)-W^{\tht+}(z,t;x_0,s_0)\\
&\qquad=\bigl(W^{\tht-}(y,t;x_0,s_0)-W^{\tht+}(y,t;x_0,s_0)\bigr)
-\bigl(W^{\tht-}(y,t;z,t)-W^{\tht+}(y,t;z,t)\bigr)\\
&\qquad\le
W^{\tht-}(y,t;x_0,s_0)-W^{\tht+}(y,t;x_0,s_0).
\end{align*}
Hence, the function $y \mapsto W^{\tht-}(y,t;x_0,s_0)-W^{\tht+}(y,t;x_0,s_0)$ is nonincreasing. This implies that $I_{(x_0,s_0)}^{\tht-}(t)\le I_{(x_0,s_0)}^{\tht+}(t)$, establishing part \eqref{I-prop.a}. The  nonincreasing behavior also yields parts \eqref{I-prop.b} and \eqref{I-prop.c}. Moreover, it follows that $W^{\tht-}(y,t;x_0,s_0)=W^{\tht+}(y,t;x_0,s_0)$ for all $y$ strictly between $I_{(x_0,s_0)}^{\tht-}(t)$ and $I_{(x_0,s_0)}^{\tht+}(t)$. Then, by the continuity \eqref{W:cont} of $W^{\tht-}$ and $W^{\tht+}$, we get that for both $\sigg\in\{-,+\}$,
\begin{align}\label{aux912}
W^{\tht-}(I_{(x_0,s_0)}^{\tht\sig}(t),t;x_0,s_0)=W^{\tht+}(I_{(x_0,s_0)}^{\tht\sig}(t),t;x_0,s_0).
\end{align}
We have thus shown part \eqref{I-prop.d} to hold.\smallskip

By the monotonicity \eqref{W:mono}, we have $W^{\tht-}(y,s_0;x_0,s_0)\ge W^{\tht+}(y,s_0;x_0,s_0)$, for all $y\le x_0$. This and part \eqref{I-prop.b} imply that $y\le I_{(x_0,s_0)}^{\tht+}(s_0)$ for all such $y$. Thus, $x_0\le I_{(x_0,s_0)}^{\tht+}(s_0)$. A similar argument shows that $x_0\ge I_{(x_0,s_0)}^{\tht-}(s_0)$ and part \eqref{I-prop.e} is proved.\smallskip

By \eqref{aux912} and the cocycle property \eqref{cocycle}, for all $y\in\R$,
\begin{align*}
&W^{\tht-}(I_{(x_0,s_0)}^{\tht\sig}(t),t;y,t)-W^{\tht+}(I_{(x_0,s_0)}^{\tht\sig}(t),t;y,t)\\
&\quad=\bigl(W^{\tht-}(I_{(x_0,s_0)}^{\tht\sig}(t),t;x_0,s_0)-W^{\tht+}(I_{(x_0,s_0)}^{\tht\sig}(t),t;x_0,s_0)\bigr)
-\bigl(W^{\tht-}(y,t;x_0,s_0)-W^{\tht+}(y,t;x_0,s_0)\bigr)\\
&\quad=-\bigl(W^{\tht-}(y,t;x_0,s_0)-W^{\tht+}(y,t;x_0,s_0)\bigr).
\end{align*}
Thus, \eqref{I-prop.f} follows from \eqref{I-prop.b}, \eqref{I-prop.g} from \eqref{I-prop.c}, and \eqref{I-prop.h} from \eqref{I-prop.d}. 
\end{proof}

\begin{lem}\label{hmixed}
Let $\w\in\Omega_0$, $\tht\in\baddir$, and $x_0,s_0\in\R$. Then for any $\sigg\in\{-,+\}$ and any $t>s$ and $x$ in $\R$,
\[h^\tht_{(x_0,s_0)}(x,s)=\sup_{y\ge I_{(x_0,s_0)}^{\tht\sig}(t)}\!\!\{\mathcal L(x,s;y,t)+W^{\tht+}(y,t;x_0,s_0)\}\,\vee\!\!\!\sup_{y\le I_{(x_0,s_0)}^{\tht\sig}(t)}\!\!\{\mathcal L(x,s;y,t)+W^{\tht-}(y,t;x_0,s_0)\}.\]
\end{lem}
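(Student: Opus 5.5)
The plan is to combine the eternal-solution identity \eqref{h-eternal} with the sign information in Lemma \ref{I-prop}. Fix $\sigg\in\{-,+\}$ and $t>s$, and abbreviate $I=I_{(x_0,s_0)}^{\tht\sig}(t)$, $W^{\pm}(y)=W^{\tht\pm}(y,t;x_0,s_0)$, and $h(y)=h^\tht_{(x_0,s_0)}(y,t)=W^-(y)\vee W^+(y)$.

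The key observation is that $h$ is determined piecewise by $I$:
\[
h(y)=W^+(y)\ \text{ for } y\ge I \qquad\text{and}\qquad h(y)=W^-(y)\ \text{ for } y\le I.
\]
To see this, use the three sign statements of Lemma \ref{I-prop}:
\begin{itemize}
\item for $y>I_{(x_0,s_0)}^{\tht+}(t)$, part \eqref{I-prop.b} gives $W^-<W^+$;
\item for $y<I_{(x_0,s_0)}^{\tht-}(t)$, part \eqref{I-prop.c} gives $W^->W^+$;
\item on the closed interval $[I_{(x_0,s_0)}^{\tht-}(t),I_{(x_0,s_0)}^{\tht+}(t)]$, part \eqref{I-prop.d} gives $W^-=W^+$.
\end{itemize}
Since $I$ lies in that closed interval by part \eqref{I-prop.a}, every $y\ge I$ either lies in the equality interval or to the right of $I_{(x_0,s_0)}^{\tht+}(t)$, so $h(y)=W^+(y)$ there. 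Symmetrically, $h(y)=W^-(y)$ for $y\le I$. Note that both halves include the point $y=I$ itself, where $W^-=W^+$.

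Now start from \eqref{h-eternal},
\[
h^\tht_{(x_0,s_0)}(x,s)=\sup_{y\in\R}\{\mathcal L(x,s;y,t)+h(y)\},
\]
and split the supremum over $\R=(-\infty,I]\cup[I,\infty)$. A supremum over a union is the maximum of the two suprema. On $[I,\infty)$ we replace $h$ by $W^+$, and on $(-\infty,I]$ we replace $h$ by $W^-$. This gives exactly the claimed formula.

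There is no real obstacle. The only step needing care is the boundary case: we must confirm that $I$ belongs to the equality interval of Lemma \ref{I-prop}\eqref{I-prop.d} for both choices of $\sigg$. This is supplied by parts \eqref{I-prop.a} and \eqref{I-prop.d}, the latter resting on the continuity \eqref{W:cont}.
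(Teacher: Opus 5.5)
Your proof is correct and is essentially the paper's own argument. You use parts (b)--(d) of Lemma \ref{I-prop} to show that $h^\tht_{(x_0,s_0)}(\cdot,t)$ equals $W^{\tht+}(\cdot,t;x_0,s_0)$ on $[I_{(x_0,s_0)}^{\tht\sig}(t),\infty)$ and $W^{\tht-}(\cdot,t;x_0,s_0)$ on $(-\infty,I_{(x_0,s_0)}^{\tht\sig}(t)]$, then split the supremum in \eqref{h-eternal} at that point; your explicit check of the boundary point for both signs is a welcome addition.
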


\begin{proof}
By definition \eqref{h} and parts (\ref{I-prop.b}-\ref{I-prop.d}) of Lemma \ref{I-prop}, we have $h^\tht_{(x_0,s_0)}(y,t)=W^{\tht+}(y,t;x_0,s_0)$ when $y\ge I_{(x_0,s_0)}^{\tht\sig}(t)$ and $h^\tht_{(x_0,s_0)}(y,t)=W^{\tht-}(y,t;x_0,s_0)$ when $y\le I_{(x_0,s_0)}^{\tht\sig}(t)$. The claim follows from this and \eqref{h-eternal}.
\end{proof}

\begin{lem}\label{lm:x0onI}
Let $\w\in\Omega_0$, $\tht\in\baddir$, and $(x_0,s_0)\in\IG\tht$. Then
\begin{align}\label{x0onI}
\begin{split}
&(x_0,s_0)\text{ is not right-isolated }\Longrightarrow\ I_{(x_0,s_0)}^{\tht+}(s_0)=x_0,\\
&(x_0,s_0)\text{ is not left-isolated }\Longrightarrow\ I_{(x_0,s_0)}^{\tht-}(s_0)=x_0.
\end{split}
\end{align}
\end{lem}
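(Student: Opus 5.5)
We treat the first implication; the second follows by the mirror-image argument with the inequalities reversed. By Lemma \ref{I-prop}\eqref{I-prop.e} we already have $x_0\le I_{(x_0,s_0)}^{\tht+}(s_0)$. It therefore suffices to rule out $I_{(x_0,s_0)}^{\tht+}(s_0)>x_0$ when $(x_0,s_0)$ is not right-isolated.

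Suppose, for contradiction, that $b:=I_{(x_0,s_0)}^{\tht+}(s_0)>x_0$. By Lemma \ref{I-prop}\eqref{I-prop.d} and \eqref{I-prop.e}, the whole interval $[x_0,b]$ lies in $[I_{(x_0,s_0)}^{\tht-}(s_0),I_{(x_0,s_0)}^{\tht+}(s_0)]$. Hence
\[
W^{\tht-}(y,s_0;x_0,s_0)=W^{\tht+}(y,s_0;x_0,s_0)\quad\text{for all }y\in[x_0,b].
\]
The cocycle property \eqref{cocycle} then gives
\[
W^{\tht-}(y,s_0;y',s_0)=W^{\tht+}(y,s_0;y',s_0)\quad\text{for all }y,y'\in[x_0,b].
\]
Equivalently, the nondecreasing difference profile $y\mapsto W^{\tht-}(0,s_0;y,s_0)-W^{\tht+}(0,s_0;y,s_0)$ is constant on $[x_0,b]$.

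We now show no point of $(x_0,b)$ is a point of increase of this profile in the sense of Definition \ref{inc-pts}. Take $y\in(x_0,b)$ and pick $z\in(x_0,y)$ and $y'\in(y,b)$. The profile takes equal values at $z$ and $y'$, which violates the defining condition $f(z)<f(y')$. By Definition \ref{def:buseinstab}, $(y,s_0)\notin\IG\tht$ for every $y\in(x_0,b)$.

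Taking $\e=b-x_0$ in Definition \ref{def:isolstht}, this says exactly that $(x_0,s_0)$ is right-isolated, which is the contradiction we need. The argument is a direct unpacking of definitions. The only point requiring care is the sign and orientation conventions: the profile in \eqref{I} is $y\mapsto W^{\tht\pm}(y,t;x_0,s_0)$, which is nonincreasing in $y$, while Definition \ref{def:buseinstab} uses $W^{\tht\pm}(0,s;y,s)$, which is nondecreasing. The cocycle identity converts equality of one into constancy of the other. The left-isolated case uses $I_{(x_0,s_0)}^{\tht-}(s_0)\le x_0$ in the same way.
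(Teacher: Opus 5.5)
Your argument is correct and is essentially the paper's proof run in contrapositive form. The paper uses Lemma \ref{ptinc}\eqref{ptinc.b} to get $W^{\tht-}(x_0,s_0;y,s_0)>W^{\tht+}(x_0,s_0;y,s_0)$ for every $y>x_0$, then reads $I_{(x_0,s_0)}^{\tht+}(s_0)\le x_0$ off the infimum in \eqref{I} and closes with Lemma \ref{I-prop}\eqref{I-prop.e}; you instead assume $I_{(x_0,s_0)}^{\tht+}(s_0)>x_0$, obtain flatness of the profile from Lemma \ref{I-prop}\eqref{I-prop.d}, and re-derive by hand the easy direction of Lemma \ref{ptinc}\eqref{ptinc.b}.
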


\begin{proof}
We prove the first claim, the second being similar. Therefore, assume $(x_0,s_0)$ is not right-isolated. Then Definition \ref{def:buseinstab}, the cocycle property \eqref{cocycle}, continuity \eqref{W:cont}, monotonicity \eqref{W:mono}, and Lemma \ref{ptinc}\eqref{ptinc.b} imply that 
$W^{\tht-}(x_0,s_0;y,s_0) > W^{\tht+}(x_0,s_0;y,s_0)$ for all $y>x_0$. This and the definition of $I_{(x_0,s_0)}^{\tht+}$ in \eqref{I} imply that 
$I_{(x_0,s_0)}^{\tht+}(s_0)\le x_0$. Lemma \ref{I-prop}\eqref{I-prop.e} gives the reverse inequaliy.
\end{proof}

The following comes similarly to the above lemma, from Definition \ref{def:buseinstab} of an instability point, properties \eqref{W:cont}-\eqref{W:mono} of Busemann functions, Lemma \ref{ptinc}\eqref{ptinc.b}, and parts \eqref{I-prop.b}-\eqref{I-prop.d} of Lemma \ref{I-prop}.

\begin{lem}\label{I-inst}
Let $\w\in\Omega_0$, $\tht\in\baddir$, and $x_0,s_0\in\R$. The following hold for all $t\in\R$.
\begin{enumerate} [label={\rm(\alph*)}, ref={\rm\alph*}] \itemsep=1pt
\item\label{I-inst.a} $(I_{(x_0,s_0)}^{\tht\sig}(t),t)\in\IG\tht$, for both $\sigg\in\{-,+\}$.
\item\label{I-inst.b} If $I_{(x_0,s_0)}^{\tht-}(t)<I_{(x_0,s_0)}^{\tht+}(t)$, then $(I_{(x_0,s_0)}^{\tht-}(t),t)$ is right-isolated and $(I_{(x_0,s_0)}^{\tht+}(t),t)$ is left-isolated.  
\item\label{I-inst.c} If $I_{(x_0,s_0)}^{\tht-}(t)=I_{(x_0,s_0)}^{\tht+}(t)$, then $(I_{(x_0,s_0)}^{\tht-}(t),t)=(I_{(x_0,s_0)}^{\tht+}(t),t)$ is neither left- nor right-isolated.
\end{enumerate}
\end{lem}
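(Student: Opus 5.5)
The whole argument reduces to the profile $D(y)=W^{\tht-}(y,t;x_0,s_0)-W^{\tht+}(y,t;x_0,s_0)$. Lemma \ref{I-prop} shows that $D$ is continuous and nonincreasing. It is strictly positive to the left of $a:=I_{(x_0,s_0)}^{\tht-}(t)$, identically zero on $[a,b]$ with $b:=I_{(x_0,s_0)}^{\tht+}(t)$, and strictly negative to the right of $b$. By the cocycle property \eqref{cocycle}, for any $z<y$ we have
\[
\bigl(W^{\tht-}(0,t;y,t)-W^{\tht+}(0,t;y,t)\bigr)-\bigl(W^{\tht-}(0,t;z,t)-W^{\tht+}(0,t;z,t)\bigr)=D(z)-D(y).
\]
So the nondecreasing function in Definition \ref{def:buseinstab} at time level $t$ is $-D$ up to an additive constant. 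Hence $x\in\IG\tht_t$ exactly when $D(z)>D(y)$ for all $z<x<y$.

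For part (a), take $z<a<y$. Then $D(z)>0=D(a)\ge D(y)$, so $a$ is a point of increase. Symmetrically, for $z<b<y$ we have $D(z)\ge D(b)=0>D(y)$, so $b$ is a point of increase. Thus both $(a,t)$ and $(b,t)$ lie in $\IG\tht$.

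For part (b), suppose $a<b$. For any $u\in(a,b)$, pick $z,y$ with $u<z<y<b$; then $D(z)=D(y)=0$, so $u$ is not a point of increase. Hence $(u,t)\notin\IG\tht$ for all $u\in(a,b)$. This makes $(a,t)$ right-isolated (take $\ep=b-a$) and $(b,t)$ left-isolated.

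For part (c), suppose $a=b$. Then $D>0$ on $(-\infty,a)$ and $D<0$ on $(a,\infty)$, and we show that instability points accumulate at $a$ from both sides. Suppose, for contradiction, that $(a,t)$ were left-isolated, so that no point of $(a-\ep,a)$ lies in $\IG\tht_t$. Then, as in Lemma \ref{ptinc}\eqref{ptinc.b}, the continuous monotone function $D$ is constant on $[a-\ep,a]$. (If $D$ were not constant on a closed interval, that interval would contain a point of increase, because the set where $D$ is locally constant is open and its complement in the interval is nonempty.) This forces $D(a-\ep)=D(a)=0$, contradicting $D>0$ on $(-\infty,a)$. The right side is symmetric.

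The only step needing care is this use of Lemma \ref{ptinc}: a continuous nondecreasing function with no point of increase in an open interval is constant on its closure. I would invoke that lemma directly rather than reprove it.
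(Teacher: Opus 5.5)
Your argument is correct and is essentially the paper's: the paper also reads everything off the sign pattern of $W^{\tht-}-W^{\tht+}$ from Lemma \ref{I-prop}\eqref{I-prop.b}--\eqref{I-prop.d}, using the cocycle, continuity and monotonicity properties and Lemma \ref{ptinc}\eqref{ptinc.b}. One small slip in part (b): to show that $u\in(a,b)$ is not a point of increase, the witnesses must satisfy $a<z<u<y<b$, not $u<z<y<b$, because the failure condition needs a point $z$ to the left of $u$.
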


The next lemma says that one can replace the base point $(x_0,s_0)$ by any point on the interfaces $I_{(x_0,s_0)}^{\tht\sig}$ and get the same interfaces.

\begin{lem}\label{I-restart}
Let $\w\in\Omega_0$, $\tht\in\baddir$, and $x_0,s_0\in\R$. Then for each $\sigg\in\{-,+\}$ and $r\in\R$,
$I_{(I_{(x_0,s_0)}^{\tht\sig}(r),r)}^{\tht-}=I_{(x_0,s_0)}^{\tht-}$ and $I_{(I_{(x_0,s_0)}^{\tht\sig}(r),r)}^{\tht+}=I_{(x_0,s_0)}^{\tht+}$.
\end{lem}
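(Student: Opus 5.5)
The argument is purely algebraic, using the cocycle property \eqref{cocycle} and the equality in Lemma \ref{I-prop}\eqref{I-prop.d}. Fix $\sigg\in\{-,+\}$ and $r\in\R$, and set $x_1=I_{(x_0,s_0)}^{\tht\sig}(r)$, with new base point $(x_1,r)$. The definitions \eqref{I} depend on the base point only through the sign of
\[
D_{(x_0,s_0)}(y,t)=W^{\tht-}(y,t;x_0,s_0)-W^{\tht+}(y,t;x_0,s_0).
\]
The key step is to show that $D_{(x_1,r)}(y,t)=D_{(x_0,s_0)}(y,t)$ for all $(y,t)\in\R^2$. Once this holds, the sets whose infimum and supremum define $I^{\tht+}$ and $I^{\tht-}$ coincide for the two base points, and both identities follow at every time $t$.

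To prove this, apply the cocycle property \eqref{cocycle} to each sign $\sigg'\in\{-,+\}$:
\[
W^{\tht\sig'}(y,t;x_0,s_0)=W^{\tht\sig'}(y,t;x_1,r)+W^{\tht\sig'}(x_1,r;x_0,s_0).
\]
Subtracting the two identities gives
\[
D_{(x_0,s_0)}(y,t)=D_{(x_1,r)}(y,t)+D_{(x_0,s_0)}(x_1,r).
\]
By \eqref{aux912}, which is Lemma \ref{I-prop}\eqref{I-prop.d} evaluated at the endpoint $y=I^{\tht\sig}_{(x_0,s_0)}(r)$, we have $D_{(x_0,s_0)}(x_1,r)=0$. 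This is exactly the shift-invariance we need.

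The only place requiring care is that equality at the endpoints of the interval $[I^{\tht-},I^{\tht+}]$. It relies on continuity \eqref{W:cont}, and that was already established in the proof of Lemma \ref{I-prop}. No real obstacle remains, so I would present the proof as this short computation.
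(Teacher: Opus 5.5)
Your proof is correct and matches the paper's: it also applies Lemma~\ref{I-prop}\eqref{I-prop.d} at the point $I_{(x_0,s_0)}^{\tht\sig}(r)$ and uses the cocycle property \eqref{cocycle} to show that moving the base point does not change the sign of $W^{\tht-}-W^{\tht+}$, then concludes from the definition \eqref{I}. Your version phrases this as exact equality of the two difference functions, which is the identity the paper's sign-equivalences rest on.
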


\begin{proof}
    By Lemma \ref{I-prop}\eqref{I-prop.d}, $W^{\tht-}(I_{(x_0,s_0)}^{\tht\sig}(r),r;x_0,s_0)=W^{\tht+}(I_{(x_0,s_0)}^{\tht\sig}(r),r;x_0,s_0)$. This and the cocycle property \eqref{cocycle} imply that for any $y,t\in\R$,
    \[W^{\tht-}(y,t;x_0,s_0)<W^{\tht+}(y,t;x_0,s_0)\ \Longleftrightarrow\ W^{\tht-}(y,t;I_{(x_0,s_0)}^{\tht\sig}(r),r)<W^{\tht+}(y,t;I_{(x_0,s_0)}^{\tht\sig}(r),r)\]
    and
    \[W^{\tht-}(y,t;x_0,s_0)>W^{\tht+}(y,t;x_0,s_0)\ \Longleftrightarrow\ W^{\tht-}(y,t;I_{(x_0,s_0)}^{\tht\sig}(r),r)>W^{\tht+}(y,t;I_{(x_0,s_0)}^{\tht\sig}(r),r).\]
    The claim follows from this and the definition \eqref{I} of the interfaces.
\end{proof}

\begin{lem}\label{lm:1356}
Let $\w\in\Omega_0$, $\tht\in\baddir$, and $x_0,s_0\in\R$. Then for all  $r<t$ and $x$ in $\R$, setting $a=I_{(x_0,s_0)}^{\tht-}(t)$, we have
\begin{align}\label{claim}
\begin{split}
&W^{\tht-}(a,t;x,r)\ge W^{\tht+}(a,t;x,r)\\
&\qquad\qquad\Longleftrightarrow\ 
\sup_{y\ge a}\{\mathcal L(x,r;y,t)+W^{\tht+}(y,t;a,t)\}\ge\sup_{y\le a}\{\mathcal L(x,r;y,t)+W^{\tht-}(y,t;a,t)\}.
\end{split}
\end{align}
Similarly, if we set $a=I_{(x_0,s_0)}^{\tht+}(t)$, then
\begin{align*}
&W^{\tht-}(a,t;x,r)\le W^{\tht+}(a,t;x,r)\\
&\qquad\qquad\Longleftrightarrow\ 
\sup_{y\ge a}\{\mathcal L(x,r;y,t)+W^{\tht+}(y,t;a,t)\}\le\sup_{y\le a}\{\mathcal L(x,r;y,t)+W^{\tht-}(y,t;a,t)\}.
\end{align*}
\end{lem}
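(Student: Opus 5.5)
The plan is to express both sides of \eqref{claim} through the update rule \eqref{eternal} and the cocycle property \eqref{cocycle}, and then compare them using the structure of $W^{\tht\pm}$ at time level $t$ around $a$ given by Lemma \ref{I-prop}. We treat the first equivalence (with $a=I_{(x_0,s_0)}^{\tht-}(t)$); the second is symmetric, exchanging the roles of left and right and using parts \eqref{I-prop.f}, \eqref{I-prop.b} instead of \eqref{I-prop.g}, \eqref{I-prop.c}.

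First, apply \eqref{eternal} with base point $(a,t)$. This gives
\[W^{\tht\sig}(x,r;a,t)=\sup_{y\in\R}\{\mathcal L(x,r;y,t)+W^{\tht\sig}(y,t;a,t)\},\qquad \sigg\in\{-,+\}.\]
By antisymmetry of the cocycle, the left inequality in \eqref{claim} is equivalent to $W^{\tht-}(x,r;a,t)\le W^{\tht+}(x,r;a,t)$. So it suffices to show that this holds if and only if $A:=\sup_{y\ge a}\{\mathcal L+W^{\tht+}(y,t;a,t)\}\ge B:=\sup_{y\le a}\{\mathcal L+W^{\tht-}(y,t;a,t)\}$.

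Next, the key structural input. By Lemma \ref{I-prop}\eqref{I-prop.g} and \eqref{I-prop.h}, we have $W^{\tht-}(y,t;a,t)\ge W^{\tht+}(y,t;a,t)$ for all $y\le a$, since the difference $W^{\tht-}(a,t;y,t)-W^{\tht+}(a,t;y,t)$ is $<0$ for $y<a$. For $y\ge a$, monotonicity \eqref{W:mono} gives $W^{\tht-}(y,t;a,t)\le W^{\tht+}(y,t;a,t)$, because $W^{\tht-}(a,t;y,t)\ge W^{\tht+}(a,t;y,t)$. Hence $\max(W^{\tht-},W^{\tht+})(\cdot,t;a,t)$ equals $W^{\tht-}$ on $(-\infty,a]$ and $W^{\tht+}$ on $[a,\infty)$, with the two agreeing (value $0$) at $a$. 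Write $A^\pm,B^\pm$ for the sup over $y\ge a$, respectively $y\le a$, of $\mathcal L(x,r;y,t)+W^{\tht\pm}(y,t;a,t)$. Then:
\[W^{\tht+}(x,r;a,t)=A^+\vee B^+,\quad W^{\tht-}(x,r;a,t)=A^-\vee B^-,\quad A^-\le A^+=A,\quad B^+\le B^-=B.\]

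Now compare. If $A\ge B$, then $W^{\tht+}(x,r;a,t)\ge A\ge B\vee A^-=W^{\tht-}(x,r;a,t)$. Conversely, suppose $A<B$. Then $W^{\tht-}(x,r;a,t)\ge B>A\ge$ the $A^+$ part of $W^{\tht+}$, and also $B>B^+$ is needed. This strictness is the main obstacle, since the inequality $B^+\le B^-$ could be an equality.

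For that step, I would use the unboundedness/strictness on $y<a$. By \eqref{I-prop.g}, $W^{\tht+}(y,t;a,t)<W^{\tht-}(y,t;a,t)$ strictly for $y<a$, with equality only at $y=a$. Take a maximizer $y^*\le a$ of $B^+$; it exists by the growth bounds (Corollary 10.7 of \cite{Dau-Ort-Vir-22} together with \eqref{Wgrowth}, which give compactness of near-maximizers). If $y^*<a$, then $B^+<B^-=B$ strictly. If $y^*=a$, then $B^+=\mathcal L(x,r;a,t)\le A$, so $B^+\le A<B$. In either case $W^{\tht+}(x,r;a,t)=A\vee B^+<B\le W^{\tht-}(x,r;a,t)$, which gives the reverse implication. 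The symmetric argument handles $a=I^{\tht+}_{(x_0,s_0)}(t)$ with the inequalities reversed.
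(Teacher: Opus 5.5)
Your proof is correct, but for the nontrivial implication you take a more direct route than the paper. The easy direction ($A\ge B$ implies $W^{\tht-}(x,r;a,t)\le W^{\tht+}(x,r;a,t)$) is the same in both.

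For the other direction, the paper argues as follows.
\begin{itemize}
\item It assumes $W^{\tht-}(a,t;x,r)\ge W^{\tht+}(a,t;x,r)$ and first shows $B^+\le A^+$ by contradiction.
\item It then passes through the auxiliary eternal solution $h^\tht_{(x_0,s_0)}$, Lemma \ref{hmixed}, and the base-point change \eqref{aux1036} to reach $B^-\le A^+$.
\end{itemize}
You instead prove the contrapositive entirely at the base point $(a,t)$. From $A<B$ you get $B^+<B$, and hence $W^{\tht+}(x,r;a,t)=A\vee B^+<B\le W^{\tht-}(x,r;a,t)$.

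Your organization shows the detour through $h^\tht_{(x_0,s_0)}$ is not needed. Even in the paper's forward scheme, once $B^+\le A^+$ is known, one gets $B^-\le W^{\tht-}(x,r;a,t)\le W^{\tht+}(x,r;a,t)=A^+\vee B^+=A^+$ at once.

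The two arguments share the same core inputs:
\begin{itemize}
\item the strict inequality of Lemma \ref{I-prop}\eqref{I-prop.g} for $y<a$;
\item the fact that the $y=a$ term, $\mathcal L(x,r;a,t)$, is at most $A$.
\end{itemize}

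Where the routes genuinely differ is in how the strict gap $B^+<B^-$ is extracted.
\begin{itemize}
\item You use attainment of the supremum over $(-\infty,a]$. This needs the quadratic decay of $\mathcal L$ (Corollary 10.7 of \cite{Dau-Ort-Vir-22}) together with the linear growth \eqref{Wgrowth}.
\item The paper needs only continuity at $y=a$, plus a uniform bound from the cocycle property and \eqref{W:mono}: $W^{\tht-}(y,t;a,t)-W^{\tht+}(y,t;a,t)\ge W^{\tht-}(a_1,t;a,t)-W^{\tht+}(a_1,t;a,t)>0$ for all $y\le a_1<a$.
\end{itemize}
Both are valid. The paper's version uses fewer inputs and can be dropped into your argument unchanged.
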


\begin{proof}
We prove \eqref{claim}, the other claim being symmetric.
First we prove the $\impliedby$ direction.
Suppose the inequality on the right-hand side of \eqref{claim} is true. Then the monotonicity \eqref{W:mono} gives us
\[\sup_{y\ge a}\{\mathcal L(x,r;y,t)+W^{\tht-}(y,t;a,t)\}
 \le\sup_{y\ge a}\{\mathcal L(x,r;y,t)+W^{\tht+}(y,t;a,t)\}\]
and, together with the right-hand side of \eqref{claim}, we have
\begin{align*}
\sup_{y\in\R}\{\mathcal L(x,r;y,t)+W^{\tht-}(y,t;a,t)\}
&\le\sup_{y\ge a}\{\mathcal L(x,r;y,t)+W^{\tht+}(y,t;a,t)\}\\
&\le\sup_{y\in\R}\{\mathcal L(x,r;y,t)+W^{\tht+}(y,t;a,t)\}.
\end{align*}
Using the update rule \eqref{eternal}, we get that the left-hand side equals $W^{\tht-}(x,r;a,t)$ and the right-hand side equals $W^{\tht+}(x,r;a,t)$. This shows that the inequality on the left-hand side of \eqref{claim} is true. 
\smallskip

Now we show the $\implies$ direction in \eqref{claim}. Thus, assume 
\begin{align}\label{h=W}
W^{\tht-}(a,t;x,r)\ge W^{\tht+}(a,t;x,r).
\end{align}
We claim that under this assumption, we have 
\begin{align}\label{claim2'}
    \sup_{y\leq a}\{\mathcal{L}(x,r;y,t)+W^{\tht+}(y,t;a,t)\} \le \sup_{y\geq a}\{\mathcal{L}(x,r;y,t)+W^{\tht+}(y,t;a,t)\}.
\end{align}

We first complete the proof of the lemma, assuming the validity of \eqref{claim2'}, and then proceed to establish \eqref{claim2'}.

Lemma \ref{I-prop}\eqref{I-prop.d} gives \begin{align}\label{aux1036}
 W^{\tht-}(a,t;x_0,s_0)=W^{\tht+}(a,t;x_0,s_0).
 \end{align}
This, \eqref{h=W}, and the cocycle property \eqref{cocycle}, give
$W^{\tht-}(x,r;x_0,s_0)\le W^{\tht+}(x,r;x_0,s_0)$. Then, by the definition \eqref{h} and Lemma \ref{hmixed},
\begin{align*}
W^{\tht+}(x,r;x_0,s_0)
&=h^\tht_{(x_0,s_0)}(x,r)\\
&=\sup_{y\le a}\{\mathcal L(x,r;y,t)+W^{\tht-}(y,t;x_0,s_0)\} \vee \sup_{y\geq a} \{\mathcal L(x,r;y,t)+W^{\tht+}(y,t;x_0,s_0)\}.
\end{align*}
This, another use of \eqref{aux1036} and the cocycle property \eqref{cocycle}, followed by \eqref{claim2'}, and \eqref{update}, give 
\begin{align*}
    W^{\tht+}(x,r;a,t) 
    &=\sup_{y\le a}\{\mathcal L(x,r;y,t)+W^{\tht-}(y,t;a,t)\} \vee \sup_{y\geq a} \{\mathcal L(x,r;y,t)+W^{\tht+}(y,t;a,t)\}\\
    &=\sup_{y\le a}\{\mathcal L(x,r;y,t)+W^{\tht-}(y,t;a,t)\} \vee \sup_{y\in\R} \{\mathcal L(x,r;y,t)+W^{\tht+}(y,t;a,t)\}\\
    &=\sup_{y\le a}\{\mathcal L(x,r;y,t)+W^{\tht-}(y,t;a,t)\}\vee W^{\tht+}(x,r;a,t).
\end{align*}
This proves that
\begin{align*}
    \sup_{y\le a}\{\mathcal L(x,r;y,t)+W^{\tht-}(y,t;a,t)\}\le W^{\tht+}(x,r;a,t)=\sup_{y\ge a}\{\mathcal L(x,r;y,t)+W^{\tht+}(y,t;a,t)\}.
\end{align*}
Now both directions of \eqref{claim} have been proved. 

We turn back to proving \eqref{claim2'}.
We proceed by contradiction. 
Suppose that
\begin{align}\label{supposition}
    \sup_{y\leq a}\{\mathcal{L}(x,r;y,t)+W^{\tht+}(y,t;a,t)\} > \sup_{y\geq a}\{\mathcal{L}(x,r;y,t)+W^{\tht+}(y,t;a,t)\}.
\end{align}
Then the supremum on the left cannot be attained at $y=a$.
Consequently, since the functions are all continuous, there exists $a_1<a$ such that 
\begin{align}\label{clover}
    \sup_{y\leq a}\{\mathcal{L}(x,r;y,t)+W^{\tht+}(y,t;a,t)\} = \sup_{y\leq a_1}\{\mathcal{L}(x,r;y,t)+W^{\tht+}(y,t;a,t)\}.
\end{align}
Using the cocycle property \eqref{cocycle} and the monotonicity \eqref{W:mono} gives, for all $y\le a_1$,
\begin{align*}
   &W^{\tht+}(y,t;a,t)-W^{\tht-}(y,t;a,t)\\
   &\qquad=W^{\tht+}(a_1,t;a,t)-W^{\tht-}(a_1,t;a,t)
   +W^{\tht+}(y,t;a_1,t)-W^{\tht-}(y,t;a_1,t)\\
   &\qquad\le W^{\tht+}(a_1,t;a,t)-W^{\tht-}(a_1,t;a,t).
\end{align*}
Rearranging, adding $\mathcal L(x,r;y,t)$, then taking a supremum over $y\ge a_1$ on both sides, we get
\begin{align*}
    &\sup_{y\leq a_1} \{\mathcal{L}(x,r;y,t)+W^{\tht+}(y,t;a,t)\}\\ 
    &\qquad\le\sup_{y\leq a_1} \{\mathcal{L}(x,r;y,t)+W^{\tht-}(y,t;a,t)\}+
    W^{\tht+}(a_1,t;a,t)-W^{\tht-}(a_1,t;a,t)\\
    &\qquad<\sup_{y\leq a_1} \{\mathcal{L}(x,r;y,t)+W^{\tht-}(y,t;a,t)\},
\end{align*}
where the strict inequality came from Lemma \ref{I-prop}\eqref{I-prop.g}, as $a_1<a=I_{(x_0,s_0)}^{\tht-}(t)$.
Consequently,
\begin{align*}
    &\sup_{y\leq a} \{\mathcal{L}(x,r;y,t)+W^{\tht-}(y,t;a,t)\}
    \geq 
    \sup_{y\leq a_1} \{\mathcal{L}(x,r;y,t)+W^{\tht-}(y,t;a,t)\} \\
    &\qquad>\sup_{y\leq a_1} \{\mathcal{L}(x,r;y,t)+W^{\tht+}(y,t;a,t)\}
    =\sup_{y\leq a} \{\mathcal{L}(x,r;y,t)+W^{\tht+}(y,t;a,t)\},
\end{align*}
where the last equality comes from \eqref{clover}.
 
 Recalling \eqref{supposition}, and using \eqref{aux1036} to move the point $(a,t)$ to $(x_0,s_0)$ in all the $W^{\tht\sig}$ terms, 
 we now have
\begin{align*}
    \sup_{y\leq a} \{\mathcal{L}(x,r;y,t)+W^{\tht-}(y,t;x_0,s_0)\} 
    &> \sup_{y\leq a} \{\mathcal{L}(x,r;y,t)+W^{\tht+}(y,t;x_0,s_0)\}\\ 
    &> \sup_{y\geq a}\{\mathcal{L}(x,r;y,t)+W^{\tht+}(y,t;x_0,s_0)\}.
\end{align*}
This gives
    \begin{align*}
    W^{\tht-}(x,r;x_0,s_0)
    &=\sup_{y\in\R} \{\mathcal{L}(x,r;y,t)+W^{\tht-}(y,t;x_0,s_0)\}\\ 
    &\ge\sup_{y\leq a} \{\mathcal{L}(x,r;y,t)+W^{\tht-}(y,t;x_0,s_0)\}\\ 
    &>\sup_{y\in\R} \{\mathcal{L}(x,r;y,t)+W^{\tht+}(y,t;x_0,s_0)\}
    =W^{\tht+}(x,r;x_0,s_0),
    \end{align*}
    where we used the update rule \eqref{update} for the equalities. Applying \eqref{aux1036} and the cocycle property \eqref{cocycle} one more time, we get  $W^{\tht-}(x,r;a,t)>W^{\tht+}(x,r;a,t)$, which contradicts assumption \eqref{h=W}. Thus, \eqref{supposition} is incorrect and we have shown that \eqref{h=W} implies \eqref{claim2'}.
\end{proof}

\begin{lem}\label{Icont}
Let $\w\in\Omega_0$, $\tht\in\baddir$, and $x_0,s_0\in\R$. Then both $I_{(x_0,s_0)}^{\tht-}$ and $I_{(x_0,s_0)}^{\tht+}$ are continuous on $\R$. 
\end{lem}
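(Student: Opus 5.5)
The plan is to identify $I_{(x_0,s_0)}^{\tht\sig}$ on $(-\infty,t]$ with a competition interface from Section \ref{sec:cif+shock}, and then import the continuity \eqref{I-cont}. Fix $\sigg\in\{-,+\}$ and an arbitrary $t\in\R$. It suffices to show continuity of $I_{(x_0,s_0)}^{\tht\sig}$ on $(-\infty,t]$, since $t$ is arbitrary.

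First I reduce the base point. Set $a=I_{(x_0,s_0)}^{\tht-}(t)$ (for $\sigg=-$). By Lemma \ref{I-restart}, $I_{(a,t)}^{\tht-}=I_{(x_0,s_0)}^{\tht-}$, so I may take the base point to be $(a,t)$. By Lemma \ref{I-prop}\eqref{I-prop.d}, the base point $(a,t)$ itself satisfies $I^{\tht-}_{(a,t)}(t)=a$. Now define the boundary condition
\[f(y)=W^{\tht+}(y,t;a,t)\ \text{for } y\ge a,\qquad f(y)=W^{\tht-}(y,t;a,t)\ \text{for } y\le a.\]
Both pieces vanish at $y=a$, so $f$ is continuous by \eqref{W:cont}. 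By \eqref{Wgrowth}, $f$ is linearly bounded, so it is a boundary condition in the sense of Definition \ref{def:init}. Let $d_{(a,t)}$ be the competition function \eqref{ddef} built from this $f$.

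Next I translate Lemma \ref{lm:1356}. By its first claim, for $r<t$,
\[d_{(a,t)}(x,r)\ge0 \iff W^{\tht-}(a,t;x,r)\ge W^{\tht+}(a,t;x,r).\]
By the cocycle property \eqref{cocycle}, the right-hand side is equivalent to $W^{\tht-}(x,r;a,t)\le W^{\tht+}(x,r;a,t)$. By Lemma \ref{I-prop}\eqref{I-prop.b}--\eqref{I-prop.d}, applied with base point $(a,t)$, this holds exactly when $x\ge I^{\tht-}_{(a,t)}(r)$: strict reverse inequality holds for $x<I^{\tht-}_{(a,t)}(r)$, and equality or the right direction holds for $x\ge I^{\tht-}_{(a,t)}(r)$. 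Hence
\[\{x:d_{(a,t)}(x,r)\ge0\}=[I^{\tht-}_{(a,t)}(r),\infty),\]
and by \eqref{CIs} the competition interface $I^-_{(a,t)}(r)$ for $f$ equals $I^{\tht-}_{(x_0,s_0)}(r)$ for all $r<t$. At $r=t$ both equal $a$.

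Now \eqref{I-cont} gives continuity on $(-\infty,t]$. The case $\sigg=+$ is symmetric: take $a=I^{\tht+}_{(x_0,s_0)}(t)$, use the second claim of Lemma \ref{lm:1356}, and match with $I^+_{(a,t)}(r)=\sup\{x:d\le0\}$.

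The main obstacle is bookkeeping: getting the weak versus strict inequalities right, so that the sets coincide exactly and not merely up to endpoints. This hinges on the equality case at $I^{\tht\sig}$ itself (Lemma \ref{I-prop}\eqref{I-prop.d}) and on the sign flip under the cocycle.
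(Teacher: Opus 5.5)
Your proposal is correct and follows the paper's proof. Both arguments use Lemma \ref{lm:1356} to identify $I^{\tht\sig}_{(x_0,s_0)}$ on $(-\infty,t]$ with the competition interface whose boundary condition is $W^{\tht+}$ on $[a,\infty)$ and $W^{\tht-}$ on $(-\infty,a]$, with reference point $a=I^{\tht\sig}_{(x_0,s_0)}(t)$, and then invoke \eqref{I-cont}. Your rebasing to $(a,t)$ via Lemma \ref{I-restart} only shifts the paper's boundary condition $h^\tht_{(x_0,s_0)}(\cdot,t)$ by the constant $W^{\tht\pm}(a,t;x_0,s_0)$, which is the same for both signs by Lemma \ref{I-prop}\eqref{I-prop.d}, so the difference is cosmetic. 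One small citation fix: the identity $I^{\tht-}_{(a,t)}(t)=a$ comes from Lemma \ref{I-restart}, not from Lemma \ref{I-prop}\eqref{I-prop.d}.
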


\begin{proof}
We prove the continuity of $I_{(x_0,s_0)}^{\tht-}$, the other case being similar. Fix $t>s_0$ and abbreviate $a=I_{(x_0,s_0)}^{\tht-}(t)$. For $r<t$ and $x$ in $\R$, define the function  
\[d(x,r)=\sup_{y\ge a}\{\mathcal L(x,r;y,t)+h^\tht_{(x_0,s_0)}(y,t)\}-\sup_{y\le a}\{\mathcal L(x,r;y,t)+h^\tht_{(x_0,s_0)}(y,t)\}.\]
From the definition \eqref{h} and parts \eqref{I-prop.b}-\eqref{I-prop.d} of Lemma \ref{I-prop}, we get that $h^\tht_{(x_0,s_0)}(y,t)=W^{\tht+}(y,t;x_0,s_0)$ when $y\ge a$ and $h^\tht_{(x_0,s_0)}(y,t)=W^{\tht-}(y,t;x_0,s_0)$ when $y\le a$. Thus, 
\[d(x,r)=\sup_{y\ge a}\{\mathcal L(x,r;y,t)+W^{\tht+}(y,t;x_0,s_0)\}-\sup_{y\le a}\{\mathcal L(x,r;y,t)+W^{\tht-}(y,t;x_0,s_0)\}.\]

Lemma \ref{I-prop}\eqref{I-prop.d}, together with the cocyle property \eqref{cocycle} imply
\begin{align*}
W^{\tht-}(I_{(x_0,s_0)}^{\tht-}(r),r;a,t)
&=W^{\tht-}(I_{(x_0,s_0)}^{\tht-}(r),r;x_0,s_0)
-W^{\tht-}(I_{(x_0,s_0)}^{\tht-}(t),t;x_0,s_0)\\
&=W^{\tht+}(I_{(x_0,s_0)}^{\tht-}(r),r;x_0,s_0)
-W^{\tht+}(I_{(x_0,s_0)}^{\tht-}(t),t;x_0,s_0)\\
&=W^{\tht+}(I_{(x_0,s_0)}^{\tht-}(r),r;a,t).
\end{align*}
This, 
together with the cocycle property \eqref{cocycle}, \eqref{claim}, and  \eqref{aux1036}, give 
\begin{align*}
&W^{\tht+}(I_{(x_0,s_0)}^{\tht-}(r),r;x,r)\le W^{\tht-}(I_{(x_0,s_0)}^{\tht-}(r),r;x,r)\\
&\qquad\qquad\Longleftrightarrow 
W^{\tht+}(a,t;x,r)\le W^{\tht-}(a,t;x,r)\\
&\qquad\qquad\Longleftrightarrow 
\sup_{y\ge a}\{\mathcal L(x,r;y,t)+W^{\tht+}(y,t;a,t)\}\ge\sup_{y\le a}\{\mathcal L(x,r;y,t)+W^{\tht-}(y,t;a,t)\}\\
&\qquad\qquad\Longleftrightarrow 
\sup_{y\ge a}\{\mathcal L(x,r;y,t)+W^{\tht+}(y,t;x_0,s_0)\}\ge\sup_{y\le a}\{\mathcal L(x,r;y,t)+W^{\tht-}(y,t;x_0,s_0)\}.
\end{align*}
The right-hand side is equivalent to $d(x,r)\ge0$.
By parts \eqref{I-prop.f}-\eqref{I-prop.h} of Lemma \ref{I-prop}, the left-hand side is equivalent to $x\ge I_{(x_0,s_0)}^{\tht-}(r)$.
Thus, we have
\begin{align*}
    x\ge I_{(x_0,s_0)}^{\tht-}(r)\ \Longleftrightarrow\ d(x,r)\ge0.
\end{align*}
This means
\[I_{(x_0,s_0)}^{\tht-}(r)=\inf\{x\in\R:d(x,r)\ge0\}.\]
Thus, $\{I_{(x_0,s_0)}^{\tht-}(r):r\le t\}$ corresponds to the leftmost competition interface as per the definition \eqref{CIs}, with the boundary condition $y\mapsto h^\tht_{(x_0,s_0)}(y,t)$ at time $t$, and the reference point being $a=I_{(x_0,s_0)}^{\tht-}(t)$. By \eqref{Icont}, this interface is a continuous function from $(-\infty,t]$ to $\R$. 
 Since $t$ was arbitrary, $I_{(x_0,s_0)}^{\tht-}$ is continuous on $\R$.
 \end{proof}

We are now ready to prove Proposition \ref{prop:IGgoup}.

\begin{proof}[Proof of Proposition \ref{prop:IGgoup}] 
Let $\Ipath_{(x,s)}^{\tht\sig}(t)=(I_{(x,s)}^{\tht\sig}(t),t)$.
Part \eqref{IGgoup.a} comes from Lemma \ref{Icont}. Part \eqref{IGgoup.b} follows from Lemma \ref{I-prop}\eqref{I-prop.a}. Parts \eqref{IGgoup.c} and \eqref{IGgoup.d} are in Lemma \ref{I-inst}. 
By Lemma \ref{no-isolated}, any $(x,s)\in\IG\tht$ is either not right-isolated or not left-isolated (or both). Together with Lemma \ref{lm:x0onI}, this yields the first claim as well as the $\Leftarrow$ directions of the two equivalences stated in part \eqref{IGgoup.e}. The converse directions also follow: for instance, if $(x,s)$ is right-isolated, then Lemma \ref{no-isolated} implies that it is not left-isolated, and we have just shown this implies $\Ipath_{(x,s)}^{\tht-}(s)=(x,s)$.
Lemma \ref{I-restart} gives part \eqref{IGgoup.f}.
\smallskip

Part \eqref{IGgoup.g}. 
Suppose there exists an $r\in\R$ such that $\Ipath_{(x,s)}^{\tht\sig}(r)=\Ipath_{(x',s')}^{\tht\sig}(r)$.
Then part \eqref{IGgoup.f} says that for any $t\in\R$, 
\[\Ipath_{(x,s)}^{\tht\sig}(t)=\Ipath_{\Ipath_{(x,s)}^{\tht\sig}(r)}^{\tht\sig}(t)=\Ipath_{\Ipath_{(x',s')}^{\tht\sig}(r)}^{\tht\sig}(t)=\Ipath_{(x',s')}^{\tht\sig}(t).\]
Part \eqref{IGgoup.g} is proved.\smallskip

Part \eqref{IGgoup.h}. Take $t>s'$. 
By Lemma \ref{I-prop}\eqref{I-prop.d}, 
\begin{align*}
&W^{\tht-}\bigl(I_{(x,s)}^{\tht\sig}(t),t;x,s\bigr)=W^{\tht+}\bigl(I_{(x,s)}^{\tht\sig}(t),t;x,s\bigr)
\quad\text{and}\\
&W^{\tht-}\bigl(I_{(x,s)}^{\tht\sig}(s'),s';x,s\bigr)=W^{\tht+}\bigl(I_{(x,s)}^{\tht\sig}(s'),s';x,s\bigr).
\end{align*}
This and the cocycle property \eqref{cocycle} give
\begin{align}\label{1773}
W^{\tht-}\bigl(I_{(x,s)}^{\tht\sig}(t),t;I_{(x,s)}^{\tht\sig}(s'),s'\bigr)=W^{\tht+}\bigl(I_{(x,s)}^{\tht\sig}(t),t;I_{(x,s)}^{\tht\sig}(s'),s'\bigr).
\end{align}
Then by Lemma \ref{lm:1356}, we get
\begin{align}\label{aux1535}
\begin{split}
&\sup_{y\ge I_{(x,s)}^{\tht\sig}(t)}\bigl\{\mathcal L(I_{(x,s)}^{\tht\sig}(s'),s';y,t)+W^{\tht+}(y,t;I_{(x,s)}^{\tht\sig}(t),t)\bigr\}\\
&\qquad\qquad=\sup_{y\le I_{(x,s)}^{\tht\sig}(t)}\bigl\{\mathcal L(I_{(x,s)}^{\tht\sig}(s'),s';y,t)+W^{\tht-}(y,t;I_{(x,s)}^{\tht\sig}(t),t)\bigr\}.
\end{split}
\end{align}

By the monotonicity \eqref{W:mono}, we have
\begin{align*}
&W^{\tht+}\bigl(y,t;I_{(x,s)}^{\tht\sig}(t),t\bigr)
\ge W^{\tht-}\bigl(y,t;I_{(x,s)}^{\tht\sig}(t),t\bigr)
\quad\forall y\ge I_{(x,s)}^{\tht\sig}(t)
\quad\text{and}\\
&W^{\tht-}\bigl(y,t;I_{(x,s)}^{\tht\sig}(t),t\bigr)
\ge W^{\tht+}\bigl(y,t;I_{(x,s)}^{\tht\sig}(t),t\bigr)
\quad\forall y\le I_{(x,s)}^{\tht\sig}(t).
\end{align*}
This and \eqref{aux1535} imply that 
\begin{align*}
&\sup_{y\ge I_{(x,s)}^{\tht\sig}(t)}\bigl\{\mathcal L(I_{(x,s)}^{\tht\sig}(s'),s';y,t)+W^{\tht+}(y,t;I_{(x,s)}^{\tht\sig}(t),t)\bigr\}\\
&\qquad\qquad\ge\sup_{y\le I_{(x,s)}^{\tht\sig}(t)}\bigl\{\mathcal L(I_{(x,s)}^{\tht\sig}(s'),s';y,t)+W^{\tht+}(y,t;I_{(x,s)}^{\tht\sig}(t),t)\bigr\}
\end{align*}
and
\begin{align*}
&\sup_{y\le I_{(x,s)}^{\tht\sig}(t)}\bigl\{\mathcal L(I_{(x,s)}^{\tht\sig}(s'),s';y,t)+W^{\tht-}(y,t;I_{(x,s)}^{\tht\sig}(t),t)\bigr\}\\
&\qquad\qquad\ge\sup_{y\ge I_{(x,s)}^{\tht\sig}(t)}\bigl\{\mathcal L(I_{(x,s)}^{\tht\sig}(s'),s';y,t)+W^{\tht-}(y,t;I_{(x,s)}^{\tht\sig}(t),t)\bigr\}.
\end{align*}
Consequently, the rightmost maximizer of $\mathcal L(I_{(x,s)}^{\tht\sig}(s'),s';y,t)+W^{\tht+}(y,t;I_{(x,s)}^{\tht\sig}(t),t)$ is $\ge I_{(x,s)}^{\tht\sig}(t)$ and the leftmost maximizer of $\mathcal L(I_{(x,s)}^{\tht\sig}(s'),s';y,t)+W^{\tht-}(y,t;I_{(x,s)}^{\tht\sig}(t),t)$ is $\le I_{(x,s)}^{\tht\sig}(t)$. This and the extremality properties \eqref{p2lleftmost} and \eqref{p2lrightmost} imply that 
\begin{align}\label{aux1510}
\geo\from{\Ipath_{(x,s)}^{\tht\sig}(s')}\dir{L}{\tht}{-}(t)=\geo\from{(I_{(x,s)}^{\tht\sig}(s'),s')}\dir{L}{\tht}{-}(t)\le \Ipath_{(x,s)}^{\tht\sig}(t)=\bigl(I_{(x,s)}^{\tht\sig}(t),t\bigr)\le \geo\from{(I_{(x,s)}^{\tht\sig}(s'),s')}\dir{R}{\tht}{+}(t)=\geo\from{\Ipath_{(x,s)}^{\tht\sig}(s')}\dir{R}{\tht}{+}(t).
\end{align}
This proves \eqref{Isplit-strict} with weak inequalities.
It remains to prove the strict inequalities.
We prove the first, the second being similar.

For a contradiction, suppose $\sigg=-$ and the first inequality in \eqref{aux1510} is an equality.
As $y$ strictly increases to $I_{(x,s)}^{\tht-}(s')$, $(y,s')$ converges to $\Ipath_{(x,s)}^{\tht-}(s')$ and, by Corollary \ref{cor:geo:lim}, $\geo\from{(y,s')}\dir{L}{\tht}{-}$ converges in overlap topology to $\geo\from{\Ipath_{(x,s)}^{\tht-}(s')}\dir{L}{\tht}{-}$. Thus, for $y<I_{(x,s)}^{\tht-}(s')$ close enough to $I_{(x,s)}^{\tht-}(s')$, $\geo\from{(y,s')}\dir{L}{\tht}{-}$ coalesces with $\geo\from{\Ipath_{(x,s)}^{\tht-}(s')}\dir{L}{\tht}{-}$ before time $t$ and hence goes through $\geo\from{\Ipath_{(x,s)}^{\tht-}(s')}\dir{L}{\tht}{-}(t)$, which we have assumed to equal $(I_{(x,s)}^{\tht-}(t),t)$. Then, by \eqref{rec}, we have
\begin{align}\label{1521}
\mathcal L\bigl(y,s';I_{(x,s)}^{\tht-}(t),t\bigr)=W^{\tht-}\bigl(y,s';I_{(x,s)}^{\tht-}(t),t\bigr).
\end{align}
But, since $y<I_{(x,s)}^{\tht-}(s')$, Lemma \ref{I-prop}\eqref{I-prop.g} gives 
$W^{\tht-}\bigl(y,s';I_{(x,s)}^{\tht-}(s'),s'\bigr)>W^{\tht+}\bigl(y,s';I_{(x,s)}^{\tht-}(s'),s'\bigr)$ which, together with \eqref{1773} and the cocycle property \eqref{cocycle}, gives
\[W^{\tht-}\bigl(y,s';I_{(x,s)}^{\tht-}(t),t\bigr)>W^{\tht+}\bigl(y,s';I_{(x,s)}^{\tht-}(t),t\bigr).\]
This strict inequality contradicts \eqref{1521} and \eqref{L<W}. 
Part \eqref{IGgoup.h} is proved.\smallskip

Part \eqref{IGgoup.i}. 
We prove the first inequality, the second being similar. For a contradiction, suppose  there exists an $r<s'$ such that $\Upsilon\from{\Ipath_{(x,s)}^{\tht\sig}(s')}\dir{L}{\tht}{+}(r)>\Ipath_{(x,s)}^{\tht\sig}(r)$. By part \eqref{IGgoup.h}, $\geo\from{\Ipath_{(x,s)}^{\tht\sig}(r)}\dir{R}{\tht}{+}$ remains weakly to the right of $\Ipath_{(x,s)}^{\tht\sig}$. On the other hand, the duality \eqref{no-intersection} and the continuity of the paths imply that this geodesic must stay strictly to the left of the shock interface $\Upsilon\from{\Ipath_{(x,s)}^{\tht\sig}(s')}\dir{L}{\tht}{+}$ over the time interval $(r,s')$. Thus,  we must have $\geo\from{\Ipath_{(x,s)}^{\tht\sig}(r)}\dir{R}{\tht}{+}(s')=\Ipath_{(x,s)}^{\tht\sig}(s')$, and now we have a contradiction with Lemma \ref{lm:geosplitsshocks}. Part \eqref{IGgoup.i} is proved.
\smallskip

Part \eqref{IGgoup.j}. The claim follows from parts \eqref{IGgoup.h} and \eqref{IGgoup.i}, the $\tht$-directedness \eqref{geo:directed} of the geodesics, and the $\tht$-directedness \eqref{Upsilondir} of the $\tht\sigg$ shock interfaces. (The $\tht$-directedness as $t\to-\infty$ can also be seen from putting together \eqref{Wgrowth} and \eqref{Idir}.)
\end{proof}

Next, we establish the pathwise connectedness of $\IG\tht$ and justify the terminology of referring to $\IG\tht$ both as a \emph{graph} and as a \emph{web}.

\begin{defn}\label{def:Nanc}
    For $\tht\in\baddir$, $s<t$, and $(x,s),(y,t)\in\IG\tht$, we say that $(y,t)$ is an \emph{ancestor} of $(x,s)$, equivalently that $(x,s)$ is a \emph{descendant} of $(y,t)$, if there is a continuous space-time path $\gamma:[s,t]\to\IG\tht$ such that $\gamma(s) = (x,s)$ and $\gamma(t) = (y,t)$.
\end{defn}

\begin{lem}\label{lem:ancifbtw}
    Let $\w\in\Omega_0$, $\tht\in\baddir$, $s<t$ in $\R$, and $(x,s),(y,t)\in\IG\tht$.
    If $\geo\from{(x,s)}\dir{L}{\tht}{-}(t)\le (y,t)\le\geo\from{(x,s)}\dir{R}{\tht}{+}(t)$, then $(y,t)$ is an ancestor of $(x,s)$ .
\end{lem}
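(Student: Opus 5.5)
The plan is to connect $(x,s)$ to $(y,t)$ using the bi-infinite instability paths $\Ipath_{(y,t)}^{\tht\pm}$ of Proposition \ref{prop:IGgoup}, which lie in $\IG\tht$ by part \eqref{IGgoup.c}. Concretely, I will aim to show
\[
x\in\bigl\{I_{(y,t)}^{\tht-}(s),\,I_{(y,t)}^{\tht+}(s)\bigr\}.
\]
Once this holds, the restriction of the corresponding $\Ipath_{(y,t)}^{\tht\sig}$ to $[s,t]$ is the required continuous path in $\IG\tht$. Its endpoint at time $t$ is $(y,t)$: by Proposition \ref{prop:IGgoup}\eqref{IGgoup.e} one of $\Ipath_{(y,t)}^{\tht\pm}(t)$ equals $(y,t)$, and if the other does not, then Lemma \ref{I-inst} and Lemma \ref{I-prop}\eqref{I-prop.e} say we are in the split case. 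In that case I would choose $\sig$ appropriately. Alternatively, Proposition \ref{prop:IGgoup}\eqref{IGgoup.f} lets me re-base at $(x,s)$ and use the path of matching sign through both points.

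The argument splits into three steps. First, I rule out $x$ lying strictly between $a^-=I_{(y,t)}^{\tht-}(s)$ and $a^+=I_{(y,t)}^{\tht+}(s)$. By Lemma \ref{I-prop}\eqref{I-prop.d}, $W^{\tht-}(\cdot,s;y,t)=W^{\tht+}(\cdot,s;y,t)$ on $[a^-,a^+]$. By the cocycle property \eqref{cocycle}, the difference profile of Definition \ref{def:buseinstab} is therefore constant on that interval. Hence no interior point is a point of increase, contradicting $(x,s)\in\IG\tht$.

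Second, I rule out $x<a^-$; the case $x>a^+$ is symmetric. By Lemma \ref{I-prop}\eqref{I-prop.c} applied with base point $(y,t)$, we have $W^{\tht-}(x,s;y,t)>W^{\tht+}(x,s;y,t)$. On the other hand, the geodesic $\geo\from{(x,s)}\dir{R}{\tht}{+}$ starts at time $s$ to the left of $\Ipath_{(y,t)}^{\tht-}(s)$ and, by hypothesis, ends at time $t$ weakly to the right of $(y,t)$. By Proposition \ref{prop:IGgoup}\eqref{IGgoup.e}, $(y,t)$ is weakly to the right of $\Ipath_{(y,t)}^{\tht-}(t)$. By continuity, the geodesic must therefore meet $\Ipath_{(y,t)}^{\tht-}$ at some point $p=\Ipath_{(y,t)}^{\tht-}(r)$ with $r\in(s,t]$. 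Along the geodesic, \eqref{rec} gives $\mathcal L(x,s;p)=W^{\tht+}(x,s;p)$. Along $\Ipath$, \eqref{1773} gives $W^{\tht-}(p;y,t)=W^{\tht+}(p;y,t)$. Combining these with \eqref{L<W} and the cocycle property should produce an inequality comparing $W^{\tht-}(x,s;y,t)$ with $W^{\tht+}(x,s;y,t)$ that is incompatible with the strict inequality above.

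The main obstacle is this second step: getting the inequality in the right direction together with strictness. The hitting point $p$ alone only gives a weak inequality of the wrong sign. I would first try to sharpen it by copying the strictness argument in the proof of Proposition \ref{prop:IGgoup}\eqref{IGgoup.h}. Namely, perturb $x$ slightly using Corollary \ref{cor:geo:lim} so that the relevant geodesic coalesces with one through $p$ before time $r$. Then apply Lemma \ref{I-prop}\eqref{I-prop.g}, re-based at $p$ via Lemma \ref{I-restart}, to obtain a strict Busemann gap that contradicts \eqref{rec} and \eqref{L<W}. If that fails, I would use the jump criterion \eqref{Wjump}: $W^{\tht-}(x,s;a^-,s)>W^{\tht+}(x,s;a^-,s)$ forces $\geo\from{(x,s)}\dir{R}{\tht}{-}$ and $\geo\from{(a^-,s)}\dir{L}{\tht}{+}$ to be disjoint. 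Together with the strict splitting $\geo\from{\Ipath^{\tht-}(s)}\dir{L}{\tht}{-}\prec\Ipath^{\tht-}$ in \eqref{Isplit-strict}, I would then try to show that $\geo\from{(x,s)}\dir{L}{\tht}{-}$ must end strictly left of $(y,t)$, or that $\geo\from{(x,s)}\dir{R}{\tht}{+}$ must end strictly right of it. Either conclusion contradicts the sandwich hypothesis.
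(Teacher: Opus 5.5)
Your Step 1 is fine. The trouble is the target of the whole argument: that $x\in\{I^{\tht-}_{(y,t)}(s),I^{\tht+}_{(y,t)}(s)\}$, i.e.\ that $(x,s)$ lies on one of the two paths $\Ipath^{\tht\pm}_{(y,t)}$. This is false under the hypotheses of the lemma, so Step 2 cannot be rescued by any sharpening.

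Here is a configuration where it fails. Fix $t$ and take $y<y'$ with $(y,t),(y',t)\in\IG\tht$, both not right-isolated; such pairs exist by Lemmas \ref{kS inf}, \ref{no-isolated}, and \ref{lm:isodense}.
\begin{itemize}
\item By Proposition \ref{prop:IGgoup}\eqref{IGgoup.e}, $\Ipath^{\tht+}_{(y,t)}(t)=(y,t)\ne(y',t)=\Ipath^{\tht+}_{(y',t)}(t)$.
\item Hence, by part \eqref{IGgoup.f}, no point lies on a path of both families $\Ipath^{\tht\pm}_{(y,t)}$ and $\Ipath^{\tht\pm}_{(y',t)}$. Such a point $q$ would force $\Ipath^{\tht\pm}_{(y,t)}=\Ipath^{\tht\pm}_{q}=\Ipath^{\tht\pm}_{(y',t)}$.
\item By \eqref{Itree}, the interfaces $\Upsilon\from{(y,t)}\dir{L}{\tht}{+}$ and $\Upsilon\from{(y',t)}\dir{L}{\tht}{+}$ coalesce. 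Let $(x,s)$, with $s<t$, be a point of their common part.
\item Then $(x,s)\in\IG\tht$ by Lemma \ref{lem:IGgodown}. Also, \eqref{shocksplitsgeo} (letting the time tend to $t$) together with \eqref{geo:mono} gives $\geo\from{(x,s)}\dir{L}{\tht}{-}(t)\le\geo\from{(x,s)}\dir{L}{\tht}{+}(t)\le(y,t)\le\geo\from{(x,s)}\dir{R}{\tht}{+}(t)$. The same holds with $y'$ in place of $y$.
\end{itemize}
So $(x,s)$ satisfies the hypothesis for both targets, yet it cannot lie on the $\Ipath$ paths of both. For at least one target it is off both paths, and then by your Step 1 it lies strictly outside $[a^-,a^+]$. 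That is exactly the case Step 2 tries to exclude. This is also why your crossing computation only yields $W^{\tht-}(x,s;y,t)\ge W^{\tht+}(x,s;y,t)$: that inequality is consistent with $x<a^-$, and there is no contradiction to extract.

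The missing idea is that an ancestry path need not be a single $\Ipath$ path; you may switch paths at a crossing. In the example above, the path from $(x,s)$ to $(y',t)$ is simply $\Upsilon\from{(y',t)}\dir{L}{\tht}{+}$ restricted to $[s,t]$.

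The paper's argument works from both ends.
\begin{itemize}
\item Let $\gamma$ be one of the paths $\Ipath^{\tht\pm}_{(x,s)}$ passing through $(x,s)$; it exists by Lemma \ref{no-isolated} and Proposition \ref{prop:IGgoup}\eqref{IGgoup.e}. Compare $\gamma(t)$ with $(y,t)$.
\item If $\gamma(t)<(y,t)$, descend from $(y,t)$ along $\Upsilon\from{(y,t)}\dir{L}{\tht}{+}$, which lies in $\IG\tht$ by Lemma \ref{lem:IGgodown}.
\item This interface stays strictly left of $\geo\from{(x,s)}\dir{R}{\tht}{+}$ on $(s,t)$. This follows from \eqref{no-intersection}, or from Lemma \ref{lm:geosplitsshocks} when $(y,t)=\geo\from{(x,s)}\dir{R}{\tht}{+}(t)$.
\item Hence its position at time $s$ is $\le(x,s)=\gamma(s)$, while at time $t$ it is strictly right of $\gamma(t)$. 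The intermediate value theorem gives a crossing at some $r\in[s,t)$.
\item Then $\gamma|_{[s,r]}$ followed by the interface on $[r,t]$ is the required path in $\IG\tht$.
\item The case $\gamma(t)>(y,t)$ is symmetric, using $\Upsilon\from{(y,t)}\dir{R}{\tht}{-}$ and $\geo\from{(x,s)}\dir{L}{\tht}{-}$.
\end{itemize}
In each case only one of the two inequalities in the hypothesis is used.
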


\begin{proof}
    Proposition \ref{prop:IGgoup} provides a continuous space-time path $\gamma$ emanating from $(x,s)$ whose entire trajectory lies within $\IG\tht$.
    If $\gamma(t)=(y,t)$, then we are done. The cases $\gamma(t)<(y,t)$ and $\gamma(t)>(y,t)$ are treated similarly. We describe the first case. 
    By Lemma \ref{lem:IGgodown}, the whole shock interface $\Upsilon\from{(y,t)}\dir{L}{\tht}{+}$ is in $\IG\tht$.
    By duality \eqref{no-intersection} and continuity, if $(y,t) < \geo\from{(x,s)}\dir{R}{\tht}{+}(t)$,  and by Lemma \ref{lm:geosplitsshocks} in the case 
$(y,t) = \geo\from{(x,s)}\dir{R}{\tht}{+}(t)$, 
the shock interface $\Upsilon\from{(y,t)}\dir{L}{\tht}{+}$ remains strictly to the left of the geodesic $\geo\from{(x,s)}\dir{R}{\tht}{+}$ throughout the time interval $(s,t)$.
 Since the interface and the geodesic are both continuous, it must be that
    $\Upsilon\from{(y,t)}\dir{L}{\tht}{+}(s)\le(x,s)$. 
    By continuity, $\Upsilon\from{(y,t)}\dir{L}{\tht}{+}$ must intersect $\gamma$ at some time $r\in[s,t)$. Following $\gamma$ from $(x,s)$ to the intersection point and then following $\Upsilon\from{y}\dir{L}{\tht}{+}$ from that point to $(y,t)$ produces a continuous space-time path that remains on $\IG\tht$. Thus $(y,t)$ is an ancestor of $(x,s)$. 
\end{proof}

The next lemma is the analogous version of the previous one, with shock interfaces in place of geodesics.

\begin{lem}\label{lem:childifbtw}
    Let $\w\in\Omega_0$, $\tht\in\baddir$, $s<t$ in $\R$, $(x,s),(y,t)\in\IG\tht$, and $\sigg,\overline\sigg\in\{-,+\}$. Let $\tau$ {\rm(}respectively $\overline\tau${\rm)} be a $\tht\sigg$ {\rm(}respectively $\tht\overline\sigg${\rm)} shock interface out of $(y,t)$. 
    If $\tau(s)\le(x,s)\le\overline\tau(s)$, then $(x,s)$ is a descendant of $(y,t)$. 
\end{lem}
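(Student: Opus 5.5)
The argument mirrors Lemma \ref{lem:ancifbtw}, with the roles of geodesics and shock interfaces interchanged: we build a path forward in time from $(x,s)$ inside $\IG\tht$ and make it meet one of the two given interfaces. Lemma \ref{lem:IGgodown} gives $\tau\subset\IG\tht$ and $\overline\tau\subset\IG\tht$, since both emanate from $(y,t)\in\IG\tht$. If $(x,s)$ equals $\tau(s)$ or $\overline\tau(s)$, the restriction of that interface to $[s,t]$ is the required path, and we are done. So assume
\[
\tau(s)<(x,s)<\overline\tau(s).
\]

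For the path out of $(x,s)$ we use Proposition \ref{prop:IGgoup}. Pick $\sigg'\in\{-,+\}$ with $\Ipath_{(x,s)}^{\tht\sig'}(s)=(x,s)$, which is possible by part \eqref{IGgoup.e}. Set $\gamma=\Ipath_{(x,s)}^{\tht\sig'}|_{[s,t]}$; it is continuous and lies in $\IG\tht$ by parts \eqref{IGgoup.a} and \eqref{IGgoup.c}.

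Now compare at the two endpoint times. At time $s$, $\gamma(s)=(x,s)$ lies weakly between $\tau(s)$ and $\overline\tau(s)$. At time $t$, all three of $\tau(t)$, $\overline\tau(t)$, $(y,t)$ coincide, and $\gamma(t)$ lies either weakly left or weakly right of $(y,t)$.
\begin{itemize}
\item If $\gamma(t)\le(y,t)$: the function $r\mapsto \overline\tau(r)-\gamma(r)$ is continuous on $[s,t]$, is $\ge0$ at $r=s$, and is $\le0$ at $r=t$. The intermediate value theorem gives $r_0\in[s,t]$ with $\gamma(r_0)=\overline\tau(r_0)$.
\item If $\gamma(t)\ge(y,t)$: the same argument applied to $\tau$ gives $r_0\in[s,t]$ with $\gamma(r_0)=\tau(r_0)$.
\end{itemize}
Concatenate $\gamma|_{[s,r_0]}$ with the relevant interface restricted to $[r_0,t]$. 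This is a continuous space-time path from $(x,s)$ to $(y,t)$ contained in $\IG\tht$, so $(x,s)$ is a descendant of $(y,t)$ by Definition \ref{def:Nanc}.

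The only point that needs care is continuity of the shock interfaces up to their starting time $t$. A shock interface is by definition a continuous path on $(-\infty,t]$, so $\tau(r)\to(y,t)$ as $r\nearrow t$, and the intermediate value argument goes through. Beyond that, the proof uses no ordering between $\gamma$ and the interfaces, only continuity and the endpoint comparisons, so I expect no real obstacle.
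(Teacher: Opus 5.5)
Your argument is essentially the paper's proof: Lemma~\ref{lem:IGgodown} places $\tau$ and $\overline\tau$ in $\IG\tht$, and Proposition~\ref{prop:IGgoup}\eqref{IGgoup.a},\eqref{IGgoup.c},\eqref{IGgoup.e} supplies a continuous path in $\IG\tht$ starting at $(x,s)$. Continuity then forces that path to meet one of the two interfaces on $[s,t]$, and you concatenate.

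One slip needs fixing: your two cases are swapped. If $\gamma(t)\le(y,t)$, then $\overline\tau(r)-\gamma(r)$ is $\ge0$ at both $r=s$ and $r=t$, so the intermediate value theorem gives nothing there. The sign change is in $\gamma-\tau$, since $\gamma(s)-\tau(s)\ge0\ge\gamma(t)-\tau(t)$, so $\gamma$ meets $\tau$. Symmetrically, $\gamma$ meets $\overline\tau$ when $\gamma(t)\ge(y,t)$.
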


\begin{proof}
Lemma \ref{lem:IGgodown} implies that both $\tau$ and $\overline\tau$ are entirely contained in $\IG\tht$. By Lemma \ref{no-isolated}, the point $(x,s)$ is either not left-isolated or not right-isolated. Applying Proposition \ref{prop:IGgoup}\eqref{IGgoup.e}, choose $\wh\sigg\in\{+,-\}$ such that $\Ipath_{(x,s)}^{\tht\wh\sig}(s)=(x,s)$. By part \eqref{IGgoup.c} of the same proposition, the path $\Ipath_{(x,s)}^{\tht\wh\sig}$ is entirely contained in $\IG\tht$.

Since $\Ipath_{(x,s)}^{\tht\wh\sig}$ is bi-infinite, $(x,s)$ lies between $\tau$ and $\overline\tau$, and all paths are continuous, it follows that $\Ipath_{(x,s)}^{\tht\wh\sig}$ must intersect one of $\tau$ or $\overline\tau$ at some time $r\in[s,t]$. Following $\Ipath_{(x,s)}^{\tht\wh\sig}$ on $[s,r]$ and then continuing along $\tau$ or $\overline\tau$ on $[r,t]$, according to which one passes through $\Ipath_{(x,s)}^{\tht\wh\sig}(r)$, gives a continuous space-time path from $(x,s)$ to $(y,t)$ that remains entirely within $\IG\tht$, completing the proof.
\end{proof}

\begin{rmk}\label{rk:ancestry}
The converses of Lemmas \ref{lem:ancifbtw} and \ref{lem:childifbtw} remain open. If the converse of Lemma \ref{lem:ancifbtw} holds, then any continuous space-time path $\gamma:[s,\infty)\to\R^2$ contained in $\IG\tht$ is $\tht$-directed. This would give a positive answer to Problem \ref{prob2}. Likewise, if the converse of Lemma \ref{lem:childifbtw} holds, then any continuous space-time path $\tau:(-\infty,s]\to\R^2$ contained in $\IG\tht$ is $\tht$-directed.
Both converses would follow if one shows that, almost surely, for any $\tht\in\baddir$ and any continuous space-time path $\gamma:[s,t]\to\R^2$ with $\gamma([s,t])\subset\IG\tht$, there exists a sequence $r_n\searrow s$ such that $\geo\from{\gamma(r_n)}\dir{L}{\tht}{-}\preceq \gamma \preceq \geo\from{\gamma(r_n)}\dir{R}{\tht}{+}$.
This, in turn, would follow from a positive answer to  Problem \ref{prob3}.
\end{rmk}

\begin{lem}\label{lem:cmnNanc}
    Let $\w\in\Omega_0$, $\tht\in\baddir$, and $(x,s), (y,t) \in \IG\tht$. Then $(x,s)$ and $(y,t)$ have a common ancestor in $\IG\tht$.
\end{lem}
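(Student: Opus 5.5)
Without loss of generality take $s\le t$. I would produce a common ancestor at a large time $T>t$, combining the ancestor criterion of Lemma \ref{lem:ancifbtw} with the directedness and coalescence of Busemann geodesics. Write $\gamma_1^-=\geo\from{(x,s)}\dir{L}{\tht}{-}$, $\gamma_1^+=\geo\from{(x,s)}\dir{R}{\tht}{+}$, $\gamma_2^-=\geo\from{(y,t)}\dir{L}{\tht}{-}$, $\gamma_2^+=\geo\from{(y,t)}\dir{R}{\tht}{+}$. By the coalescence \eqref{geo:coal1}--\eqref{geo:coal2} there is a time $T_0>t$ such that $\gamma_1^-$ and $\gamma_2^-$ agree on $[T_0,\infty)$, and likewise $\gamma_1^+$ and $\gamma_2^+$ agree on $[T_0,\infty)$. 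For every $T\ge T_0$ the two ``windows'' $[\gamma_1^-(T),\gamma_1^+(T)]$ and $[\gamma_2^-(T),\gamma_2^+(T)]$ therefore coincide. Moreover, by the sign distinction \eqref{sign} together with the ordering \eqref{geo:mono}, the window is nondegenerate for all large $T$: $\gamma_1^-(T)<\gamma_1^+(T)$.

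The task is then to find a point of $\IG\tht$ at level $T$ inside this common window. I would use the bi-infinite path $\Ipath=\Ipath_{(x,s)}^{\tht\wh\sig}$ from Proposition \ref{prop:IGgoup}, where $\wh\sigg$ is chosen as in part \eqref{IGgoup.e} so that $\Ipath(s)=(x,s)$. By part \eqref{IGgoup.h}, applied with $s'=s$,
\[
\gamma_1^-(T)\le\Ipath(T)\le\gamma_1^+(T)\qquad\text{for all }T>s.
\]
Part \eqref{IGgoup.c} gives $\Ipath(T)\in\IG\tht$. Taking any $T\ge T_0$, the point $(z,T)=\Ipath(T)$ lies in $\IG\tht$ and in the window of $(x,s)$. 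Lemma \ref{lem:ancifbtw} (or directly the path $\Ipath|_{[s,T]}$) then shows that $(z,T)$ is an ancestor of $(x,s)$.

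Since the windows coincide at level $T\ge T_0$, we also have $\gamma_2^-(T)\le (z,T)\le\gamma_2^+(T)$. As $(y,t)\in\IG\tht$, $(z,T)\in\IG\tht$ and $t<T$, Lemma \ref{lem:ancifbtw} applies and shows that $(z,T)$ is an ancestor of $(y,t)$. This completes the argument.

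The only delicate point is the edge case $s=t$, where the time-ordered definition of ancestor needs $T>t$; this is automatic since $T_0>t$. The main step to verify carefully is therefore that the coalescence time $T_0$ can be chosen simultaneously for the $-$ and $+$ families, which holds by taking the maximum of the two coalescence times. I do not expect a genuine obstacle beyond this.
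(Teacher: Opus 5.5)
Your proof is correct, but it finds the common ancestor differently from the paper.

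\emph{The paper's route.} It splits into cases according to whether $\geo\from{(x,s)}\dir{L}{\tht}{-}(t)\le(y,t)\le\geo\from{(x,s)}\dir{R}{\tht}{+}(t)$. If this holds, Lemma~\ref{lem:ancifbtw} applies directly. If instead $(y,t)$ lies, say, to the right of this window, the paper takes the last intersection point $(z,r)$ of $\geo\from{(x,s)}\dir{R}{\tht}{+}$ and $\geo\from{(y,t)}\dir{L}{\tht}{-}$. It uses \eqref{geo:restart} and Lemma~\ref{def:geoinstabpt} to show $(z,r)\in\IG\tht$, notes that $(z,r)$ lies in the windows of both points, and applies Lemma~\ref{lem:ancifbtw} twice.

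\emph{Your route.} You get an instability point inside the window of $(x,s)$ at every later time from the sandwich in Proposition~\ref{prop:IGgoup}\eqref{IGgoup.h}, with $s'=s$ and $\wh\sigg$ chosen by part~\eqref{IGgoup.e}. You then move past a time $T_0$ at which both the $\tht-$ and the $\tht+$ families have coalesced, so the two windows coincide. A single application of Lemma~\ref{lem:ancifbtw} to $(y,t)$ then finishes. There is no circularity, since neither Proposition~\ref{prop:IGgoup} nor Lemma~\ref{lem:ancifbtw} uses the present lemma.

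\emph{Comparison.} Your argument avoids the case analysis and the existence argument for a last intersection point. It also always produces an ancestor at a time strictly larger than $s\vee t$, which is the form used in Theorem~\ref{main:IG}\eqref{main:IG.graph}; the paper's first case returns $(y,t)$ itself. In exchange, the paper's construction identifies the ancestor explicitly from the geodesics and places it earlier. In its second case, $(z,r)$ comes strictly before the coalescence of $\geo\from{(y,t)}\dir{L}{\tht}{-}$ with $\geo\from{(x,s)}\dir{L}{\tht}{-}$: after that coalescence the path can no longer touch $\geo\from{(x,s)}\dir{R}{\tht}{+}$, because $(x,s)\in\IG\tht$. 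Hence $(z,r)$ also comes before your $T_0$.

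The nondegeneracy of the window that you mention is true but not needed anywhere in your argument.
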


\begin{proof}
    Without loss of generality, we can assume $s\le t$. When $t=s$, we can also assume $y\ne x$. 
    
    If $\geo\from{(x,s)}\dir{L}{\tht}{-}(t)\le(y,t)\le\geo\from{(x,s)}\dir{R}{\tht}{+}(t)$, then we must have $t>s$ and then the claim follows from  Lemma \ref{lem:ancifbtw}.  The cases $(y,t)<\geo\from{(x,s)}\dir{L}{\tht}{-}(t)$ and $\geo\from{(x,s)}\dir{R}{\tht}{+}(t)<(y,t)$ are similar and we treat the second one. By \eqref{geo:coal2},  $\geo\from{(y,t)}\dir{L}{\tht}{-}$ must coalesce with $\geo\from{(x,s)}\dir{L}{\tht}{-}$ and thus, by continuity, it must cross $\geo\from{(x,s)}\dir{R}{\tht}{+}$. Call $(z,r)\in\R^2$ the last intersection point of $\geo\from{(x,s)}\dir{R}{\tht}{+}$ and $\geo\from{(y,t)}\dir{L}{\tht}{-}$. (Such a point exists by \eqref{sign} and the continuity of the paths.)
    By \eqref{geo:restart}, $\geo\from{(z,r)}\dir{L}{\tht}{-}(r')=\geo\from{(y,t)}\dir{L}{\tht}{-}(r')$ and $\geo\from{(z,r)}\dir{R}{\tht}{+}(r')=\geo\from{(x,s)}\dir{R}{\tht}{+}(r')$, for all $r'\ge r$. Since $(z,r)$ is the last intersection point, we have $\geo\from{(z,r)}\dir{L}{\tht}{-}\cap\geo\from{(z,r)}\dir{R}{\tht}{+}=\{(z,r)\}$ and, by Lemma \ref{def:geoinstabpt}, $(z,r)\in\IG\tht$.

    By construction, $\geo\from
    {(x,s)} \dir{L}{\tht}{-}(r)\le(z,r) = \geo\from
    {(x,s)}\dir{R}{\tht}{+}(r)$ and $\geo\from
    {(y,t)} \dir{L}{\tht}{-}(r)= (z,r) \le \geo\from
    {(y,t)}\dir{R}{\tht}{+}(r)$. Thus, by Lemma \ref{lem:ancifbtw}, $(z,r)$ is an ancestor of both $(x,s)$ and $(y,t)$. 
\end{proof}

\begin{lem}\label{lem:cmnSanc}
      Let $\w\in\Omega_0$, $\tht\in\baddir$, and $(x,s), (y,t) \in \IG\tht$. Then $(x,s)$ and $(y,t)$ have a common descendant in $\IG\tht$. 
\end{lem}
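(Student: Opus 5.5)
We mirror the proof of Lemma \ref{lem:cmnNanc}, with shock interfaces in place of geodesics and Lemma \ref{lem:childifbtw} in place of Lemma \ref{lem:ancifbtw}. Without loss of generality take $s\le t$, and $x\ne y$ when $s=t$. Consider the shock interfaces $\tau^+=\Upsilon\from{(y,t)}\dir{L}{\tht}{+}$ and $\tau^-=\Upsilon\from{(y,t)}\dir{R}{\tht}{-}$. By Lemma \ref{lem:IGgodown}, both lie in $\IG\tht$.

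\textbf{Case 1:} $s<t$ and $\tau^+(s)\le(x,s)\le\tau^-(s)$. Then Lemma \ref{lem:childifbtw}, with $\sigg=+$ and $\overline\sigg=-$, shows that $(x,s)$ is itself a descendant of $(y,t)$. So $(x,s)$ is a common descendant, modulo the trivial reflexive convention. To avoid relying on that convention, we move down one step: follow $\Ipath_{(x,s)}^{\tht\wh\sig}$ from Proposition \ref{prop:IGgoup} backward to some time $r<s$. This gives a point of $\IG\tht$ that is a descendant of $(x,s)$, and by concatenating paths also of $(y,t)$.

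\textbf{Case 2:} $(x,s)$ lies outside the pair, say $(x,s)>\tau^-(s)$; the other side is symmetric. Also consider the interface $\sigma=\Upsilon\from{(x,s)}\dir{L}{\tht}{-}$, which lies in $\IG\tht$ by Lemma \ref{lem:IGgodown}. The interfaces $\sigma$ and $\tau^-$ are both $\tht-$ interfaces, so by \eqref{Itree} they coalesce at some point $(z,r)$ with $r<s$. Since $\sigma(s)=(x,s)>\tau^-(s)$, coalescence yields a first meeting point, and $(z,r)\in\IG\tht$. Following $\sigma$ from $(x,s)$ down to $(z,r)$, and $\tau^-$ from $(y,t)$ down to $(z,r)$, gives continuous paths in $\IG\tht$, so $(z,r)$ is a common descendant.

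If $(x,s)<\tau^+(s)$, use $\Upsilon\from{(x,s)}\dir{R}{\tht}{+}$ and $\tau^+$ in the same way. When $s=t$, the same argument works directly, because distinct starting points force a coalescence strictly below level $s$.

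\textbf{Main obstacle.} The only subtle step is making sure the coalescence time is strictly below $s$ and that the concatenated path really stays in $\IG\tht$. The second point is immediate from Lemma \ref{lem:IGgodown}. For the first, if the two interfaces met exactly at $(x,s)$, that would force $(x,s)\in\tau^-$, and we are then back in Case 1.

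In fact, the simplest uniform route avoids the case split entirely: take the $\tht-$ interfaces from both points. They coalesce by \eqref{Itree} at some $(z,r)$, and $(z,r)$ is a common descendant by Lemma \ref{lem:IGgodown}. The degenerate situation, where one point lies on the other's interface, is handled by moving further down along the common interface.
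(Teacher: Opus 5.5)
Your proposal is correct, and the uniform route in your final paragraph is exactly the paper's proof: take the $\tht-$ interfaces $\Upsilon\from{(x,s)}\dir{R}{\tht}{-}$ and $\Upsilon\from{(y,t)}\dir{R}{\tht}{-}$, let $(z,r)$ be their coalescence point from \eqref{Itree}, and use Lemma \ref{lem:IGgodown} to see that both descending paths to $(z,r)$ stay in $\IG\tht$. The case split through Lemma \ref{lem:childifbtw} is valid but unnecessary. Your remark about moving further down when one point lies on the other's interface is a useful addition, since Definition \ref{def:Nanc} requires the descendant to sit at a strictly earlier time, and the paper's proof passes over this point silently.
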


\begin{proof}
By \eqref{Itree}, $\Upsilon\from{(x,s)}\dir{R}{\tht}{-}$ and $\Upsilon\from{(y,t)}\dir{R}{\tht}{-}$ coalesce. Let $(z,r)$ be their coalescence point. 
By Lemma \ref{lem:IGgodown}, these paths remain on $\IG\tht$. Thus, $(z,r)\in\IG\tht$ and both $(x,s)$ and $(y,t)$ are ancestors of $(z,r)$.
\end{proof}

\section{Sketching the instability graph using shocks}\label{sec:sketch}

Recall that if $U\subset\R^2$ is open, a \emph{connected component} of $U$ is a maximal connected open subset of $U$. The collection of connected components forms a partition of $U$. By the separability of $\R^2$, any open set has at most countably many connected components. Since $\R^2$ is locally path-connected, any connected subset is also path-connected. See Appendix \ref{app:con} for more.

By Lemma \ref{lem:IGisclosed}, the set $\R^2\setminus\IG\tht$ of stability points is open. In this section, we describe its connected components and characterize their boundaries in terms of shock interfaces. Proposition \ref{prop:islandsdense} shows that the union of the boundaries of all these components is dense in the instability graph. Accordingly, we refer to this union as the \emph{skeleton} of the instability graph.

It is important to emphasize, however, that this skeleton represents only a small portion of the full graph: the remainder of $\IG\tht$ constitutes the bulk of the set. An instructive analogy is provided by the zero set of standard Brownian motion. In that setting, the analogue of the skeleton is the collection of left-isolated/right-isolated pairs of zeros (that is, pairs $s<t$ such that the Brownian motion vanishes at $s$ and $t$, but is nonzero for all $r\in(s,t)$). This collection is countable, whereas the entire zero set has Hausdorff dimension $1/2$ and therefore forms the dominant part of the set. Nevertheless, the closure of the left- and right-isolated zeros recovers the full zero set.
In fact, by \eqref{Hausdorff}, the intersection of $\IG\tht$ with any rational time level exhibits precisely this structure (and we expect the same to hold for arbitrary time levels). Despite occupying only a small portion of the graph, these components play a distinguished structural role: Lemma \ref{shocksintersect} shows that, for each $\sigg\in\{-,+\}$, the coalescence points of $\tht\sigg$ shock interfaces lie in the union of their closures.

\begin{defn}
For $\tht \in \baddir$, a connected component of $\R^2 \setminus \IG\tht$ is called a \emph{stability island}. For $(x,s) \in \R^2 \setminus \IG\tht$, we denote by $\island_{(x,s)}$ the island containing $(x,s)$.
\end{defn}

In this section, we analyze stability islands and their relationship to shock interfaces and the various types of shocks.  Lemma \ref{lm:islands} shows that all islands are bounded and that there are countably infinitely many of them. Lemmas \ref{lem:shockstoisland} and \ref{lem:islandtoshocks} identify the boundary of an island as arising from a specific configuration of shock interfaces (Definition \ref{def:misordered}). By definition, the left and right boundaries of an island consist of right- and left-isolated instability points, respectively. Lemma \ref{lm:isotoisland} explains how an island can be reconstructed from a single such point.
Lemma \ref{lem:islandtoshocks} identifies the top point of an island, while Lemma \ref{lem:shockstoisland} shows how the island can be reconstructed from this tip.
Lemmas \ref{lm:snowbird} and \ref{lem:SBisisland} identify the bottom point as a special type of double shock (Definition \ref{def:snowbird}) and show that the island may also be reconstructed from that point. The full description of the boundary is in Lemma \ref{lm:islandboundary}. Lemma \ref{lm:disjointislands} establishes that the closures of distinct islands are disjoint. 
The results in Subsection \ref{sec:islands+cifs} relate island boundaries to the interfaces constructed in Proposition \ref{prop:IGgoup}.

\begin{lem}\label{lm:islands}
    Let $\w\in\Omega_0$ and $\tht\in\baddir$. All stability islands are bounded and there are countably infinitely many of them.
\end{lem}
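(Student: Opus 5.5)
We prove boundedness first, then countable infiniteness. Fix an island $\island_{(x,s)}$ with $(x,s)\notin\IG\tht$. We trap it inside a compact region whose boundary lies in $\IG\tht$.

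\emph{Spatial bounds at a fixed time.} By Lemma \ref{kS inf} there are $a<x<b$ with $(a,s),(b,s)\in\IG\tht$. The horizontal segment at level $s$ alone does not confine the island, since it can escape above or below $s$. We therefore use the bi-infinite paths of Proposition \ref{prop:IGgoup}. Set $\alpha=\Ipath^{\tht+}_{(a,s)}$ and $\beta=\Ipath^{\tht-}_{(b,s)}$. These are continuous paths in $\IG\tht$ by parts \eqref{IGgoup.a} and \eqref{IGgoup.c}. By part \eqref{IGgoup.e} they pass through points at level $s$ weakly between $a$ and $b$. Choosing $a,b$ as the nearest instability points to $x$ at level $s$ (the sup of $\IG\tht_s\cap(-\infty,x)$ and the inf of $\IG\tht_s\cap(x,\infty)$, which lie in the closed set $\IG\tht_s$), we get $a<x<b$. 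Here $a$ is right-isolated and $b$ is left-isolated, so part \eqref{IGgoup.e} gives $\alpha(s)=(a,s)$ and $\beta(s)=(b,s)$. Both paths are $\tht$-directed in both time directions by \eqref{IGgoup.j}.

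\emph{Vertical bounds.} We need to show $\alpha$ and $\beta$ meet both above and below $s$; the closed region between them is then compact and contains the island. Connectedness of the island, together with the fact that $\alpha,\beta\subset\IG\tht$ avoid it, keeps the island inside the strip between $\alpha$ and $\beta$. On the interval $[a,b]\times\{s\}$ we have $W^{\tht-}=W^{\tht+}$, since no point of increase lies strictly between $a$ and $b$.

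Above $s$, apply Lemma \ref{def:geoinstabpt} at $(x,s)$: the geodesics $\geo\from{(x,s)}\dir{L}{\tht}{-}$ and $\geo\from{(x,s)}\dir{R}{\tht}{+}$ meet at some time $t>s$. By \eqref{Isplit-strict}, $\alpha$ stays weakly right of $\geo\from{(a,s)}\dir{L}{\tht}{-}$ and $\beta$ stays weakly left of $\geo\from{(b,s)}\dir{R}{\tht}{+}$. We would rather show $\alpha$ and $\beta$ cross. The cleaner route is to observe that $I^{\tht\pm}_{(a,s)}$ are defined through the sign of $W^{\tht-}-W^{\tht+}$ relative to base point $(a,s)$. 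Since $W^{\tht-}(b,s;a,s)=W^{\tht+}(b,s;a,s)$, we get $I^{\tht\pm}_{(a,s)}=I^{\tht\pm}_{(b,s)}$, so $\alpha=\Ipath^{\tht+}_{(a,s)}$ and $\beta=\Ipath^{\tht-}_{(a,s)}$. The island is thus the region where $\Ipath^{\tht-}<\Ipath^{\tht+}$ containing level $s$.

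It remains to show the set of times $\{t:I^{\tht-}(t)<I^{\tht+}(t)\}$ has bounded component around $s$. For large $t>s$: points $(y,t)$ strictly between the interfaces have $W^{\tht-}(y,t;a,s)=W^{\tht+}(y,t;a,s)$. Then \eqref{Wjump} and coalescence force $\geo\from{(a,s)}\dir{R}{\tht}{-}$ and $\geo\from{(b,s)}\dir{L}{\tht}{+}$ to intersect. Using \eqref{Isplit-strict} together with \eqref{sign} (the $\tht\pm$ geodesics eventually strictly separate), we would show the interfaces are squeezed between geodesics that eventually leave no gap, so they must meet. For times below $s$, use part \eqref{IGgoup.i}: $\Ipath^{\tht+}\succeq\Upsilon\dir{L}{\tht}{+}$ and $\Ipath^{\tht-}\preceq\Upsilon\dir{R}{\tht}{-}$. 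Lemma \ref{lm:shockasymorder} puts $\tht+$ interfaces eventually left of $\tht-$ interfaces, which forces a crossing of $\alpha,\beta$ at some earlier time.

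\emph{The main obstacle.} The hardest step is the upward meeting: turning eventual strict separation of $\tht-$ and $\tht+$ geodesics into an actual intersection of $\Ipath^{\tht-}$ and $\Ipath^{\tht+}$. We will try to show that the point where $\geo\from{(x,s)}\dir{L}{\tht}{-}$ and $\geo\from{(x,s)}\dir{R}{\tht}{+}$ meet, and then split at a last time $t'$, yields an instability point by Lemma \ref{def:geoinstabpt}, and that this point lies on both interfaces.

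\emph{Countability.} Countability is immediate because components of an open subset of $\R^2$ are countable. For infinitely many islands: islands exist since $\IG\tht$ is nowhere dense (Lemma \ref{lem:IGisclosed}), and a single time level $s$ meets infinitely many of them. Indeed, $\R\setminus\IG\tht_s$ is open and nonempty, and its unbounded extent from Lemma \ref{kS inf} produces infinitely many gaps $(a_k,b_k)$ separated by instability points, whose bounded islands must be distinct.
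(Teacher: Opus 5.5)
There is a genuine gap. The lemma's content is that the island is confined vertically, and that confinement is never established. You also cannot fall back on Lemmas \ref{lm:bdry} or \ref{lm:disjointI}, since both are derived from this lemma.

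First, a sign slip. Since $(a,s)$ is right-isolated, Proposition \ref{prop:IGgoup}\eqref{IGgoup.e} gives $\Ipath^{\tht-}_{(a,s)}(s)=(a,s)$ and $\Ipath^{\tht+}_{(a,s)}(s)=(b,s)$. Your $\alpha,\beta$ are therefore swapped, and as written they violate part \eqref{IGgoup.b}.

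More seriously, for the upward step you only say what you would try, and the squeeze you describe cannot work. By \eqref{Isplit-strict}, the left path $\Ipath^{\tht-}_{(a,s)}$ is bounded below by $\geo\from{(a,s)}\dir{L}{\tht}{-}$, and the right path $\Ipath^{\tht+}_{(a,s)}$ is bounded above by $\geo\from{(b,s)}\dir{R}{\tht}{+}$. But $\geo\from{(a,s)}\dir{L}{\tht}{-}\preceq\geo\from{(b,s)}\dir{R}{\tht}{+}$, eventually strictly by \eqref{sign}. Bounds on these sides can never force the two paths together.

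The downward step fails for the same reason. Part \eqref{IGgoup.i} gives only $\Upsilon\dir{L}{\tht}{+}\preceq\Ipath^{\tht\pm}_{(a,s)}\preceq\Upsilon\dir{R}{\tht}{-}$ (interfaces from points of the path). This is again a lower bound on the left path and an upper bound on the right one. Lemma \ref{lm:shockasymorder} gives $\Upsilon\from{(a,s)}\dir{L}{\tht}{+}\preceq\Upsilon\from{(b,s)}\dir{R}{\tht}{-}$, which is exactly the ordering compatible with the paths never meeting.

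The paper does not try to make $\Ipath^{\tht\pm}$ meet. It takes instability points $x_1<x<x_2$ at level $s$ and uses a common descendant and a common ancestor of them (Lemmas \ref{lem:cmnSanc} and \ref{lem:cmnNanc}). This gives two space-time paths in $\IG\tht$ from a point below to a point above, passing through $(x_1,s)$ and $(x_2,s)$, and the island is trapped between them.

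Your framework can be repaired along the same lines:
\begin{itemize}
\item \emph{Below $s$.} Use two $\tht-$ shock interfaces from $(a,s)$ and $(b,s)$. They stay in $\IG\tht$ by Lemma \ref{lem:IGgodown} and coalesce by \eqref{Itree}.
\item \emph{Above $s$.} Carry out your intended route. Let $p=(z,t)$ be the last common point of $\geo\from{(x,s)}\dir{L}{\tht}{-}$ and $\geo\from{(x,s)}\dir{R}{\tht}{+}$; they meet after time $s$ by Lemma \ref{def:geoinstabpt} and eventually separate by \eqref{sign}.
\item \emph{$p$ is a pns point.} We have $p\in\IG\tht$ by \eqref{geo:restart} and Lemma \ref{def:geoinstabpt}, and $p\in\IGpns\tht$ by \eqref{noshcoksongeo}.
\item \emph{$p$ lies weakly between the interfaces.} By \eqref{rec}, $W^{\tht-}(x,s;z,t)=\mathcal L(x,s;z,t)=W^{\tht+}(x,s;z,t)$. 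With the cocycle property and $W^{\tht-}(a,s;x,s)=W^{\tht+}(a,s;x,s)$, Lemma \ref{I-prop} gives $I^{\tht-}_{(a,s)}(t)\le z\le I^{\tht+}_{(a,s)}(t)$.
\item \emph{Both interfaces pass through $p$.} If these two values differed, $p$ would be either strictly between them, hence stable, or an endpoint. An endpoint is one-sidedly isolated (Lemma \ref{I-inst}\eqref{I-inst.b}) and hence a shock (Lemmas \ref{LRisolgeo}\eqref{LRisolgeo.a} and \ref{lm:hugging}\eqref{lm:hugging.a}). Either case contradicts $p\in\IGpns\tht$.
\end{itemize}
Together these give a closed barrier in $\IG\tht$ around $(x,s)$.

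Your counting argument is fine once boundedness is in hand, with one adjustment: take $s\in\Q$, so that $\R\setminus\IG\tht_s\supset\Q$ by \eqref{QnotIG}. For a general level you have not justified that this set is nonempty. Also, what the argument really uses is that finitely many bounded islands cannot contain gaps escaping to $\pm\infty$, not that distinct gaps give distinct islands.
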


\begin{proof}
 Consider an island $\island_{(x,s)}$, $(x,s)\in\R^2\setminus\IG\tht$. By \eqref{Wunbounded} and Definition \ref{def:buseinstab}, there exist $x_1<x<x_2$ such that $(x_1,s)$ and $(x_2,s)$ are in $\IG\tht$. By Lemmas \ref{lem:cmnNanc} and \ref{lem:cmnSanc}, there exist $r<s<t$ and $z,y$ in $\R$ such that $(y,t)$ is a common ancestor of $(x_1,s)$ and $(x_2,s)$ and $(z,r)$ is a common descendant of the two points. This implies that $\island_{(x,s)}$ is surrounded by two continuous paths, that go from $(z,r)$ to $(x_i,s)$ to $(y,t)$, $i\in\{1,2\}$. Thus, $\island_{(x,s)}$ is bounded. Since $\R^2\setminus\IG\tht$ is dense (Lemma \ref{lem:IGisclosed}), there must be infinitely many stability islands. As noted above, the number of connected components is at most countable; hence, the collection of stability islands is countably infinite.
\end{proof}

Recall from Lemma \ref{lm:shockasymorder} that $\theta+$ shock interfaces eventually lie to the left of $\theta-$ shock interfaces. This explains the terminology in the following definition.

\begin{defn}\label{def:misordered}
Let $\tht\in\baddir$. For $(z,t)\in\R^2$, let $\tau^-$ and $\tau^+$ be, respectively, $\tht-$ and $\tht+$ shock interfaces emanating from $(z,t)$. We say that $\tau^-$ and $\tau^+$ are \emph{misordered} if there exists $\ep>0$ such that
$\tau^-(r)<\tau^+(r)$ for all $r\in(t-\ep,t)$.
\end{defn}

The next lemma gives a sufficient condition for determining an island, in terms of misordered shock interfaces.

\begin{lem}\label{lem:shockstoisland}
    Let $\w\in\Omega_0$ and $\tht\in\baddir$. Let $(z,t_2)\in\IG\tht$. Suppose that $\Upsilon_{(z,t_2)}\dir{L}{\tht}{-}$ and $\Upsilon_{(z,t_2)}\dir{R}{\tht}{+}$ are misordered.  Let
    \begin{align}\label{t1}
        t_1=\sup\bigl\{r<t_2:\Upsilon_{(z,t_2)}\dir{L}{\tht}{-}(r) = \Upsilon_{(z,t_2)}\dir{R}{\tht}{+}(r)\bigr\}.
    \end{align}
    Then $-\infty<t_1<t_2$, $\Upsilon_{(z,t_2)}\dir{L}{\tht}{-}(t_1) = \Upsilon_{(z,t_2)}\dir{R}{\tht}{+}(t_1)$, $\Upsilon_{(z,t_2)}\dir{L}{\tht}{-}(r)<\Upsilon_{(z,t_2)}\dir{R}{\tht}{+}(r)$,  
    for all $r\in(t_1,t_2)$, and 
    \begin{align}\label{eq:shockstoisland}
    U=\bigl\{ (y,r)\in\R^2\,:\, r\in(t_1,t_2),\, \Upsilon_{(z,t_2)}\dir{L}{\tht}{-}(r)<y<\Upsilon_{(z,t_2)}\dir{R}{\tht}{+}(r)\bigr\}
    \end{align}
    is a stability island $\island_{(x,s)}$ for any $(x,s)\in U$. 
\end{lem}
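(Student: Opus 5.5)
Write $\tau^-=\Upsilon_{(z,t_2)}\dir{L}{\tht}{-}$ and $\tau^+=\Upsilon_{(z,t_2)}\dir{R}{\tht}{+}$. The plan has three steps: locate $t_1$, show the region $U$ consists of stable points, and show $U$ is a full connected component of $\R^2\setminus\IG\tht$.

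\textbf{The time $t_1$.} By Lemma \ref{lm:shockasymorder} (the case $x=y$), there is $r_0$ with $\tau^+(r)\le\tau^-(r)$ for $r\le r_0$. Misorderedness gives $\tau^-<\tau^+$ on some $(t_2-\ep,t_2)$. Since both paths are continuous (by \eqref{I-cont}), the set $\{r<t_2:\tau^-(r)=\tau^+(r)\}$ is closed in $(-\infty,t_2)$ and meets $[r_0,t_2-\ep]$. Hence $-\infty<t_1\le t_2-\ep<t_2$, and equality holds at $t_1$. On $(t_1,t_2)$ the difference $\tau^+-\tau^-$ is continuous, positive near $t_2$, and never zero, so it is positive throughout. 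Consequently $U$ is open, connected (it is a region between two continuous graphs over an interval), and bounded.

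\textbf{$U$ consists of stable points.} Take $(x,s)\in U$ and suppose, for contradiction, that $(x,s)\in\IG\tht$. By Lemma \ref{def:geoinstabpt}, the geodesics $\geo\from{(x,s)}\dir{L}{\tht}{-}$ and $\geo\from{(x,s)}\dir{R}{\tht}{+}$ meet only at $(x,s)$.
\begin{itemize}
\item The geodesic $\geo\from{(x,s)}\dir{R}{\tht}{+}$ is a $W^{\tht+}$-geodesic. By duality \eqref{no-intersection} it cannot touch $\tau^+$ on $(s,t_2)$, so it stays strictly left of $\tau^+$ and reaches level $t_2$ at a point $\le z$.
\item Symmetrically, $\geo\from{(x,s)}\dir{L}{\tht}{-}$ stays strictly right of $\tau^-$ and reaches level $t_2$ at a point $\ge z$.
\end{itemize}
By the ordering $\geo\dir{L}{\tht}{-}\preceq\geo\dir{R}{\tht}{+}$ from \eqref{geo:mono}, both geodesics must pass through $(z,t_2)$. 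This contradicts Lemma \ref{def:geoinstabpt}. The same contradiction arises if either geodesic reaches level $t_2$ exactly at $z$, and Lemma \ref{lm:geosplitsshocks} handles that boundary case. Hence $U\subset\R^2\setminus\IG\tht$, so $U\subset\island_{(x,s)}$ for every $(x,s)\in U$.

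\textbf{$U$ is the whole island.} It suffices to show $\partial U\subset\IG\tht$. Then any connected set of stable points containing $U$ cannot cross $\partial U$, and so equals $U$. The boundary of $U$ consists of three parts:
\begin{itemize}
\item $\tau^-([t_1,t_2])$ and $\tau^+([t_1,t_2])$, which lie in $\IG\tht$ by Lemma \ref{lem:IGgodown}, since $(z,t_2)\in\IG\tht$;
\item the top point $(z,t_2)$, which is in $\IG\tht$ by hypothesis;
\item the bottom point $\tau^\pm(t_1)$, which lies on the interfaces and so is again covered by Lemma \ref{lem:IGgodown}.
\end{itemize}
Because $\tau^\pm$ are continuous and meet at both ends, the closure of $U$ adds only these curves. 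The island is therefore $U$.

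\textbf{Expected main obstacle.} The delicate step is the second one. One must check that the geodesics from $(x,s)$ really are trapped between $\tau^-$ and $\tau^+$ all the way up to time $t_2$, and in particular that they cannot escape through a shared boundary point before $t_2$. This depends on the strict duality \eqref{no-intersection}, which applies only in the relative interiors of the interfaces, together with Lemma \ref{lm:geosplitsshocks} to exclude touching at the endpoint $t_2$.
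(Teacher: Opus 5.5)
Your proof is correct. The first two steps match the paper's argument. You locate $t_1$ via \eqref{+<=-} and continuity. You then trap $\geo\from{(x,s)}\dir{L}{\tht}{-}$ and $\geo\from{(x,s)}\dir{R}{\tht}{+}$ between the two interfaces using \eqref{no-intersection}, which forces both through $(z,t_2)$. Your appeal to Lemma \ref{lm:geosplitsshocks} is unnecessary: the geodesics meeting at $(z,t_2)$ is exactly the conclusion you want, not a boundary case to exclude.

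You depart from the paper in the last step. The paper extends the two boundary curves to continuous functions $f\le g$ on all of $\R$ whose graphs lie in $\IG\tht$. Above $t_2$ it uses the bi-infinite path $\Ipath_{(z,t_2)}^{\tht\sig}$ from Proposition \ref{prop:IGgoup}, with $\sig$ chosen through Lemma \ref{no-isolated} and Proposition \ref{prop:IGgoup}\eqref{IGgoup.e}. Below $t_1$ it uses $\Upsilon_{(z,t_2)}\dir{L}{\tht}{-}$. It then applies Lemma \ref{f<g:connected}\eqref{f<g:connected.b} with $V=\R^2\setminus\IG\tht$, which requires the full graphs over $\R$ to avoid $V$.

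You instead note that $\partial U=\tau^-([t_1,t_2])\cup\tau^+([t_1,t_2])\subset\IG\tht$ by Lemma \ref{lem:IGgodown}. You finish with the elementary fact that a connected set meeting $U$ and disjoint from $\partial U$ lies in $U$, by splitting it along the open sets $U$ and $\R^2\setminus\overline U$.

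Your route is more economical: it needs only Lemma \ref{lem:IGgodown} and never invokes the paths $\Ipath$. The paper's route has the advantage of reusing the same component lemma it needs again in Lemma \ref{lm:bdry}. There the curves are genuinely global, and the region between them can have many components.
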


\begin{figure}[hpt]
    \includegraphics[width=3cm]{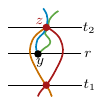}
    \caption{\small The geodesics $\geo\from{(y,r)}\dir{L}{\tht}{-}$ and $\geo\from{(y,r)}\dir{R}{\tht}{+}$ are trapped between the two misordered shock interfaces $\Upsilon_{(z,t_2)}\dir{L}{\tht}{-}$ and $\Upsilon_{(z,t_2)}\dir{R}{\tht}{+}$, and are thus forced to meet at $(z,t_2)$.}
    \label{fig:misordered}
\end{figure}

\begin{proof}
    Let $\e>0$ be as in Definition \ref{def:misordered}.
    It is immediate that $t_1\le t_2-\e<t_2$. By \eqref{+<=-}, there exists an $r<t_2$ such that $\Upsilon_{(z,t_2)}\dir{R}{\tht}{+}(r)\le\Upsilon_{(z,t_2)}\dir{L}{\tht}{-}(r)$. This and the continuity of the interfaces imply that $t_1>-\infty$ and $\Upsilon_{(z,t_2)}\dir{L}{\tht}{-}(t_1) = \Upsilon_{(z,t_2)}\dir{R}{\tht}{+}(t_1)$. Then the definition of $t_1$ implies  $\Upsilon_{(z,t_2)}\dir{L}{\tht}{-}(r)<\Upsilon_{(z,t_2)}\dir{R}{\tht}{+}(r)$,  
    for all $r\in(t_1,t_2)$.

    Let $(y,r)\in U$. 
    Then by duality \eqref{no-intersection},
     $\geo\from{(y,r)}\dir{L}{\tht}{-}$ cannot cross $\Upsilon_{(z,t_2)}\dir{L}{\tht}{-}$ at a time $s\in(r,t_2)$ and $\geo\from{(y,r)}\dir{R}{\tht}{+}$ cannot cross  $\Upsilon_{(z,t_2)}\dir{R}{\tht}{+}$ during that time interval.
     This and the geodesics ordering \eqref{geo:mono} give that $\Upsilon_{(z,t_2)}\dir{L}{\tht}{-}(s)\le\geo\from{(y,r)}\dir{L}{\tht}{-}(s)\le\geo\from{(y,r)}\dir{R}{\tht}{+}(s)\le\Upsilon_{(z,t_2)}\dir{R}{\tht}{+}(s)$ for all $s\in[r,t_2]$. By the continuity of these paths, we get that $\geo\from{(y,r)}\dir{L}{\tht}{-}(t_2)=\geo\from{(y,r)}\dir{R}{\tht}{+}(t_2)=(z,t_2)$. See Figure \ref{fig:misordered}. By Lemma \ref{def:geoinstabpt}, $(y,r)\not\in\IG\tht$. We have thus shown that $U\subset\R^2\setminus\IG\tht$. 

    By Lemma \ref{no-isolated}, $(z,t_2)$ is either not left-isolated or not right-isolated. (In fact, Lemma \ref{lm:islandboundary}\eqref{lm:islandboundary.c} below, together with Lemma \ref{LRisolgeo}\eqref{LRisolgeo.a}, implies that $(z,t_2)$ is neither left- nor right-isolated.) Using Proposition \ref{prop:IGgoup}\eqref{IGgoup.e}, pick $\sigg\in\{-,+\}$ so that $\Ipath_{(z,t_2)}^{\tht\sig}(t_2)=(z,t_2)$.
    Define the functions $f,g:\R\to\R$ as follows. For $t\in[t_1,t_2]$, let $(f(t),t)=\Upsilon_{(z,t_2)}\dir{L}{\tht}{-}(t)$ and $(g(t),t)=\Upsilon_{(z,t_2)}\dir{R}{\tht}{+}(t)$.
     For $t\ge t_2$, let $(f(t),t)=(g(t),t)=\Ipath_{(z,t_2)}^{\tht\sig}(t)$. For $t\le t_1$, let $(f(t),t)=(g(t),t)=\Upsilon_{(z,t_2)}\dir{L}{\tht}{-}(t)$.
     By Lemma \ref{lem:IGgodown} and Proposition \ref{prop:IGgoup}\eqref{IGgoup.c}, both $(f(t),t)$ and $(g(t),t)$ are in $\IG\tht$, for all $t\in\R$. Both functions are also continuous and satisfy $f(t)\le g(t)$ for all $t$. Furthermore, $f(s)<g(s)$ for all $s\in(t_1,t_2)$. Thus, Lemma \ref{f<g:connected} applies and says that $U$ is a connected component of $V=\R^2\setminus\IG\tht$. Consequently, for any $(x,s)\in U$, $U=\island_{(x,s)}$.
    \end{proof}

Next, we show how to construct a stability island from left- or right-isolated instability points. See Figure \ref{fig:isotoisland}

\begin{lem}\label{lm:isotoisland}
Let $\w\in\Omega_0$ and $\tht\in\baddir$, and suppose $(v,s)\in\IG\tht$ is left- or right-isolated. 
If $(v,s)$ is left-isolated, let $(z,t_2)$ be the separation point of 
$\geo\from{(v,s)}\dir{L}{\tht}{-}$ and $\geo\from{(v,s)}\dir{L}{\tht}{+}$; 
if it is right-isolated, let $(z,t_2)$ be the separation point of 
$\geo\from{(v,s)}\dir{R}{\tht}{-}$ and $\geo\from{(v,s)}\dir{R}{\tht}{+}$. 
Then, in either case, $t_2>s$, $(z,t_2)\in\IG\tht$ and 
\[\Upsilon_{(z,t_2)}\dir{L}{\tht}{-}(r)< 
\Upsilon_{(z,t_2)}\dir{R}{\tht}{+}(r)
\quad\text{for all } r\in(s,t_2),\]
so $\Upsilon_{(z,t_2)}\dir{L}{\tht}{-}$ and $\Upsilon_{(z,t_2)}\dir{R}{\tht}{+}$ are misordered. 
Moreover, in the left-isolated case $\Upsilon_{(z,t_2)}\dir{R}{\tht}{+}(s)=(v,s)$ and $\geo\from{(v,s)}\dir{L}{\tht}{+}(r)<\Upsilon_{(z,t_2)}\dir{R}{\tht}{+}(r)$ for all $r\in(s,t_2)$, 
while in the right-isolated case $\Upsilon_{(z,t_2)}\dir{L}{\tht}{-}(s)=(v,s)$ and $\Upsilon_{(z,t_2)}\dir{L}{\tht}{-}(r)<\geo\from{(v,s)}\dir{R}{\tht}{-}(r)$ for all $r\in(s,t_2)$. 
\end{lem}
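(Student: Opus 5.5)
By symmetry, I treat only the left-isolated case. In that case Lemma \ref{LRisolgeo}\eqref{LRisolgeo.a} says $(v,s)$ is a $\tht+$ hugging shock. By Lemma \ref{lm:hugging}\eqref{lm:hugging.b}, $\geo\from{(v,s)}\dir{L}{\tht}{-}$ and $\geo\from{(v,s)}\dir{L}{\tht}{+}$ agree on some $[s,s+\delta]$. By \eqref{sign} they eventually separate. By the extremality \eqref{p2pleftmost}, once they separate they can never reunite: two leftmost geodesics meeting again would have to coincide in between. So the separation time $t_2=\sup\{r: \text{they agree on }[s,r]\}$ is well defined, $t_2\ge s+\delta>s$, and $z$ is their common location at time $t_2$.

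Next I show $(z,t_2)\in\IG\tht$. By \eqref{geo:restart}, $\geo\from{(z,t_2)}\dir{L}{\tht}{-}$ follows $\geo\from{(v,s)}\dir{L}{\tht}{-}$ after $t_2$. Likewise the $+$ geodesic $\geo\from{(z,t_2)}\dir{R}{\tht}{+}$ stays weakly right of $\geo\from{(v,s)}\dir{L}{\tht}{+}$. Also $\geo\from{(v,s)}\dir{R}{\tht}{+}$ is strictly right of $\geo\from{(v,s)}\dir{L}{\tht}{-}$ after time $s$, by Lemma \ref{def:geoinstabpt}. I will then argue that $\geo\from{(z,t_2)}\dir{L}{\tht}{-}$ and $\geo\from{(z,t_2)}\dir{R}{\tht}{+}$ meet only at $(z,t_2)$. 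The delicate point is that $\geo\from{(z,t_2)}\dir{L}{\tht}{+}$ need not be the continuation of $\geo\from{(v,s)}\dir{L}{\tht}{+}$. If they met at a later point, then (using \eqref{geo:mono} and the fact that the $+$ geodesic leaving $(z,t_2)$ lies weakly right of the $-$ one) $\geo\from{(z,t_2)}\dir{L}{\tht}{-}$ and $\geo\from{(z,t_2)}\dir{L}{\tht}{+}$ would both reach that point. The leftmost property \eqref{p2pleftmost} would then force them to agree on the whole interval, contradicting the choice of $t_2$ as the separation time. Lemma \ref{def:geoinstabpt} then gives $(z,t_2)\in\IG\tht$.

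For the misordering, set $\tau^+=\Upsilon_{(z,t_2)}\dir{R}{\tht}{+}$ and $\tau^-=\Upsilon_{(z,t_2)}\dir{L}{\tht}{-}$. The geodesic $\gamma=\geo\from{(v,s)}\dir{L}{\tht}{+}$ (which equals $\geo\from{(v,s)}\dir{L}{\tht}{-}$ on $[s,t_2]$) is simultaneously a $W^{\tht+}$- and a $W^{\tht-}$-geodesic passing through $(z,t_2)$. Lemma \ref{lm:geosplitsshocks}, applied with $\sigg=+$, gives $\gamma(r)<\tau^+(r)$ for $r\in(s,t_2)$. Applied with $\sigg=-$, it gives $\tau^-(r)<\gamma(r)$. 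Hence $\tau^-(r)<\gamma(r)<\tau^+(r)$ on $(s,t_2)$, which gives the strict misordering inequality and also the statement $\geo\from{(v,s)}\dir{L}{\tht}{+}(r)<\tau^+(r)$.

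It remains to show $\tau^+(s)=(v,s)$, which I expect to be the main obstacle. Continuity already gives $\tau^+(s)\ge(v,s)$. Suppose instead $\tau^+(s)>(v,s)$. Since $(v,s)$ is left-isolated but not right-isolated (Lemma \ref{no-isolated}), there are instability points $(y,s)$ with $v<y<\tau^+(s)$. Take such a $y$ and follow $\geo\from{(y,s)}\dir{L}{\tht}{-}$ and $\geo\from{(y,s)}\dir{R}{\tht}{+}$:
\begin{itemize}
\item by the ordering \eqref{geo:mono} they stay weakly right of $\gamma$;
\item by duality \eqref{no-intersection} the $+$ geodesic cannot cross $\tau^+$, and the $-$ geodesic cannot cross $\tau^-$;
\item so both are trapped between $\gamma$ and $\tau^+$ and are forced to meet at $(z,t_2)$, exactly as in the proof of Lemma \ref{lem:shockstoisland}.
\end{itemize}
Lemma \ref{def:geoinstabpt} then says $(y,s)\notin\IG\tht$, a contradiction. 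If the trapping needs a small adjustment at the endpoints, I would instead use Lemma \ref{lem:IGgodown}: $\tau^+\subset\IG\tht$, and the region strictly between $\gamma$ and $\tau^+$ is stable. Then $(v,s)$ being a limit of instability points from the right (it is not right-isolated) forces $\tau^+(s)=(v,s)$.
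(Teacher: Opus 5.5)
Your proposal is correct. Up to the misordering inequality it follows the paper's proof: the separation time comes from \eqref{sign} and \eqref{p2pleftmost}, $(z,t_2)\in\IG\tht$ comes from Lemma \ref{def:geoinstabpt}, and Lemma \ref{lm:geosplitsshocks} is applied with both signs to the common segment of $\geo\from{(v,s)}\dir{L}{\tht}{\pm}$ on $[s,t_2]$. One correction: the ``delicate point'' in your second paragraph does not exist. Since $(z,t_2)$ lies in the relative interior of $\geo\from{(v,s)}\dir{L}{\tht}{+}$, \eqref{geo:restart} says that $\geo\from{(z,t_2)}\dir{S}{\tht}{+}$, $S\in\{L,M,R\}$, all coincide with the continuation of $\geo\from{(v,s)}\dir{L}{\tht}{+}$, and likewise for the $-$ sign. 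Your contradiction with the definition of $t_2$ actually relies on this for both signs, so cite it rather than treat it as uncertain.

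The genuine difference is in proving $\Upsilon_{(z,t_2)}\dir{R}{\tht}{+}(s)=(v,s)$.

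The paper uses $\geo\from{(v,s)}\dir{R}{\tht}{+}$ as a right wall. By \eqref{geo:mono}, $\geo\from{(v,s)}\dir{R}{\tht}{+}(t_2)\ge(z,t_2)$. Equality is excluded by Lemma \ref{def:geoinstabpt}, because $(z,t_2)=\geo\from{(v,s)}\dir{L}{\tht}{-}(t_2)$ and $(v,s)\in\IG\tht$. So by \eqref{no-intersection} the interface stays weakly left of this geodesic on $[s,t_2]$ and is squeezed to $(v,s)$ at time $s$.

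You argue by contradiction instead. Lemma \ref{no-isolated} gives an instability point $(y,s)$ with $v<y<\tau^+(s)$. The geodesics $\geo\from{(y,s)}\dir{L}{\tht}{-}$ and $\geo\from{(y,s)}\dir{R}{\tht}{+}$ are trapped below by $\gamma$, using $\geo\from{(y,s)}\dir{L}{\tht}{-}\succeq\geo\from{(v,s)}\dir{R}{\tht}{-}\succeq\geo\from{(v,s)}\dir{L}{\tht}{-}=\gamma$ on $[s,t_2]$. They are trapped above by $\tau^+$, using $y<\tau^+(s)$ and \eqref{no-intersection}. Hence they meet at $(z,t_2)$, contradicting Lemma \ref{def:geoinstabpt}. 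This works as written, so your fallback via Lemma \ref{lem:IGgodown} is unnecessary.

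The paper's barrier argument is shorter and needs only $(v,s)\in\IG\tht$. Yours also uses that $(v,s)$ is not right-isolated. In exchange, it reuses the trapping mechanism from Lemma \ref{lem:shockstoisland} and shows directly that no instability point on level $s$ can sit strictly between $(v,s)$ and $\tau^+(s)$.
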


\begin{figure}[hpt]
    \includegraphics[width=3cm]{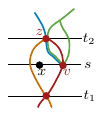}
    \caption{\small The construction of a stability island from a left-isolated point $(v,s)$. The right boundary (in red) is a path of $\tht+$ hugging shocks. Similarly, the left boundary (in orange) is a path of $\tht-$ hugging shocks. The two $\tht-$ and $\tht+$ shock interfaces are misordered.}
    \label{fig:isotoisland}
\end{figure}

\begin{proof}
    We treat the case where $(v,s)$ is left-isolated, the right-isolated case being symmetric. By Lemma \ref{LRisolgeo}\eqref{LRisolgeo.a}, $(v,s)$ is a $\tht+$ hugging shock point, which by Definition \ref{def:hugging} means that $t_2>s$. 
    Once $\geo\from{(v,s)}\dir{L}{\tht}{-}$ and $\geo\from{(v,s)}\dir{L}{\tht}{+}$ separate at $(z,t_2)$, the two geodesics cannot re-intersect, since, by \eqref{p2pleftmost}, they are both leftmost geodesics between any two of their points. This forces $\geo\from{(z,t_2)}\dir{L}{\tht}{-}\cap\geo\from{(z,t_2)}\dir{R}{\tht}{+}=\{(z,t_2)\}$ and, by Lemma \ref{def:geoinstabpt}, $(z,t_2)\in\IG\tht$.
    
    As $(z,t_2)$ is in the relative interior of a geodesic, 
    Lemma \ref{lm:geosplitsshocks} implies that the two  interfaces $\Upsilon\from{(z,t_2)}\dir{L}{\tht}{+}$ and $\Upsilon\from{(z,t_2)}\dir{R}{\tht}{+}$ are distinct and that 
    \begin{align}\label{2014}
    \geo\from{(v,s)}\dir{L}{\tht}{+}(r)<\Upsilon\from{(z,t_2)}\dir{R}{\tht}{+}(r),\text{ for all }r\in(s,t_2).
    \end{align}
    By the geodesic ordering \eqref{geo:mono}, $(z,t_2)=\geo\from{(v,s)}\dir{L}{\tht}{+}(t_2)\leq \geo\from{(v,s)}\dir{R}{\tht}{+}(t_2)$. Since $(v,s)\in\IG\tht$ and $(z,t_2)=\geo\from{(v,s)}\dir{L}{\tht}{-}(t_2)$, Lemma \ref{def:geoinstabpt} says $\geo\from{(v,s)}\dir{R}{\tht}{+}(t_2)\ne(z,t_2)$.
    Consequently, $(z,t_2)=\geo\from{(v,s)}\dir{L}{\tht}{+}(t_2)<\geo\from{(v,s)}\dir{R}{\tht}{+}(t_2)$.
    Therefore, by \eqref{no-intersection} and the continuity of the paths,  $\Upsilon\from{(z,t_2)}\dir{R}{\tht}{+}(r)\le\geo\from{(v,s)}\dir{R}{\tht}{+}(r)$ for all $r\in[s,t_2]$. Now, by the continuity of all these paths, we get $\Upsilon\from{(z,t_2)}\dir{R}{\tht}{+}(s) = (v,s)$. 
    
    Likewise, for all $r\in(s,t_2)$, $\Upsilon\from{(z,t_2)}\dir{L}{\tht}{-}(r)< \geo\from{(v,s)}\dir{L}{\tht}{-}(r)$. This, the geodesic ordering \eqref{geo:mono}, and \eqref{2014} give $\Upsilon\from{(z,t_2)}\dir{L}{\tht}{-}(r)<\Upsilon\from{(z,t_2)}\dir{R}{\tht}{+}(r)$ for all such $r$. The lemma is proved.
\end{proof}

The next lemma gives the converse of Lemma \ref{lem:shockstoisland}, hence showing that misordered shock interfaces characterize stability islands. Together with Lemma \ref{lem:shockstoisland}, this also shows that an island can be reconstructed from its tip point.

\begin{lem}\label{lem:islandtoshocks}
Let $\w\in\Omega_0$ and $\tht\in\baddir$. Let $q\in\R^2\setminus\IG\tht$. Then there exists a unique point $(z,t_2)\in\IG\tht$ such that $\Upsilon_{(z,t_2)}\dir{L}{\tht}{-}$ and $\Upsilon_{(z,t_2)}\dir{R}{\tht}{+}$ are misordered and $\island_q$ is given by \eqref{eq:shockstoisland}. \end{lem}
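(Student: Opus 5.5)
Existence will come from Lemma \ref{lm:isotoisland} followed by Lemma \ref{lem:shockstoisland}; uniqueness will come from the fact that the tip is the top of the set $U$ in \eqref{eq:shockstoisland}. Write $q=(x,s)$.

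First I would find an isolated boundary point on the level of $q$. By Lemma \ref{kS inf} and closedness of $\IG\tht$ (Lemma \ref{lem:IGisclosed}), the point $v=\sup\{y<x:(y,s)\in\IG\tht\}$ is finite and satisfies $(v,s)\in\IG\tht$ and $v<x$. By construction $(v,s)$ is right-isolated. Lemma \ref{lm:isotoisland} then yields a point $(z,t_2)\in\IG\tht$ with $t_2>s$ such that $\Upsilon_{(z,t_2)}\dir{L}{\tht}{-}$ and $\Upsilon_{(z,t_2)}\dir{R}{\tht}{+}$ are misordered on $(s,t_2)$, and $\Upsilon_{(z,t_2)}\dir{L}{\tht}{-}(s)=(v,s)$.

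Next, Lemma \ref{lem:shockstoisland} shows that the region $U$ defined in \eqref{eq:shockstoisland} is a stability island, with $t_1\le s$. It remains to show $q\in U$. At time $s$ the left boundary sits at $v<x$. For the right boundary, I would set $v'=\inf\{y>x:(y,s)\in\IG\tht\}>x$. The open segment $(v,v')\times\{s\}$ lies in $\R^2\setminus\IG\tht$. The interface $\Upsilon_{(z,t_2)}\dir{R}{\tht}{+}$ lies in $\IG\tht$ by Lemma \ref{lem:IGgodown}, so $\Upsilon_{(z,t_2)}\dir{R}{\tht}{+}(s)\notin(v,v')\times\{s\}$. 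Since $\Upsilon_{(z,t_2)}\dir{R}{\tht}{+}(s)\ge\Upsilon_{(z,t_2)}\dir{L}{\tht}{-}(s)=(v,s)$, it equals $(v,s)$ or lies at or to the right of $(v',s)$.

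The case $\Upsilon_{(z,t_2)}\dir{R}{\tht}{+}(s)=(v,s)$ is the main obstacle; I must rule it out, which would also give $t_1<s$. My plan is to use \eqref{Isplit-strict}. The path $\Ipath^{\tht-}_{(v,s)}$ passes through $(v,s)$ by Proposition \ref{prop:IGgoup}\eqref{IGgoup.e}, since $(v,s)$ is not left-isolated (Lemma \ref{no-isolated}). I would compare the interfaces with $\Ipath^{\tht\pm}_{(v,s)}$ through Proposition \ref{prop:IGgoup}\eqref{IGgoup.i}. If both interfaces issued from $(v,s)$, the region between them just above time $s$ would be a stability region containing no points of level $s$ to the right of $v$. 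On the other hand, by Lemma \ref{I-prop}\eqref{I-prop.d}, $W^{\tht-}=W^{\tht+}$ on $[I^{\tht-},I^{\tht+}]$ at every level, and $I^{\tht+}_{(v,s)}(s)\ge v'>v$ because $W^{\tht-}(v,s;y,s)=W^{\tht+}(v,s;y,s)$ for $y\in[v,v']$. I expect that tracking $\Ipath^{\tht+}_{(v,s)}$ upward, using \eqref{Isplit-strict} and continuity, shows it coincides with $\Upsilon_{(z,t_2)}\dir{R}{\tht}{+}$ on $[s,t_2]$ and starts at $v'$. That would give $\Upsilon_{(z,t_2)}\dir{R}{\tht}{+}(s)\ge(v',s)$, so $q\in U$. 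If this coupling fails, I would instead argue directly with the geodesics from $q$: they meet at $(z,t_2)$ because $(v,s)$ and $q$ are joined through stable points, and geodesic ordering traps them between the interfaces.

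Uniqueness: suppose $(z',t_2')$ is another point with misordered interfaces whose region $U'$ equals $\island_q=U$. The set $U$ is bounded by continuous paths that meet only at its top point $(z,t_2)$. The quantity $t_2=\sup\{r:(y,r)\in U\}$ is therefore determined by $U$, so $t_2'=t_2$ and, by continuity of the interfaces, $z'=z$.
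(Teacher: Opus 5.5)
Your overall route is the paper's: find a right-isolated instability point on the level of $q$, apply Lemma \ref{lm:isotoisland} and then Lemma \ref{lem:shockstoisland}, and place $q$ in the resulting $U$. Your choice $v=\sup\{y<x:(y,s)\in\IG\tht\}$ makes right-isolatedness automatic, and your dichotomy for $\Upsilon_{(z,t_2)}\dir{R}{\tht}{+}(s)$ and your uniqueness argument are fine.

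The gap is the case you flag yourself, $\Upsilon_{(z,t_2)}\dir{R}{\tht}{+}(s)=(v,s)$, which the proposal never excludes.
\begin{itemize}
\item The coupling of $\Ipath^{\tht+}_{(v,s)}$ with $\Upsilon_{(z,t_2)}\dir{R}{\tht}{+}$ on $[s,t_2]$ is only expected, not derived. It is also at least as strong as what you want to prove; the paper obtains it only a posteriori, from this lemma together with Lemma \ref{lm:bdry}.
\item The fallback is circular. Trapping $\geo\from{q}\dir{L}{\tht}{-}$ and $\geo\from{q}\dir{R}{\tht}{+}$ between the two interfaces, as in the proof of Lemma \ref{lem:shockstoisland}, requires $q$ to lie between them at time $s$, which is exactly the point in question.
\item You also cannot shortcut by noting that $(v,s)$ lies in the closures of both $U$ and $\island_q$. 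Disjointness of island closures (Lemma \ref{lm:disjointislands}) is proved using the present lemma.
\end{itemize}

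The missing step has a short proof with tools you already cite.
\begin{itemize}
\item In the right-isolated case of Lemma \ref{lm:isotoisland}, $(z,t_2)$ is the separation point of $\geo\from{(v,s)}\dir{R}{\tht}{-}$ and $\geo\from{(v,s)}\dir{R}{\tht}{+}$. So $\geo\from{(v,s)}\dir{R}{\tht}{+}(t_2)=(z,t_2)$.
\item Lemma \ref{lm:geosplitsshocks} then gives $\geo\from{(v,s)}\dir{R}{\tht}{+}(r)<\Upsilon_{(z,t_2)}\dir{R}{\tht}{+}(r)$ for all $r\in(s,t_2)$.
\item Suppose $\Upsilon_{(z,t_2)}\dir{R}{\tht}{+}(s)=(v,s)$. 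Since $s<t_2$, the point $(v,s)$ lies in the relative interior of this $\tht+$ shock interface. Then \eqref{shocksplitsgeo} forces $\geo\from{(v,s)}\dir{R}{\tht}{+}(r)>\Upsilon_{(z,t_2)}\dir{R}{\tht}{+}(r)$ for $r\in(s,t_2)$, a contradiction.
\item Your dichotomy now yields $\Upsilon_{(z,t_2)}\dir{R}{\tht}{+}(s)\ge(v',s)$. Hence $t_1<s$ and $(v,s)<q<\Upsilon_{(z,t_2)}\dir{R}{\tht}{+}(s)$, so $q\in U$ and $U=\island_q$.
\end{itemize}
With this step inserted, the proof is complete.
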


\begin{proof}
    By Lemma \ref{lm:islands}, $\island_q$ is bounded. Let 
    \[t_1=\inf\{t\in\R:\exists x\text{ with }(x,t)\in\island_q\}\quad\text{and}\quad
    t_2=\sup\{t\in\R:\exists x\text{ with }(x,t)\in\island_q\}.\]
    Since $\island_q$ is an open set, we must have $t_1<t_2$. 
    There exist $z$ and $z'$ such that $(z',t_1)$ and $(z,t_2)$ are in the closure of the island. Take any $t\in(t_1,t_2)$ and let 
    \[y=\inf\{x\in\R:(x,t)\in\island_q\}.\]
    By the connectedness of the island, $\island_q\cap(\R\times\{t\})\ne\varnothing$
    and thus $y<\infty$. Since the island is bounded, $y>-\infty$. 
    $(y,t)$ is on the boundary of the island and must therefore be in $\IG\tht$. Furthermore, by its definition, $(y,t)$ must be right-isolated. The claim now follows from Lemma \ref{lm:isotoisland}, which, in particular, uniquely identifies $(z,t_2)$.
\end{proof}

\begin{defn}
We call the point $(z,t_2)$ the \emph{tip} of the island $\island_q$. The point $\Upsilon_{(z,t_2)}\dir{L}{\tht}{-}(t_1)=\Upsilon_{(z,t_2)}\dir{R}{\tht}{+}(t_1)$, where $t_1$ is given by \eqref{t1}, is called the \emph{bottom} of the island $\island_q$.
\end{defn}

Next, we show that islands cannot touch.

\begin{lem}\label{lm:disjointislands}
Let $\w\in\Omega_0$ and $\tht\in\baddir$. The closures of any two distinct stability islands are disjoint.
\end{lem}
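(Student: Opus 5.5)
The plan is to argue by contradiction, using the explicit description of islands from Lemmas \ref{lem:shockstoisland} and \ref{lem:islandtoshocks}. Suppose $p=(x,s)$ lies in $\overline{\island_1}\cap\overline{\island_2}$ for distinct islands. Since islands are open components of $\R^2\setminus\IG\tht$, we have $p\notin\island_1\cup\island_2$, so $p\in\IG\tht$ lies on both boundaries. For $i\in\{1,2\}$, let $(z_i,t_2^i)$ be the tip of $\island_i$ and $t_1^i$ the time in \eqref{t1}. By Lemma \ref{lem:islandtoshocks}, $\island_i$ is the open region strictly between $\Upsilon_{(z_i,t^i_2)}\dir{L}{\tht}{-}$ and $\Upsilon_{(z_i,t^i_2)}\dir{R}{\tht}{+}$ over $(t_1^i,t_2^i)$. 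By continuity of the interfaces, $\partial\island_i$ is exactly the union of these two interfaces restricted to $[t_1^i,t_2^i]$. Hence $s\in[t_1^i,t_2^i]$, and $p$ is on the left boundary, the right boundary, the tip, or the bottom of each island.

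First consider $s\in(t_1^i,t_2^i)$ for both $i$. Then the slice $\island_i\cap(\R\times\{s\})$ is a nonempty open interval whose endpoints are the two boundary points at level $s$. If $p$ is the left endpoint for $\island_1$ and the right endpoint for $\island_2$, then $p$ is both right- and left-isolated, which contradicts Lemma \ref{no-isolated}. If $p$ is the left endpoint for both, then $p$ is right-isolated. So for some $\ep>0$ the segment $(x,x+\ep)\times\{s\}$ lies in $\R^2\setminus\IG\tht$, and being connected it lies in a single island. But this segment meets both slices, since each slice is an interval with left endpoint $x$. Thus $\island_1=\island_2$, a contradiction. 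The right-endpoint case is symmetric.

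It remains to handle $p$ being the tip or the bottom of, say, $\island_1$. The tip case is the main obstacle. My first step is to show that a tip is neither left- nor right-isolated, by the following route. Suppose the tip $p$ were right-isolated. By Lemma \ref{LRisolgeo}\eqref{LRisolgeo.a}, $p$ is then a $\tht-$ hugging shock. Lemma \ref{lm:isotoisland} then produces a tip $(z',t_2')$ with $t_2'>s$, together with the interface $\Upsilon_{(z',t'_2)}\dir{L}{\tht}{-}$, which passes through $p$ and stays strictly left of $\geo\from{p}\dir{R}{\tht}{-}$ on $(s,t_2')$. On the other hand, points $q\in\island_1$ just below $p$ have all their $\tht$-geodesics trapped between the misordered pair and passing through $p$ (Figure \ref{fig:misordered}). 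By Corollary \ref{cor:geo:lim} and the coalescence \eqref{geo:coal1}, the $\tht-$ geodesics from such $q$ then follow $\geo\from{p}\dir{L}{\tht}{-}$ or $\geo\from{p}\dir{R}{\tht}{-}$ above $p$. I would then use the duality \eqref{no-intersection}, together with the fact that $\Upsilon_{p}\dir{L}{\tht}{-}$ and $\Upsilon_{(z',t'_2)}\dir{L}{\tht}{-}$ must coalesce at $p$, and invoke Lemma \ref{lm:intcoal}. The aim is a forbidden configuration: three $\tht-$ interfaces meeting at $p$, or a middle $\tht-$ geodesic from $p$ incompatible with the hugging of $\geo\from{p}\dir{R}{\tht}{-}$ and $\geo\from{p}\dir{R}{\tht}{+}$.

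Once tips, and similarly bottoms, are known to be non-isolated, the remaining cases close quickly. A non-isolated $p$ cannot be an interior left or right boundary point of $\island_2$, since those points are isolated. So $p$ must be the tip or the bottom of $\island_2$ as well. Both islands sharing a tip contradicts the uniqueness in Lemma \ref{lem:islandtoshocks}, because each is then given by \eqref{eq:shockstoisland} from the same interfaces. If $p$ is the tip of $\island_1$ and the bottom of $\island_2$, then $\island_2$ lies above $p$, between $\tht\pm$ interfaces that reach down to $p$. This forces the $\tht-$ and $\tht+$ interfaces from $\island_2$'s tip to coalesce with those from $p$ at $p$. That is exactly the kind of triple-coalescence excluded by Lemma \ref{lm:intcoal}, which I expect to finish the argument. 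Shared bottoms lead to the same contradiction.
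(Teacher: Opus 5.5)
There is a genuine gap, at the heart of the lemma. The unhandled case is the bottom $b$ of one island lying on the boundary of the other, either at an interior point of a side or at the other bottom. You dispose of it with ``once tips, and similarly bottoms, are known to be non-isolated.'' That is not an intermediate step but a special case of the statement itself. If the bottom $b$ of $\island_2$ were left-isolated, the stable segment just to its left at the time level of $b$ would lie in a single island. That island differs from $\island_2$, since $\island_2$ does not meet that level, and $b$ lies in the closure of both.

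Your tip mechanism does not transfer to bottoms. A tip is not a shock: by Lemma \ref{lm:isotoisland} it lies in the relative interior of $W^{\tht\pm}$-geodesics, so \eqref{noshcoksongeo} applies. Hence, by Lemmas \ref{lm:hugging}\eqref{lm:hugging.a} and \ref{LRisolgeo}\eqref{LRisolgeo.a}, a tip is not isolated. A bottom, by contrast, lies in the relative interior of both shock interfaces from its tip, so it is a $\tht-$ and a $\tht+$ shock, and nothing local stops it from also being a hugging shock. Concretely, let $b$ be the bottom of $\island_2$ and an interior point of the right boundary of $\island_1$. Then the following configuration on an initial time interval
\[\geo\from{b}\dir{L}{\tht}{-}=\geo\from{b}\dir{L}{\tht}{+}\prec\geo\from{b}\dir{R}{\tht}{-}=\geo\from{b}\dir{M}{\tht}{+}\prec\geo\from{b}\dir{R}{\tht}{+}\]
is consistent with \eqref{no4stars}, with Lemma \ref{lm:intcoal} (applied to the two $\tht+$ interfaces meeting at $b$), and with Lemma \ref{LRisolgeo}\eqref{LRisolgeo.b}. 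Counting geodesics at $b$ will therefore not close this case.

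What is missing is a global trapping argument, which is what the paper supplies.
\begin{itemize}
\item Choose a level $s'$ slightly above $b$ at which the right-boundary point $q$ of $\island_1$ is strictly left of the left-boundary point $q'$ of $\island_2$.
\item Since $q$ is left-isolated, Lemma \ref{no-isolated} says it is not right-isolated. Lemma \ref{lm:isodense} then gives instability points $q_1<q_2$ strictly between $q$ and $q'$ that are not right-isolated.
\item By Proposition \ref{prop:IGgoup}, the paths $\Ipath^{\tht+}_{q_1}$ and $\Ipath^{\tht+}_{q_2}$ are continuous, pass through $q_1$ and $q_2$, lie in $\IG\tht$, and are disjoint.
\item Going down from level $s'$, they cannot enter either island. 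So they are confined to the wedge between the two boundary curves, which pinches to the single point $b$.
\item Hence both paths pass through $b$, contradicting Proposition \ref{prop:IGgoup}\eqref{IGgoup.g}. The same argument covers a shared bottom.
\end{itemize}

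Your treatment of a common interior side point via Lemma \ref{no-isolated} is fine. Your tip argument, however, is vaguer than necessary and misapplies Lemma \ref{lm:intcoal}. That lemma concerns two interfaces starting strictly above their coalescence point, not interfaces emanating from $p$. The clean route is the one above: a tip is not a shock, whereas every other boundary point of an island lies in the relative interior of a $\tht-$ or $\tht+$ shock interface from that island's tip. So a tip of one island can only be the tip of the other, and uniqueness in Lemma \ref{lem:islandtoshocks} excludes that.
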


\begin{proof}
Let $U$ and $U'$ be two distinct stability islands, and suppose for contradiction that their closures intersect. By Lemma \ref{lem:islandtoshocks}, each island has a unique tip, denoted $(z,t_2)$ and $(z',t_2')$, respectively. The same lemma shows that the interfaces $\Upsilon_{(z,t_2)}\dir{L}{\tht}{-}$ and $\Upsilon_{(z,t_2)}\dir{R}{\tht}{+}$ are misordered, reintersect at some time $t_1<t_2$, and that $U$ is precisely the open region strictly between the curves $\Upsilon_{(z,t_2)}\dir{L}{\tht}{-}([t_1,t_2])$ and $\Upsilon_{(z,t_2)}\dir{R}{\tht}{+}([t_1,t_2])$. An analogous description holds for $U'$, with tip $(z',t_2')$ and reintersection time $t_1'<t_2'$. In particular, $(z,t_2)\neq(z',t_2')$, since otherwise $U=U'$.

Since $U$ and $U'$ are distinct connected components, their closures can intersect only along their boundaries. More precisely, any intersection must lie in
\[
\Upsilon_{(z,t_2)}\dir{R}{\tht}{+}([t_1,t_2])\cap \Upsilon_{(z',t_2')}\dir{L}{\tht}{-}([t'_1,t'_2])
\quad\text{or}\quad
\Upsilon_{(z,t_2)}\dir{L}{\tht}{-}([t_1,t_2])\cap \Upsilon_{(z',t_2')}\dir{R}{\tht}{+}([t'_1,t'_2]),
\]
and we treat the first case, the second being symmetric.

By \eqref{eq:shockstoisland}, the points $\Upsilon_{(z,t_2)}\dir{R}{\tht}{+}(r)$, $r\in(t_1,t_2)$, are left-isolated and, by Lemma \ref{LRisolgeo}\eqref{LRisolgeo.a}, are $\tht+$ hugging shocks. Likewise, for $r\in(t'_1,t'_2)$, the points $\Upsilon_{(z',t'_2)}\dir{L}{\tht}{-}(r)$ are $\tht-$ hugging shocks. It follows from Lemma \ref{LRisolgeo}\eqref{LRisolgeo.b} that these paths cannot intersect at any $r\in (t_1,t_2)\cap(t'_1,t'_2)$.

We next rule out the tips. By Lemma \ref{lm:isotoisland}, the point $(z,t_2)$ is where the geodesics $\geo\from{(v,t)}\dir{L}{\tht}{\pm}$ separate, for any $t\in(t_1,t_2)$ and $(v,t)=\Upsilon_{(z,t_2)}\dir{R}{\tht}{+}(t)$. Since $(z,t_2)$ lies in the relative interiors of these geodesics, \eqref{noshcoksongeo} implies that $(z,t_2)\notin \NU_1^{\tht-}\cup \NU_1^{\tht+}$. As $(z,t_2)$ is not the tip of $U'$ and the remainder of the boundary of $U'$ consists of $\tht\pm$ shock points, it follows that $(z,t_2)$ is not in the intersection of the two boundaries. The same argument applies to $(z',t_2')$.

Thus there are only remaining possible points of intersection:
$\Upsilon_{(z,t_2)}\dir{R}{\tht}{+}(t'_1)=\Upsilon_{(z',t_2')}\dir{L}{\tht}{-}(t'_1)$ with $t_1\le t'_1<t_2\wedge t_2'$, or $\Upsilon_{(z,t_2)}\dir{R}{\tht}{+}(t_1)=
\Upsilon_{(z',t_2')}\dir{L}{\tht}{-}(t_1)$ with $t'_1\le t_1<t_2\wedge t_2'$.
Since the two cases are symmetric, we treat only the first. See Figure \ref{fig:island3new}.

     \begin{figure}[hpt]
    \includegraphics[width=4.3cm]{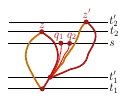}
    \caption{\small Proof of Lemma \ref{lm:disjointislands}. The bottom of the island to the right is on the boundary of the island to the left. This forces the interfaces $\Ipath^{+}_{q_1}$ and $\Ipath^{+}_{q_2}$ to intersect at time $t_1'$, causing a contradiction.}
    \label{fig:island3new}
    \end{figure}

Fix $s\in(t_1',t_2\wedge t_2')$. Then $q=\Upsilon_{(z,t_2)}\dir{R}{\tht}{+}(s)<q'=\Upsilon_{(z',t_2')}\dir{L}{\tht}{-}(s)$.
By Lemma \ref{lem:IGgodown}, $q,q'\in\IG\tht$. 
Since $q$ is left-isolated, Lemma \ref{no-isolated} says it is not right-isolated. Hence, by Lemma \ref{lm:isodense}, there exist $y_1<y_2$ such that $q_1=(y_1,s)$ and $q_2=(y_2,s)$ lie in $\IG\tht$, are not right-isolated, and lie strictly between $q$ and $q'$.
By parts \eqref{IGgoup.a}, \eqref{IGgoup.c}, \eqref{IGgoup.e}, and \eqref{IGgoup.g} of Proposition \ref{prop:IGgoup}, the interfaces $\Ipath_{q_1}^{\tht+}$ and $\Ipath_{q_2}^{\tht+}$ are disjoint, continuous, bi-infinite paths contained in $\IG\tht$. Since they remain in $\IG\tht$, they cannot enter either island, and must therefore lie weakly between $\Upsilon_{(z,t_2)}\dir{R}{\tht}{+}$ and $\Upsilon_{(z',t_2')}\dir{L}{\tht}{-}$ on $[t'_1,s]$. By continuity, this forces $\Ipath_{q_1}^{\tht+}(t_1')=\Ipath_{q_2}^{\tht+}(t_1')$, contradicting their disjointness. This contradiction shows that the closures of the two islands cannot intersect.
\end{proof}

We now define one more type of shock.  For illustrations, see the last configuration on the second row in Figure \ref{fig:geodesics}, the right panel in Figure \ref{fig:instint}, and Figure \ref{fig:island} (where the snowbird shock is the point $(u,t_1)$).

\begin{defn}\label{def:snowbird}
    For $\tht\in\baddir$, a point $(x,s)\in\IG\tht\cap\NU_1^{\tht-}\cap\NU_1^{\tht+}$ is a \emph{snowbird shock} if there exists a $t>s$ such that for all $r\in(s,t), \geo\from{(x,s)}\dir{L}{\tht}{-}(r)<\geo\from{(x,s)}\dir{R}{\tht}{-}(r)=\geo\from{(x,s)}\dir{L}{\tht}{+}(r)< \geo\from{(x,s)}\dir{R}{\tht}{+}(r)$. 
\end{defn}

\begin{lem}\label{topsnowbird}
Let $\w\in\Omega_0$ and $\tht\in\baddir$. Suppose $(x,s)\in\IG\tht\cap\NU_1^{\tht-}\cap\NU_1^{\tht+}$ is a snowbird shock. Let $(z,t)$ be the point at which $\geo\from{(x,s)}\dir{R}{\tht}{-}$ and $\geo\from{(x,s)}\dir{L}{\tht}{+}$ separate. 
Then $\geo\from{(x,s)}\dir{L}{\tht}{-}(r)<\geo\from{(x,s)}\dir{R}{\tht}{-}(r)=\geo\from{(x,s)}\dir{L}{\tht}{+}(r)< \geo\from{(x,s)}\dir{R}{\tht}{+}(r)$ for all $r\in(s,t]$.
\end{lem}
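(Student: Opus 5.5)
Write $\gamma_1=\geo\from{(x,s)}\dir{L}{\tht}{-}$, $\gamma_2=\geo\from{(x,s)}\dir{R}{\tht}{-}$, $\gamma_3=\geo\from{(x,s)}\dir{L}{\tht}{+}$ and $\gamma_4=\geo\from{(x,s)}\dir{R}{\tht}{+}$. Let $t_0>s$ be as in Definition \ref{def:snowbird}. The proof has three steps.

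\textbf{Step 1: $\gamma_2=\gamma_3$ on $[s,t]$ and $t\ge t_0$.} On $(s,t_0)$ we have $\gamma_2=\gamma_3$. Define
\[t=\sup\{r>s:\gamma_2=\gamma_3\text{ on }[s,r]\}.\]
This supremum is finite by the sign distinction \eqref{sign}. By continuity, $\gamma_2(t)=\gamma_3(t)=(z,t)$, and $t\ge t_0$. Since $\gamma_2$ is rightmost and $\gamma_3$ is leftmost between any two of their points (\eqref{p2prightmost}, \eqref{p2pleftmost}), they cannot separate and then reunite. Hence $(z,t)$ is exactly where they separate, and they agree on all of $[s,t]$.

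\textbf{Step 2: the outer geodesics cannot touch the middle one before $t$.} Suppose, for contradiction, that $\gamma_1(r)=\gamma_2(r)$ for some $r\in(s,t]$. Take the smallest such $r$; by the definition of $t_0$, this $r$ is at least $t_0$. Then $\gamma_1$ and $\gamma_2$ are two distinct $W^{\tht-}$ geodesics out of $(x,s)$ that separate immediately. By the $\Omega_6$ property they coalesce at their first reunion point, so $\gamma_1=\gamma_2$ after time $r$. In particular $\gamma_1(t)=\gamma_2(t)=\gamma_3(t)$.

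If $\gamma_3(t)$ lies on both $\gamma_1$ and $\gamma_3$ at time $t>s$, then $\gamma_1\cap\gamma_3\ne\{(x,s)\}$. By the ordering $\gamma_1\preceq\gamma_3\preceq\gamma_4$, we want this to contradict $(x,s)\in\IG\tht$ through Lemma \ref{def:geoinstabpt}, which says $\gamma_1\cap\gamma_4=\{(x,s)\}$. A point of $\gamma_1\cap\gamma_3$ is not automatically a point of $\gamma_4$, so this needs more work.

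The fix is as follows. Near time $r^+$, the geodesics $\gamma_1=\gamma_2$ and $\gamma_3$ all follow a common path: on $[r,t]$ we have $\gamma_1=\gamma_2=\gamma_3$, since $\gamma_2=\gamma_3$ there. So $\gamma_1$ and $\gamma_3$ meet at $(z,t)$. I will then use the fact that $\gamma_3$ and $\gamma_1$ are both leftmost between any two of their points, by \eqref{p2pleftmost}. Since they share $\gamma_1(r)$ and $(z,t)$, and $\gamma_1$ is the leftmost $W^{\tht-}$-geodesic, the claim is that after $t$, $\gamma_3$ must separate to the right and never return. That much is consistent and gives no contradiction yet.

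Instead I will argue through $\gamma_1$ and $\gamma_2$ directly. Coalescence of $\gamma_1$ and $\gamma_2$ at $r$ means that the segment $\gamma_2|_{[s,r]}$ is a geodesic from $(x,s)$ to $\gamma_1(r)$ that is disjoint from $\gamma_1$ on $(s,r)$. Together with $\gamma_3=\gamma_2$ on $[s,r]$, and with $\gamma_4$ separating from them immediately, this should produce a forbidden configuration. The tool I would try first is \eqref{Duncan-deg3}, applied to three geodesics from $(x,s)$ to a common point: $\gamma_1$, $\gamma_2=\gamma_3$, and $\gamma_4$. This requires $\gamma_4$ to also hit $\gamma_1(r)$, which it need not do.

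So the plan falls back to the $W^{\tht-}$ versus $W^{\tht+}$ values. Since $(x,s)\in\NU_1^{\tht+}$ and $\gamma_3\ne\gamma_4$, the geodesics $\gamma_3$ and $\gamma_4$ coalesce at some time $t_4$. Consider the ordered paths $\gamma_1\preceq\gamma_2=\gamma_3\preceq\gamma_4$. For a shock interface from $\gamma_1(r)$ of type $\tht-$, given by Lemma \ref{lm:geocoal}, the region it lies in (between $\gamma_1$ and $\gamma_2$) gets trapped. The contradiction is then obtained by the argument of Lemma \ref{lem:stableshocks}: the points $\tau(q)$ for rational $q\in(s,r)$ lie between $\gamma_1$ and $\gamma_2$. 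Their $L,-$ and $R,+$ geodesics reach $\gamma_1(r)$ without meeting earlier, and then must continue jointly, since $\gamma_1$ and $\gamma_2$ coincide after $r$. Lemma \ref{lm:Duncan}\eqref{Duncan.a} then forbids the immediate separation at $\gamma_1(r)$. Combining this with $\gamma_3$ leaving $\gamma_2$ at $t$ gives $\gamma_1(r)\notin\IG\tht$, whereas Lemma \ref{def:geoinstabpt} and the fact that $(x,s)\in\IG\tht$ force the geodesics to stay apart.

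This case analysis is the step I expect to be the main obstacle.

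\textbf{Step 3: the right side and the endpoint.} The inequality $\gamma_3<\gamma_4$ on $(s,t]$ follows by the mirror-image argument. At the time $t$ itself, strictness follows because any touching at $t$ would be a first reunion point at or before $t$, which Step 2 has ruled out.
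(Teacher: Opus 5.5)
There is a genuine gap in Step 2, and Step 3 inherits it because you present it as the mirror image of Step 2.

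Your fallback argument never reaches a contradiction. Transplanting the proof of Lemma \ref{lem:stableshocks} to the shock interface $\tau$ out of $p=\gamma_1(r)$ shows at best that $p\notin\IG\tht$, and this conflicts with nothing. Lemma \ref{def:geoinstabpt} at $(x,s)$ constrains only $\gamma_1$ against $\gamma_4$, as you noticed yourself. Nothing forbids a stability point from lying on $\gamma_1$ above the instability point $(x,s)$.

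Lemma \ref{lm:Duncan}\eqref{Duncan.a} is also powerless when $r<t$. After reuniting at $p$, the geodesics $\geo\from{\tau(q)}\dir{L}{\tht}{-}$ and $\geo\from{\tau(q)}\dir{R}{\tht}{+}$ run together along $\gamma_1=\gamma_3$ on $[r,t]$ before separating. That is exactly the configuration the lemma permits; it excludes only the case $r=t$.

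The missing idea is to apply the extremality \eqref{p2pleftmost} with the base point $(x,s)$ itself; the statement allows $r=s$. Suppose $\gamma_1(r)=\gamma_2(r)=\gamma_3(r)$ for some $r\in(s,t]$. Then $\gamma_1|_{[s,r]}$ and $\gamma_3|_{[s,r]}$ are both the leftmost geodesic from $(x,s)$ to this point, hence equal. This contradicts $\gamma_1<\gamma_3$ on $(s,t_0)$.

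You had this tool in hand, but you applied it to the pair of points $\gamma_1(r)$, $(z,t)$ rather than $(x,s)$, $\gamma_1(r)$. Symmetrically, \eqref{p2prightmost} shows that $\gamma_2$ and $\gamma_4$, which separate immediately at $(x,s)$, never meet again.

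Combine these with the weak inequalities $\gamma_1\preceq\gamma_3$ and $\gamma_2\preceq\gamma_4$ from \eqref{geo:mono}, and with $\gamma_2=\gamma_3$ on $[s,t]$. This gives all the strict inequalities on $(s,t]$, including at the endpoint $t$. That is the paper's two-line proof; no minimal $r$, coalescence, shock interfaces, or instability considerations are needed.

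A minor point on Step 1: extremality alone does not prevent $\gamma_2$ and $\gamma_3$ from reuniting after a bubble with $\gamma_2$ on the right. Ruling that out would require \eqref{no bubble}. The claim is unnecessary anyway, since your supremum definition of $t$ already gives $\gamma_2=\gamma_3$ on $[s,t]$.
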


\begin{proof}
 By the geodesic ordering \eqref{geo:mono},    $\geo\from{(x,s)}\dir{L}{\tht}{-}(r)\le\geo\from{(x,s)}\dir{R}{\tht}{-}(r)=\geo\from{(x,s)}\dir{L}{\tht}{+}(r)\le \geo\from{(x,s)}\dir{R}{\tht}{+}(r)$ for all $r\in(s,t]$.
 The claim then follows from the fact that, for each $S\in{L,R}$, the geodesics $\geo\from{(x,s)}\dir{S}{\tht}{-}$ and $\geo\from{(x,s)}\dir{S}{\tht}{+}$ separate immediately and therefore can never reintersect, as any such reintersection would violate the extremality properties \eqref{p2pleftmost} and \eqref{p2prightmost}.
\end{proof}

The next lemma characterizes the bottom point of the island in Lemma \ref{lem:shockstoisland} as being a snowbird shock point.

\begin{lem}\label{lm:snowbird}
Assume the setting of Lemma \ref{lem:shockstoisland} holds. Then $\Upsilon_{(z,t_2)}\dir{L}{\tht}{-}(t_1)=\Upsilon_{(z,t_2)}\dir{R}{\tht}{+}(t_1)$ is a snowbird shock.
\end{lem}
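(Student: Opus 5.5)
Write $\tau^-=\Upsilon_{(z,t_2)}\dir{L}{\tht}{-}$, $\tau^+=\Upsilon_{(z,t_2)}\dir{R}{\tht}{+}$, and $(u,t_1)=\tau^\pm(t_1)$ for the bottom point. By Lemma \ref{lem:IGgodown}, $(u,t_1)\in\IG\tht$. Since $(u,t_1)$ is an interior point of both interfaces, Lemma \ref{lm:shockintallshocks} gives $(u,t_1)\in\NU_1^{\tht-}\cap\NU_1^{\tht+}$.

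It remains to produce $t>t_1$ such that, on $(t_1,t)$,
\[
\geo\from{(u,t_1)}\dir{L}{\tht}{-}<\geo\from{(u,t_1)}\dir{R}{\tht}{-}=\geo\from{(u,t_1)}\dir{L}{\tht}{+}<\geo\from{(u,t_1)}\dir{R}{\tht}{+}.
\]
The strict outer inequalities are automatic. By \eqref{shocksplitsgeo} applied at $(u,t_1)$, the geodesic $\geo\from{(u,t_1)}\dir{L}{\tht}{-}$ stays strictly left of $\tau^-$ and $\geo\from{(u,t_1)}\dir{R}{\tht}{-}$ stays strictly right of $\tau^-$ on $(t_1,t_2)$. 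Symmetrically, $\geo\from{(u,t_1)}\dir{R}{\tht}{+}$ stays strictly right of $\tau^+$ and $\geo\from{(u,t_1)}\dir{L}{\tht}{+}$ strictly left of $\tau^+$. In particular, $\geo\from{(u,t_1)}\dir{R}{\tht}{-}$ and $\geo\from{(u,t_1)}\dir{L}{\tht}{+}$ both enter the island $U$ of \eqref{eq:shockstoisland} immediately. By \eqref{no-intersection} neither of them can cross the boundary interfaces before $t_2$, so both remain in $\overline U$ on $[t_1,t_2]$ and are trapped there until they reach the tip $(z,t_2)$.

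The main step is to show that $\geo\from{(u,t_1)}\dir{R}{\tht}{-}$ and $\geo\from{(u,t_1)}\dir{L}{\tht}{+}$ coincide immediately. Take a point $p=\geo\from{(u,t_1)}\dir{R}{\tht}{-}(r)$ with $r\in(t_1,t_2)$ close to $t_1$. Then $p\in U$, so $p\notin\IG\tht$. The proof of Lemma \ref{lem:shockstoisland} shows that from $p$ all four geodesics $\geo\from{p}\dir{S}{\tht}{\pm}$ pass through $(z,t_2)$. Because $p$ is a stability point, Lemma \ref{lem:stableshocks} gives $\geo\from{p}\dir{S}{\tht}{-}=\geo\from{p}\dir{S}{\tht}{+}$ up to their common meeting time. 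By \eqref{geo:restart}, $\geo\from{(u,t_1)}\dir{R}{\tht}{-}$ continues from $p$ as a $W^{\tht-}$-geodesic out of $p$. I would like to conclude that $\geo\from{(u,t_1)}\dir{R}{\tht}{-}$ is also a $W^{\tht+}$-geodesic on $[t_1,\infty)$ up to $t_2$. For this I would use \eqref{rec} together with the equality $W^{\tht-}=W^{\tht+}$ on the closure of the island. That equality follows from Lemma \ref{I-prop}\eqref{I-prop.d} applied to the interfaces $\Ipath$ through the tip, plus continuity, giving $W^{\tht-}(\cdot;\cdot)=W^{\tht+}(\cdot;\cdot)$ on $\overline U\times\overline U$.

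Once $\geo\from{(u,t_1)}\dir{R}{\tht}{-}$ is known to be a $W^{\tht+}$-geodesic near $t_1$, the remaining comparisons follow from orderings. It lies weakly right of $\geo\from{(u,t_1)}\dir{L}{\tht}{+}$, because the latter is the leftmost maximizer by \eqref{p2lleftmost}. On the other hand, \eqref{geo:mono} gives $\geo\from{(u,t_1)}\dir{R}{\tht}{-}\preceq\geo\from{(u,t_1)}\dir{R}{\tht}{+}$, which is consistent with this. By the symmetric argument, $\geo\from{(u,t_1)}\dir{L}{\tht}{+}$ is a $W^{\tht-}$-geodesic, so it lies weakly left of $\geo\from{(u,t_1)}\dir{R}{\tht}{-}$ by \eqref{p2lrightmost}. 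Combined with $\geo\from{(u,t_1)}\dir{R}{\tht}{-}\preceq\geo\from{(u,t_1)}\dir{L}{\tht}{+}$, this forces the two to be equal on $(t_1,t)$ for some $t>t_1$, which gives the snowbird configuration.

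I expect the hardest step to be justifying $W^{\tht-}=W^{\tht+}$ along the geodesic starting exactly at the boundary point $(u,t_1)$. A fallback is to take limits from points of $U$ using Theorem \ref{th:convanydir}, together with the fact that \eqref{Wjump} fails inside $U$.
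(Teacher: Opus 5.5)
\textbf{There is a genuine gap in your final step.} Write all geodesics from the bottom $(u,t_1)$ without subscript.

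Your two ``symmetric'' extremality arguments give the \emph{same} inequality. Treating $\geo\dir{R}{\tht}{-}$ as a $W^{\tht+}$-path and using \eqref{p2lleftmost} gives $\geo\dir{L}{\tht}{+}\preceq\geo\dir{R}{\tht}{-}$. Treating $\geo\dir{L}{\tht}{+}$ as a $W^{\tht-}$-path and using \eqref{p2lrightmost} gives $\geo\dir{L}{\tht}{+}\preceq\geo\dir{R}{\tht}{-}$ again.

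The reverse inequality $\geo\dir{R}{\tht}{-}\preceq\geo\dir{L}{\tht}{+}$, which you then ``combine'' with, is not supplied by \eqref{geo:mono}. That property only compares $L$ with $R$ for a fixed sign, and $-$ with $+$ for a fixed letter. The inequality is false in general: at a proper double shock, $\geo\dir{L}{\tht}{+}\prec\geo\dir{R}{\tht}{-}$ initially. At the bottom of an island it is exactly what the lemma asserts.

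The identity $W^{\tht-}=W^{\tht+}$ on $\overline U\times\overline U$ only tells you something weaker. For $t\in(t_1,t_2)$, the maximizer sets of $y\mapsto\mathcal L(u,t_1;y,t)+W^{\tht\pm}(y,t;\cdot)$ agree on $[\tau^-(t),\tau^+(t)]$. Nothing forces this common set to be a singleton, so $\geo\dir{L}{\tht}{+}(t)$ and $\geo\dir{R}{\tht}{-}(t)$ could be two different points of it.

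A secondary issue: \eqref{shocksplitsgeo} puts $\geo\dir{R}{\tht}{-}$ to the right of $\tau^-$ and $\geo\dir{L}{\tht}{+}$ to the left of $\tau^+$. These are the wrong sides for ``entering $U$''. Also, \eqref{no-intersection} does not stop a $W^{\tht-}$-geodesic from crossing the $\tht+$ interface $\tau^+$.

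\textbf{The missing ingredient is the degree bound \eqref{no4stars}, which your proposal never uses.} The paper argues in two steps.
\begin{enumerate}
\item It rules out that $(u,t_1)$ is a hugging shock. Otherwise Lemma \ref{lm:isotoisland} would put it on the boundary of a second island, contradicting Lemma \ref{lm:disjointislands}.
\item Then $\geo\dir{L}{\tht}{-}$ and $\geo\dir{R}{\tht}{+}$ each separate immediately from the other three geodesics. So \eqref{no4stars} forces $\geo\dir{R}{\tht}{-}$ and $\geo\dir{L}{\tht}{+}$ to coincide on an initial interval, which is the snowbird configuration.
\end{enumerate}

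Your Busemann identity can in fact replace step 1. Take a base point $b\in U$ and use Lemma \ref{lm:bdry} to identify $I^{\tht\pm}_b$ with $\tau^\pm$ on $[t_1,t_2]$. Then $W^{\tht-}(u,t_1;b)=W^{\tht+}(u,t_1;b)$. By Lemma \ref{I-prop}\eqref{I-prop.c}, $W^{\tht-}(y,t;b)>W^{\tht+}(y,t;b)$ for $y<\tau^-(t)$, so no such $y$ maximizes the $W^{\tht+}$ problem. Hence $\geo\dir{L}{\tht}{+}(t)\ge\tau^-(t)>\geo\dir{L}{\tht}{-}(t)$. Symmetrically, Lemma \ref{I-prop}\eqref{I-prop.b} gives $\geo\dir{R}{\tht}{-}(t)\le\tau^+(t)<\geo\dir{R}{\tht}{+}(t)$. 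This excludes hugging directly, and adding \eqref{no4stars} then yields a correct proof.
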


   \begin{proof}
    By Lemma \ref{lem:shockstoisland}, the interfaces $\Upsilon_{(z,t_2)}\dir{L}{\tht}{-}$ and $\Upsilon_{(z,t_2)}\dir{R}{\tht}{+}$ are misordered and the region $U$ strictly between them is a stability island.
   
     Abbreviate $p=(u,t_1)=\Upsilon_{(z,t_2)}\dir{L}{\tht}{-}(t_1) = \Upsilon_{(z,t_2)}\dir{R}{\tht}{+}(t_1)$. By \eqref{shocksplitsgeo}, 
     \[\geo\from{p}\dir{L}{\tht}{+}(r)<\Upsilon_{(z,t_2)}\dir{R}{\tht}{+}(r)<\geo\from{p}\dir{R}{\tht}{+}(r)\quad\text{and}\quad
     \geo\from{p}\dir{L}{\tht}{-}(r)<\Upsilon_{(z,t_2)}\dir{L}{\tht}{-}(r)<\geo\from{p}\dir{R}{\tht}{-}(r),\] 
     for all $t_1<r<t_2$. In particular, $\geo\from{p}\dir{L}{\tht}{-}$ and $\geo\from{p}\dir{R}{\tht}{+}$ immediately separate (as they should, since $p\in\IG\tht$). We consider two cases.\smallskip

     \begin{figure}[hpt]
    \includegraphics[width=4.3cm]{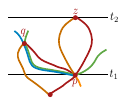}
    \caption{\small An illustration of Case 1 in the proof of Lemma \ref{lm:snowbird}. The two regions between the misordered shock interfaces out of $(z,t_2)$ and $q$ are instability islands. Their boundaries intersect at $p$, which cannot happen.}
    \label{fig:island3}
    \end{figure}

     {\bf Case 1.} Suppose there exists $\ep>0$ such that either $\geo\from{p}\dir{L}{\tht}{-}(r)=\geo\from{p}\dir{L}{\tht}{+}(r)$ for all $r\in[t_1,t_1+\ep]$ or $\geo\from{p}\dir{R}{\tht}{-}(r)=\geo\from{p}\dir{R}{\tht}{+}(r)$ for all $r\in[t_1,t_1+\ep]$. The two situations are symmetric and we treat the first one.  
     See Figure \ref{fig:island3} for an illustration. 
     In this case, $p$ is a $\tht+$ hugging shock and, by Lemma \ref{LRisolgeo}\eqref{LRisolgeo.a}, it is left-isolated. Let $q=(w,r)$ denote the point where $\geo\from{p}\dir{L}{\tht}{-}$ and $\geo\from{p}\dir{L}{\tht}{+}$ separate. 
     By Lemma \ref{lm:isotoisland}, $q\in\IG\tht$, $\Upsilon_{q}\dir{R}{\tht}{+}(t_1)=p$, the geodesics $\Upsilon_{q}\dir{L}{\tht}{-}$ and $\Upsilon_{q}\dir{R}{\tht}{+}$ are misordered, and $\geo\from{p}\dir{L}{\tht}{+}(t)<\Upsilon_{q}\dir{R}{\tht}{+}(t)$ for all $t\in(t_1,r)$. By Lemma \ref{lem:shockstoisland}, the region $U'$ strictly between  $\Upsilon_{q}\dir{L}{\tht}{-}$ and $\Upsilon_{q}\dir{R}{\tht}{+}$ is a stability island. This contradicts Lemma \ref{lm:disjointislands}, as now $p$ belongs the the boundaries of two distinct stability islands. Thus, the situation in Case 1 cannot occur.\smallskip

    {\bf Case 2.} Suppose now that $\geo\from{p}\dir{L}{\tht}{-}(r)<\geo\from{p}\dir{L}{\tht}{+}(r)$ and $\geo\from{p}\dir{R}{\tht}{-}(r)<\geo\from{p}\dir{R}{\tht}{+}(r)$ for all $r>t_1$.  Then by \eqref{no4stars}, it must be the case that there exists a $\delta>0$ such that $\geo\from{p}\dir{L}{\tht}{+}(r)=\geo\from{p}\dir{R}{\tht}{-}(r)$ for all $r\in[t_1,t_1+\delta]$, which means $p=(u,t_1)$ is a snowbird shock point, as claimed. See Figure \ref{fig:island}. The lemma is proved.
\end{proof}

The following lemma is the converse of Lemma \ref{lm:snowbird}. It shows that each snowbird shock is the bottom point of an island and that the island can be reconstructed from its bottom point.

\begin{lem}\label{lem:SBisisland}
    Let $\w\in\Omega_0$, $\tht\in\baddir$, and $(u,t_1)\in\IG\tht$. Suppose that $(u,t_1)$ is a snowbird shock. Then $(u,t_1)$ is the bottom point of a stability island, with the top point being the point $(z,t_2)$, where $\geo\from{(u,t_1)}\dir{R}{\tht}{-}$ and $\geo\from{(u,t_1)}\dir{L}{\tht}{+}$ first separate. See Figure \ref{fig:island}.
\end{lem}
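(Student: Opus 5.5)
Write $p=(u,t_1)$ for the snowbird shock and $(z,t_2)$ for the point where $\geo\from{p}\dir{R}{\tht}{-}$ and $\geo\from{p}\dir{L}{\tht}{+}$ first separate. By Lemma \ref{topsnowbird}, on $(t_1,t_2]$ the four geodesics satisfy $\geo\from{p}\dir{L}{\tht}{-}<\geo\from{p}\dir{R}{\tht}{-}=\geo\from{p}\dir{L}{\tht}{+}<\geo\from{p}\dir{R}{\tht}{+}$. The plan is to show that $(z,t_2)\in\IG\tht$ and that $\Upsilon_{(z,t_2)}\dir{L}{\tht}{-}$ and $\Upsilon_{(z,t_2)}\dir{R}{\tht}{+}$ are misordered and come back together exactly at $p$. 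Then Lemma \ref{lem:shockstoisland} identifies the region between them as an island with tip $(z,t_2)$ and bottom $p$.

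\textbf{Step 1: $(z,t_2)\in\IG\tht$.} After $t_2$, the geodesics $\geo\from{p}\dir{R}{\tht}{-}$ and $\geo\from{p}\dir{L}{\tht}{+}$ cannot meet again. Indeed, both are rightmost (resp.\ leftmost) point-to-point geodesics by \eqref{p2prightmost}–\eqref{p2pleftmost}, and a reintersection would create a bubble forbidden by \eqref{no bubble}. The ordering \eqref{geo:mono} then gives $\geo\from{p}\dir{R}{\tht}{-}(r)<\geo\from{p}\dir{L}{\tht}{+}(r)$ for $r>t_2$. By \eqref{geo:restart}, the geodesics $\geo\from{(z,t_2)}\dir{L}{\tht}{-}\preceq\geo\from{(z,t_2)}\dir{R}{\tht}{-}$ coincide with $\geo\from{p}\dir{R}{\tht}{-}$ after $t_2$, and $\geo\from{(z,t_2)}\dir{R}{\tht}{+}\succeq\geo\from{(z,t_2)}\dir{L}{\tht}{+}$ coincide with $\geo\from{p}\dir{L}{\tht}{+}$. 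Hence $\geo\from{(z,t_2)}\dir{L}{\tht}{-}\cap\geo\from{(z,t_2)}\dir{R}{\tht}{+}=\{(z,t_2)\}$, and Lemma \ref{def:geoinstabpt} gives $(z,t_2)\in\IG\tht$.

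\textbf{Step 2: the two interfaces out of the tip.} The point $(z,t_2)$ lies in the relative interior of the $W^{\tht-}$-geodesic $\gamma=\geo\from{p}\dir{R}{\tht}{-}$. Lemma \ref{lm:geosplitsshocks} therefore gives $\Upsilon_{(z,t_2)}\dir{L}{\tht}{-}(r)<\gamma(r)$ for $r\in(t_1,t_2)$. The same path is also the $W^{\tht+}$-geodesic $\geo\from{p}\dir{L}{\tht}{+}$ on $[t_1,t_2]$. Applying Lemma \ref{lm:geosplitsshocks} to it gives $\gamma(r)<\Upsilon_{(z,t_2)}\dir{R}{\tht}{+}(r)$ on the same interval. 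So the two interfaces are misordered on all of $(t_1,t_2)$, separated by $\gamma$.

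Next, the duality \eqref{no-intersection} keeps $\Upsilon_{(z,t_2)}\dir{L}{\tht}{-}$ weakly to the right of $\geo\from{p}\dir{L}{\tht}{-}$ on $[t_1,t_2)$. It keeps $\Upsilon_{(z,t_2)}\dir{R}{\tht}{+}$ weakly to the left of $\geo\from{p}\dir{R}{\tht}{+}$ there. Both geodesics start at $p$, and both interfaces are continuous. Hence both interfaces are squeezed against $\gamma$ at time $t_1$, and both pass through $p$.

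\textbf{Step 3: conclusion.} With $t_1$ defined by \eqref{t1}, Step 2 shows that the supremum there equals the snowbird time $t_1$: the interfaces meet at time $t_1$ and are strictly ordered on $(t_1,t_2)$. Lemma \ref{lem:shockstoisland} then says that the region $U$ between the interfaces on $(t_1,t_2)$ is a stability island. By Lemma \ref{lem:islandtoshocks} its tip is $(z,t_2)$, and by definition its bottom is $\Upsilon_{(z,t_2)}\dir{L}{\tht}{-}(t_1)=p$.

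\textbf{Main obstacle.} The delicate step is Step 2's squeezing. The issue is at $p$ itself, where the geodesics are not strictly separated from the interfaces, and one must rule out the interfaces reaching level $t_1$ at a point other than $p$. The argument uses only continuity of the interfaces together with the sandwich $\geo\from{p}\dir{L}{\tht}{-}\preceq\Upsilon_{(z,t_2)}\dir{L}{\tht}{-}$ and $\Upsilon_{(z,t_2)}\dir{R}{\tht}{+}\preceq\geo\from{p}\dir{R}{\tht}{+}$ on $(t_1,t_2)$. At $r=t_1$ all four geodesics equal $p$, so letting $r\searrow t_1$ forces both interfaces to converge to $p$.
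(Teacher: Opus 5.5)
Your proof is correct and follows the paper's argument. Lemma \ref{lm:geosplitsshocks}, applied to the common segment of $\geo\from{p}\dir{R}{\tht}{-}$ and $\geo\from{p}\dir{L}{\tht}{+}$, gives the misordering. Duality \eqref{no-intersection} against $\geo\from{p}\dir{L}{\tht}{-}$ and $\geo\from{p}\dir{R}{\tht}{+}$ (strictly separated at $t_2$ by Lemma \ref{topsnowbird}), together with continuity, pins both interfaces to $p$ at time $t_1$, and Lemma \ref{lem:shockstoisland} finishes. Your Step 1, showing $(z,t_2)\in\IG\tht$ via \eqref{no bubble}, \eqref{geo:restart}, and Lemma \ref{def:geoinstabpt}, is a worthwhile addition: it is a hypothesis of Lemma \ref{lem:shockstoisland} that the paper's proof does not check explicitly.
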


\begin{proof}
    Let $(z,t_2)$ be as in the claim. By Lemma \ref{lm:geosplitsshocks}, $\geo\from{(u,t_1)}\dir{L}{\tht}{+}(r)<\Upsilon\from{(z,t_2)}\dir{R}{\tht}{+}(r)$ for all $r\in(t_1,t_2)$. By Lemma \ref{topsnowbird}, $\geo\from{(u,t_1)}\dir{L}{\tht}{+}(t_2)<\geo\from{(u,t_1)}\dir{R}{\tht}{+}(t_2)$. Therefore, by \eqref{no-intersection} and the continuity of the paths, we also have $\Upsilon\from{(z,t_2)}\dir{R}{\tht}{+}(r)\le\geo\from{(u,t_1)}\dir{R}{\tht}{+}(r)$ for all $r\in(t_1,t_2)$. Since all these paths are continuous, this forces $\Upsilon\from{(z,t_2)}\dir{R}{\tht}{+}(t_1)=(u,t_1)$. Likewise, $\geo\from{(u,t_1)}\dir{L}{\tht}{-}(r)\le\Upsilon\from{(z,t_2)}\dir{L}{\tht}{-}(r)<\geo\from{(u,t_1)}\dir{R}{\tht}{-}(r)$ for all $r\in(t_1,t_2)$ and $\Upsilon\from{(z,t_2)}\dir{L}{\tht}{-}(t_1)=(u,t_1)$. Since $\geo\from{(u,t_1)}\dir{R}{\tht}{-}$ and $\geo\from{(u,t_1)}\dir{L}{\tht}{+}$ match over the interval $(t_1,t_2)$, we get that $\Upsilon\from{(z,t_2)}\dir{L}{\tht}{-}$ remains strictly to the left of $\Upsilon\from{(z,t_2)}\dir{R}{\tht}{+}$ over this time interval. Thus, the two geodesics are misordered and, by Lemma \ref{eq:shockstoisland}, the region strictly between them is a stability island, with its top at $(z,t_2)$ and its bottom at $(u,t_1)$.
\end{proof}

In the proof of Lemma \ref{lm:disjointislands}, we observed that the tip of a stability island is neither a $\tht-$ shock nor a $\tht+$ shock. See the point $(z,t_2)$ in Figures \ref{fig:isotoisland} and \ref{fig:island}.  This motivates the following definition. 

\begin{defn}\label{def:pns}
We call a point in $\IG\tht \setminus (\NU_1^{\tht-} \cup \NU_1^{\tht+})$ a \emph{proper non-shock instability point} (pns point), and denote the set of all such points by $\IGpns\tht$.
See the first configuration on the second line in Figure \ref{fig:geodesics}.
\end{defn}

The following is immediate from \eqref{geo:no-lollipop} and Lemma \ref{def:geoinstabpt}. 

\begin{lem}\label{lm:pns}
Let $\w\in\Omega_0$ and $\tht\in\baddir$. Then $p\in\IGpns\tht$ if and only if $\geo\from{p}\dir{L}{\tht}{\sig}=\geo\from{p}\dir{R}{\tht}{\sig}$, for both $\sigg\in\{-,+\}$, and $\geo\from{p}\dir{L}{\tht}{-}\cap\geo\from{p}\dir{R}{\tht}{+}=\{p\}$.
\end{lem}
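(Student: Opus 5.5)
The plan is to unwind the two definitions and combine them. By Definition \ref{def:pns}, $p\in\IGpns\tht$ means $p\in\IG\tht$ and $p\notin\NU_1^{\tht-}\cup\NU_1^{\tht+}$. I will treat the two conditions separately and then put them together.

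First, the shock condition. By \eqref{geo:no-lollipop}, applied once with $\sigg=-$ and once with $\sigg=+$, we have $p\notin\NU_1^{\tht\sig}$ if and only if $\geo\from{p}\dir{L}{\tht}{\sig}=\geo\from{p}\dir{R}{\tht}{\sig}$. So $p$ avoids both shock sets exactly when the leftmost and rightmost $W^{\tht\sig}$-geodesics from $p$ coincide for both signs.

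Second, the instability condition. By Lemma \ref{def:geoinstabpt}, $p\in\IG\tht$ if and only if $\geo\from{p}\dir{L}{\tht}{-}\cap\geo\from{p}\dir{R}{\tht}{+}=\{p\}$.

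Taking the conjunction of these two equivalences gives the claimed characterization in both directions at once. This is valid because both cited facts are genuine equivalences and both hold on $\Omega_0\subset\Omega_{10}$, and here $\tht\in\baddir$.

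I do not expect any real obstacle. The only point needing care is making sure \eqref{geo:no-lollipop} is used for each sign separately, so that the conjunction over $\sigg\in\{-,+\}$ matches the statement exactly.
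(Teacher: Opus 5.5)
Your proposal is correct and matches the paper's argument, which simply notes that the lemma is immediate from \eqref{geo:no-lollipop} (applied for each sign) and the geometric characterization of $\IG\tht$ in Lemma \ref{def:geoinstabpt}. Combining the two equivalences with Definition \ref{def:pns}, as you do, is all that is needed.
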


The next lemma completes the description of the boundaries of islands. See Figure \ref{fig:island} for an illustration. Recall Definition \ref{def:hugging} of a $\tht\sigg$ hugging shock, $\sigg\in\{-,+\}$. 

\begin{lem}\label{lm:islandboundary}
Assume the setting of Lemma \ref{lem:shockstoisland} holds. We have the following.
    \begin{enumerate}[label={\rm(\alph*)}, ref={\rm\alph*}] \itemsep=1pt 
        \item\label{lm:islandboundary.a} For any $r\in(t_1,t_2)$, $\Upsilon\from{(z,t_2)}\dir{L}{\tht}{-}(r)\in\IG\tht$ is a $\tht-$ hugging shock and $\Upsilon\from{(z,t_2)}\dir{R}{\tht}{+}(r)\in\IG\tht$ is a $\tht+ $ hugging shock.
        \item\label{lm:islandboundary.b} $\Upsilon\from{(z,t_2)}\dir{L}{\tht}{-}(t_1)=\Upsilon\from{(z,t_2)}\dir{L}{\tht}{-}(t_1) \in\IG\tht\cap\NU^{\tht+}\cap \NU^{\tht-}$ and is a snowbird shock.
        \item\label{lm:islandboundary.c} $(z,t_2)\in\IGpns\tht$.
    \end{enumerate}
\end{lem}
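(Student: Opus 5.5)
Each of the three parts follows by assembling results already proved; the argument is mostly bookkeeping, with one short geometric argument at the tip.

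\emph{Part (a).} Fix $r\in(t_1,t_2)$ and set $q=\Upsilon\from{(z,t_2)}\dir{R}{\tht}{+}(r)$. By Lemma \ref{lem:IGgodown}, $q\in\IG\tht$, since $(z,t_2)\in\IG\tht$. By Lemma \ref{lem:shockstoisland}, the island $U$ of \eqref{eq:shockstoisland} contains the points $(y,r)$ for all $y$ slightly smaller than the first coordinate of $q$, and these points lie outside $\IG\tht$. Hence $q$ is left-isolated, and Lemma \ref{LRisolgeo}\eqref{LRisolgeo.a} shows that $q$ is a $\tht+$ hugging shock. The symmetric argument applies to $\Upsilon\from{(z,t_2)}\dir{L}{\tht}{-}(r)$: it is right-isolated, and therefore a $\tht-$ hugging shock.

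\emph{Part (b).} Lemma \ref{lm:snowbird} already states that the point $p=\Upsilon\from{(z,t_2)}\dir{L}{\tht}{-}(t_1)=\Upsilon\from{(z,t_2)}\dir{R}{\tht}{+}(t_1)$ is a snowbird shock. Definition \ref{def:snowbird} requires a snowbird shock to lie in $\IG\tht\cap\NU_1^{\tht-}\cap\NU_1^{\tht+}$. The fact that $p\in\IG\tht$ also follows independently from Lemma \ref{lem:IGgodown}. So (b) is a restatement of Lemma \ref{lm:snowbird}.

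\emph{Part (c).} This is the only part that needs an actual argument. We know $(z,t_2)\in\IG\tht$, so by Definition \ref{def:pns} it remains to show $(z,t_2)\notin\NU_1^{\tht-}\cup\NU_1^{\tht+}$.

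1. Pick $r\in(t_1,t_2)$ and set $(v,r)=\Upsilon\from{(z,t_2)}\dir{R}{\tht}{+}(r)$. By part (a), $(v,r)$ is left-isolated.
2. Apply Lemma \ref{lm:isotoisland} to $(v,r)$. It produces the separation point $(z',t_2')$ of $\geo\from{(v,r)}\dir{L}{\tht}{-}$ and $\geo\from{(v,r)}\dir{L}{\tht}{+}$, with misordered interfaces out of $(z',t_2')$ and with $\Upsilon\from{(z',t_2')}\dir{R}{\tht}{+}(r)=(v,r)$.
3. Those misordered interfaces bound an island by Lemma \ref{lem:shockstoisland}. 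That island contains points immediately to the left of $(v,r)$, so it meets $U$ and therefore equals $U$.
4. The uniqueness of the tip in Lemma \ref{lem:islandtoshocks} then gives $(z',t_2')=(z,t_2)$.
5. Consequently $(z,t_2)$ lies in the relative interior of both $\geo\from{(v,r)}\dir{L}{\tht}{-}$ and $\geo\from{(v,r)}\dir{L}{\tht}{+}$, since $t_2>r$. By \eqref{noshcoksongeo}, $(z,t_2)$ is neither a $\tht-$ shock nor a $\tht+$ shock, which gives $(z,t_2)\in\IGpns\tht$. This is the same observation made in the proof of Lemma \ref{lm:disjointislands}.

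The main obstacle is the identification of the tip in steps 3 and 4, namely that the separation point produced by Lemma \ref{lm:isotoisland} coincides with $(z,t_2)$. If relying on the uniqueness clause of Lemma \ref{lem:islandtoshocks} turns out to be awkward, a fallback is a direct trapping argument. For $t\in(r,t_2)$, both geodesics $\geo\from{(v,r)}\dir{L}{\tht}{\pm}$ stay weakly between the interfaces $\Upsilon\from{(z,t_2)}\dir{L}{\tht}{-}$ and $\Upsilon\from{(z,t_2)}\dir{R}{\tht}{+}$, by \eqref{no-intersection} and Lemma \ref{lm:geosplitsshocks}. Continuity then forces both geodesics through $(z,t_2)$. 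Finally, $t_2$ is the separation time because the two geodesics cannot meet again after time $t_2$ while $(z,t_2)\in\IG\tht$.
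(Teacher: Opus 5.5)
Your argument is correct and is the same as the paper's: (a) from the one-sided isolation read off \eqref{eq:shockstoisland} plus Lemma \ref{LRisolgeo}\eqref{LRisolgeo.a}, (b) as a restatement of Lemma \ref{lm:snowbird}, and (c) by placing $(z,t_2)$ in the relative interior of $\geo\from{(v,r)}\dir{L}{\tht}{\pm}$ and applying \eqref{noshcoksongeo}, where the paper leaves the identification of the tip with the separation point implicit. Your fallback trapping argument is the cleanest way to justify that identification (cite \eqref{shocksplitsgeo} rather than Lemma \ref{lm:geosplitsshocks} for the right side). Step 3 as written needs one more step, since Lemma \ref{lm:isotoisland} gives misordering only on the open interval $(r,t_2')$; for example, apply Lemma \ref{lm:disjointislands} to the common closure point $(v,r)$.
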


\begin{figure}[hpt]
    \includegraphics[width=5.5cm]{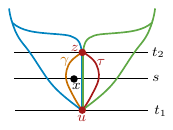}    \caption{\small A sketch of an island. $\island_{(x,s)}$ is the area strictly between the (shock) paths $\gamma=\Upsilon\from{(z,t_2)}\dir{L}{\tht}{-}$ and $\tau=\Upsilon\from{(z,t_2)}\dir{R}{\tht}{+}$. The bottom of the island, $(u,t_1)$, is a snowbird double shock instability point. The tip of the island, $(z,t_2)$, is a pns point. The shocks along $\tau$ are all $\tht+$ hugging shocks and the shocks along $\gamma$ are all $\tht-$ hugging shocks.}
    \label{fig:island}
\end{figure}

\begin{proof}
    Since $(z,t_2)\in\IG\tht$, Lemma \ref{lem:IGgodown} implies that $\Upsilon\from{(z,t_2)}\dir{L}{\tht}{-}$ and $\Upsilon\from{(z,t_2)}\dir{R}{\tht}{+}$ are both in $\IG\tht$. 
    By \eqref{eq:shockstoisland}, $\Upsilon\from{(z,t_2)}\dir{L}{\tht}{-}(r)$ is right-isolated and $\Upsilon\from{(z,t_2)}\dir{R}{\tht}{+}(r)$ is left-isolated, for all $r\in(t_1,t_2)$. 
    Part \eqref{lm:islandboundary.a} then follows from Lemma \ref{LRisolgeo}\eqref{LRisolgeo.a}.\smallskip

    Part \eqref{lm:islandboundary.b} is in Lemma \ref{lm:snowbird}.
    Part \eqref{lm:islandboundary.c} follows from the fact that $(z,t_2)$ is interior to both $\geo\from{(v,s)}\dir{L}{\tht}{-}$ and $\geo\from{(v,s)}\dir{L}{\tht}{+}$ and hence, by \eqref{noshcoksongeo}, 
    cannot be in $\NU_1^{\tht-}\cup\NU_1^{\tht+}$.
\end{proof}




\subsection{Relation to competition interfaces}\label{sec:islands+cifs}

The following lemma relates instability islands to the bubbles formed between the interfaces $\Ipath_{(x,s)}^{\tht\pm}$ introduced in Proposition \ref{prop:IGgoup}. 
See Figure \ref{fig:island} for an illustration.

\begin{lem}\label{lm:bdry}
    Let $\w\in\Omega_0$, $\tht\in\baddir$, and  $(x,s)\in\R^2$. For $t\in\R$, let $\gamma(t)=(I^{\tht-}_{(x,s)}(t),t)$ and $\tau(t)=(I^{\tht+}_{(x,s)}(t),t)$. 
    \begin{enumerate}[label={\rm(\alph*)}, ref={\rm\alph*}] \itemsep=1pt
    \item\label{lm:bdry.a} $\gamma\preceq\tau$ and $(x,s)$ is weakly between the two paths. If $\gamma\ne\tau$, then the connected components of $\{(z,r):\gamma(r)<(z,r)<\tau(r)\}$ are all stability islands.
    \item\label{lm:bdry.b} Suppose $(x,s)$ is in the closure of some stability island. Let $t_1=\sup\{r\le s:\gamma(r)=\tau(r)\}$ and $t_2=\inf\{r\ge s:\gamma(r)=\tau(r)\}$. Then $-\infty<t_1<t_2<\infty$, $s\in[t_1,t_2]$, $\gamma(t_1) = \tau(t_1)$, $\gamma(t_2) = \tau(t_2)$, for any $r\in(t_1,t_2)$, $\gamma(r) < \tau(r)$, and the island is given by
    \begin{align}\label{eqn:bdry}
    \{(z,r):\gamma(r)<(z,r)<\tau(r),\,t_1<r<t_2\}.
    \end{align}
    \end{enumerate}
\end{lem}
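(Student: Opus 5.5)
Part \eqref{lm:bdry.a} follows mostly from results already in hand. Lemma \ref{I-prop}\eqref{I-prop.a} gives $\gamma\preceq\tau$, and Lemma \ref{I-prop}\eqref{I-prop.e} gives $\gamma(s)\le(x,s)\le\tau(s)$. Now let $(z,r)$ satisfy $\gamma(r)<(z,r)<\tau(r)$. By Lemma \ref{I-prop}\eqref{I-prop.d} and the cocycle property, $W^{\tht-}(y,r;z,r)=W^{\tht+}(y,r;z,r)$ for all $y$ in the open interval $(I^{\tht-}_{(x,s)}(r),I^{\tht+}_{(x,s)}(r))$. So $z$ is not a point of increase of the difference profile, and $(z,r)\notin\IG\tht$. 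Hence the open set strictly between $\gamma$ and $\tau$ lies in $\R^2\setminus\IG\tht$. Its boundary lies on $\gamma\cup\tau$, which is contained in $\IG\tht$ by Lemma \ref{I-inst}\eqref{I-inst.a} and Lemma \ref{Icont}. A connected component of this open region is therefore a connected open subset of $\R^2\setminus\IG\tht$ whose boundary lies in $\IG\tht$. It is thus closed in $\R^2\setminus\IG\tht$ as well as open, so it is a full connected component, i.e.\ a stability island. I would justify this with the same topological fact used via Lemma \ref{f<g:connected}: maximality follows because any connected set of stable points containing it cannot cross $\IG\tht$.

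For part \eqref{lm:bdry.b}, I first reduce to the case where $(x,s)$ lies on the boundary of the island, or in it. If $(x,s)$ lies in the island itself, then $(x,s)\notin\IG\tht$, and the island is given by Lemma \ref{lem:islandtoshocks} as the region between $\Upsilon_{(z,t_2)}\dir{L}{\tht}{-}$ and $\Upsilon_{(z,t_2)}\dir{R}{\tht}{+}$. The key step is to identify $\gamma$ and $\tau$ on $[t_1,t_2]$ with these two shock interfaces. The plan for that step:
\begin{enumerate}
\item At time $s$, the points $\gamma(s)$ and $\tau(s)$ are in $\IG\tht$ and bracket $(x,s)$.
\item By part \eqref{lm:bdry.a} applied on the time level, the open segment between them is stable.
\item Since the island's horizontal section at time $s$ is exactly the interval between the boundary interfaces (by \eqref{eq:shockstoisland}), $\gamma(s)$ and $\tau(s)$ must be the left and right boundary points of the island at time $s$.
\item Re-basing via Lemma \ref{I-restart} (the interfaces do not change when the base point is moved onto them), the same holds at every time $r$ at which the island has a section.
\end{enumerate}
Concretely, for $r\in(t_1^{\rm isl},t_2^{\rm isl})$, take a point of the island at time $r$ on a path connecting it to $(x,s)$ inside the island. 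Part \eqref{lm:bdry.a} shows the component of the between-region containing $(x,s)$ is an island, hence equals $\island_{(x,s)}$. So at each level, the island section is a component of the between-set, namely the full interval $(\gamma(r),\tau(r))$ when that is nonempty. Thus $\gamma<\tau$ on the island's time span.

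At the island's top and bottom times, the island closes up at the tip and at the bottom point. There $\gamma=\tau$: a strict inequality $\gamma<\tau$ would make the open between-set at that level a stable region adjacent to the island, contradicting that the tip and bottom are in $\IG\tht$ with the island's boundary pinched. Hence $t_1,t_2$ as defined coincide with the island's bottom and tip times, which gives the finiteness and \eqref{eqn:bdry}.

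If instead $(x,s)$ lies on the island's boundary, the main obstacle is locating the base point on the correct interface. By Lemma \ref{lm:x0onI} and Proposition \ref{prop:IGgoup}\eqref{IGgoup.e}, a right-isolated boundary point is $\gamma(s)$. Since $(x,s)$ is not left-isolated, Lemma \ref{I-inst}\eqref{I-inst.b} forces $\tau(s)>\gamma(s)$, so the stable interval just to its right is the island's section. Symmetrically, a left-isolated boundary point is $\tau(s)$. The tip and the bottom are the remaining cases. There I would pick a nearby interior boundary point and use Lemma \ref{I-restart} together with Proposition \ref{prop:IGgoup}\eqref{IGgoup.g} to show the interfaces through the tip or bottom are the same paths.
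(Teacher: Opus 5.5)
Your proposal is correct in substance, and its core is the paper's. In (a) you use Lemma~\ref{I-prop}, Lemma~\ref{I-inst}\eqref{I-inst.a}, and a topological step. Your clopen argument (for a component $C$ of the between-region $B$, $\partial C\subset\partial B\subset\IG\tht$) is interchangeable with the paper's appeal to Lemma~\ref{f<g:connected}. In (b) the key point is the same as in the paper: the component of the between-region that meets the island is, by (a), the island itself.

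From there the paper finishes in three lines. Lemma~\ref{lm:disjointislands} makes ``the island'' unique, Lemma~\ref{f<g:connected} gives the shape \eqref{eqn:bdry} with $t_1,t_2$ the nearest times where $\gamma=\tau$, and Lemma~\ref{lm:islands} gives finiteness. Your detour through Lemma~\ref{lem:islandtoshocks}, the re-basing in step~4, and the ``pinching'' argument at the top and bottom times is not needed. The pinching step as written is only heuristic; its rigorous form is exactly Lemma~\ref{f<g:connected}: if $\gamma<\tau$ at the island's top time, continuity gives a connected stable strip across that level that meets, hence lies in, the island.

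Where your route adds something is the boundary case. The paper's proof takes for granted that a point of the island lies strictly between $\gamma$ and $\tau$. That is clear if $(x,s)$ is in the island or is left- or right-isolated. At the tip or bottom, however, $\gamma(s)=\tau(s)=(x,s)$, since these points are neither left- nor right-isolated. So weak betweenness of $(x,s)$ does not say on which side of $\gamma\cup\tau$ the island lies. Your transfer via Lemma~\ref{I-restart} settles this cleanly: from an isolated boundary point $p$, the paths $\Ipath^{\tht\pm}_p$ bound the island and pinch to its tip and bottom, so $I^{\tht\pm}_{(x,s)}=I^{\tht\pm}_p$. One small slip: in the right-isolated case, $\tau(s)>\gamma(s)$ holds because $(x,s)$ \emph{is} right-isolated, by Proposition~\ref{prop:IGgoup}\eqref{IGgoup.e} or Lemma~\ref{I-inst}\eqref{I-inst.c}. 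It does not follow from $(x,s)$ being not left-isolated.

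What you must not carry into the tip/bottom case is the sentence ``$t_1,t_2$ as defined coincide with the island's bottom and tip times.'' There $\gamma(s)=\tau(s)$, so with the sets $\{r\le s:\gamma(r)=\tau(r)\}$ and $\{r\ge s:\gamma(r)=\tau(r)\}$ as written you get $t_1=t_2=s$, and the claimed $t_1<t_2$ fails. The statement is literally false at those two points.

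What your argument, and the paper's, actually proves there is \eqref{eqn:bdry} with $t_1,t_2$ the nearest equality times seen from a point $(x',s')$ of the island. Equivalently, one can use strict inequalities $r<s$ and $r>s$ in the definitions, as in Lemma~\ref{f<g:connected}; the check that no second bubble starts at the tip or ends at the bottom uses (a) and Lemma~\ref{lm:disjointislands}. For points with $\gamma(s)<\tau(s)$ the two conventions agree. The paper's closing sentence (``the two formulas for $t_1$ and $t_2$ claimed in part (b) follow'') has the same defect, so state explicitly which version you are proving.
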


\begin{proof} 
Part \eqref{lm:bdry.a}. By Lemma \ref{I-prop}\eqref{I-prop.a}, $\gamma\preceq\tau$ and, by part \eqref{I-prop.e} of that lemma, $(x,s)$ is between the two paths. By Lemma \ref{I-inst}\eqref{I-inst.a}, both paths are on $\IG\tht$. By Lemma \ref{I-prop}\eqref{I-prop.h} and Definition \ref{def:buseinstab}, if $(y,t)$ is strictly between $\gamma(t)$ and $\tau(t)$, then $(y,t)\not\in\IG\tht$. Thus, the whole subset of $\R^2$ that is strictly between $\gamma$ and $\tau$ is inside $\R^2\setminus\IG\tht$. By Lemma \ref{f<g:connected}, if this region is not empty (i.e.\ if $\gamma\ne\tau$), then its connected components are connected components of $\R^2\setminus\IG\tht$ and, hence, are all stability islands.
\smallskip

Part \eqref{lm:bdry.b}. By Lemma \ref{lm:disjointislands}, distinct islands have  disjoint closures. Thus, there is a unique island containing $(x,s)$ in its closure. Let $(x',s')$ be a point in that island. By part \eqref{lm:bdry.a}, this island is the connected component of the region strictly between $\gamma$ and $\tau$ that contains $(x',s')$. In particular, $\gamma(s')<x'<\tau(s')$.
By Lemma \ref{f<g:connected}, this region is given by \eqref{eqn:bdry}, but with $t_1=\sup\{r\le s':\gamma(r)=\tau(r)\}$ and $t_2=\inf\{r\ge s':\gamma(r)=\tau(r)\}$. Since this connected component is not empty, we have $t_1<t_2$.
By Lemma \ref{lm:islands}, this connected component is bounded and, consequently, $t_1>-\infty$ and $t_2<\infty$.
From these definitions of $t_1$ and $t_2$, together with $\gamma\preceq\tau$, and the continuity of the two paths, we get that $\gamma(t_1)=\tau(t_1)$, $\gamma(t_2)=\tau(t_2)$, and $\gamma|_{(t_1,t_2)}\prec\tau|_{(t_1,t_2)}$. 
 Since $(x,s)$ is in the closure of this region, we have $t_1\le s\le t_2$ and the two formulas for $t_1$ and $t_2$ claimed in part \eqref{lm:bdry.b} follow.
\end{proof}

The following is immediate from Proposition \ref{prop:IGgoup}\eqref{IGgoup.f} 
and Lemmas 
\ref{lm:disjointislands} and  \ref{lm:bdry}.

\begin{lem}\label{lm:disjointI}
Let $\w\in\Omega_0$, $\tht\in\baddir$, and $(x,s)\in\IG\tht$. The intervals making up the connected components of $\{t\in\R:\Ipath_{(x,s)}^{\tht-}(t)<\Ipath_{(x,s)}^{\tht+}(t)\}$ are all bounded and their closures are disjoint.
\end{lem}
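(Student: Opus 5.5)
The statement should follow by combining the self-consistency property of the interfaces $\Ipath_{(x,s)}^{\tht\pm}$ with the description of islands given by Lemma \ref{lm:bdry}. First I rewrite the set in question. Write $\gamma=\Ipath_{(x,s)}^{\tht-}$ and $\tau=\Ipath_{(x,s)}^{\tht+}$, and set $A=\{t\in\R:\gamma(t)<\tau(t)\}$. By continuity of the two paths (Proposition \ref{prop:IGgoup}\eqref{IGgoup.a}), $A$ is open, so its connected components are open intervals. Fix such a component $J$ and a time $r\in J$. Pick any $z$ with $\gamma(r)<(z,r)<\tau(r)$. By Lemma \ref{lm:bdry}\eqref{lm:bdry.a} (applied with base point $(x,s)$), the point $(z,r)$ is a stability point and lies in some island $\island$.

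\textbf{Step 1: $J$ is bounded.} By Lemma \ref{lm:bdry}\eqref{lm:bdry.a}, $\island$ is the connected component of the open region strictly between $\gamma$ and $\tau$ that contains $(z,r)$. Lemma \ref{f<g:connected} (as used in the proof of Lemma \ref{lm:bdry}\eqref{lm:bdry.b}) identifies this component as
\[
\{(y,u):\gamma(u)<(y,u)<\tau(u),\ u\in J\}.
\]
Its time projection is exactly $J$, since by definition $J$ is a maximal interval on which $\gamma<\tau$. Lemma \ref{lm:islands} says islands are bounded, so $J$ is bounded. Equivalently, one can apply Lemma \ref{lm:bdry}\eqref{lm:bdry.b} with the base point $(z,r)$. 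This base point gives the same paths, because Proposition \ref{prop:IGgoup}\eqref{IGgoup.f} (Lemma \ref{I-restart}) lets us replace the base point by any point on the interfaces. Since $(z,r)$ lies in the island itself, $J=(t_1,t_2)$ with $-\infty<t_1<t_2<\infty$.

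\textbf{Step 2: the closures of distinct components are disjoint.} Let $J=(t_1,t_2)$ and $J'=(t_1',t_2')$ be distinct components, with corresponding islands $\island$ and $\island'$. These islands are distinct: their time projections are the disjoint intervals $J$ and $J'$. Suppose their closures in $\R$ meet. Since the intervals themselves are disjoint, this forces, say, $t_2=t_1'$. At that time $\gamma(t_2)=\tau(t_2)$, and this common point is the tip of $\island$ (by Lemma \ref{lm:bdry}\eqref{lm:bdry.b}, the tip is the point where the two bounding paths meet at the top). The same point is the bottom of $\island'$. A single point would then lie in the closures of two distinct islands, contradicting Lemma \ref{lm:disjointislands}.

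The only point requiring care is Step 1's identification of each component $J$ with the time projection of a single island whose boundary consists of $\gamma$ and $\tau$ themselves. This is exactly what Lemma \ref{lm:bdry}\eqref{lm:bdry.b} provides once the base-point invariance of the interfaces is invoked, so I expect no real obstacle.
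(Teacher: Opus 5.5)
Your proof is correct and is essentially the paper's argument: the paper calls the lemma immediate from Proposition \ref{prop:IGgoup}\eqref{IGgoup.f} together with Lemmas \ref{lm:disjointislands} and \ref{lm:bdry}, your main route for Step 1 via Lemma \ref{lm:bdry}\eqref{lm:bdry.a}, Lemma \ref{f<g:connected}, and Lemma \ref{lm:islands} already suffices, and Step 2 only needs that $\gamma(t_2)$ lies in both closures, not that it is a tip or bottom. One small inaccuracy is in your ``equivalently'' remark: Proposition \ref{prop:IGgoup}\eqref{IGgoup.f} re-bases only at points on the interfaces, so you should apply Lemma \ref{lm:bdry}\eqref{lm:bdry.b} at $\gamma(r)$, which lies in the closure of the island, rather than at the interior point $(z,r)$---although the proof of Lemma \ref{I-restart} does extend to $(z,r)$, since $W^{\tht-}(z,r;x,s)=W^{\tht+}(z,r;x,s)$ by Lemma \ref{I-prop}\eqref{I-prop.d}.
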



The last lemma of this section states that both $\tht-$ and $\tht+$ shock points are dense on competition interfaces, off of the boundaries of stability islands. Proposition \ref{prop:islandconfigs}\eqref{prop:islandconfigs.c} below shows that this density continues to hold even on the boundaries of stability islands.

\begin{lem}\label{shocksdenseondust} 
Let $\w\in\Omega_0$, $\tht\in\baddir$, and $\sigg\in\{-,+\}$. Then $\NU_1^{\tht\sig}\cap\Ipath_{(x,s)}^{\tht-}(\R)\cap\Ipath_{(x,s)}^{\tht+}(\R)$ is dense in $\Ipath_{(x,s)}^{\tht-}(\R)\cap \Ipath_{(x,s)}^{\tht+}(\R)$.
\end{lem}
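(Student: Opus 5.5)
Write $A=\Ipath_{(x,s)}^{\tht-}(\R)\cap\Ipath_{(x,s)}^{\tht+}(\R)$. By Proposition \ref{prop:IGgoup}\eqref{IGgoup.b} and continuity, $A$ consists of the points $\Ipath_{(x,s)}^{\tht\pm}(t)$ at times $t$ with $\Ipath_{(x,s)}^{\tht-}(t)=\Ipath_{(x,s)}^{\tht+}(t)$. Fix $p=(y,t)\in A$ and $\rho>0$. The plan is to find a $\tht\sigg$ shock interface that runs through the $\rho$-ball around $p$ and crosses the common path $\Ipath^{\tht-}=\Ipath^{\tht+}$ there. Any crossing point is then automatically in $A\cap\NU_1^{\tht\sig}$, because every point of a shock interface other than its starting point is a shock (Lemma \ref{lm:shockintallshocks}).

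First I would locate a good time window. The set $\{r:\Ipath^{\tht-}(r)=\Ipath^{\tht+}(r)\}$ is closed. By Lemma \ref{lm:disjointI}, its complement is a union of bounded open intervals with disjoint closures. Hence every $t$ in the set is a limit of times $r<t$ in the set, and $p$ is approached from below along the path.

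Case 1: $t$ is in the interior of the set. Choose $r<t$ close to $t$ such that $\Ipath^{\tht-}=\Ipath^{\tht+}=:\Ipath$ on $[r,t]$. By Lemma \ref{lem:shocksdense}, choose $\tht\sigg$ shocks $q_1<\Ipath(t)<q_2$ at time $t$, within distance $\rho/2$ of $p$. Take $\tht\sigg$ shock interfaces $\tau_i$ out of $q_i$. I expect these interfaces to coalesce below time $r$, which by \eqref{Itree} they eventually do. The intended mechanism: by Proposition \ref{prop:IGgoup}\eqref{IGgoup.i}, $\Ipath$ lies between $\Upsilon^{L,\tht+}$ and $\Upsilon^{R,\tht-}$ out of its own points. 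Comparing $\tau_1$ with $\Upsilon\from{\Ipath(t)}\dir{L}{\tht}{\sig}$ via the ordering in Lemma \ref{lm:shocksandwich} / Lemma \ref{lm:shockasymorder} should force $\tau_1$ and $\tau_2$ to straddle $\Ipath$ initially and meet it before time $r$. Continuity then gives a crossing point in the ball (shrinking $r$ and the spacing of the $q_i$ so the paths stay in the ball). This crossing point lies on $\Ipath$ at a time in $(r,t)$, hence in $A$, and it is a $\tht\sigg$ shock.

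Case 2 (and the main obstacle): $t$ is a boundary time of the set, for example the endpoint of an island time-interval, or $p$ is an accumulation point of islands. In that case I would approximate $p$ by Case 1 points, or by points of $A$ below it, if those exist. If the set of equality times has empty interior near $t$, I would instead use an alternative route. Every point of $A$ is neither left- nor right-isolated (Proposition \ref{prop:IGgoup}\eqref{IGgoup.d}), so by Lemma \ref{no-isolated} and Lemma \ref{lm:isodense} there are instability points arbitrarily close on both sides at time $t$. I would then take a $\tht\sigg$ shock interface from a nearby point on one side. By Lemma \ref{lem:IGgodown} it stays in $\IG\tht$. By Proposition \ref{prop:IGgoup}\eqref{IGgoup.g} and the disjointness of the $\Ipath$ paths, it cannot enter the islands adjacent to $\Ipath$. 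By the tree property \eqref{Itree}, it eventually coalesces with $\Upsilon\from{p}\dir{L}{\tht}{\sig}$, which is itself in $\IG\tht$. The hard step is to show that the coalescence point, or a crossing point, lies on $\Ipath$ itself rather than on an island boundary. I would try to rule out the island-boundary alternative using Lemma \ref{lm:islandboundary}: island boundaries carry only hugging shocks, and hugging shocks of the wrong sign cannot meet, by Lemma \ref{LRisolgeo}\eqref{LRisolgeo.b}. Combined with Lemma \ref{lm:intcoal}, this should force the interface to reach $A$ within the ball.
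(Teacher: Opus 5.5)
Your proposal has a genuine gap in each of its two cases.

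\textbf{Case 1: the crossing in the window $(r,t)$ is not established.} Lemma \ref{lm:shocksandwich} compares interfaces from a common point, and Lemma \ref{lm:shockasymorder} compares $\tht+$ with $\tht-$ interfaces. Neither says \emph{when} an interface started near $\Ipath(t)$ meets $\Ipath$. The coalescence of $\tau_1$ with $\tau_2$ (which you expect below time $r$) is irrelevant to this. Proposition \ref{prop:IGgoup}\eqref{IGgoup.i} only tells you on which side of $\Ipath$ interfaces of each sign stay. Nothing in your argument prevents $\tau_1$ from drifting away and returning to $\Ipath$ only below time $r$. There $\Ipath^{\tht-}$ and $\Ipath^{\tht+}$ may have split, and the point may be far from $p$.

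The missing idea is a barrier that pinches at time $r$. This is exactly what Proposition \ref{prop:IGgoup}\eqref{IGgoup.h} supplies, and it is what the paper uses. If $\Ipath^{\tht-}_{(x,s)}=\Ipath^{\tht+}_{(x,s)}=:\Ipath$ on $[r,t]$, then $\geo\from{\Ipath(r)}\dir{L}{\tht}{-}\prec\Ipath$ on $(r,t]$, and both paths start at $\Ipath(r)$. Start any $\tht-$ shock interface at a point strictly between $\geo\from{\Ipath(r)}\dir{L}{\tht}{-}(t)$ and $\Ipath(t)$. The starting point need not be a shock, so Lemma \ref{lem:shocksdense} is not needed. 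By \eqref{no-intersection}, the interface cannot cross this $W^{\tht-}$-geodesic during $(r,t)$. By continuity it must therefore hit $\Ipath$ at some time in $[r,t)$, and that point is a $\tht-$ shock lying in $A$. For $\tht+$ shocks, use $\geo\from{\Ipath(r)}\dir{R}{\tht}{+}$ on the right.

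\textbf{Case 2 is left open, and the claim before it is false.} You call Case 2 the hard step and do not resolve it. The preceding claim, that every equality time is a limit of equality times from below, fails at island tips: $\Ipath^{\tht-}<\Ipath^{\tht+}$ on the whole interval $(t_1,t_2)$ below a tip.

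The worry behind Case 2 is legitimate. Disjoint closures in Lemma \ref{lm:disjointI} do not prevent island time-intervals from accumulating at an equality time; Cantor-type accumulation is allowed. The paper's proof passes over this by asserting, from that lemma, that every equality time lies in a nondegenerate equality interval.

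No interface chasing is needed to fix this. By Lemma \ref{lm:bdry}, an island between $\Ipath^{\tht\pm}_{(x,s)}$ with time interval $(t_1,t_2)$ has its bottom at $\Ipath(t_1)\in A$. By Lemma \ref{lm:snowbird} (see also Lemma \ref{lm:islandboundary}\eqref{lm:islandboundary.b}), this bottom is a snowbird shock, hence lies in $\NU_1^{\tht-}\cap\NU_1^{\tht+}$. So, given an equality time $t$ and $\delta>0$, one of two things happens:
\begin{itemize}
\item $[t,t+\delta)$ consists of equality times. Then the trapping argument above with $r=t$ gives a shock of either sign at some $\Ipath(u)$ with $u\in[t,t+\delta)$.
\item Some island time-interval meets $(t,t+\delta)$. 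Its left endpoint then lies in $[t,t+\delta)$ and carries a snowbird shock in $A$.
\end{itemize}
Continuity of $\Ipath$ then yields the density.
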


\begin{proof}
Take a time $s'$ such that $\Ipath_{(x,s)}^{\tht-}(s')=\Ipath_{(x,s)}^{\tht+}(s')$. By Lemma \ref{lm:disjointI}, there exist $r'<t'$ such that $s'\in[r',t']$ and 
$\Ipath_{(x,s)}^{\tht-}\big|_{[r',t']}=\Ipath_{(x,s)}^{\tht+}\big|_{[r',t']}$.
We thus drop the sign distinction from the notation.
Take any $r<t$ in $[r',t']$. We prove that for any $\sigg\in\{-,+\}$, $\Ipath_{(x,s)}^{\tht}([r,t])\cap\NU_1^{\tht\sig}\ne\varnothing$.

By \eqref{Isplit-strict}, $\Ipath^{\tht}_{(x,s)}$ goes strictly between $\geo\from{\Ipath^{\tht}_{(x,s)}(r)}\dir{L}{\tht}{-}$ and $\geo\from{\Ipath^{\tht}_{(x,s)}(r)}\dir{R}{\tht}{+}$ on the time interval $(r,t]$. Take $x$ such that $(x,t)$ is strictly between the left geodesic and the competition interface. Then, by the duality \eqref{no-intersection} and the continuity of the paths, any $\tht-$ shock interface from $(x,t)$ must intersect $\Ipath^{\tht}_{(x,s)}([r,t))$. Since $s'$ and $r<t$ are arbitrary, we have shown the density of $\tht-$ shocks. A similar argument works for $\tht+$ shocks.
\end{proof}

\section{Configurations of semi-infinite \texorpdfstring{$\theta$}{theta}-directed geodesics}\label{sec:geodesics}

In this section, we classify all possible configurations of $\theta$-directed geodesics emanating from each point $(x,s)\in\R^2$ and identify the regions in which each type occurs. This completes the preparatory material needed for the proofs of our main theorems, which are given in Section \ref{sec:mainproofs}. The section concludes with Lemma \ref{shocksintersect}, which shows that $\tht\sigg$ shock interfaces do not coalesce at dust points, thereby underscoring the importance of the boundaries of stability islands relative to dust points.

By \eqref{allgeo}, the only semi-infinite geodesics are the Busemann geodesics. Therefore, it suffices to describe the configurations of Busemann geodesics.

Recall that when $\tht \notin \baddir$, there is no sign distinction, and we omit the sign from the notation.
By Remark \ref{rk:Mgeo}, when $\tht \in \baddir$ and there are exactly two $W^{\tht+}$-geodesics emanating from $(x,s)$, we adopt the convention that $\geo\from{(x,s)}\dir{M}{\tht}{+} = \geo\from{(x,s)}\dir{R}{\tht}{+}$. Similarly, if there are exactly two $W^{\tht-}$-geodesics from $(x,s)$, we set $\geo\from{(x,s)}\dir{M}{\tht}{-} = \geo\from{(x,s)}\dir{L}{\tht}{-}$. In the case $\tht \in \R \setminus \baddir$, if exactly two $W^{\tht}$-geodesics emanate from $(x,s)$, Remark \ref{rk:Mgeostable} specifies the convention $\geo\from{(x,s)}\dir{M}{\tht}{} = \geo\from{(x,s)}\dir{L}{\tht}{}$.
Finally, recall that we write $\gamma|_I$ for the restriction of $\gamma$ to the interval $I$, and $\gamma \prec \pi$ to mean that $\gamma(s) < \pi(s)$ for all $s$ in the intersection of the two domains.

The first proposition addresses the configurations of $\tht$-directed semi-infinite geodesics when $\tht$ is a stability direction. 

\begin{prop}\label{prop:geodconfigstable}
    Let $\w \in \Omega_0$, $\tht \in \R\setminus\baddir$, and $(x,s)\in\R^2$. Then the following are the only possible configurations of $\tht$-directed geodesics emanating from $(x,s)$. Each of the configurations occurs at a dense set of starting points $(x,s)$.  See the first four configurations on the first row of Figure \ref{fig:geodesics}.
    \begin{enumerate}[label={\rm(\alph*)}, ref={\rm\alph*}] \itemsep=1pt
        \item\label{prop:stablegeo_a} $(x,s)\not\in \NU_1^\tht$ and hence $\geo\from{(x,s)}\dir{L}{\tht}{} = \geo\from{(x,s)}\dir{R}{\tht}{}$. \item\label{prop:stablegeo_b} $(x,s)\in\NU_1^{\tht}$. There exist times $t>s'>s$ such that $t = s+\age^{\tht}(x,s)$, 
        \begin{align}\label{aux2040}
        \geo\from{(x,s)}\dir{L}{\tht}{}\big|_{(s,t)}\prec\geo\from{(x,s)}\dir{R}{\tht}{}\big|_{(s,t)},\quad
        \geo\from{(x,s)}\dir{L}{\tht}{}\big|_{[t,\infty)} = \geo\from{(x,s)}\dir{R}{\tht}{}\big|_{[t,\infty)},
        \end{align}
        and exactly one of the following occurs:
        \vspace{5pt}
              
        \begin{enumerate}[label={{\rm(}\ref{prop:geods-stability.b}.\rm\roman*)}, ref={\ref{prop:geods-stability.b}.\rm\roman*}] \itemsep=1pt
            \item\label{prop:stablegeo_bi}   $\geo\from{(x,s)}\dir{M}{\tht}{}=\geo\from{(x,s)}\dir{L}{\tht}{}$ for all time, or

            \item\label{prop:stablegeo_bii} $\geo\from{(x,s)}\dir{L}{\tht}{}\big|_{(s,s')}\prec\geo\from{(x,s)}\dir{M}{\tht}{}\big|_{(s,s')}\prec\geo\from{(x,s)}\dir{R}{\tht}{}\big|_{(s,s')}$, and the middle geodesic intersects exactly one of $\geo\from{(x,s)}\dir{L}{\tht}{}$ or $\geo\from{(x,s)}\dir{R}{\tht}{}$ at time $s'$, proceeding with said geodesic for all times beyond $s'$. Each of the two configurations occurs at a dense set of starting points $(x,s)$.
        \end{enumerate}       
    \end{enumerate}
\end{prop}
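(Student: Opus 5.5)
Since $\tht\notin\baddir$, \eqref{allgeo} together with \eqref{sign} reduces the statement to describing the three Busemann geodesics $\geo\from{(x,s)}\dir{S}{\tht}{}$, $S\in\{L,M,R\}$. The classification half should be a short case analysis. If $(x,s)\notin\NU_1^\tht$, then \eqref{geo:no-lollipop} gives $\geo\from{(x,s)}\dir{L}{\tht}{}=\geo\from{(x,s)}\dir{R}{\tht}{}$, and the ordering \eqref{geo:mono} squeezes the middle geodesic between them, which is case (a). If $(x,s)\in\NU_1^\tht$, then \eqref{age} with $t=s+\age^\tht(x,s)$ gives \eqref{aux2040}. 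For the middle geodesic we use the property that any two distinct $W^{\tht}$-geodesics from the same point split immediately and later coalesce (the $\Omega_6$ property). So either $\geo\from{(x,s)}\dir{M}{\tht}{}$ equals one of the outer geodesics, which under the convention of Remark \ref{rk:Mgeostable} is $\geo\from{(x,s)}\dir{L}{\tht}{}$ (case (b.i)), or it immediately separates from both. In the latter case let $s'$ be the first time it meets one of the outer geodesics. Coalescence \eqref{geo:coal1} makes it follow that geodesic afterwards, and because it is sandwiched and disjoint from both outer geodesics on $(s,s')$ it meets exactly one of them at $s'$; meeting both at $s'$ would force $s'=t$. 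To obtain $s'<t$ we apply \eqref{Duncan-deg3}: three geodesics from $(x,s)$ that are disjoint on $(s,r)$ and coincide afterwards cannot occur. This gives case (b.ii), and the cases are exclusive by construction.

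For density we work at a fixed $(x_0,s_0)$ and $\rho>0$. Configuration (a) is dense because by \eqref{ratnoshock} every rational point lies outside $\NU_1^\tht$. Shocks as a whole are dense by Lemma \ref{lem:shocksdense}. To get case (b.i) specifically, we take a rational time $s\in(s_0-\rho,s_0)$ and a shock $(x,s)$ near $x_0$ on that level, again by Lemma \ref{lem:shocksdense}; Lemma \ref{lm:Duncan}\eqref{Duncan.b} then forbids a distinct middle geodesic.

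The main obstacle is showing that (b.ii) occurs densely, since at rational times it never occurs. The plan is to manufacture coalescence points of shock interfaces and apply Lemma \ref{lm:intcoal}, which states that at such a point $\geo\from{(x,s)}\dir{M}{\tht}{}$ is distinct from both outer geodesics. We pick two nearby points $p_1\ne p_2$ on a time level slightly above $s_0$, close to $x_0$, with $p_1,p_2\in\mathcal T^\tht$ (for instance points in the relative interior of geodesics from rational points). We then take shock interfaces from them, say $\Upsilon_{p_1}\dir{R}{\tht}{}$ and $\Upsilon_{p_2}\dir{L}{\tht}{}$. By \eqref{Itree} these coalesce, and we need the coalescence point to lie within $\rho$ of $(x_0,s_0)$. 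To arrange this, we first take a geodesic $\gamma$ starting slightly below $s_0$ near $x_0$, and then take $p_1,p_2$ on opposite sides of $\gamma$ at a time slightly above, close to $\gamma$. By \eqref{no-intersection} the interfaces cannot cross $\gamma$, so after crossing the level of $\gamma$'s start they must pass on either side of it. To force an early merger instead, we choose $p_1<p_2$ very close together at the same level with no geodesic of $\mathcal T^\tht$ separating them down to a short depth. That this is possible is the delicate point, and we expect to use Lemma \ref{lm:shocklims}: as $p_2\searrow p_1$ the interfaces $\Upsilon_{p_2}\dir{L}{\tht}{}$ converge to $\Upsilon_{p_1}\dir{R}{\tht}{}$, and we combine this with the tree property to place the merger close. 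Finally, we choose $p_1,p_2$ so that the middle geodesic first rejoins the left outer geodesic or the right outer geodesic, whichever is desired, presumably by reflection symmetry or by selecting which interface the coalescence is inherited from. We also check that this coalescence point is not a $(b.i)$ point, which Lemma \ref{lm:intcoal} guarantees.
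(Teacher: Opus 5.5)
Your classification argument matches the paper's, which packages the last step as Lemma \ref{aux:1651}: \eqref{geo:no-lollipop} for (a), \eqref{age} for \eqref{aux2040}, immediate splitting of distinct $W^\tht$-geodesics, and \eqref{Duncan-deg3} to force $s'<t$. Your density proofs for (a) and (b.i) also coincide with the paper's: rational points are not shocks, and shocks at rational levels come from Lemma \ref{lem:shocksdense} together with Lemma \ref{lm:Duncan}\eqref{Duncan.b}.

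The gap is the density of the two (b.ii) configurations, and it has two parts.

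\emph{Localization.} You never actually localize the merger point. Placing $p_1,p_2$ on opposite sides of a geodesic $\gamma$ only forces the interfaces to merge at or below the starting level of $\gamma$, which is a bound in the wrong direction. The pointwise convergence of Lemma \ref{lm:shocklims}, even with \eqref{Itree}, does not by itself stop $\Upsilon\from{p_2}\dir{L}{\tht}{}$ and $\Upsilon\from{p_1}\dir{R}{\tht}{}$ from running close together and merging far below. Ruling that out needs an extra compactness step that you do not supply, for instance overlap limits (Theorem \ref{th:convanydir}) of geodesics started between the two interfaces, played against Lemma \ref{lm:geosplitsshocks}.

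\emph{Side selection.} This is the more serious problem. The statement asserts that \emph{each} of the two (b.ii) configurations is dense. Your construction gives no control over whether the middle geodesic at the merger point $q$ first rejoins the left or the right geodesic. That geodesic crosses level $T$ strictly between $p_1$ and $p_2$, the outer geodesics lie outside, and nothing in the setup decides which merger happens first. Reflection symmetry is a distributional fact, so it cannot be invoked pathwise for the fixed $\w\in\Omega_0$ of the proposition; it also sends $\tht$ to $-\tht$.

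The missing idea is to manufacture the merger \emph{inside} a (b.i) configuration, which handles localization and side selection together.
Take a (b.i) point $(x,s)$ with $s\in\Q$ (these are dense, as you showed). Set $t=s+\age^\tht(x,s)$ and $p'=\geo\from{(x,s)}\dir{L}{\tht}{}(t)$. Lemma \ref{lm:geocoal} gives a shock interface $\tau$ from $p'$ running strictly between $\geo\from{(x,s)}\dir{L}{\tht}{}$ and $\geo\from{(x,s)}\dir{R}{\tht}{}$ on $(s,t)$, hence passing through $(x,s)$.

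Now take $r'\in(s,t)$ and $(z',r')$ strictly between $\geo\from{(x,s)}\dir{L}{\tht}{}(r')$ and $\tau(r')$. By \eqref{no-intersection}, $\Upsilon\from{(z',r')}\dir{L}{\tht}{}$ is trapped between $\geo\from{(x,s)}\dir{L}{\tht}{}$ and $\tau$, so it merges with $\tau$ at some time $u'\in[s,r')$. Moreover $u'\neq s$, since otherwise Lemma \ref{lm:intcoal} would give $(x,s)$ a distinct middle geodesic.

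Lemma \ref{lm:intcoal} then gives a distinct middle geodesic at $\tau(u')$, and this geodesic cannot cross $\tau$ before $p'$. The right geodesic from $\tau(u')$ runs strictly right of $\tau$ by \eqref{shocksplitsgeo}, so it meets the middle one only at $p'$. By Lemma \ref{aux:1651}, the middle geodesic therefore rejoins the left one strictly before $t$.

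Starting from $(z',r')$ on the right of $\tau$ gives the mirror configuration. Letting $r'\searrow s$ places $\tau(u')$ arbitrarily close to $(x,s)$.
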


The next three propositions deal with the case $\tht\in\baddir$.
First, we list all possible configurations of $\tht$-directed semi-infinite geodesics out of stable points.

\begin{prop}\label{prop:geods-stability}
    Let $\w \in \Omega_0$ and $\tht \in \baddir$.  
Suppose $(x,s) \in \R^2 \setminus \IG\tht$.  
Then the following are the only possible configurations of $\tht$-directed geodesics emanating from $(x,s)$. 
Each of the configurations occurs for a dense set of such starting points $(x,s)$. See the last four configurations on the first row in Figure \ref{fig:geodesics}.    
\begin{enumerate}[label={\rm(\alph*)}, ref={\rm\alph*}] \itemsep=1pt
        \item\label{prop:geods-stability.a} $(x,s)\not\in\NU_1^{\tht+}\cup\NU_1^{\tht-}$. There exists $t>s$ such that 
        \begin{align*}
            &\geo\from{(x,s)}\dir{L}{\tht}{-}\big|_{[s,t]} = \geo\from{(x,s)}\dir{R}{\tht}{+}\big|_{[s,t]}\quad\text{and}\quad
            \geo\from{(x,s)}\dir{L}{\tht}{-}(t) = \geo\from{(x,s)}\dir{R}{\tht}{+}(t)\in\IGpns\tht.
        \end{align*}
 
        \item\label{prop:geods-stability.b} $(x,s)\in\NU_1^{\tht+}\cap\NU_1^{\tht-}$. There exist times $t>t'>s'>s$ such that $t' = s+\age^{\tht-}(x,s) = s+\age^{\tht+}(x,s)$, 
        \begin{align*}
        &\geo\from{(x,s)}\dir{L}{\tht}{-}\big|_{(s,t')} = \geo\from{(x,s)}\dir{L}{\tht}{+}\big|_{(s,t')}\prec\geo\from{(x,s)}\dir{R}{\tht}{-}\big|_{(s,t')} = \geo\from{(x,s)}\dir{R}{\tht}{+}\big|_{(s,t')},\\
        &\geo\from{(x,s)}\dir{L}{\tht}{-}\big|_{[t',t]} = \geo\from{(x,s)}\dir{R}{\tht}{+}\big|_{[t',t]},\quad
        \geo\from{(x,s)}\dir{L}{\tht}{-}(t) = \geo\from{(x,s)}\dir{R}{\tht}{+}(t)\in\IGpns\tht,
        \end{align*}
        and exactly one of the following occurs:
        \vspace{5pt}
              
        \begin{enumerate}[label={{\rm(}\ref{prop:geods-stability.b}.\rm\roman*)}, ref={\ref{prop:geods-stability.b}.\rm\roman*}] \itemsep=1pt
            \item\label{prop:geods-stability.bi}   $\geo\from{(x,s)}\dir{M}{\tht}{-}=\geo\from{(x,s)}\dir{L}{\tht}{-}$ and $\geo\from{(x,s)}\dir{M}{\tht}{+}=\geo\from{(x,s)}\dir{R}{\tht}{+}$, or

            \item\label{prop:geods-stability.bii} $\geo\from{(x,s)}\dir{M}{\tht}{-}\bigl|_{[s,t]}=\geo\from{(x,s)}\dir{M}{\tht}{+}\big|_{[s,t]}$, $\geo\from{(x,s)}\dir{L}{\tht}{-}\big|_{(s,s')}\prec\geo\from{(x,s)}\dir{M}{\tht}{-}\big|_{(s,s')}=\geo\from{(x,s)}\dir{M}{\tht}{+}\big|_{(s,s')}\prec\geo\from{(x,s)}\dir{R}{\tht}{+}\big|_{(s,s')}$, and the matching middle geodesics intersect exactly one of $\geo\from{(x,s)}\dir{L}{\tht}{-}$ or $\geo\from{(x,s)}\dir{R}{\tht}{+}$ at time $s'$, proceeding with said geodesic until time $t'$. Each of the two configurations occurs for a dense set of starting points $(x,s)\in\R^2\setminus\IG\tht$.
        \end{enumerate}       
    \end{enumerate}
\end{prop}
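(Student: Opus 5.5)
The plan is to reduce the configuration at a stable point to the structure already established in Lemma \ref{lem:stableshocks}, together with the fact that Busemann geodesics from a stable point reach $\IG\tht$ only at a pns point. Fix $(x,s)\notin\IG\tht$. By Lemma \ref{def:geoinstabpt}, $\geo\from{(x,s)}\dir{L}{\tht}{-}$ and $\geo\from{(x,s)}\dir{R}{\tht}{+}$ meet after time $s$. By the ordering \eqref{geo:mono}, every $\geo\from{(x,s)}\dir{S}{\tht}{\sig}$ is sandwiched between these two, so all six Busemann geodesics pass through any common point. Let $t$ be the last time the extreme pair agrees, meaning the point where $\geo\from{(x,s)}\dir{L}{\tht}{-}$ and $\geo\from{(x,s)}\dir{R}{\tht}{+}$ finally separate; this time exists and is finite by \eqref{sign}.

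The first step is to show that the pair coincides on $[t',t]$, where $t'$ is their first meeting time, and that the point $\geo\from{(x,s)}\dir{L}{\tht}{-}(t)$ lies in $\IGpns\tht$.

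\begin{itemize}
\item \emph{Coincidence.} Apply extremality \eqref{p2pleftmost}--\eqref{p2prightmost} between two meeting points. In the stable case Lemma \ref{lm:Duncan}\eqref{Duncan.a} forbids reseparation, but $s$ need not be rational. So I would instead use \eqref{no bubble} together with the leftmost/rightmost property: the two geodesics are leftmost and rightmost between any two common points, and \eqref{no bubble} then forces them to coincide in between.
\item \emph{Instability of the endpoint.} With $p=\geo\from{(x,s)}\dir{L}{\tht}{-}(t)$, the restart property \eqref{geo:restart} and the choice of $t$ give $\geo\from{p}\dir{L}{\tht}{-}\cap\geo\from{p}\dir{R}{\tht}{+}=\{p\}$. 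Hence $p\in\IG\tht$ by Lemma \ref{def:geoinstabpt}.
\item \emph{Not a shock.} The point $p$ is interior to the geodesics from $(x,s)$, so \eqref{noshcoksongeo} gives $p\notin\NU_1^{\tht\pm}$. Therefore $p\in\IGpns\tht$.
\end{itemize}

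Next comes the case split.

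\begin{itemize}
\item If $(x,s)\notin\NU_1^{\tht-}\cup\NU_1^{\tht+}$, then \eqref{geo:no-lollipop} makes $L$ and $R$ coincide for each sign. The extreme pair therefore agrees from time $s$, giving $t'=s$ and configuration \eqref{prop:geods-stability.a}.
\item Otherwise, Lemma \ref{lem:stableshocks} shows that $(x,s)$ is a proper double shock with equal ages $a$. The $\pm$ geodesics of each type $L,R$ agree on $[s,s+a]$, and they reunite at $t'=s+a$, which gives the displayed relations in \eqref{prop:geods-stability.b}.
\end{itemize}

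For the middle geodesics, recall that any third $W^{\tht\sig}$-geodesic separates immediately and later coalesces, and that there are at most three geodesics. Suppose there is a distinct middle $\geo\from{(x,s)}\dir{M}{\tht}{-}$ or $\geo\from{(x,s)}\dir{M}{\tht}{+}$. It lies strictly between $L$ and $R$ on an initial interval and merges with one of them at some time $s'<t'$. That $s'<t'$ follows from \eqref{Duncan-deg3}: three geodesics cannot be disjoint up to $t'$ and then coincide.

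It remains to see that the $-$ and $+$ middles coincide on $[s,t]$. Both are $\tht$-directed geodesics trapped between the common $L$ and $R$ curves. If they differed, we would have four distinct geodesics separating immediately at $(x,s)$, namely $L$, $M^-$, $M^+$, $R$, which contradicts \eqref{no4stars}. Extremality then forces them to agree until $t$. This yields the dichotomy between \eqref{prop:geods-stability.bi} and \eqref{prop:geods-stability.bii}, and exactness of the "one of" holds because the two alternatives are mutually exclusive.

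The main obstacle is density. Fix an open set in $\R^2\setminus\IG\tht$; it is contained in some island, so work inside an island.

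\begin{itemize}
\item \emph{Configuration \eqref{prop:geods-stability.a}.} Rational points have no shocks by \eqref{ratnoshock}, so they give this configuration densely.
\item \emph{Double shocks.} Lemma \ref{lem:shocksdense} gives $\tht-$ shocks densely on every time level, and by Lemma \ref{lem:stableshocks} these are double shocks inside the island.
\item \emph{Configurations \eqref{prop:geods-stability.bi} and \eqref{prop:geods-stability.bii}.} For \eqref{prop:geods-stability.bii}, I would use Lemma \ref{lm:intcoal}: pick two $\tht-$ shock interfaces starting inside a small ball and coalescing inside it. Their coalescence point has a distinct middle geodesic and is stable, since it lies in the ball. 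For \eqref{prop:geods-stability.bi}, I would take a shock on a rational time level: Lemma \ref{lm:Duncan}\eqref{Duncan.b} rules out a distinct middle there, and Lemma \ref{lem:shocksdense} gives density of such shocks along rational times.
\end{itemize}

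The remaining point in the density argument is ensuring that coalescence of the chosen interfaces happens inside the ball. To get this, I would pick the starting points close to a single $\tht-$ interface and use Lemma \ref{lm:shocklims} to make the interfaces converge, so that they meet nearby; if needed, instead pick one interface starting on another interface, which coalesces immediately by definition.
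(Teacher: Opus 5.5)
\paragraph{Gap 1: the middle geodesics.} Most of your skeleton works. The reduction via Lemma \ref{lem:stableshocks} is sound. So is the use of \eqref{no bubble} between the first and last meeting times of $\geo\from{(x,s)}\dir{L}{\tht}{-}$ and $\geo\from{(x,s)}\dir{R}{\tht}{+}$. Identifying the final separation point as a pns point via \eqref{geo:restart} and \eqref{noshcoksongeo} is also fine. Your density argument for (b.i) (a shock on a rational level from Lemma \ref{lem:shocksdense}, then Lemma \ref{lm:Duncan}\eqref{Duncan.b}) is a valid shortcut.

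The genuine gap is the middle-geodesic step. Your appeal to \eqref{no4stars} presupposes that \emph{both} $\geo\from{(x,s)}\dir{M}{\tht}{-}$ and $\geo\from{(x,s)}\dir{M}{\tht}{+}$ are initially distinct from the left and right geodesics. Suppose only $\geo\from{(x,s)}\dir{M}{\tht}{-}$ is distinct. By convention $\geo\from{(x,s)}\dir{M}{\tht}{+}=\geo\from{(x,s)}\dir{R}{\tht}{+}$, so the two middles do differ, yet only three geodesics leave $(x,s)$ and \eqref{no4stars} gives no contradiction. This configuration (three $W^{\tht-}$-geodesics but only two $W^{\tht+}$-geodesics) is neither (b.i) nor (b.ii), and nothing you wrote excludes it. What is missing is a reason why a distinct middle of one sign forces a distinct middle of the other sign.

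The paper gets this from shock interfaces:
\begin{enumerate}
\item Lemma \ref{lm:geocoal} gives two $\tht-$ interfaces that bracket $\geo\from{(x,s)}\dir{M}{\tht}{-}$ and first meet at $(x,s)$.
\item These interfaces run through the region between $\geo\from{(x,s)}\dir{L}{\tht}{-}$ and $\geo\from{(x,s)}\dir{R}{\tht}{+}$ before time $t'$. That region is stable, since all geodesics from it are funneled through the resolution point.
\item By Lemmas \ref{lem:stableshocks} and \ref{lem:movingonshocks} they are therefore $\tht+$ interfaces, and Lemma \ref{lm:intcoal} yields a distinct $\geo\from{(x,s)}\dir{M}{\tht}{+}$.
\end{enumerate}
Only then do \eqref{no4stars} and extremality force the middles to agree.

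A shorter repair also works. Say $\geo\from{(x,s)}\dir{M}{\tht}{-}$ merges with $\geo\from{(x,s)}\dir{L}{\tht}{-}$ at time $s'<t'$, where the latter coincides with $\geo\from{(x,s)}\dir{L}{\tht}{+}$. Splice $\geo\from{(x,s)}\dir{M}{\tht}{-}\big|_{[s,s']}$ with $\geo\from{(x,s)}\dir{L}{\tht}{+}\big|_{[s',\infty)}$. By \eqref{composition}, \eqref{L<W}, and \eqref{cocycle}, the spliced path satisfies \eqref{rec} for $W^{\tht+}$. It is thus a third $W^{\tht+}$-geodesic, so it must be $\geo\from{(x,s)}\dir{M}{\tht}{+}$.

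\paragraph{Gap 2: density in (b.ii).} The statement requires each of the two variants of (b.ii) to be dense: the middle merging first with the left geodesic, and the middle merging first with the right one. Your construction does not control the side. Lemma \ref{lm:shocklims} gives only pointwise convergence of interfaces, which does not locate their coalescence point. Your fallback of starting one interface on another also fails the hypothesis of Lemma \ref{lm:intcoal}, which needs coalescence strictly below both starting levels.

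The paper handles both issues at once:
\begin{enumerate}
\item Start from a (b.i) point $(x,s)$, and let $\tau$ be the $\tht+$ interface from the resolution point given by Lemma \ref{lm:geocoal}.
\item Start a second $\tht+$ interface just above time $s$, between $\geo\from{(x,s)}\dir{L}{\tht}{+}$ and $\tau$ (respectively between $\tau$ and $\geo\from{(x,s)}\dir{R}{\tht}{+}$).
\item By \eqref{no-intersection} it must coalesce with $\tau$ near $(x,s)$. The coalescence is strictly after time $s$, because $(x,s)$ has no middle geodesic.
\item The middle geodesic from the coalescence point cannot cross $\tau$, so it merges first with the left (respectively right) geodesic.
\end{enumerate}
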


Next, we describe the configurations of $\tht$-directed semi-infinite geodesics emanating from dust points---that is, instability points that do not lie on the boundary of any stability island.

\begin{prop}\label{prop:geods-dust}
    Let $\w\in\Omega_0$, $\tht\in\baddir$, and  $(x,s)\in\IG\tht$. Suppose that $(x,s)$ is not on the boundary of any stability island. Then the following are the only three possible $\tht$-directed geodesic configurations out of $(x,s)$, and each of the three occurs on a set of starting points that is dense in $\IG\tht$. See the first three configurations on the second row of Figure \ref{fig:geodesics}.
    \begin{enumerate}[label={\rm(\alph*)}, ref={\rm\alph*}] \itemsep=1pt
        \item\label{prop:geods-dust.a} $(x,s)\not\in\NU_1^{\tht-}\cup\NU_1^{\tht+}$. Then $(x,s)\in\IGpns\tht$.
        
        \item\label{prop:geods-dust.b} $(x,s)\in \NU_1^{\tht-}$. Then $(x,s)\not\in\NU_1^{\tht+}$, there are no distinct middle geodesics out of $(x,s)$, and
        \[
        \geo\from{(x,s)}\dir{L}{\tht}{-}\big|_{(s,\infty)}\preceq \geo\from{(x,s)}\dir{R}{\tht}{-}\big|_{(s,\infty)}\prec\geo\from{(x,s)}\dir{L}{\tht}{+}\big|_{(s,\infty)} = \geo\from{(x,s)}\dir{R}{\tht}{+}\big|_{(s,\infty)}.
        \]

        \item\label{prop:geods-dust.c} $(x,s)\in \NU_1^{\tht+}$. Then $(x,s)\not\in\NU_1^{\tht-}$, there are no distinct middle geodesics out of $(x,s)$, and
        \[
        \geo\from{(x,s)}\dir{L}{\tht}{-}\big|_{(s,\infty)}= \geo\from{(x,s)}\dir{R}{\tht}{-}\big|_{(s,\infty)}\prec\geo\from{(x,s)}\dir{L}{\tht}{+}\big|_{(s,\infty)} \preceq\geo\from{(x,s)}\dir{R}{\tht}{+}\big|_{(s,\infty)}.
        \]
    \end{enumerate}
\end{prop}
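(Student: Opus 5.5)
The plan is first to show that a dust point is neither left- nor right-isolated, is not a hugging shock, and is not a snowbird shock. Then I classify the configurations by which of $\NU_1^{\tht\pm}$ contain $(x,s)$, and finally prove density separately for each type.

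\emph{Step 1: exclusions.} Suppose $(x,s)$ were left-isolated. Then the points $(y,s)$ with $y\in(x-\ep,x)$ are stable, so they lie in some island, and $(x,s)$ lies in the closure of that island. Since $(x,s)\in\IG\tht$ and islands are open, $(x,s)$ would be a boundary point, which is excluded. The right-isolated case is the same. By Lemma \ref{LRisolgeo}\eqref{LRisolgeo.a}, $(x,s)$ is therefore neither a $\tht+$ nor a $\tht-$ hugging shock. By Lemma \ref{lm:hugging}\eqref{lm:hugging.b}--\eqref{lm:hugging.c}, this means $\geo\from{(x,s)}\dir{L}{\tht}{-}$ and $\geo\from{(x,s)}\dir{L}{\tht}{+}$ separate immediately, and so do $\geo\from{(x,s)}\dir{R}{\tht}{-}$ and $\geo\from{(x,s)}\dir{R}{\tht}{+}$. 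Moreover, Lemmas \ref{lm:snowbird} and \ref{lem:SBisisland} show that $(x,s)$ is not a snowbird shock, since snowbird shocks are island bottoms. Finally, by Lemma \ref{def:geoinstabpt}, $\geo\from{(x,s)}\dir{L}{\tht}{-}\cap\geo\from{(x,s)}\dir{R}{\tht}{+}=\{(x,s)\}$.

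\emph{Step 2: the three cases.} If $(x,s)\notin\NU_1^{\tht-}\cup\NU_1^{\tht+}$, then $(x,s)\in\IGpns\tht$ by Definition \ref{def:pns}; this is case \eqref{prop:geods-dust.a}.

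Now suppose $(x,s)\in\NU_1^{\tht-}$. I first show that $\geo\from{(x,s)}\dir{R}{\tht}{-}$ and $\geo\from{(x,s)}\dir{L}{\tht}{+}$ separate immediately. If they agreed on an initial interval, there are two possibilities. If $\geo\from{(x,s)}\dir{L}{\tht}{+}\ne\geo\from{(x,s)}\dir{R}{\tht}{+}$, the configuration is a snowbird shock, which Step 1 excludes. If $\geo\from{(x,s)}\dir{L}{\tht}{+}=\geo\from{(x,s)}\dir{R}{\tht}{+}$, then $\geo\from{(x,s)}\dir{R}{\tht}{-}$ and $\geo\from{(x,s)}\dir{R}{\tht}{+}$ agree initially, giving a hugging shock, again excluded.

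Next I rule out a later reintersection of these two geodesics at some $(y,t)$. Suppose they met there. Take any point $p$ strictly between them at a time $r\in(s,t)$. By the ordering \eqref{geo:mono} and coalescence \eqref{geo:coal1}, both $\geo\from{p}\dir{L}{\tht}{-}$ and $\geo\from{p}\dir{R}{\tht}{+}$ are trapped between the two geodesics and are forced through $(y,t)$. By Lemma \ref{def:geoinstabpt}, $p$ is stable. Hence the open region strictly between the two geodesics is a nonempty stable set, and it contains $(x,s)$ in its closure. This makes $(x,s)$ a point of an island boundary, a contradiction. Therefore $\geo\from{(x,s)}\dir{R}{\tht}{-}\prec\geo\from{(x,s)}\dir{L}{\tht}{+}$ on $(s,\infty)$.

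It remains to identify the rest of the configuration in this case:
\begin{itemize}
\item $(x,s)\notin\NU_1^{\tht+}$: otherwise the four geodesics $\geo\from{(x,s)}\dir{L}{\tht}{-},\geo\from{(x,s)}\dir{R}{\tht}{-},\geo\from{(x,s)}\dir{L}{\tht}{+},\geo\from{(x,s)}\dir{R}{\tht}{+}$ would all separate immediately, contradicting \eqref{no4stars}.
\item No distinct middle geodesics: a distinct $\geo\from{(x,s)}\dir{M}{\tht}{-}$ would split immediately from both $\geo\from{(x,s)}\dir{L}{\tht}{-}$ and $\geo\from{(x,s)}\dir{R}{\tht}{-}$ (by the property defining $\Omega_6$). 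Together with $\geo\from{(x,s)}\dir{L}{\tht}{+}$ this again yields four immediately separating geodesics. The same argument applies to a distinct $\geo\from{(x,s)}\dir{M}{\tht}{+}$.
\end{itemize}
This gives case \eqref{prop:geods-dust.b}, and case \eqref{prop:geods-dust.c} is symmetric.

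\emph{Step 3: density.} Fix $q\in\IG\tht$ and $\rho>0$. Using Lemma \ref{lm:isodense}, I pick a point $q'$ within $\rho/2$ of $q$ that is neither left- nor right-isolated. By Proposition \ref{prop:IGgoup}\eqref{IGgoup.d}--\eqref{IGgoup.e} and Lemma \ref{lm:disjointI}, the paths $\Ipath_{q'}^{\tht\pm}$ coincide on a time interval $[r',t']$ near $s$. On this interval the points are non-isolated, so they are not on any island boundary except possibly at tips or bottoms. A point in the relative interior of such a coinciding segment cannot be a bottom, since bottoms are right-isolated points of $\Ipath^{\tht-}$ splitting off. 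Tips I exclude by shrinking the interval to avoid the countably many interval endpoints. Hence these points are dust points, and Lemma \ref{shocksdenseondust} supplies both $\tht-$ and $\tht+$ shocks arbitrarily close to $q$, which gives density of types \eqref{prop:geods-dust.b} and \eqref{prop:geods-dust.c}.

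\emph{Main obstacle.} I expect the hardest step to be density of pns dust points, since shocks are themselves dense. My first attempt: at a time $r$ on the coinciding segment, take the point $\Ipath(r)$ and a point just to its right. Follow the $W^{\tht-}$ geodesic from that point; by \eqref{Isplit-strict} and continuity it lies on one side of $\Ipath$, and I would try to show that $\Ipath$ must pass through the relative interior of some $W^{\tht-}$ geodesic. I would then aim to pick a point of $\Ipath$ that lies simultaneously in $\mathcal T^{\tht-}\cap\mathcal T^{\tht+}$. Such a point is a non-shock by \eqref{noshcoksongeo}. If this direct approach fails, a fallback is a Baire-category argument along $\Ipath([r,t])$, using that each time level contains only countably many shocks \eqref{NUcountable} while the dust on the segment is uncountable.
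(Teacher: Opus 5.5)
\textbf{Classification (Steps 1--2).} This part is sound and essentially follows the paper: you exclude isolated, hugging and snowbird points, then apply \eqref{no4stars}. There is one ordering slip. Your ``no reintersection'' trapping argument assumes that $\geo\from{(x,s)}\dir{R}{\tht}{-}$ lies to the left of $\geo\from{(x,s)}\dir{L}{\tht}{+}$, but \eqref{geo:mono} does not give this; at proper double shocks the opposite order holds. Reverse the steps. Your \eqref{no4stars} bullet needs only pairwise initial distinctness, so it gives $\geo\from{(x,s)}\dir{L}{\tht}{+}=\geo\from{(x,s)}\dir{R}{\tht}{+}$ first. Then, since $(x,s)$ is not a $\tht-$ hugging shock, \eqref{p2prightmost} gives $\geo\from{(x,s)}\dir{R}{\tht}{-}\prec\geo\from{(x,s)}\dir{R}{\tht}{+}$ on $(s,\infty)$. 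This is the strict inequality you want, and it is how the paper obtains it.

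\textbf{Density of type (a).} This is a genuine gap: you do not prove it. The fallback you propose cannot work, because a space-time path meets each time level in exactly one point, so \eqref{NUcountable} says nothing about how many points of $\Ipath([r,t])$ are shocks. The missing idea is to count \emph{horizontally}, on a single time level:
\begin{itemize}
\item if $\IG\tht\cap((a,b)\times\{s\})\neq\varnothing$, it is uncountable (the paper uses \eqref{Hausdorff} at rational $s$; alternatively, $\IG\tht_s$ is closed with no isolated points by Lemmas \ref{ptinc} and \ref{no-isolated});
\item that level carries only countably many shocks, by \eqref{NUcountable};
\item the only non-shock points on island boundaries are tips (Lemma \ref{lm:islandboundary}), and there are countably many of them (Lemma \ref{lm:islands}).
\end{itemize}
Removing the countable sets leaves uncountably many pns dust points in every such interval.

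\textbf{Density of types (b) and (c).} Your claim that ``$\Ipath_{q'}^{\tht-}$ and $\Ipath_{q'}^{\tht+}$ coincide on a nondegenerate time interval'' does not follow from Lemma \ref{lm:disjointI}. Bounded gap intervals with pairwise disjoint closures are compatible with a Cantor-like coincidence set, e.g.\ the union of the integer translates of the middle-thirds Cantor set. Without such an interval you cannot rule out that every shock supplied by Lemma \ref{shocksdenseondust} near $q'$ is an island bottom. Contrary to your remark, bottoms are neither left- nor right-isolated, so they survive your filter. Also, Lemma \ref{lm:isodense} does not by itself give a point that is both not left- and not right-isolated; a countability argument does.

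The paper avoids all of this. Start from a type-(a) dust point $(x,s)$ and let $\tau=\Upsilon\from{(x,s)}\dir{L}{\tht}{\sig}$. By Lemma \ref{lem:IGgodown}, $\tau$ stays in $\IG\tht$, and $\tau(r)$ is a $\tht\sigg$ shock for every $r<s$. Suppose $\tau([r,s))$ were covered by island closures. Then Theorem \ref{thm:sierpinski-interval}, with Lemmas \ref{lm:islands} and \ref{lm:disjointislands}, puts each $\tau([r,r'])$, $r'<s$, inside one and the same closed island closure, which would then contain $(x,s)$. That contradicts $(x,s)$ being dust. Hence there are times $r_n\nearrow s$ at which $\tau(r_n)$ is a dust $\tht\sigg$ shock, and density of (b) and (c) follows from density of (a).
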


Lastly, we give the configurations of $\tht$-directed semi-infinite geodesics out of instability points that are on boundaries of stability islands.

\begin{prop}\label{prop:islandconfigs}
    Let $\w\in\Omega_0$ and $\tht\in\baddir$. Suppose that $\tau:[s,t]\to\R$ is the right boundary of a stability island, including its tip and bottom points. Then for $r\in[s,t]$, $\tau(r)\in\IG\tht$, and the following are the only possible $\tht$-directed geodesic configurations out of $\tau(r)$.  See the first and last configurations on the second row and the configurations on the third row in Figure \ref{fig:geodesics}.
    
    \begin{enumerate}[label={\rm(\alph*)}, ref={\rm\alph*}] \itemsep=1pt
        \item\label{prop:islandconfigs.a} $r = t$, $\tau(t)$ is the tip of the island and is in $\IGpns\tht$. 
       \item\label{prop:islandconfigs.b} $r=s$, $\tau(s)$ is the bottom of the island and is a snowbird shock. 
        \item\label{prop:islandconfigs.c} $r\in(s,t)$, $\tau(r)$ is a $\tht+$ hugging shock, $\geo\from
        {\tau(r)}\dir{L}{\tht}{-}\big|_{[r,t]}=\geo\from
        {\tau(r)}\dir{L}{\tht}{+}\big|_{[r,t]}$,  exactly one of the following happens and each of the three types {\rm(}\ref{prop:islandconfigs.ci}-\ref{prop:islandconfigs.ciii}{\rm)} occurs at a dense set of times $r\in(s,t)$.
         \vspace{5pt}
              
        \begin{enumerate}[label={{\rm(}\ref{prop:islandconfigs.c}.\rm\roman*)}, ref={\ref{prop:islandconfigs.c}.\rm\roman*}] \itemsep=1pt
            
        \item\label{prop:islandconfigs.ci} $\tau(r)\notin\NU_1^{\tht-}$ and $\geo\from
        {\tau(r)}\dir{M}{\tht}{+} = \geo\from
        {\tau(r)}\dir{R}{\tht}{+}$ {\rm(}i.e.\ no middle $W^{\tht+}$-geodesic{\rm)}.
        \item\label{prop:islandconfigs.cii} $\tau(r) \in \NU_1^{\tht-}$ and hence $\geo\from{\tau(r)}\dir{L}{\tht}{-}$ and $\geo\from{\tau(r)}\dir{R}{\tht}{-}$ are initially distinct. In this case, both geodesics go strictly left of $\tau$ on $(r,t)$ and coalesce strictly before $t$. Furthermore, $\geo\from{\tau(r)}\dir{M}{\tht}{+}\big|_{[r,t]}=\geo\from{\tau(r)}\dir{R}{\tht}{-}\big|_{[r,t]}$.
       \item\label{prop:islandconfigs.ciii} $\tau(r)\notin\NU_1^{\tht-}$ and $\geo\from{\tau(r)}\dir{M}{\tht}{+}$ is initially distinct from both $\geo\from{\tau(r)}\dir{L}{\tht}{+}$ and $\geo\from{\tau(r)}\dir{R}{\tht}{+}$. In this case, the middle geodesic coalesces with one of these strictly before the other. If it first coalesces with the left geodesic, this occurs strictly after time $t$.
        Moreover, there exists $r_0\in(s,t)$ such that for all $r\in(s,r_0)$ with $\tau(r)$ of type \eqref{prop:islandconfigs.ciii}, $\geo\from{\tau(r)}\dir{M}{\tht}{+}$ first coalesces with $\geo\from{\tau(r)}\dir{R}{\tht}{+}$. There is a dense subset of $(r_0,t)$ consisting of points $r$ for which $\tau(r)$ is of type \eqref{prop:islandconfigs.ciii} and $\geo\from{\tau(r)}\dir{M}{\tht}{+}$ again first coalesces with $\geo\from{\tau(r)}\dir{R}{\tht}{+}$. 
     On the other hand, the set of points $r\in[r_0,t)$ for which $\tau(r)$ is of type \eqref{prop:islandconfigs.ciii} and $\geo\from{\tau(r)}\dir{M}{\tht}{+}$ first coalesces with $\geo\from{\tau(r)}\dir{L}{\tht}{+}$ contains $r_0$, consists of isolated points, and accumulates at $t$.
    \end{enumerate}
    \end{enumerate}

    The analogous statements hold upon exchanging left and right and swapping $+$ and $-$. See the fourth row in Figure \ref{fig:geodesics}.
\end{prop}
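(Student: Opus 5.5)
Parts (a) and (b) are already available. By Lemma~\ref{lm:islandboundary}\eqref{lm:islandboundary.c}, the tip $\tau(t)$ is in $\IGpns\tht$. By Lemma~\ref{lm:snowbird}, the bottom $\tau(s)$ is a snowbird shock. Lemma~\ref{lem:IGgodown} gives $\tau\subset\IG\tht$. For $r\in(s,t)$, Lemma~\ref{lm:islandboundary}\eqref{lm:islandboundary.a} shows that $\tau(r)$ is a $\tht+$ hugging shock. Lemma~\ref{lm:isotoisland} then shows that $\geo\from{\tau(r)}\dir{L}{\tht}{-}$ and $\geo\from{\tau(r)}\dir{L}{\tht}{+}$ agree up to the point where they separate, and that this point is the tip $(z,t)$; this gives the identity on $[r,t]$.

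The trichotomy in (c) comes from combining three facts:
\begin{itemize}
\item By Lemma~\ref{LRisolgeo}\eqref{LRisolgeo.b}, $\tau(r)$ is not a $\tht-$ hugging shock, so $\geo\from{\tau(r)}\dir{R}{\tht}{-}$ separates immediately from $\geo\from{\tau(r)}\dir{R}{\tht}{+}$.
\item By \eqref{no4stars}, at most three geodesics leave $\tau(r)$ immediately distinct.
\item $\geo\from{\tau(r)}\dir{R}{\tht}{+}$ goes strictly to the right of $\tau$, by \eqref{shocksplitsgeo}.
\end{itemize}
The distinct geodesics emanating from $\tau(r)$ are then $L^-=L^+$ on an initial interval, $R^+$, and possibly one further path. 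That path is either $R^-$ (when $\tau(r)\in\NU_1^{\tht-}$) or a distinct middle $M^+$; by \eqref{no4stars}, it cannot be both.

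In case (c.ii), $R^-$ is a $W^{\tht+}$-geodesic initially as well, hence equals $M^+$ up to time $t$. The $\tht-$ geodesics stay left of $\tau$: they cannot cross the $\tht+$ shock interface $\tau$ before the tip, by the island structure and \eqref{no-intersection} applied to the $\tht-$ boundary. They coalesce before $t$, since $L^-$ passes through the tip and $R^-\preceq L^+$. In case (c.iii), I would use Lemma~\ref{lm:intcoal}/\ref{lm:geocoal} for the $\tht+$ tree. The middle geodesic separates two $\tht+$ interfaces coalescing at $\tau(r)$, namely $\tau$ itself and another $\tht+$ interface entering from the left or right. If $M^+$ merges first with $L^+$, then it does so after $L^+$ has left $L^-$, so after time $t$.

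For density, I would approximate $\tau(r)$ by shock points. Lemma~\ref{lem:shocksdense} supplies $\tht+$ shocks near boundary points inside the island, and their interfaces must join $\tau$; the coalescence point gives type (c.iii) by Lemma~\ref{lm:intcoal}. Symmetrically, $\tht-$ interfaces from points just to the right of $\tau$ hit $\tau$ and produce type (c.ii). Type (c.i) should follow from countability: coalescence points of interfaces onto $\tau$ are countable by \eqref{NUcountable}-type arguments, while $\tau((s,t))$ is uncountable. This needs care; I would argue that each time in $(s,t)$ is a coalescence time of at most one such interface and that these times are countable.

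The main obstacle is the last part of (c.iii): the threshold $r_0$, and the claim that the ``left-coalescing'' points are isolated, include $r_0$, and accumulate at $t$. My plan is to track the side from which the merging interface enters $\tau$. A merging $\tht+$ interface from the left lies inside the island, emanates from a point of the island, and forces $M^+$ to coalesce with $R^+$. Left-coalescence instead requires the incoming interface to come from outside, between $\tau$ and $\geo\from{\tau(r)}\dir{R}{\tht}{+}$, and to be pushed along $L^+$ past the tip.

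These outside incoming branches should correspond to the successive coalescences of $\tau$ with $\tht+$ interfaces originating from points above $t$. For example, the $\tht+$ interface from the tip itself, $\Upsilon_{(z,t)}\dir{L}{\tht}{+}$, enters $\tau$ at some earliest junction, which defines $r_0$. Continuing upward along the tip geodesic gives a sequence of junctions, which should be isolated by continuity of interfaces and accumulate at $t$. This is where I expect the real work, using Lemma~\ref{lm:geosplitsshocks} and the tree structure \eqref{Itree}.
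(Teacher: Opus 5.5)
Your treatment of (a), (b), the identity $\geo\from{\tau(r)}\dir{L}{\tht}{-}\big|_{[r,t]}=\geo\from{\tau(r)}\dir{L}{\tht}{+}\big|_{[r,t]}$, and the bare partition of (c) is fine. The partition splits according to whether $\tau(r)\in\NU_1^{\tht-}$ and whether $M^+$ is distinct. However, the properties asserted inside each case are not established, and the central missing ingredient is Lemma~\ref{aux:R-goin}. Below, $L^\pm,M^\pm,R^\pm$ are your shorthand for the geodesics out of $\tau(r)$.

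\textbf{The (c.ii) and (c.iii) arguments.}
\begin{enumerate}
\item You keep the $W^{\tht-}$-geodesics left of $\tau$ via \eqref{no-intersection}. But $\tau$ is a $\tht+$ interface, so \eqref{no-intersection} only stops $W^{\tht+}$-geodesics from crossing it. The $\tht-$ left boundary says nothing about $R^-$ leaving $\tau(r)$ to the right. The paper excludes this as follows. If $R^-$ went right, Lemma~\ref{lm:isodense} gives a non-right-isolated instability point $(x,r')$ between $\tau$ and $R^-$. Proposition~\ref{prop:IGgoup}\eqref{IGgoup.i}, \eqref{Itree}, and a $\tht-$ interface trapped between $\tau$ and $R^-$ then force $\Ipath^{\tht+}_{(x,r')}$ to meet $\tau\subset\Ipath^{\tht+}_{\tau(r)}$. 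This contradicts Proposition~\ref{prop:IGgoup}\eqref{IGgoup.g}. A further argument based on Lemma~\ref{doubleisolareisol} excludes $R^-$ touching $\tau$ again.
\item Your claim that $R^-$ ``is a $W^{\tht+}$-geodesic initially'' is unsupported, since \eqref{no4stars} alone permits $M^+=R^+$. The paper gets $M^+=R^-$ in three steps. Take the $\tht-$ interface between $L^-$ and $R^-$ given by Lemma~\ref{lm:geocoal}. It lies in the island, so it is also a $\tht+$ interface (Lemmas~\ref{lem:stableshocks} and~\ref{lem:movingonshocks}). Hence $\tau(r)$ is a coalescence point of two $\tht+$ interfaces, and Lemma~\ref{lm:intcoal} with \eqref{no4stars} finishes.
\item The inequality $R^-\preceq L^+$ you invoke is false here, since initially $R^-$ is strictly right of $L^-=L^+$. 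In any case, trapping gives coalescence only \emph{by} time $t$. Strictness requires excluding coalescence at the tip, where the $\tht-$ interface from Lemma~\ref{lm:geocoal} would run inside the island, contradicting Lemma~\ref{lem:IGgodown}.
\item In (c.iii), ``after $L^+$ has left $L^-$'' is not an argument. You must show $M^+$ stays strictly right of $\tau$. Otherwise its coalescence with $L^+$ inside the island produces a $\tht+$ interface through $\tau(r)$ that is also a $\tht-$ interface, contradicting $\tau(r)\notin\NU_1^{\tht-}$. You must also exclude a meeting exactly at $\tau(t)$, which the paper does via Lemma~\ref{lm:Duncan}\eqref{Duncan.a} at a rational time.
\end{enumerate}

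\textbf{Density and the refinement have the geometry reversed.}
\begin{enumerate}
\item By \eqref{shocksplitsgeo}, a $\tht-$ interface through $\tau(r)$ must run between $L^-$ and $R^-$, and both enter the island. So $\tht-$ interfaces arriving from the right of $\tau$ never create (c.ii) points. The paper starts them between $L^-$ and $\tau$ (Lemma~\ref{aux:-shocksalongRboundary}).
\item Conversely, a $\tht+$ interface merging into $\tau$ from inside the island puts $M^+$ inside the island (Lemma~\ref{lm:intcoal}). This forces $\tau(r)\in\NU_1^{\tht-}$, so it is a (c.ii) point, where $M^+=R^-$ merges with $L^+$ before $t$. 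It is neither a (c.iii) point nor a right-coalescence, as you claim.
\item All (c.iii) points come from $\tht+$ interfaces entering from the right, between $\tau$ and $R^+$ (Lemma~\ref{aux:shockconfluenceexist}). Both subtypes arise this way, so your inside/outside dichotomy does not separate them.
\item Your $r_0$ cannot work. By Lemma~\ref{lm:geosplitsshocks}, $\Upsilon_{\tau(t)}\dir{L}{\tht}{+}$ lies strictly left of $\geo\from{\tau(s)}\dir{L}{\tht}{+}$, hence of $\tau$, on all of $(s,t)$. The paper's $r_0$ is where the $\tht+$ interface from the coalescence point of $\geo\from{\tau(t)}\dir{R}{\tht}{+}$ and $\geo\from{\tau(s)}\dir{R}{\tht}{+}$, running between them, merges into $\tau$. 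One has $r_0>s$ because the bottom is a snowbird shock, and $r_0<t$ by Theorem~\ref{th:convanydir}. Points of type (c.iii) are then classified by sandwiching $M^+$ between such interfaces and the geodesics $\geo\from{\tau(\cdot)}\dir{R}{\tht}{+}$.
\item Density of (c.i) needs no countability. Your route is shaky anyway, since \eqref{NUcountable} counts shocks on a fixed time level, not along a space-time path. For rational $r$, Lemma~\ref{lm:Duncan}\eqref{Duncan.b} gives $M^+=R^+$ and forces $R^-$ to start along $L^-$ or $R^+$. The latter is excluded because $\tau(r)$ is not a $\tht-$ hugging shock.
\end{enumerate}
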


Lemma \ref{shocksdenseondust} asserts that, for any $(x,s)\in\IG\tht$, both $\tht-$ and $\tht+$ shocks are dense in $\Ipath_{(x,s)}^{\tht-}(\R)\cap\Ipath_{(x,s)}^{\tht+}(\R)$. This sharpens the density claim in Proposition \ref{prop:geods-dust} by showing that $\tht-$ and $\tht+$ single shocks (configurations \eqref{prop:geods-dust.b} and \eqref{prop:geods-dust.c}) are dense along the dust portions of the competition interfaces. In addition, Proposition \ref{prop:islandconfigs}\eqref{prop:islandconfigs.c} establishes the same density on the portions of these interfaces that form the boundaries of stability islands. Consequently, this density holds along the entire interface $\Ipath_{(x,s)}^{\tht\sig}(\R)$ for both $\sigg\in\{-,+\}$.\smallskip

The last main result of the section complements the above proposition and says that islands are dense in the instability graph.

\begin{prop}\label{prop:islandsdense}
  Let $\w\in\Omega_0$, $\tht\in\baddir$, $(x,s)\in\IG\tht$, and $\delta>0$. Then there exists a stability island that is entirely contained in the ball with center $(x,s)$ and radius $\delta$.  
\end{prop}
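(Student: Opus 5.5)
The strategy is to produce, inside the ball $B$ of radius $\delta$ about $(x,s)$, a left- or right-isolated instability point whose island is small. Lemma~\ref{lm:isotoisland} and Lemma~\ref{lem:islandtoshocks} then identify that island explicitly. Its tip is the separation point of two hugging geodesics, and its bottom is found by following shock interfaces downward. So it suffices to control these two points together with the left and right boundaries.

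\textbf{Step 1: find isolated instability points close to $(x,s)$.} There are two cases.

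If $(x,s)$ is itself left- or right-isolated, it lies on the closure of an island (the one adjacent to it on its time level). That island need not be small, so a nearby one is needed. Instead, I would work at a nearby rational time. By Lemma~\ref{lem:IGisclosed} and Proposition~\ref{prop:IGgoup}\eqref{IGgoup.c}, the path $\Ipath_{(x,s)}^{\tht\sig}$ passes through points of $\IG\tht$ at times $s'$ arbitrarily close to $s$. The plan is to use \eqref{Hausdorff} at a rational time $s'$ close to $s$. The set $\IG\tht_{s'}$ meets any small interval $(a,b)$ around $I_{(x,s)}^{\tht\sig}(s')$, and it has Hausdorff dimension $1/2$ there. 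Hence it is not an interval, which by closedness gives a complementary gap: points $a<u<v<b$ with $(u,s'),(v,s')\in\IG\tht$ and $(u,v)\times\{s'\}$ disjoint from $\IG\tht$.

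Let $q=(v,s')$. It is left-isolated, hence a $\tht+$ hugging shock by Lemma~\ref{LRisolgeo}.

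\textbf{Step 2: bound the tip.} Let $(z,t_2)$ be the separation point of $\geo\from{q}\dir{L}{\tht}{-}$ and $\geo\from{q}\dir{L}{\tht}{+}$. This point is the tip of the island containing $(u,v)\times\{s'\}$. I must show that $t_2-s'$ and $|z-v|$ are small.

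By Corollary~\ref{cor:geo:lim}/Theorem~\ref{th:convanydir}, geodesics from points near $(x,s)$ coalesce, for all times $\ge r$, with geodesics from $(x,s)$. Fix $r\in(s,s+\delta/2)$. Since $(x,s)\in\IG\tht$, we have $\geo\from{(x,s)}\dir{L}{\tht}{-}(r)<\geo\from{(x,s)}\dir{R}{\tht}{+}(r)$. The island's interior points have $\geo\dir{L}{\tht}{-}$ and $\geo\dir{R}{\tht}{+}$ meeting at the tip (Figure~\ref{fig:misordered}). For points $(y,s')$ in the gap, these geodesics converge in the overlap sense, as $s'\to s$, to some Busemann geodesics from $(x,s)$.

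The obstacle is that those limiting geodesics may be a $\tht-$ and a $\tht+$ geodesic from $(x,s)$ that never meet. In that case the meeting time would not be forced small. To handle this, I would choose the gap more carefully. The gap should sit strictly between two instability points $q_1,q_2$ on the time level $s'$ whose $\Ipath$-paths stay within a $\delta/4$-neighbourhood of $(x,s)$ on $[s'-\eta,s'+\eta]$, which is possible by continuity (Lemma~\ref{Icont}). These paths lie in $\IG\tht$, so they cannot enter the island. I would then argue, as in the proof of Lemma~\ref{lm:disjointislands}, that the island is trapped between the two paths.

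\textbf{Step 3: the main difficulty, bounding the island in time.} Trapping the island between the two paths controls its spatial extent, but not how far it extends in time. For this I would pick $q_1,q_2$ so that their $\Ipath^{\tht+}$ paths coincide with each other at some times slightly above and slightly below $s'$. I do not expect this to follow from disjointness (Proposition~\ref{prop:IGgoup}\eqref{IGgoup.g}), so I would instead use the following.

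\begin{enumerate}[label={\rm(\roman*)}]
\item Apply Lemma~\ref{lem:cmnNanc} to $q_1$ and $q_2$ to obtain a common ancestor, constructed as in that proof via the last intersection of $\geo\from{q_1}\dir{R}{\tht}{+}$ with $\geo\from{q_2}\dir{L}{\tht}{-}$.
\item Apply Lemma~\ref{lem:cmnSanc} to obtain a common descendant, namely the coalescence point of the $\Upsilon$ interfaces.
\item Show that both points are within distance $\delta$ when $q_1,q_2$ are close enough together. This follows from overlap convergence (Theorem~\ref{th:convanydir}) applied to geodesics, and from Lemma~\ref{lm:shocklims} applied to interfaces.
\end{enumerate}

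The island then lies inside the closed curve formed by the ancestry and descendancy paths, exactly as in the proof of Lemma~\ref{lm:islands}. That curve is contained in $B$, so the island is contained in $B$.

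Establishing this smallness of the common ancestor and the common descendant is the step I expect to be hardest.
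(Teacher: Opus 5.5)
\paragraph{Where the proposal breaks.}
The gap is item (iii) of Step 3, which you flag yourself. It carries the whole proposition, and neither overlap convergence nor Lemma \ref{lm:shocklims} delivers it, because closeness of $q_1,q_2$ alone does not control where the common ancestor and descendant are.

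\emph{Descendant.} The common descendant of Lemma \ref{lem:cmnSanc} is the coalescence point of $\Upsilon\from{q_1}\dir{R}{\tht}{-}$ and $\Upsilon\from{q_2}\dir{R}{\tht}{-}$. By \eqref{no-intersection}, it lies at or below the starting time of any $W^{\tht-}$-geodesic passing strictly between $q_1$ and $q_2$. Pointwise convergence of interfaces says nothing about when two of them first meet.

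\emph{Ancestor.} The common ancestor of Lemma \ref{lem:cmnNanc} is the last meeting point of $\geo\from{q_1}\dir{R}{\tht}{+}$ and $\geo\from{q_2}\dir{L}{\tht}{-}$. Theorem \ref{th:convanydir} only identifies these, in the limit, with a $W^{\tht+}$-geodesic and a $W^{\tht-}$-geodesic out of the limit point $w$. Those two can travel together for a long time, which is exactly the obstacle you diagnosed in Step 2, reappearing.

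\emph{Concrete failures.} Let $w$ be the bottom of a large island; it is a snowbird shock and is neither left- nor right-isolated. If $q_1<w<q_2$ on that level, then $\geo\from{w}\dir{L}{\tht}{+}=\geo\from{w}\dir{R}{\tht}{-}$ up to the tip, so for $q_1,q_2$ close to $w$ the common ancestor is that tip. If instead $q_1,q_2$ straddle the tip on its own level, note that the tip lies on the relative interior of $\geo\from{w}\dir{R}{\tht}{-}$. The common descendant then lies at or below the bottom, however close $q_1,q_2$ are.

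So you would need three things, none of which is supplied:
\begin{itemize}
\item a specific selection of $q_1,q_2$;
\item a proof that no such structures pass through them;
\item control keeping the connecting ancestry and descendancy paths inside $B$.
\end{itemize}
Separately, your intermediate plan of two $\Ipath^{\tht+}$ paths meeting above and below $s'$ is impossible by Proposition \ref{prop:IGgoup}\eqref{IGgoup.g}.

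\paragraph{The missing idea.}
Close the barrier with two geodesics of the \emph{same} sign that have the \emph{same} overlap limit. Then trap the island's boundary by duality, not only by paths in $\IG\tht$. The paper's proof does this as follows.

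\begin{enumerate}
\item Go down the $\tht+$ interface $\Upsilon\from{(x,s)}\dir{L}{\tht}{+}$, which lies in $\IG\tht$ by Lemma \ref{lem:IGgodown}, to a point $(x',s')$ at a rational time near $s$.
\item Take $p=\Upsilon\from{(x,s)}\dir{L}{\tht}{+}(s'')$ with $s''<s'$ close to $s'$. By \eqref{shocksplitsgeo} and the ordering, the only possible overlap limit of $\geo\from{p}\dir{L}{\tht}{+}$ is $\geo\from{(x',s')}\dir{L}{\tht}{+}$.
\item Hence the two geodesics coalesce before a time $t$ at which everything is still inside $B$. Together with the interface arc from $p$ to $(x',s')$, they enclose a small region.
\item If $(x',s')$ is not left-isolated, this fact and \eqref{QnotIG} give a gap at level $s'$ inside that region.
\item The right boundary of the gap's island is a $\tht+$ interface. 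By \eqref{no-intersection} it cannot cross either $W^{\tht+}$-geodesic, and it cannot pass through the $\IG\tht$-path $\Upsilon\from{(x,s)}\dir{L}{\tht}{+}$.
\item The left boundary is a $\tht-$ interface, confined between $\geo\from{p}\dir{L}{\tht}{-}$ and the right boundary.
\end{enumerate}
If $(x',s')$ is not right-isolated, the argument is symmetric and uses $\geo\from{p}\dir{R}{\tht}{+}$ instead.
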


The remainder of this section is devoted to the proofs of the above propositions. We need the following lemma for proving Proposition \ref{prop:geodconfigstable}.

\begin{lem}\label{aux:1651}
Let $\w \in \Omega_0$, $\tht \in \R$, $\sigg\in\{-,+\}$, and $(x,s)\in\NU_1^{\tht\sig}$. Let $t'=s+\age^{\tht\sig}(x,s)$. Then $\geo\from{(x,s)}\dir{M}{\tht}{\sig}$ coalesces with one of $\geo\from{(x,s)}\dir{L}{\tht}{\sig}$ and $\geo\from{(x,s)}\dir{R}{\tht}{\sig}$ strictly before time $t'$: there exist $s'\in[s,t')$ and $S\in\{L,R\}$ such that $\geo\from{(x,s)}\dir{L}{\tht}{\sig}\big|_{(s,s')}\prec\geo\from{(x,s)}\dir{M}{\tht}{\sig}\big|_{(s,s')}\prec\geo\from{(x,s)}\dir{R}{\tht}{\sig}\big|_{(s,s')}$ and  $\geo\from{(x,s)}\dir{M}{\tht}{\sig}\big|_{[s',\infty)}=\geo\from{(x,s)}\dir{S}{\tht}{\sig}\big|_{[s',\infty)}$.
\end{lem}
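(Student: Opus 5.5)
The plan is to trap the middle geodesic between the two extreme ones, and to use the finiteness of geodesic configurations to force it to merge with one of them before the shock resolves.

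\textbf{Setup and reduction.} Write $\gamma^S=\geo\from{(x,s)}\dir{S}{\tht}{\sig}$ for $S\in\{L,M,R\}$ and $t'=s+\age^{\tht\sig}(x,s)$. By \eqref{age}, $\gamma^L\prec\gamma^R$ on $(s,t')$ and the two coincide on $[t',\infty)$. The ordering \eqref{geo:mono} gives $\gamma^L\preceq\gamma^M\preceq\gamma^R$.

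\emph{Trivial case.} If $\gamma^M$ equals $\gamma^L$ or $\gamma^R$, take $s'=s$. The strict-betweenness condition on $(s,s')$ is then vacuous, and the claim holds.

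\emph{Main case.} Otherwise $\gamma^M$ is distinct from both. By the dichotomy recorded in the setup of \eqref{geodesics} (from \eqref{no bubble} and Theorem 7.1 of \cite{Bus-Sep-Sor-24}), two distinct $W^{\tht\sig}$-geodesics out of $(x,s)$ split immediately and then coalesce. Let $s_L$ and $s_R$ be the coalescence times of $\gamma^M$ with $\gamma^L$ and with $\gamma^R$, and set $s'=s_L\wedge s_R$, with $S$ the corresponding side. On $(s,s')$ we then have $\gamma^L\prec\gamma^M\prec\gamma^R$, and $\gamma^M=\gamma^S$ on $[s',\infty)$ by \eqref{geo:coal1}.

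\textbf{Proving $s'<t'$.} Since $\gamma^L=\gamma^R$ from time $t'$ on and $\gamma^M$ lies between them, $\gamma^M(t')=\gamma^L(t')$. Hence $\gamma^M$ meets both extremes at time $t'$, so $s_L\le t'$ and $s_R\le t'$. Suppose for contradiction that $s'=t'$. Then $s_L=s_R=t'$, and on $(s,t')$ the three geodesics are pairwise disjoint: they form three geodesics from $(x,s)$ to $\gamma^L(t')$ that are disjoint on $(s,t')$ and coincide on $[t',t'+1]$. This is exactly the configuration forbidden by \eqref{Duncan-deg3}, with $r=t'$ and endpoint time $t'+1$.

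\textbf{Main obstacle.} The delicate step is applying \eqref{Duncan-deg3}. It requires the three paths restricted to $[s,t'+1]$ to be point-to-point geodesics, which holds because each of them is a semi-infinite geodesic. It also requires them to be disjoint on the whole open interval $(s,t')$, including near $s$. That comes from $\gamma^L\prec\gamma^M\prec\gamma^R$ on $(s,s')$, which uses the no-bubble property \eqref{no bubble} to rule out a touch followed by a separation before time $t'$. This contradiction gives $s'<t'$, and the claim follows.
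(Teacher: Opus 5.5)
Your proof is correct and follows the paper's argument. Like the paper, you rule out simultaneous coalescence of $\gamma^M$ with both extremes at time $t'$ by restricting the three geodesics to $[s,t'']$ for some $t''>t'$ and invoking \eqref{Duncan-deg3}. Your explicit use of the split-then-coalesce dichotomy to get strict betweenness on $(s,s')$ only spells out what the paper leaves implicit. One small slip: the common endpoint of the three restricted geodesics should be $\gamma^L(t'+1)$, not $\gamma^L(t')$.
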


\begin{proof}
We proceed by contradiction. Suppose $\geo\from{(x,s)}\dir{M}{\tht}{\sig}$ remains distinct from $\geo\from{(x,s)}\dir{L}{\tht}{\sig}$ and $\geo\from{(x,s)}\dir{R}{\tht}{\sig}$ over the entire interval $(s,t')$. 
By \eqref{age} and the geodesics ordering \eqref{geo:mono}, all three geodesics match from time $t'$ onward. 
Take some $t''>t'$ and observe that $\geo\from{(x,s)}\dir{S}{\tht}{\sig}\big|_{[s,t'']}$, $S\in\{L,M,R\}$, are all geodesics between $(x,s)$ and $\geo\from{(x,s)}\dir{L}{\tht}{\sig}(t'')$ and that all three are distinct on $(s,t')$. This is prohibited by \eqref{Duncan-deg3}. 
\end{proof}

\begin{proof}[Proof of Proposition \ref{prop:geodconfigstable}]
Part \eqref{prop:stablegeo_a} is in \eqref{geo:no-lollipop}.
By \eqref{ratnoshock}, this configuration occurs for all starting points in $\Q^2$, which is dense in $\R^2$.\smallskip

Now Suppose $(x,s)\in\NU_1^{\tht}$. Then, \eqref{age} gives \eqref{aux2040} with $t = s+\age^{\tht}(x,s)$.
By \eqref{allgeo}, $\geo\from{(x,s)}\dir{M}{\tht}{}$ is the only remaining possible $\tht$-directed geodesic emanating from $(x,s)$. By the ordering in \eqref{geo:mono}, this geodesic must remain weakly between $\geo\from{(x,s)}\dir{L}{\tht}{}$ and $\geo\from{(x,s)}\dir{R}{\tht}{}$ and coalesce with both by time $t$. By Lemma \ref{aux:1651}, it must coalesce first with exactly one of these two geodesics at some time $s' \in [s,t)$.
If $s'=s$, then  $\geo\from{(x,s)}\dir{L}{\tht}{}$ and $\geo\from{(x,s)}\dir{R}{\tht}{}$ are the only two geodesics out of $(x,s)$ and we have the configuration in \eqref{prop:stablegeo_bi}. 
Taking $s\in\Q$, \eqref{NUcountable} says there are infinitely many $x\in\R$ such that $(x,s)\in\NU_1^\tht$ and Lemma \ref{lm:Duncan}\eqref{Duncan.b} says none of these points have a middle $W^\tht$-geodesic. By Lemma \ref{lem:shocksdense}, the set of these points $x$ is dense in $\R$. Thus, the configuration in \eqref{prop:stablegeo_bi} occurs for a dense set of starting points in $\R^2$.\smallskip 

If $s'\in (s,t)$, then we are in exactly one of the two cases described in \eqref{prop:stablegeo_bii}. We  show that both configurations occur infinitely often. See the right panel of Figure \ref{fig:pf-geods-stability} for an illustration.

Take $s\in\Q$. As we have just shown, there exist infinitely many $(x,s)\in\NU_1^\tht$ at which the configuration in \eqref{prop:stablegeo_bi} occurs.  Take any such point and let $t = s+\age^{\tht}(x,s)$ and  $p' = \geo\from{(x,s)}\dir{L}{\tht}{}(t) = \geo\from{(x,s)}\dir{R}{\tht}{}(t)$. Then, by Lemma \ref{lm:geocoal}, there exists a $\tht$ shock interface $\tau$ proceeding down from $p'$, strictly between the two geodesics on the time interval $(s,t)$. By the continuity of the paths, $\tau$ must go through $(x,s)$. Take $r'\in(s,t)$ and $z'$ such that $(z',r')$ is strictly between $\geo\from{(x,s)}\dir{L}{\tht}{}(r')$ and $\tau(r')$. Then by \eqref{no-intersection} and \eqref{Itree}, $\Upsilon\from{(z',r')}\dir{L}{\tht}{}$ must coalesce with $\tau$ at a time $u'\in[s,r')$. 
By  Lemma \ref{lm:intcoal},  $\geo\from{\tau(u')}\dir{M}{\tht}{}$ goes strictly between $\Upsilon\from{(z',r')}\dir{L}{\tht}{}$ and $\tau$ and is  initially distinct from $\geo\from{\tau(u')}\dir{L}{\tht}{}$ and $\geo\from{\tau(u')}\dir{R}{\tht}{}$. 
Since there are exactly two $\tht$-directed geodesics out of $(x,s)$, it must be that $u'\in(s,r')$.  
By \eqref{no-intersection}, $\geo\from{\tau(u')}\dir{M}{\tht}{}$ cannot cross $\tau$ and thus must coalesce with $\geo\from{(x,s)}\dir{L}{\tht}{}$, and hence also with $\geo\from{\tau(u')}\dir{L}{\tht}{}$, at a time $s'<t$ before coalescing with $\geo\from{(x,s)}\dir{R}{\tht}{}$, and hence with $\geo\from{\tau(u')}\dir{R}{\tht}{}$, at $p'$. This gives one of the two configurations in \eqref{prop:stablegeo_bii}. Starting with $z'$ such that $(z',r')$ is strictly between $\tau(r')$ and $\geo\from{(x,s)}\dir{R}{\tht}{}(r')$ yields the other configuration in \eqref{prop:stablegeo_bii}. Since both configurations are contained between the two geodesics out of $(x,s)$, each distinct $(x,s)$ gives two distinct such configurations. 
Since we have shown that the configuration in \eqref{prop:stablegeo_bi} occurs at a dense set of points $(x,s)$ and since we can take $r'$ arbitrarily close to $s$, which makes $\tau(u')$ arbitrarily close to $(x,s)$, we get that each of the two configurations in \eqref{prop:stablegeo_bii} occurs at a dense set of starting points in $\R^2$.
\end{proof}

To prove Proposition \ref{prop:geods-stability}, we need two more preliminary lemmas.

\begin{lem}\label{aux:1634}
Let $\w \in \Omega_0$, $\tht \in \baddir$, and $(x,s)\in\R^2$. Suppose $\geo\from{(x,s)}\dir{L}{\tht}{-}\big|_{[a,b]}=\geo\from{(x,s)}\dir{R}{\tht}{+}\big|_{[a,b]}$, for some $b>a\ge s$. Then there exists $t\ge b$ such that 
\[\geo\from{(x,s)}\dir{L}{\tht}{-}\big|_{[a,t]}=\geo\from{(x,s)}\dir{R}{\tht}{+}\big|_{[a,t]}\quad\text{and}\quad
\geo\from{(x,s)}\dir{L}{\tht}{-}(t)=\geo\from{(x,s)}\dir{R}{\tht}{+}(t)\in\IGpns\tht.\]
\end{lem}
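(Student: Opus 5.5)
The natural choice is $t=\sup\{r\ge b:\geo\from{(x,s)}\dir{L}{\tht}{-}\big|_{[a,r]}=\geo\from{(x,s)}\dir{R}{\tht}{+}\big|_{[a,r]}\}$, the last time of agreement. We then verify that the common point $p$ at time $t$ lies in $\IGpns\tht$.

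\textbf{Finiteness and agreement up to $t$.} By \eqref{sign}, $\geo\from{(x,s)}\dir{L}{\tht}{-}(r)<\geo\from{(x,s)}\dir{R}{\tht}{+}(r)$ for all large $r$, so $t<\infty$. Continuity of both paths gives agreement on $[a,t]$. Set $p=\geo\from{(x,s)}\dir{L}{\tht}{-}(t)=\geo\from{(x,s)}\dir{R}{\tht}{+}(t)$.

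\textbf{Identifying geodesics out of $p$.} By \eqref{geo:restart}, $\geo\from{p}\dir{S}{\tht}{-}$ and $\geo\from{p}\dir{S'}{\tht}{+}$ agree after time $t$ with the corresponding geodesics from $(x,s)$. We want to pin down $\geo\from{p}\dir{L}{\tht}{-}$ and $\geo\from{p}\dir{R}{\tht}{+}$ specifically.
\begin{itemize}
\item \emph{Left side.} The restriction $\geo\from{(x,s)}\dir{L}{\tht}{-}\big|_{[t,\infty)}$ is a $W^{\tht-}$-geodesic from $p$. By \eqref{p2pleftmost} and \eqref{p2lleftmost} it is leftmost between any two of its points, and at each later time it is the leftmost maximizer of the update variational problem started from $p$. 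Since $\geo\from{p}\dir{L}{\tht}{-}$ is characterized by the same leftmost-maximizer property \eqref{p2lleftmost}, the two coincide.
\item \emph{Right side.} By the symmetric argument, $\geo\from{p}\dir{R}{\tht}{+}=\geo\from{(x,s)}\dir{R}{\tht}{+}\big|_{[t,\infty)}$.
\end{itemize}
This identification is the step needing the most care; the restart property alone only gives agreement with \emph{some} $W^{\tht\sig}$-geodesic from $p$, so the extremality properties are essential.

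\textbf{$p\in\IG\tht$.} By the definition of $t$ as a supremum, the two geodesics from $(x,s)$ do not agree on any $[t,t+\delta]$. Suppose they met again at some $r>t$. Both are geodesics from $p$ to that meeting point, one leftmost and one rightmost, so the no-bubble property \eqref{no bubble}, together with \eqref{p2pleftmost}–\eqref{p2prightmost} and \eqref{geo:mono}, would force agreement on $[t,r]$. This contradicts the maximality of $t$. Hence $\geo\from{p}\dir{L}{\tht}{-}\cap\geo\from{p}\dir{R}{\tht}{+}=\{p\}$, and $p\in\IG\tht$ by Lemma \ref{def:geoinstabpt}.

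\textbf{$p$ is not a shock.} Since $t>a\ge s$, the point $p$ lies in the relative interior of the $W^{\tht-}$-geodesic $\geo\from{(x,s)}\dir{L}{\tht}{-}$. It also lies in the relative interior of the $W^{\tht+}$-geodesic $\geo\from{(x,s)}\dir{R}{\tht}{+}$. By \eqref{noshcoksongeo}, $p\notin\NU_1^{\tht-}\cup\NU_1^{\tht+}$. Combined with $p\in\IG\tht$, this gives $p\in\IGpns\tht$ by Definition \ref{def:pns}.
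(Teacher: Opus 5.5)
Your proof is correct and follows essentially the paper's route: take the last time $t$ of agreement, use \eqref{no bubble} to rule out reintersection, and use \eqref{geo:restart} to identify the geodesics out of $p$ with the continuations, which gives $p\in\IGpns\tht$. Two small remarks. First, \eqref{geo:restart} as stated already says that all three $W^{\tht\sig}$-geodesics out of $p$ coincide with the continuation, so your extremality detour is valid but unnecessary. Second, \eqref{no bubble} must be applied to the geodesics from $(x,s)$, which agree at the interior time $t>s$ and at $r$; applied to geodesics from $p$ it does not apply, since $p$ is then an endpoint, and the leftmost/rightmost properties give nothing there.
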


\begin{proof}
     By \eqref{geo:mono} and \eqref{sign}, we must have $\geo\from{(x,s)}\dir{L}{\tht}{-}(r)<\geo\from{(x,s)}\dir{R}{\tht}{+}(r)$ for all $r$ sufficiently large. Hence, the two geodesics must eventually separate at some time $t\ge b$. By \eqref{no bubble}, once separated after their common segment, the two geodesics cannot reintersect. Thus, $\geo\from{(x,s)}\dir{L}{\tht}{-}\big|_{[a,t]}=\geo\from{(x,s)}\dir{R}{\tht}{+}\big|_{[a,t]}$ and $\geo\from{(x,s)}\dir{L}{\tht}{-}\big|_{(t,\infty)}\prec\geo\from{(x,s)}\dir{R}{\tht}{+}\big|_{(t,\infty)}$. Furthermore, by monotonicity \eqref{geo:mono}, we have that that for $\sigg\in\{-,+\}$, $\geo\from{(x,s)}\dir{L}{\tht}{\sig}\big|_{[a,t]}=\geo\from{(x,s)}\dir{M}{\tht}{\sig}\big|_{[a,t]}=\geo\from{(x,s)}\dir{R}{\tht}{\sig}\big|_{[a,t]}$. Then coalescence \eqref{geo:coal1} implies that this holds on the whole time interval $[a,\infty)$. To see the last claim, let $p=\geo\from{(x,s)}\dir{L}{\tht}{-}(t)=\geo\from{(x,s)}\dir{R}{\tht}{+}(t)$. Using \eqref{geo:restart}, 
     \[\geo\from{p}\dir{L}{\tht}{-}\big|_{(t,\infty)}=\geo\from{p}\dir{R}{\tht}{-}\big|_{(t,\infty)}=\geo\from{(x,s)}\dir{L}{\tht}{-}\big|_{(t,\infty)}\prec\geo\from{(x,s)}\dir{R}{\tht}{+}\big|_{(t,\infty)}=\geo\from{p}\dir{L}{\tht}{+}\big|_{(t,\infty)}=\geo\from{p}\dir{R}{\tht}{+}\big|_{(t,\infty)}.\] 
     By Lemma \ref{lm:pns}, $p\in\IGpns\tht$. 
\end{proof}

The next lemma documents a simple extension fact, for easier reference.

\begin{lem}\label{lem:movingonshocks}
    Let $\w\in\Omega_0$, $\tht \in \R$, $\sigg\in\{-,+\}$, and $s<t$ in $\R$. Let $\tau:[s,t]\to\R^2$ be a continuous space-time path such that for all $r\in(s,t)$, $\tau(r)\in\NU_1^{\tht\sig}$. Then one can extend $\tau$ to a $\tht\sigg$ shock interface on the time interval $(-\infty,t]$.
\end{lem}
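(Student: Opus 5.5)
The plan is to extend $\tau$ below time $s$ by attaching a suitable $\tht\sigg$ shock interface emanating from the bottom endpoint $\tau(s)$. The natural candidate is $\Upsilon\from{\tau(s)}\dir{L}{\tht}{\sig}$ (or the $R$ version; either works). I would define
\[
\wt\tau(r)=\tau(r)\ \text{for } r\in[s,t],\qquad \wt\tau(r)=\Upsilon\from{\tau(s)}\dir{L}{\tht}{\sig}(r)\ \text{for } r\le s.
\]
The two pieces agree at $r=s$, because $\Upsilon\from{\tau(s)}\dir{L}{\tht}{\sig}(s)=\tau(s)$ by the convention $I^-_{(a,t)}(t)=a$. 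Each piece is continuous, by hypothesis and by \eqref{I-cont} respectively, so $\wt\tau$ is a continuous space-time path on $(-\infty,t]$.

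Next I check Definition \ref{def:shockint}: every point $\wt\tau(r)$ with $r<t$ must lie in $\NU_1^{\tht\sig}$. For $r\in(s,t)$ this is the hypothesis. For $r<s$, the point $\wt\tau(r)$ is in the relative interior of the shock interface $\Upsilon\from{\tau(s)}\dir{L}{\tht}{\sig}$, and by definition of a shock interface (or by Lemma \ref{lm:shockintallshocks}) it belongs to $\NU_1^{\tht\sig}$. Here I recall that $\Upsilon\from{\tau(s)}\dir{L}{\tht}{\sig}$ is a $\tht\sigg$ shock interface, which the paper established via Lemma \ref{lm:shockint}.

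The only delicate point, and the main obstacle, is the junction $r=s$ itself: I must show $\tau(s)\in\NU_1^{\tht\sig}$, since $\tau(s)$ is only assumed to be a limit of shocks. I would argue with Lemma \ref{lm:shockint}, showing $\tau(s)\notin\mathcal T^{\tht\sig}$. Suppose instead that $\tau(s)$ lies in the relative interior of some $W^{\tht\sig}$-geodesic $\gamma$ started at a time $s_0<s$. By Lemma \ref{lm:shockint} in the "only if" direction (equivalently \eqref{noshcoksongeo}), the points $\tau(r)$ with $r\in(s,t)$ avoid $\gamma$. Lemma \ref{lm:geosplitsshocks} then places $\gamma$ strictly between $\Upsilon\from{\gamma(r')}\dir{L}{\tht}{\sig}$ and $\Upsilon\from{\gamma(r')}\dir{R}{\tht}{\sig}$ for $r'\in(s,t)$. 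Since $\tau(r)\to\tau(s)=\gamma(s)$ as $r\searrow s$ while $\tau$ stays on one side of $\gamma$, I expect to combine this with the shock splitting \eqref{shocksplitsgeo} at the points $\tau(r)$ to force the geodesics from nearby shocks to cross $\gamma$, which contradicts coalescence or ordering.

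An alternative that sidesteps the junction entirely is to observe that Lemma \ref{lm:shockint} characterizes interfaces by avoidance of $\mathcal T^{\tht\sig}$ only at times strictly before the start. So I would first apply the extension with $\tau$ restricted to $[s',t]$ for $s'\in(s,t)$, where $\tau(s')$ is a genuine shock. I would then let $s'\searrow s$ and choose the extensions consistently, taking the leftmost interface $\Upsilon\from{\tau(s')}\dir{L}{\tht}{\sig}$ and using the coalescence \eqref{Itree} together with the limit Lemma \ref{lm:shocklims}. I would try this route first if the direct junction argument stalls.
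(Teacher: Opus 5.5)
Your construction is the paper's. Its entire proof is the one line ``extend $\tau$ by $\Upsilon\from{\tau(s)}\dir{L}{\tht}{\sig}$ below time $s$,'' and it says nothing about the junction. You are right that Definition \ref{def:shockint} requires $\tau(s)\in\NU_1^{\tht\sig}$, since $s<t$. This is not cosmetic: in the proof of \eqref{aux2466} (Case 3 of Proposition \ref{prop:islandconfigs}), the lemma is used precisely to conclude that the bottom endpoint of a path of $\tht-$ shocks inside an island is itself a $\tht-$ shock.

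Your proposal, however, does not prove this step. In the $\gamma$-sketch, a geodesic from $\tau(r)$ meeting $\gamma$ contradicts neither coalescence nor ordering, since geodesics from points left of $\gamma$ routinely coalesce with it. What you actually need is that the geodesic on the far side of $\tau$ from $\gamma$ must reach $\gamma$ before time $t$. Neither \eqref{geo:mono} nor the horizontal limits \eqref{geo:lim}/Corollary \ref{cor:geo:lim} give this, because $\tau(r)$ approaches $\tau(s)$ from later times, not along $\R\times\{s\}$. Your alternative route cannot sidestep the issue either: any extension of $\tau$ passes through $\tau(s)$ at time $s<t$, so you must still show $\tau(s)\in\NU_1^{\tht\sig}$.

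The missing ingredient is the overlap convergence of Theorem \ref{th:convanydir}.

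First, for $s<s'<r<t$, the path $\tau|_{[s',t]}$ followed by $\Upsilon\from{\tau(s')}\dir{L}{\tht}{\sig}$ is a genuine shock interface, because $\tau(s')\in\NU_1^{\tht\sig}$. So \eqref{shocksplitsgeo} gives $\geo\from{\tau(r)}\dir{L}{\tht}{\sig}(u)<\tau(u)<\geo\from{\tau(r)}\dir{R}{\tht}{\sig}(u)$ for $u\in(r,t)$.

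Next, take $r_n\searrow s$ and apply Theorem \ref{th:convanydir} to $\tau(r_n)\to\tau(s)$, once with $S_n\equiv L$ and once with $S_n\equiv R$. Along subsequences, for each fixed $u\in(s,t)$, the two geodesics from $\tau(r_n)$ eventually agree at time $u$ with some $\geo\from{\tau(s)}\dir{S}{\tht}{\sig}$ and $\geo\from{\tau(s)}\dir{S'}{\tht}{\sig}$, respectively. Hence, by \eqref{geo:mono},
\[
\geo\from{\tau(s)}\dir{L}{\tht}{\sig}(u)\le\geo\from{\tau(s)}\dir{S}{\tht}{\sig}(u)<\tau(u)<\geo\from{\tau(s)}\dir{S'}{\tht}{\sig}(u)\le\geo\from{\tau(s)}\dir{R}{\tht}{\sig}(u),\qquad u\in(s,t).
\]
So $\tau(s)\in\NU_1^{\tht\sig}$ by Definition \ref{def:shock}, and your concatenation is then a $\tht\sigg$ shock interface.

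In your own language: if $\tau(s)$ were interior to $\gamma$, every geodesic from $\tau(s)$ would follow $\gamma$. Then both the left and right geodesics from $\tau(r_n)$ would pass through $\gamma(u)$ for a fixed $u\in(s,t)$, even though $\tau(u)$ strictly separates them.
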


\begin{proof}
  Extend $\tau$ by letting $\tau(r)=\Upsilon_{\tau(s)}\dir{L}{\tht}{\sig}(r)$ for $r<s$. 
\end{proof}

\begin{proof}[Proof of Proposition \ref{prop:geods-stability}]
    First, suppose that $(x,s)\not\in\NU_1^{\tht-}\cup\NU_1^{\tht+}$. 
    Then, by \eqref{geo:no-lollipop}, for both $\sigg\in\{-,+\}$, $\geo\from{(x,s)}\dir{L}{\tht}{\sig}=\geo\from{(x,s)}\dir{R}{\tht}{\sig}$. By Lemma \ref{def:geoinstabpt}, $(x,s)\not\in\IG\tht$ implies that $\geo\from{(x,s)}\dir{L}{\tht}{-}$ and $\geo\from{(x,s)}\dir{R}{\tht}{+}$ must intersect at some time $b>s$. Then the extremality \eqref{p2pleftmost}-\eqref{p2prightmost} forces $\geo\from{(x,s)}\dir{S}{\tht}{-}$ and $\geo\from{(x,s)}\dir{S}{\tht}{+}$ to match on the time interval $[s,b]$, for both $S\in\{L,R\}$. But now we have $\geo\from{(x,s)}\dir{L}{\tht}{-}(r)=\geo\from{(x,s)}\dir{R}{\tht}{+}(r)$ for all $r\in[s,b]$. Applying Lemma \ref{aux:1634} with $a=s$ gives the claim in part \eqref{prop:geods-stability.a}. 
    By \eqref{ratnoshock} and \eqref{QnotIG}, $\Q^2\subset\R^2\setminus(\IG\tht\cup\NU_1^{\tht-}\cup\NU_2^{\tht+})$. Thus, this configuration occurs at a dense set of starting points.\smallskip

    Now suppose that $(x,s)\in\NU_1^{\tht\sig}$ for some $\sigg\in\{+,-\}$. Then by Lemma \ref{lem:stableshocks}, $(x,s)\in\NU_1^{\tht+}\cap\NU_1^{\tht-}$ and setting  $t'=s+\age^{\tht-}(x,s)=s+\age^{\tht+}(x,s)>s$ gives 
    \begin{align}\label{1649}
    \geo\from{(x,s)}\dir{L}{\tht}{-}\big|_{(s,t')} =\geo\from{(x,s)}\dir{L}{\tht}{+}\big|_{(s,t')}\prec\geo\from{(x,s)}\dir{R}{\tht}{-}\big|_{(s,t')}=\geo\from{(x,s)}\dir{R}{\tht}{+}\big|_{(s,t')}.
    \end{align}
    By the coalescence \eqref{geo:coal1}, $\geo\from{(x,s)}\dir{L}{\tht}{\sig}\big|_{[t',\infty)} =\geo\from{(x,s)}\dir{R}{\tht}{\sig}\big|_{[t',\infty)}$ for both $\sigg\in\{-,+\}$. 
    
   Since the double shock resolves at time $t'$, we have $\geo\from{(x,s)}\dir{L}{\tht}{-}(t')=\geo\from{(x,s)}\dir{R}{\tht}{+}(t')$. By Lemma \ref{lem:stableshocks}, this point is not in $\IG\tht$. Then \eqref{geo:restart} and Lemma \ref{def:geoinstabpt} imply that $\geo\from{(x,s)}\dir{L}{\tht}{-}$ and $\geo\from{(x,s)}\dir{R}{\tht}{+}$ must re-intersect at a time $b>t'$. By the extremality \eqref{p2pleftmost}-\eqref{p2prightmost}, we must  have that for both $S\in\{L,R\}$, $\geo\from{(x,s)}\dir{S}{\tht}{-}$ and $\geo\from{(x,s)}\dir{S}{\tht}{+}$ match over the interval $[t',b]$. Thus, all four geodesics proceed together over this time interval. Applying Lemma \ref{aux:1634} with $a=t'$ produces $t\ge b>t'$ such that  $\geo\from{(x,s)}\dir{L}{\tht}{-}\big|_{[t',t]}=\geo\from{(x,s)}\dir{R}{\tht}{+}\big|_{[t',t]}$ and $\geo\from{(x,s)}\dir{L}{\tht}{-}(t)=\geo\from{(x,s)}\dir{R}{\tht}{+}(t)\in\IGpns\tht$. We next sort out the situation with the middle geodesics $\geo\from{(x,s)}\dir{M}{\tht}{\sig}$.\smallskip

By \eqref{allgeo}, the only remaining $\tht$-directed geodesics are $\geo\from{(x,s)}\dir{M}{\tht}{\sig}$, with $\sigg\in\{-,+\}$. By monotonicity \eqref{geo:mono} and coalescence \eqref{geo:coal1}, for $\geo\from{(x,s)}\dir{M}{\tht}{\sig}$ to be distinct from $\geo\from{(x,s)}\dir{S}{\tht}{\sig}$, $S\in\{L,R\}$, it must initially lie strictly between the two.
If this does not occur for either sign $\sigg\in\{-,+\}$, then the configuration in part \eqref{prop:geods-stability.bi} holds. We therefore assume that at least one of the middle geodesics initially lies strictly between the left and right geodesics, and show that this leads to one of the two (symmetric) configurations described in part \eqref{prop:geods-stability.bii}.\smallskip

Without loss of generality, assume 
$\geo\from{(x,s)}\dir{M}{\tht}{-}$ is distinct from $\geo\from{(x,s)}\dir{S}{\tht}{-}$, $S\in\{L,R\}$, the other case being analogous.  
By Lemma \ref{aux:1651}, $\geo\from{(x,s)}\dir{M}{\tht}{-}$ must coalesce with either $\geo\from{(x,s)}\dir{L}{\tht}{-}$ or $\geo\from{(x,s)}\dir{R}{\tht}{-}$ at some time $s'\in[s,t')$. Since the middle geodesic is distinct from the other two, we necessarily have $s'>s$. We show that $\geo\from{(x,s)}\dir{M}{\tht}{+}\big|_{[s,s']}=
\geo\from{(x,s)}\dir{M}{\tht}{-}\big|_{[s,s']}$.
See the left panel of Figure \ref{fig:pf-geods-stability} for an illustration.

\begin{figure}[hpt]
    \includegraphics[width=3.25cm]{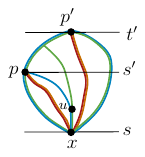}\qquad\qquad
    \includegraphics[width=3.25cm]{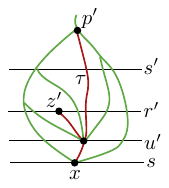}
    \caption{\small Left: The interfaces $\Upsilon_{p}\dir{M}{\tht}{-}$ and $\Upsilon_{p'}\dir{M}{\tht}{-}$ are separated by $\geo\from{(x,s)}\dir{M}{\tht}{-}$ and coalesce at $(x,s)$. Right: When $(x,s)$ has the configuration in Proposition \ref{prop:geodconfigstable}\eqref{prop:stablegeo_bi} or Proposition \ref{prop:geods-stability}\eqref{prop:geods-stability.bi}, the point $\tau(u')$ has the configuration in Proposition \ref{prop:geodconfigstable}\eqref{prop:stablegeo_bii} or Proposition \ref{prop:geods-stability}\eqref{prop:geods-stability.bii}, respectively.}
    \label{fig:pf-geods-stability}
\end{figure}

Let $p=\geo\from{(x,s)}\dir{M}{\tht}{-}(s')$
and $p'=\geo\from{(x,s)}\dir{M}{\tht}{-}(t')$.
Since $\geo\from{(x,s)}\dir{R}{\tht}{-}$ and 
$\geo\from{(x,s)}\dir{M}{\tht}{-}$ coalesce at $p'$, Lemma \ref{lm:geocoal} gives a $\tht-$ shock interface out of $p'$ that remains strictly between the two geodesics on the interval $(s,t')$. Denote this interface by $\Upsilon_{p'}\dir{M}{\tht}{-}$. Similarly, there exists an interface we denote by $\Upsilon_{p}\dir{M}{\tht}{-}$ that 
 remains strictly between 
$\geo\from{(x,s)}\dir{M}{\tht}{-}$ and 
$\geo\from{(x,s)}\dir{L}{\tht}{-}$ on the interval $(s,s')$.  
Together with continuity of the interfaces, this implies that 
$\Upsilon_{p'}\dir{M}{\tht}{-}$ and $\Upsilon_{p}\dir{M}{\tht}{-}$ first 
intersect at $(x,s)$.

For any $r\in(s,t')$ and any $z$ such that $(z,r)$ lies strictly between 
$\geo\from{(x,s)}\dir{L}{\tht}{-}$ and 
$\geo\from{(x,s)}\dir{R}{\tht}{+}$, both 
$\geo\from{(z,r)}\dir{L}{\tht}{-}$ and 
$\geo\from{(z,r)}\dir{R}{\tht}{+}$ are forced to pass through $p'$.  
Hence, by Lemma \ref{def:geoinstabpt}, the entire region strictly between $\geo\from{(x,s)}\dir{L}{\tht}{-}([s,t'])$ and 
$\geo\from{(x,s)}\dir{R}{\tht}{+}([s,t'])$ lies in 
$\R^2\setminus\IG\tht$.  
By Lemma \ref{lem:stableshocks}, any $\tht-$ shock in this region must also be a 
$\tht+$ shock.   
This and Lemma \ref{lem:movingonshocks} give that $\Upsilon_{p}\dir{M}{\tht}{-}\bigl|_{(s,s')}$ and $\Upsilon_{p'}\dir{M}{\tht}{-}\bigl|_{(s,t')}$ are both on $\tht+$ shock interfaces.  
Since they meet for the first time at $(x,s)$, this point
is a coalescence point of two $\tht+$ shock interfaces and, by 
Lemma \ref{lm:intcoal}, must be a trifurcation point of $\tht+$ geodesics. This means that $\geo\from{(x,s)}\dir{M}{\tht}{+}$ is initially separated from both $\geo\from{(x,s)}\dir{L}{\tht}{+}$ and $\geo\from{(x,s)}\dir{R}{\tht}{+}$.
Then \eqref{no4stars} implies the existence of $\varepsilon>0$ such that
$\geo\from{(x,s)}\dir{M}{\tht}{+}\big|_{[s,s+\e]}=
 \geo\from{(x,s)}\dir{M}{\tht}{-}\big|_{[s,s+\e]}$.  
We now prove that these two geodesics coincide on all of $[s,s']$.
Suppose instead that $\geo\from{(x,s)}\dir{M}{\tht}{-}$ and
$\geo\from{(x,s)}\dir{M}{\tht}{+}$ separate at $u\in\R^2$. 
Since $u$ is on the relative interior of both $\geo\from{(x,s)}\dir{M}{\tht}{-}$ and $\geo\from{(x,s)}\dir{M}{\tht}{+}$, by \eqref{geo:restart}, for each sign $\sigg\in\{-,+\}$, $\geo\from{(x,s)}\dir{S}{\tht}{\sig}$, $S\in\{L,M,R\}$ all match. Thus, from $u$, $\geo_u\dir{L}{\tht}{-}$ and $\geo_u\dir{L}{\tht}{+}$ immediately separate but later pass through $p'$, contradicting the extremality \eqref{p2pleftmost}. Consequently, $\geo\from{(x,s)}\dir{M}{\tht}{+}$ and $\geo\from{(x,s)}\dir{M}{\tht}{-}$ cannot separate before time $s'$. \smallskip

We now show that the configuration in \eqref{prop:geods-stability.bi} occurs for a dense set of starting points. Take any rational point $q=(z,r)\in\Q^2$. Then $q\in\island_q$ and, since $\island_q$ is open, there is an open neighborhood $O\subset\island_q$ containing $q$.
By continuity, there exists an $\ep>0$ such that $\Upsilon_{q}\dir{L}{\tht}{+}(r')\in O$ for all $r'\in[r-\ep,r]$. Then for any $r'\in[r-\ep,r)$, $\Upsilon_{q}\dir{L}{\tht}{+}(r')$ is in $\NU_1^{\tht+}\setminus\IG\tht$. Thus, the $\tht$-directed geodesics out of $\Upsilon_{q}\dir{L}{\tht}{+}(r')$ follow the configuration in either \eqref{prop:geods-stability.bi} or \eqref{prop:geods-stability.bii}. Take $r'\in[r-\ep,r)\cap\Q$. By Lemma \ref{lm:Duncan}\eqref{Duncan.b}, we have configuration \eqref{prop:geods-stability.bi} out of $\Upsilon_{q}\dir{L}{\tht}{+}(r')$.
This happens for a dense set of starting points because the set of rational points $q\in\Q^2$ is dense in $\R^2$ and we can take $r'\in\Q$ close enough to $r$ to make $\Upsilon_{q}\dir{L}{\tht}{+}(r')$ as close as desired to $q$.\smallskip

Finally, we show that each of the two configurations in \eqref{prop:geods-stability.bii} occurs for a dense set of starting points. The argument is identical to that in the final paragraph of the proof of Proposition \ref{prop:geodconfigstable}, which we reproduce here for the reader’s convenience. See the right panel of Figure \ref{fig:pf-geods-stability} for an illustration.

Take an $(x,s)$ for which configuration \eqref{prop:geods-stability.bi} occurs. Let $t' = s+\age^{\tht-}(x,s)=s+\age^{\tht+}(x,s)$ and  $p' = \geo\from{(x,s)}\dir{L}{\tht}{-}(t') = \geo\from{(x,s)}\dir{R}{\tht}{+}(t')$. Then, by Lemma \ref{lm:geocoal}, there exists a $\tht+$ shock interface $\tau$ proceeding down from $p'$, strictly between the geodesics $\geo\from{(x,s)}\dir{L}{\tht}{+}$ and $\geo\from{(x,s)}\dir{R}{\tht}{+}$ on the time interval $(s,t')$. By the continuity of the paths, $\tau$ must go through $(x,s)$. Take $r'\in(s,t')$ and $z'$ such that $(z',r')$ is strictly between $\geo\from{(x,s)}\dir{L}{\tht}{+}(r')$ and $\tau(r')$. Then by \eqref{no-intersection} and \eqref{Itree}, $\Upsilon\from{(z',r')}\dir{L}{\tht+}{}$ must coalesce with $\tau$ at a time $u'\in[s,r')$. 
By  Lemma \ref{lm:intcoal},  $\geo\from{\tau(u')}\dir{M}{\tht}{+}$ goes strictly between $\Upsilon\from{(z',r')}\dir{L}{\tht}{+}$ and $\tau$ and is  initially distinct from $\geo\from{\tau(u')}\dir{L}{\tht}{+}$ and $\geo\from{\tau(u')}\dir{R}{\tht}{+}$. 
Since there are exactly two $\tht$-directed geodesics out of $(x,s)$, it must be that $u'\in(s,r')$.  
By \eqref{no-intersection}, $\geo\from{\tau(u')}\dir{M}{\tht}{+}$ cannot cross $\tau$ and thus must coalesce with $\geo\from{(x,s)}\dir{L}{\tht}{+}$, and hence also with $\geo\from{\tau(u')}\dir{L}{\tht}{+}$, at a time $s'<t$ before coalescing with $\geo\from{(x,s)}\dir{R}{\tht}{+}$, and hence with $\geo\from{\tau(u')}\dir{R}{\tht}{+}$, at $p'$. This gives one of the two configurations in \eqref{prop:geods-stability.bii}. Starting with $z'$ such that $(z',r')$ is strictly between $\tau(r')$ and $\geo\from{(x,s)}\dir{R}{\tht}{+}(r')$ yields the other configuration in \eqref{prop:geods-stability.bii}. 
Since we have shown that the configuration in \eqref{prop:geods-stability.bi} occurs for a dense set of points $(x,s)$ and we can take $r'$ arbitrarily close to $s$, making $\tau(u')$ arbitrarily close to $(x,s)$, we see that each of the two configurations in \eqref{prop:geods-stability.bii} occurs for a dense set of starting points.
\end{proof}

\begin{proof}[Proof of Proposition \ref{prop:geods-dust}]
    Suppose $(x,s)\notin\NU_1^{\tht-}\cup\NU_1^{\tht+}$. Then by Definition \ref{def:pns}, $(x,s)\in\IGpns\tht$.
   To see that such configurations are dense in $\IG\tht$, fix a rational time level $s \in \Q$. By \eqref{Hausdorff}, for any rational $a<b$, the set $\IG\tht \cap ((a,b)\times \{s\})$ has Hausdorff dimension $1/2$, whereas by \eqref{NUcountable}, the set $(\NU_1^{\tht-} \cup \NU_1^{\tht+}) \cap (\R \times \{s\})$ is countable. It follows that there exist infinitely many instability points $(x,s)$, with $x\in(a,b)$, that are neither $\tht-$ nor $\tht+$ shock points. By Definition \ref{def:pns}, these points lie in $\IGpns\tht$.
    Although some of them may be island tips (Lemma \ref{lm:islandboundary}\eqref{lm:islandboundary.c}), Lemma \ref{lm:islands} shows that island tips form a countable set. Thus, removing these from $\IGpns\tht \cap ((a,b) \times \{s\})$ still leaves a set of Hausdorff dimension $1/2$, consisting of points of type \eqref{prop:geods-dust.a}. This argument shows that the set of points of type \eqref{prop:geods-dust.a} is dense in $\IG\tht$.\smallskip

    Now suppose that $(x,s)\in \NU_1^{\tht-}$. It cannot be neither left- nor right-isolated, else Lemmas \ref{lem:shockstoisland} and \ref{lm:isotoisland} would place it on the boundary of an island. Then, by Lemma \ref{LRisolgeo}\eqref{LRisolgeo.a}, $(x,s)$ cannot be a $\tht\sigg$ hugging shock for either $\sigg \in \{-,+\}$.
     By Lemma \ref{lm:hugging}(\ref{lm:hugging.b}-\ref{lm:hugging.c}) and the extremality \eqref{p2pleftmost}-\eqref{p2prightmost}, \begin{align}\label{aux2253}
     \geo\from{(x,s)}\dir{L}{\tht}{-}\big|_{(s,\infty)}\prec \geo\from{(x,s)}\dir{L}{\tht}{+}\big|_{(s,\infty)}
     \quad\text{and}\quad 
     \geo\from{(x,s)}\dir{R}{\tht}{-}\big|_{(s,\infty)}\prec \geo\from{(x,s)}\dir{R}{\tht}{+}\big|_{(s,\infty)}.
     \end{align}
     Thus, $\geo\from{(x,s)}\dir{L}{\tht}{-}$, $\geo\from{(x,s)}\dir{R}{\tht}{-}$, and $\geo\from{(x,s)}\dir{R}{\tht}{+}$ are initially distinct. Then, by \eqref{no4stars}, there is no fourth geodesic out of $(x,s)$ that is initially separate from the three geodesics. This and the coalescence \eqref{geo:coal1} imply that there is no distinct $W^{\tht-}$ middle geodesic.
     Furthermore, $\geo\from{(x,s)}\dir{L}{\tht}{+}$ cannot initially proceed with $\geo\from{(x,s)}\dir{R}{\tht}{-}$, else Definition \ref{def:snowbird} makes $(x,s)$ a snowbird shock and  Lemma \ref{lem:SBisisland} would place it on the boundary of an island. This, the first inequality in \eqref{aux2253}, \eqref{no4stars}, and the coalescence \eqref{geo:coal1} imply that $\geo\from{(x,s)}\dir{L}{\tht}{+}=\geo\from{(x,s)}\dir{R}{\tht}{+}$. In particular, there is no distinct $W^{\tht+}$-geodesic out of $(x,s)$. We are thus in the configuration in \eqref{prop:geods-dust.b} 
     (the second configuration on the second row in Figure \ref{fig:geodesics}). 
     The case when $(x,s)\in \NU_1^{\tht+}$ is analogous and leads to the configuration in \eqref{prop:geods-dust.c}.
    
It remains to show that the sets of points of type \eqref{prop:geods-dust.b} and \eqref{prop:geods-dust.c} are both dense in $\IG\tht$. We have already established that points of type \eqref{prop:geods-dust.a} are dense in $\IG\tht$, so fix such a point $(x,s)$. 
By Lemma \ref{lem:IGgodown}, the shock interfaces $\Upsilon_{(x,s)}^{L,\tht\sig}$, for $\sigg\in\{-,+\}$, remain within $\IG\tht$. 
Since the closures of distinct stability islands are disjoint (Lemma \ref{lm:disjointislands}) and there are countably many of them (Lemma \ref{lm:islands}), Lemma \ref{thm:sierpinski-interval} implies that for any $r<r'<s$, there exists $r''\in[r,r']$ such that $\Upsilon_{(x,s)}^{L,\tht\sig}(r'')$ is not in the closure of any stability island. Consequently, there exists a sequence $r_n$, strictly increasing to $s$, and such that, for each $n$, $\Upsilon_{(x,s)}^{L,\tht\sig}(r_n)$ is not contained in the closure of any stability island.
It follows that points of both types \eqref{prop:geods-dust.b} and \eqref{prop:geods-dust.c} occur arbitrarily close to $(x,s)$. Hence each of these types is dense in $\IG\tht$.    
\end{proof}

To prove Proposition \ref{prop:islandconfigs}, we need a number of lemmas.
The first of the lemmas says that the left $W^{\tht\pm}$ geodesics out of the right boundary of a stability island go together inside the island and must pass through its tip.  See Figure \ref{fig:isotoisland}.

\begin{lem}\label{aux:RbndryLgeo}
    Let $\omega\in\Omega_0$ and $\tht\in\baddir$. Suppose $\tau:[s,t]\to\R^2$ is the right boundary of a stability island. Then for any $r<r'$ in $(s,t)$, $\geo_{\tau(r)}\dir{L}{\tht}{-}(r') = \geo_{\tau(r)}\dir{L}{\tht}{+}(r') < \tau(r')$. Moreover, $\geo_{\tau(r)}\dir{L}{\tht}{-}(t) = \geo_{\tau(r)}\dir{L}{\tht}{+}(t) = \tau(t)$. The analogous statement holds upon exchanging left and right and replacing $<$ with $>$.
\end{lem}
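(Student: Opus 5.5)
The plan is to read off everything from the island structure of Lemmas \ref{lem:islandtoshocks}, \ref{lm:isotoisland} and \ref{lm:islandboundary}. Let the island have tip $(z,t)$ and bottom at time $s$. By Lemma \ref{lem:islandtoshocks}, the right boundary is $\tau=\Upsilon_{(z,t)}\dir{R}{\tht}{+}|_{[s,t]}$ and the left boundary is $\Upsilon_{(z,t)}\dir{L}{\tht}{-}|_{[s,t]}$. Fix $r\in(s,t)$ and set $v=\tau(r)$. By \eqref{eq:shockstoisland}, $v$ is left-isolated, so Lemma \ref{LRisolgeo}\eqref{LRisolgeo.a} makes it a $\tht+$ hugging shock. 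Let $(z',t_2')$ be the point where $\geo\from{v}\dir{L}{\tht}{-}$ and $\geo\from{v}\dir{L}{\tht}{+}$ separate. Lemma \ref{lm:isotoisland} then gives three facts: $\Upsilon_{(z',t_2')}\dir{L}{\tht}{-}$ and $\Upsilon_{(z',t_2')}\dir{R}{\tht}{+}$ are misordered; $\Upsilon_{(z',t_2')}\dir{R}{\tht}{+}(r)=v$; and $\geo\from{v}\dir{L}{\tht}{+}(r')<\Upsilon_{(z',t_2')}\dir{R}{\tht}{+}(r')$ for $r'\in(r,t_2')$.

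The key step is to identify $(z',t_2')=(z,t)$. By Lemma \ref{lem:shockstoisland}, the region between the two misordered interfaces out of $(z',t_2')$ is a stability island. Just to the left of $v$ at time $r$, that region contains points of the original island, because $v$ is left-isolated and the interface $\Upsilon_{(z',t_2')}\dir{L}{\tht}{-}$ lies strictly left of the right one at times near $r$. Hence the new island meets the original one, and so the two islands coincide. Lemma \ref{lem:islandtoshocks} says the tip is unique, so $(z',t_2')=(z,t)$. A cleaner route may be to observe that $v$ lies in the closure of both islands and apply Lemma \ref{lm:disjointislands} directly. Two consequences follow. First, $\geo\from{v}\dir{L}{\tht}{-}$ and $\geo\from{v}\dir{L}{\tht}{+}$ agree on $[r,t]$ and pass through $(z,t)=\tau(t)$. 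Second, using the uniqueness in Lemma \ref{lem:islandtoshocks} once more, the right boundaries coincide, i.e.\ $\Upsilon_{(z',t_2')}\dir{R}{\tht}{+}=\tau$ on $[r,t]$.

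The strict inequality is then the inequality already supplied by Lemma \ref{lm:isotoisland}: $\geo\from{v}\dir{L}{\tht}{+}(r')<\tau(r')$ for all $r'\in(r,t)$. Together with the equality of the $\tht\pm$ left geodesics, this proves the claim. For the mirror statement, I would run the same argument using the right-isolated case of Lemma \ref{lm:isotoisland} on the left boundary.

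I expect the main obstacle to be the identification of tips, specifically ruling out $t_2'<t$. A plausible first guess is that $\Upsilon_{(z',t_2')}\dir{R}{\tht}{+}$ coincides with $\tau$ because both are $\tht+$ shock interfaces through $v$. That is not automatic: shock interfaces through a common point need not agree after that point. So I would rely on island uniqueness (Lemma \ref{lem:islandtoshocks} and Lemma \ref{lm:disjointislands}) rather than on interface uniqueness.
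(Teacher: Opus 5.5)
Your proposal is correct and follows the paper's route. The paper's proof cites Lemma \ref{lem:islandtoshocks} for the description of the island and Lemma \ref{lm:isotoisland} to identify its tip as the separation point of $\geo\from{\tau(r)}\dir{L}{\tht}{\pm}$; your check that the island produced by Lemma \ref{lm:isotoisland} is the original one is exactly the step the paper leaves implicit.

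Two small tightenings are worth making. First, your overlap argument needs the two islands to share a point near time $r$, but strict misordering at time $r$ itself is not part of Lemma \ref{lm:isotoisland}. Instead note that $\geo\from{v}\dir{L}{\tht}{+}(r'')$, for $r''$ slightly larger than $r$, lies in both islands; this follows from \eqref{shocksplitsgeo}, \eqref{no-intersection}, Lemma \ref{lm:geosplitsshocks}, and the last assertion of Lemma \ref{lm:isotoisland}. Second, prefer this connected-component argument to the shortcut through Lemma \ref{lm:disjointislands}, because the paper's proof of that lemma already invokes this very tip identification.
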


\begin{proof}
 By Lemma \ref{lem:islandtoshocks}, any island is given by the region strictly between two misordered geodesics. Then, for any point on the right boundary of the island, Lemma \ref{lm:isotoisland} identifies the tip of the island as being exactly the point where the left $W^{\tht\pm}$-geodesics separate and, similarly, for points on the left boundary. It also says the geodesics run inside the island, strictly between its boundaries.  
\end{proof}

Recall from Lemma \ref{LRisolgeo}\eqref{LRisolgeo.b} that no instability point can simultaneously be both a $\tht-$ and a $\tht+$ hugging shock. The next lemma shows that there do exist instability points which are both $\tht-$ and $\tht+$ shocks, while being a $\tht\sigg$ hugging shock for exactly one value of $\sigg\in\{-,+\}$.

\begin{lem}\label{aux:-shocksalongRboundary}
    Let $\omega\in\Omega_0$ and $\tht\in\baddir$. Let $\tau:[s,t]\to\R$ be the right boundary of a stability island. Then for any $r<r'$ in $(s,t)$, there exists a time $s'\in[r,r')$ such that $\tau(s')\in\NU_1^{\tht-}$. The analogous statement holds when $\tau$ is the left boundary of a stability island and we replace $\NU_1^{\tht-}$ by $\NU_1^{\tht+}$.
\end{lem}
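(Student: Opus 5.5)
Fix $r<r'$ in $(s,t)$ and the right boundary $\tau$ of the island. The plan is to locate a $\tht-$ shock on $\tau([r,r'))$ via the duality between $W^{\tht-}$-geodesics and $\tht-$ shock interfaces, in the same spirit as the proof of Lemma \ref{shocksdenseondust}.

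By Lemma \ref{aux:RbndryLgeo}, $\geo\from{\tau(r)}\dir{L}{\tht}{-}$ runs strictly left of $\tau$ on $(r,t)$ and passes through the tip $\tau(t)$. On the other side, $\geo\from{\tau(r)}\dir{R}{\tht}{-}$ lies weakly right of $\tau$ on $(r,t)$. To see this, note that $\tau(r)$ is not right-isolated (it is left-isolated and Lemma \ref{no-isolated} applies), so there are instability points $(y,r)$ with $y>\tau(r)$ arbitrarily close. By \eqref{geo:mono}, $\geo\from{\tau(r)}\dir{R}{\tht}{-}\preceq\geo\from{(y,r)}\dir{L}{\tht}{-}$. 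I would then argue that $\geo\from{\tau(r)}\dir{R}{\tht}{-}$ cannot enter the island: if it did, it would have to exit through the tip together with $\geo\from{\tau(r)}\dir{L}{\tht}{-}$.

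I would first settle the easy case. If $\geo\from{\tau(r)}\dir{R}{\tht}{-}\neq\geo\from{\tau(r)}\dir{L}{\tht}{-}$, then $\tau(r)\in\NU_1^{\tht-}$ by \eqref{geo:no-lollipop}, and we may take $s'=r$.

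Otherwise the two $\tht-$ geodesics from $\tau(r)$ coincide, so $\geo\from{\tau(r)}\dir{R}{\tht}{-}=\geo\from{\tau(r)}\dir{L}{\tht}{-}$ lies strictly inside the island on $(r,t)$. Pick a point $p=(y,r')$ strictly between this geodesic and $\tau(r')$, that is, inside the island. Consider the $\tht-$ shock interface $\Upsilon_p\dir{R}{\tht}{-}$. By \eqref{no-intersection}, it cannot cross $\geo\from{\tau(r)}\dir{L}{\tht}{-}$ on $(r,r')$, since that geodesic has $\tau(r)$ as its initial point, and the interface is continuous. Going down from $p$, the interface therefore stays to the right of that geodesic. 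Because the geodesic starts at $\tau(r)$, the interface must reach $\tau$ at some time in $[r,r')$: otherwise it would be trapped in the shrinking region between the geodesic and $\tau$, which closes at $\tau(r)$. Let $s'$ be the first time (going down) at which the interface hits $\tau$. Then $\tau(s')$ is on a $\tht-$ shock interface. If $s'>r$, then $\tau(s')$ is in the relative interior of that interface, so $\tau(s')\in\NU_1^{\tht-}$ by Lemma \ref{lm:shockintallshocks}. If $s'=r$, then the interface passes through $\tau(r)$ while coming from the region strictly between $\geo\from{\tau(r)}\dir{L}{\tht}{-}$ and $\tau$. Continuing the interface below $r$ makes $\tau(r)$ an interior point, hence again a $\tht-$ shock.

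\textbf{Main obstacle.} The hardest step is justifying that the interface actually hits $\tau$ rather than staying inside the island. The danger is that the interface could remain stable and be trapped against the geodesic. I would handle this by using continuity together with the fact that both paths pinch to $\tau(r)$ at time $r$. The left-boundary statement follows by symmetry.
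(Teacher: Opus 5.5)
Your argument is correct and is essentially the paper's. You pick a point at time $r'$ strictly between $\geo\from{\tau(r)}\dir{L}{\tht}{-}$ and $\tau$, and use \eqref{no-intersection} and continuity to force a $\tht-$ shock interface from it to meet $\tau$ at some time in $[r,r')$. That point is automatically a $\tht-$ shock because it precedes the interface's starting time $r'$, so your separate treatment of $s'=r$ and your case split on $\geo\from{\tau(r)}\dir{R}{\tht}{-}$ are unnecessary.

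One correction to an aside your argument never uses: $\geo\from{\tau(r)}\dir{R}{\tht}{-}$ does \emph{not} stay weakly right of $\tau$. It runs strictly inside the island (Lemma \ref{aux:R-goin}), and in your own second case it coincides with $\geo\from{\tau(r)}\dir{L}{\tht}{-}$. That paragraph should simply be dropped.
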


\begin{proof}
    We only prove the first statement, the second being symmetric.
   By Lemma \ref{aux:RbndryLgeo}, $\geo\from{\tau(r)}\dir{L}{\tht}{-}$ stays strictly left of $\tau$ on the time interval $(r,t)$. Take $z$ such that  $(z,r')$ is strictly between $\geo\from{\tau(r)}\dir{L}{\tht}{-}(r')$ and $\tau(r')$. By \eqref{no-intersection} and the continuity of the paths, $\Upsilon\from{(z,r')}\dir{L}{\tht}{-}$ must intersect $\tau$ at a time $s'\in[r,r')$. Then $\tau(s') \in \NU_1^{\tht-}$, as desired.
\end{proof}

The following lemma complements the previous one by showing that an island boundary cannot contain a nontrivial continuous segment composed of double shocks.

\begin{lem}\label{doubleisolareisol}
    Let $\omega\in\Omega_0$ and $\tht\in\baddir$. Let $\tau:[s,t]\to\R^2$ be the right boundary of a stability island. Then there does not exist a sub-interval $(r,r')\subset(s,t)$ such that $\tau(u) \in \NU_1^{\tht-}$ for all $u \in (r, r')$. The analogous statement holds when $\tau$ is a left boundary and we replace $\NU_1^{\tht-}$ with $\NU_1^{\tht+}$.
\end{lem}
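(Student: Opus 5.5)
We argue by contradiction, treating the right-boundary case; the left-boundary case follows by the left/right and $\pm$ symmetry. Suppose $\tau(u)\in\NU_1^{\tht-}$ for all $u\in(r,r')$. Since each such $\tau(u)$ is also a $\tht+$ hugging shock (Lemma \ref{lm:islandboundary}\eqref{lm:islandboundary.a}), the path $\tau|_{(r,r')}$ consists of $\tht-$ shocks. By Lemma \ref{lem:movingonshocks}, it extends to a $\tht-$ shock interface $\tau'$ ending at $\tau(r')$. The idea is to play this $\tht-$ interface off against the left $W^{\tht-}$-geodesics emanating from points of $\tau$, which by Lemma \ref{aux:RbndryLgeo} run strictly inside the island.

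First step: fix $u\in(r,r')$. By \eqref{shocksplitsgeo} applied to $\tau'$, the right geodesic $\geo\from{\tau(u)}\dir{R}{\tht}{-}$ lies strictly to the right of $\tau$ on $(u,r')$. By \eqref{shocksplitsgeo}, $\geo\from{\tau(u)}\dir{L}{\tht}{-}$ lies strictly to the left, which agrees with Lemma \ref{aux:RbndryLgeo}. Hence $\tau(u)$ is a genuine $\tht-$ shock whose right $W^{\tht-}$-geodesic leaves the closed island immediately to the right. Since $\geo\from{\tau(u)}\dir{R}{\tht}{-}\preceq\geo\from{\tau(u)}\dir{R}{\tht}{+}$, the right $W^{\tht+}$-geodesic also exits to the right.

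Second step: pick $u<u'$ in $(r,r')$ and compare the geodesics from $\tau(u)$ and $\tau(u')$. The geodesic $\geo\from{\tau(u)}\dir{R}{\tht}{-}$ must stay strictly right of $\tau$ on $(u,r')$, so at time $u'$ it lies strictly to the right of $\tau(u')$. Meanwhile $\geo\from{\tau(u')}\dir{L}{\tht}{-}$ goes into the island, and all $\tht-$ geodesics eventually coalesce. Now take the $\tht-$ shock interfaces from points strictly between $\tau(u')$ and $\geo\from{\tau(u)}\dir{R}{\tht}{-}(u')$ at level $u'$. These points are instability points outside the island: there are infinitely many of them by Lemma \ref{lm:isodense}, because $\tau(u')$ is not right-isolated (Lemma \ref{no-isolated}). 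The aim is to show that these interfaces, together with the geodesics, squeeze the region immediately to the right of $\tau$ between times $u$ and $u'$. Concretely, $\geo\from{\tau(u)}\dir{R}{\tht}{-}$ cannot be crossed by $\tht-$ interfaces (\eqref{no-intersection}), yet the interfaces $\Ipath^{\tht+}_{q}$ from non-right-isolated instability points $q$ near $\tau(u')$ on its right must stay in $\IG\tht$ and outside the island. Hence they are trapped between $\tau$ and $\geo\from{\tau(u)}\dir{R}{\tht}{-}$ down to time $u$, where these two curves meet at $\tau(u)$. Taking two distinct such $q_1<q_2$, the disjoint paths $\Ipath^{\tht+}_{q_1}$ and $\Ipath^{\tht+}_{q_2}$ (Proposition \ref{prop:IGgoup}\eqref{IGgoup.g}) would both be forced through $\tau(u)$ at time $u$, which is a contradiction. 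This mirrors the pinching argument in the proof of Lemma \ref{lm:disjointislands}.

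The main obstacle is the trapping claim in the second step: the $\Ipath^{\tht+}$ paths are not geodesics or shock interfaces of sign $-$, so \eqref{no-intersection} does not directly stop them from crossing $\geo\from{\tau(u)}\dir{R}{\tht}{-}$. I would first try to use Proposition \ref{prop:IGgoup}\eqref{IGgoup.h}. Applied at level $u$, it places $\Ipath^{\tht+}_{q}$ weakly right of $\geo\from{\Ipath(u)}\dir{L}{\tht}{-}$, which traps the path from the other side once combined with the geodesic ordering \eqref{geo:mono}. If that fails, I would fall back on Lemma \ref{def:geoinstabint}. Points strictly between $\tau(v)$ and $\geo\from{\tau(u)}\dir{R}{\tht}{-}(v)$ for $v\in(u,r')$ sufficiently close to $\tau$ have left $\tht-$ geodesics merging with $\geo\from{\tau(u)}\dir{R}{\tht}{-}$ and right $\tht+$ geodesics passing through the tip region. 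The goal would then be to show that they are stable, which makes $\tau(v)$ right-isolated and so contradicts Lemma \ref{no-isolated}.
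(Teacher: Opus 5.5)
Your argument is correct, and it takes a different route from the paper's. The step you flag as the main obstacle does close with Proposition \ref{prop:IGgoup}\eqref{IGgoup.h}, and you do not need the fallback. Only the value at time $u$ matters.

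\begin{itemize}
\item Take $q$ not right-isolated with $\tau(u')<q<\geo\from{\tau(u)}\dir{R}{\tht}{-}(u')$. Then $\Ipath^{\tht+}_q(u')=q$ by part \eqref{IGgoup.e}.
\item Suppose $\Ipath^{\tht+}_q(u)>\tau(u)$. Then \eqref{geo:mono}, together with part \eqref{IGgoup.h} applied at level $u$, gives $q\ge\geo\from{\Ipath^{\tht+}_q(u)}\dir{L}{\tht}{-}(u')\ge\geo\from{\tau(u)}\dir{R}{\tht}{-}(u')>q$, which is absurd. Hence $\Ipath^{\tht+}_q(u)\le\tau(u)$.
\item Island avoidance gives $\Ipath^{\tht+}_q\succeq\tau$ on $(s,u']$. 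This is an intermediate-value argument using that the left boundary lies strictly left of $\tau$ on $(s,t)$.
\item So $\Ipath^{\tht+}_q(u)=\tau(u)$, and two such points $q$ contradict part \eqref{IGgoup.g}.
\end{itemize}

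Your remarks about $\tht-$ interfaces started at level $u'$ and about $\geo\from{\tau(u')}\dir{L}{\tht}{-}$ play no role.

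The paper argues locally, at a single point:
\begin{itemize}
\item A $\tht-$ shock interface started inside the island, between $\geo\from{\tau(u)}\dir{L}{\tht}{-}$ and $\tau$, coalesces with the $\tht-$ interface along $\tau$ at some point $\tau(u'')$.
\item At $\tau(u'')$, Lemma \ref{lm:intcoal} gives a distinct middle $W^{\tht-}$-geodesic inside the island.
\item Lemma \ref{aux:RbndryLgeo} puts $\geo\from{\tau(u'')}\dir{L}{\tht}{-}=\geo\from{\tau(u'')}\dir{L}{\tht}{+}$ inside the island as well.
\item \eqref{shocksplitsgeo}, applied to the $\tht-$ and the $\tht+$ interface along $\tau$, sends both right geodesics to the right of $\tau$.
\item \eqref{no4stars} then forces $\geo\from{\tau(u'')}\dir{R}{\tht}{-}$ and $\geo\from{\tau(u'')}\dir{R}{\tht}{+}$ to start together. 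This makes $\tau(u'')$ a simultaneous $\tht\pm$ hugging shock, which Lemma \ref{LRisolgeo}\eqref{LRisolgeo.b} excludes.
\end{itemize}
That proof is short and uses only geodesic counting.

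Your proof is heavier: it draws on the global interfaces $\Ipath^{\tht+}$ and on Lemmas \ref{no-isolated} and \ref{lm:isodense}. In exchange it proves more. The shock hypothesis is used only to produce one time $u'$ with $\geo\from{\tau(u)}\dir{R}{\tht}{-}(u')>\tau(u')$. So you actually show that $\geo\from{\tau(u)}\dir{R}{\tht}{-}(u')\le\tau(u')$ for all $u<u'$ in $(s,t)$, with no shock assumption. That stronger fact is exactly what the first half of the paper's proof of Lemma \ref{aux:R-goin} establishes, by the same trapping idea using part \eqref{IGgoup.i} and $\Upsilon\dir{R}{\tht}{-}$ interfaces instead of part \eqref{IGgoup.h}. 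That first half does not rely on the present lemma, so the present lemma could equally be obtained as its corollary.
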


\begin{figure}[hpt]
    \includegraphics[width=3.7cm]{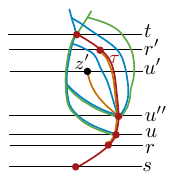}
    \caption{\small The proof of Lemma \ref{doubleisolareisol}. $\tau(u'')$ is forced to be a double hugging shock, which does not exist.}
    \label{fig:doubleisolareisol}
\end{figure}

\begin{proof}
    See Figure \ref{fig:doubleisolareisol} for an illustration of the proof.
    To argue by contradiction, suppose there exists an interval $(r,r')\subset(s,t)$ such that for any $u\in(r,r')$, $\tau(u)\in \NU_1^{\tht-}$. Then, by Lemma \ref{lem:movingonshocks}, $\tau|_{[r,r']}$ is part of a $\tht-$ shock interface out of $\tau(r')$.
    Take $u<u'$ in $(r,r')$. By Lemma \ref{aux:RbndryLgeo}, $\geo\from{\tau(u)}\dir{L}{\tht}{-}(u')<\tau(u')$. Take $z'$ such that $(z',u')$ is strictly between  these two points. By \eqref{no-intersection} and the continuity of the paths, $\Upsilon\from{(z',u')}\dir{L}{\tht}{-}$ must coalesce with $\tau$ at some time $u''\in[u,u')$. From this point, by Lemma \ref{aux:RbndryLgeo}, the left $W^{\tht-}$ and $W^{\tht+}$-geodesics must initially proceed together (all the way up to time $t$).  Moreover, by Lemma \ref{lm:intcoal}, the middle $W^{\tht-}$-geodesic out of $\tau(u'')$ must proceed strictly between $\Upsilon\from{(z',u')}\dir{L}{\tht}{-}$ and $\tau$, while by \eqref{shocksplitsgeo}, the right $W^{\tht-}$-geodesic must proceed initially strictly to the right of $\tau$. But by Lemma \ref{lm:islandboundary}\eqref{lm:islandboundary.a}, $\tau$ is on a $\tht+$ shock interface and, since $\tau(u'')$ is in its relative interior,  \eqref{shocksplitsgeo} implies that $\tau|_{(u'',t)} \prec \geo\from{\tau(u'')}\dir{R}{\tht}{+}\big|_{(u'',t)}$. But by \eqref{no4stars}, this means that $\geo\from{\tau(u'')}\dir{R}{\tht}{-}$ and $\geo\from{\tau(u'')}\dir{R}{\tht}{+}$ must initially proceed together, forming a double hugging shock at $\tau(u'')$ and contradicting Lemma \ref{LRisolgeo}\eqref{LRisolgeo.b}. This contradiction proves the lemma. 
\end{proof}

The next lemma describes the behavior of the rightmost $W^{\theta-}$-geodesic emanating from a point on the right boundary of a stability island and, symmetrically, the leftmost $W^{\theta+}$-geodesic emanating from a point on the left boundary.

\begin{lem}\label{aux:R-goin}
    Let $\w\in\Omega_0$ and $\tht\in\baddir$. Let $\tau:[s,t]\to\R^2$ be the right boundary of a stability island. Then, for every $s'<t'$ in $(s,t)$, $\geo\from{\tau(s')}\dir{R}{\tht}{-}(t') < \tau(t') $. Moreover, if $\geo\from{\tau(s')}\dir{R}{\tht}{-}$ and $\geo\from{\tau(s')}\dir{L}{\tht}{-}$ are initially distinct, then $\geo\from{\tau(s')}\dir{M}{\tht}{+}(t') = \geo\from{\tau(s')}\dir{R}{\tht}{-}(t')$.
\end{lem}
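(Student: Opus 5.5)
The proof splits into two parts. First I establish the strict inequality $\geo\from{\tau(s')}\dir{R}{\tht}{-}(t')<\tau(t')$ for $s'<t'$ in $(s,t)$. Then I identify the middle $W^{\tht+}$-geodesic in the shock case.

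For the first part, write $p=\tau(s')$. By Lemma \ref{lm:islandboundary}\eqref{lm:islandboundary.a}, $p$ is a $\tht+$ hugging shock, so it lies in $\NU_1^{\tht+}$. By \eqref{shocksplitsgeo}, the interface $\tau$ (part of $\Upsilon\from{(z,t_2)}\dir{R}{\tht}{+}$) strictly separates $\geo\from{p}\dir{L}{\tht}{+}$ from $\geo\from{p}\dir{R}{\tht}{+}$ on $(s',t)$.

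The key input is the geodesic $\geo\from{\tau(u)}\dir{L}{\tht}{-}$ for $u<s'$ close to $s'$. By Lemma \ref{aux:RbndryLgeo}, it runs strictly left of $\tau$ on $(u,t)$ and passes through the tip $\tau(t)$. I will compare $\geo\from{p}\dir{R}{\tht}{-}$ with $\geo\from{\tau(u)}\dir{L}{\tht}{-}$ as follows.

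\emph{Step 1: weak inequality.} Suppose, for contradiction, that $\geo\from{p}\dir{R}{\tht}{-}$ reaches $\tau$ at some time in $(s',t)$. It cannot cross the $\tht+$ interface to its right strictly, since it must stay $\preceq \geo\from{p}\dir{R}{\tht}{+}$, but it may touch $\tau$. Note that $\tau(r)$ is a $\tht+$ hugging shock and not a $\tht-$ interface, so the duality \eqref{no-intersection} does not rule out touching directly. Instead I argue via Busemann values. Points strictly inside the island satisfy $W^{\tht-}=W^{\tht+}$ increments, while the island sits at the left of $\tau$. If $\geo\from{p}\dir{R}{\tht}{-}$ passes through $\tau(r)$ with $r\in(s',t)$, then by \eqref{geo:restart} it continues as a $W^{\tht-}$-geodesic from $\tau(r)$. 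That geodesic must be $\preceq\geo\from{\tau(r)}\dir{R}{\tht}{-}$, and by Lemma \ref{aux:RbndryLgeo} applied at $\tau(r)$ it lies between the left geodesics and $\geo\from{\tau(r)}\dir{R}{\tht}{+}$.

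A cleaner route is to use Lemma \ref{lm:isotoisland}: the island's left boundary is $\Upsilon\from{(z,t_2)}\dir{L}{\tht}{-}$, and the whole region $U$ consists of stable points whose $\tht\pm$ geodesics funnel to the tip. If $\geo\from{p}\dir{R}{\tht}{-}$ touched $\tau(r)$, then for points $(y,r')$ inside $U$ just left of $\tau(r')$ with $r'\in(s',r)$, the geodesic $\geo\from{(y,r')}\dir{R}{\tht}{-}$ would be trapped between $\geo\from{p}\dir{R}{\tht}{-}$ and $\tau$ by ordering. Combined with the fact that these geodesics pass through the tip by Lemma \ref{lem:shockstoisland}, this forces $\geo\from{p}\dir{R}{\tht}{-}$ itself to run along $\tau$, or to the tip. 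In either case, the extremality \eqref{p2prightmost} for $\geo\from{p}\dir{R}{\tht}{-}$ versus $\geo\from{p}\dir{R}{\tht}{+}$ (which separate immediately, since $p$ is not a $\tht-$ hugging shock by Lemma \ref{LRisolgeo}\eqref{LRisolgeo.b}) gives a contradiction: two rightmost geodesics cannot separate and reintersect.

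\emph{Step 2: strictness.} Strict inequality at $t'<t$ then follows from continuity together with the no-bubble property \eqref{no bubble}. If equality occurred at a single time, $\geo\from{p}\dir{R}{\tht}{-}$ would coincide there with a point of $\tau$ lying on a $W^{\tht+}$-geodesic from $p$. That would again produce reintersection of $\geo\from{p}\dir{R}{\tht}{-}$ and $\geo\from{p}\dir{R}{\tht}{+}$, contradicting \eqref{p2prightmost}.

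For the second claim, assume $p\in\NU_1^{\tht-}$. Then $\geo\from{p}\dir{L}{\tht}{-}=\geo\from{p}\dir{L}{\tht}{+}$ initially, while $\geo\from{p}\dir{R}{\tht}{-}$ and $\geo\from{p}\dir{R}{\tht}{+}$ are initially distinct from it and from each other, since $\geo\from{p}\dir{R}{\tht}{-}$ lies strictly left of $\tau$ and $\geo\from{p}\dir{R}{\tht}{+}$ strictly right. By \eqref{no4stars}, $\geo\from{p}\dir{M}{\tht}{+}$ must initially coincide with one of these three. I aim to show that the coincidence is with $\geo\from{p}\dir{R}{\tht}{-}$, so that it is a distinct middle geodesic, using Lemma \ref{lm:geocoal}/Lemma \ref{lm:intcoal}. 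The $\tht-$ shock interface through $p$ coalesces into $\tau$ from the left, in the pattern of the proof of Lemma \ref{aux:-shocksalongRboundary}. Because every $\tht-$ shock inside $U$ is also a $\tht+$ shock (Lemma \ref{lem:stableshocks}), the segment of that interface inside $U$ extends, via Lemma \ref{lem:movingonshocks}, to a $\tht+$ interface coalescing with $\tau$ at $p$. Lemma \ref{lm:intcoal} then gives a distinct $\geo\from{p}\dir{M}{\tht}{+}$ lying strictly between the two $\tht+$ interfaces. The geodesic $\geo\from{p}\dir{R}{\tht}{-}$ lies in the same sector, so by \eqref{no4stars} $\geo\from{p}\dir{M}{\tht}{+}$ agrees with it initially.

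Equality up to time $t'$ then follows exactly as in the proof of Proposition \ref{prop:geods-stability}. A separation point $u$ would let both left geodesics from $u$ reach the tip after separating, contradicting \eqref{p2pleftmost}; alternatively, it would contradict \eqref{no bubble}.

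The main obstacle is Step 1: ruling out that $\geo\from{p}\dir{R}{\tht}{-}$ touches $\tau$ before the tip. I would try the trapping argument with interior geodesics first.
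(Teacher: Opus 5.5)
Your second part is essentially the paper's argument, but it depends on the first part, and the first part has a genuine gap.

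The gap is in the case you wave away, not the one you flag as the main obstacle. You assert that $\geo\from{p}\dir{R}{\tht}{-}$ cannot pass strictly to the right of $\tau$ because it must stay $\preceq\geo\from{p}\dir{R}{\tht}{+}$. That gives nothing. By \eqref{shocksplitsgeo}, $\geo\from{p}\dir{R}{\tht}{+}$ itself runs strictly right of $\tau$ on $(s',t)$. Moreover, \eqref{no-intersection} only pairs $W^{\tht-}$-geodesics with $\tht-$ interfaces, so the $\tht+$ interface $\tau$ is no barrier for $\geo\from{p}\dir{R}{\tht}{-}$.

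The contradictions you then claim do not materialize either. Nothing you derive makes $\geo\from{p}\dir{R}{\tht}{-}$ meet $\geo\from{p}\dir{R}{\tht}{+}$ again, so \eqref{p2prightmost} has nothing to act on. Also, no point $\tau(r)$ with $r\in(s',t)$ lies in the relative interior of a $W^{\tht+}$-geodesic: these points are $\tht+$ shocks, so \eqref{noshcoksongeo} applies. This invalidates your Step 2.

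Touching is in fact the easy case, for a different reason:
\begin{itemize}
\item If $\geo\from{p}\dir{R}{\tht}{-}$ passes through $\tau(u)$ with $u\in(s',t)$, then by \eqref{geo:restart} it continues as $\geo\from{\tau(u)}\dir{L}{\tht}{-}$. Lemma \ref{aux:RbndryLgeo} keeps that geodesic strictly left of $\tau$ on $(u,t)$.
\item Once $\geo\from{p}\dir{R}{\tht}{-}(r)<\tau(r)$, \eqref{geo:restart} and the ordering \eqref{geo:mono} give $\geo\from{p}\dir{R}{\tht}{-}\preceq\geo\from{\tau(r)}\dir{L}{\tht}{-}$ on $[r,\infty)$. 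Hence it stays strictly left of $\tau$ on $(r,t)$.
\end{itemize}
So any failure of $\geo\from{p}\dir{R}{\tht}{-}(t')<\tau(t')$ forces $\geo\from{p}\dir{R}{\tht}{-}$ to lie strictly right of $\tau$ on all of $(s',t')$. Your proposal contains no argument against that.

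Excluding this excursion needs the bi-infinite interfaces of Proposition \ref{prop:IGgoup}, which is what the paper uses.
\begin{itemize}
\item Fix $r'\in(s',t')$. Since $\tau(r')$ is left-isolated, it is not right-isolated (Lemma \ref{no-isolated}). So there is $(y,r')\in\IG\tht$ strictly between $\tau(r')$ and $\geo\from{p}\dir{R}{\tht}{-}(r')$.
\item Lemma \ref{lm:isodense} then gives a non-right-isolated $(x,r')\in\IG\tht$ strictly between $\tau(r')$ and $(y,r')$.
\item The $\tht-$ interface $\Upsilon\from{(x,r')}\dir{R}{\tht}{-}$ cannot touch the $W^{\tht-}$-geodesic $\geo\from{p}\dir{R}{\tht}{-}$, which starts at $\tau(s')$. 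By continuity, it therefore meets $\tau$ at some time in $[s',r')$.
\item By Proposition \ref{prop:IGgoup}\eqref{IGgoup.e} and \eqref{IGgoup.i}, $\Ipath^{\tht+}_{(x,r')}$ passes through $(x,r')$ and satisfies $\Ipath^{\tht+}_{(x,r')}\preceq\Upsilon\from{(x,r')}\dir{R}{\tht}{-}$. So $\Ipath^{\tht+}_{(x,r')}$ meets $\tau$ as well.
\item But $\tau$ is a piece of $\Ipath^{\tht+}_{\tau(s')}$ by Lemma \ref{lm:bdry}\eqref{lm:bdry.b}, and $\Ipath^{\tht+}_{\tau(s')}(r')=\tau(r')\ne(x,r')$. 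This contradicts Proposition \ref{prop:IGgoup}\eqref{IGgoup.g}.
\end{itemize}

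Once the first claim is secured this way, your second part goes through. There, the $\tht-$ interface between $\geo\from{p}\dir{L}{\tht}{-}$ and $\geo\from{p}\dir{R}{\tht}{-}$ becomes a $\tht+$ interface inside the island via Lemmas \ref{lem:stableshocks} and \ref{lem:movingonshocks}, followed by Lemma \ref{lm:intcoal}, \eqref{no4stars}, and extremality up to the tip. It genuinely uses that $\geo\from{p}\dir{R}{\tht}{-}$ lies strictly left of $\tau$: both for the sector argument with \eqref{no4stars} and for trapping the two geodesics so they pass through the tip.
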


\begin{figure}[hpt]
    \includegraphics[width=3.25cm]{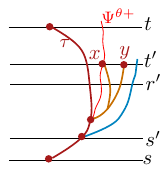}\qquad\qquad
    \includegraphics[width=3.25cm]{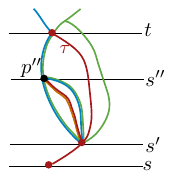}
    \caption{\small The proof of Lemma \ref{aux:R-goin}.}
    \label{fig:R-goin}
\end{figure}

\begin{proof}
First, observe that if $\geo\from{\tau(s')}\dir{R}{\tht}{-}$ and $\geo\from{\tau(s')}\dir{L}{\tht}{-}$ initially proceed together, then by coalescence \eqref{geo:coal1}, they must remain together, and thus by Lemma \ref{aux:RbndryLgeo}, it follows that $\geo\from{\tau(s')}\dir{R}{\tht}{-}(t') < \tau(t') $. Going forward, assume that $\geo\from{\tau(s')}\dir{R}{\tht}{-}$ and $\geo\from{\tau(s')}\dir{L}{\tht}{-}$ are initially distinct. 

Suppose there exists a $t'\in(s',t)$, such that $\geo\from{\tau(s')}\dir{R}{\tht}{-}(t') > \tau(t') $. See the left panel of Figure \ref{fig:R-goin}. Then, by \eqref{geo:restart} and the continuity of the paths, there exists a time $u'\in[s',t')$  such that $\geo\from{\tau(s')}\dir{R}{\tht}{-}(u') = \tau(u') $ and, for all $r\in(u',t']$, $\geo\from{\tau(u')}\dir{R}{\tht}{-}(r) > \tau(r) $. Thus, to simplify notation, we assume, without loss of generality, that $s' = u'$, i.e.\ out of $\tau(s')$, the right $W^{\tht-}$-geodesic proceeds strictly to the right of $\tau$ until at least time $t'$. Take a time level $r'\in(s',t')$. By Lemmas \ref{no-isolated}, \ref{LRisolgeo}\eqref{LRisolgeo.a}, \ref{lem:islandtoshocks}, and \ref{lm:islandboundary}\eqref{lm:islandboundary.a},  
$\tau(r')$ is not right-isolated. Therefore, there exists a $y$ such that 
$(y,r')\in\IG\tht$ and is strictly between $\tau(r')$ and $\geo\from{\tau(s')}\dir{R}{\tht}{-}(r')$. 
Then, by Lemma \ref{lm:isodense}, there exists an $x$ such that $(x,r')\in\IG\tht$, it is strictly between $\tau(r')$ and $(y,r')$, and is not right-isolated.
By \eqref{no-intersection} and the continuity of the paths, $\Upsilon\from{(y,r')}\dir{R}{\tht}{-}$ cannot intersect $\geo\from{\tau(s')}\dir{R}{\tht}{-}$ and must intersect $\tau$ at or after time $s'$.
By Proposition \ref{prop:IGgoup}\eqref{IGgoup.i}, $\Ipath^{\tht+}_{(x,r')}\preceq \Upsilon\from{(x,r')}\dir{R}{\tht}{-}$, and by the coalescence \eqref{Itree}, $\Upsilon\from{(x,r')}\dir{R}{\tht}{-}\preceq \Upsilon\from{(y,r')}\dir{R}{\tht}{-}$. 
Thus, $\Ipath^{\tht+}_{(x,r')}$ must intersect $\tau$. However, Lemma \ref{lm:bdry}\eqref{lm:bdry.b} says that $\tau$ is part of the interface $\Ipath^{\tht+}_{\tau(s')}$ and Proposition \ref{prop:IGgoup}\eqref{IGgoup.e} says that $\Ipath^{\tht+}_{(x,r')}(r')=(x,r')$. Thus, since  $\Ipath^{\tht+}_{\tau(s')}(r')=\tau(r')<(x,r')$, Proposition \ref{prop:IGgoup}\eqref{IGgoup.g} prohibits $\Ipath^{\tht+}_{(x,r')}$ from intersecting $\tau$. This contradiction shows that there cannot exist a $t'\in(s',t)$, such that $\geo\from{\tau(s')}\dir{R}{\tht}{-}(t') > \tau(t') $. In other words, on the interval $(s',t)$, $\geo\from{\tau(s')}\dir{R}{\tht}{-}$ has to proceed weakly left of $\tau$.

Now suppose that there exists a $t'\in(s',t)$ such that $\geo\from{\tau(s')}\dir{R}{\tht}{-}(t') = \tau(t')$. 
By Lemma \ref{aux:RbndryLgeo}, for any time $r\in(s,t)$, $\geo\from{\tau(r)}\dir{L}{\tht}{-}$ and $\geo\from{\tau(r)}\dir{L}{\tht}{+}$ are identical until time $t$ and proceed strictly to the left of $\tau$. 
Thus, if $\geo\from{\tau(s')}\dir{R}{\tht}{-}\big|_{(s',t')}=\tau|_{(s',t')}$, then all the points on this path would be $\tht-$ shocks, which contradicts Lemma \ref{doubleisolareisol}. Consequently, there exists an $r\in(s',t')$ such that $\geo\from{\tau(s')}\dir{R}{\tht}{-}(r)\ne\tau(r)$. Since we have shown that the geodesic remains weakly left of $\tau$, we in fact have $\geo\from{\tau(s')}\dir{R}{\tht}{-}(r)<\tau(r)$.  But then, by the coalescence \eqref{geo:coal1} and Lemma \ref{aux:RbndryLgeo}, $\geo\from{\tau(r)}\dir{L}{\tht}{-}$ remains weakly right of  $\geo\from{\tau(s')}\dir{R}{\tht}{-}$ and strictly left of $\tau$, on the time interval $(r,t)$. This is in contradiction with having $\geo\from{\tau(s')}\dir{R}{\tht}{-}(t')=\tau(t')$. 
Thus it must be that for any $t'\in(s',t)$,  $\geo\from{\tau(s')}\dir{R}{\tht}{-}(t') < \tau(t')$.

 It remains to show that for any $t'\in(s',t)$, $\geo\from{\tau(s')}\dir{R}{\tht}{-}(t') = \geo\from{\tau(s')}\dir{M}{\tht}{+}(t')$. See the left panel in Figure \ref{fig:R-goin} for an illustration.
 We first prove that the two geodesics proceed initially together. 

By Lemma \ref{aux:RbndryLgeo}, $\geo\from{\tau(s')}\dir{L}{\tht}{-}$ remains strictly left of $\tau$ until it passes through $\tau(t)$.  By what we proved above $\geo\from{\tau(s')}\dir{R}{\tht}{-}$ remains strictly left of $\tau$ on the time interval $(s',t)$. This, the monotonicity \eqref{geo:mono}, and the fact that $\geo\from{\tau(s')}\dir{R}{\tht}{-}$ and $\geo\from{\tau(s')}\dir{L}{\tht}{-}$ are initially distinct give us that the two geodesics must coalesce at some time $s''\in(s',t]$. 
Denote the coalescence point by $p''$. By Lemma \ref{lm:geocoal}, there is a $\tht-$ shock interface $\gamma$ out of $p''$ that runs strictly between the two geodesics and hence goes through $\tau(s')$ and, in fact, intersects $\tau$ at that point for the first time. Since both geodesics are inside the island, so is $\gamma|_{(s',s'')}$. Then by Lemma \ref{lem:stableshocks}, $\gamma_{(s',s'')}\subset\NU_1^{\tht+}$ and, by Lemma \ref{lem:movingonshocks}, $\gamma$ must be part of a $\tht+$ shock interface out of $p''$.
 By Lemma \ref{lm:islandboundary}\eqref{lm:islandboundary.a}, $\tau\subset\Upsilon\from{\tau(t)}\dir{R}{\tht}{+}$. Thus, by \eqref{Itree}, $\tau(s')$ is a coalescence point of two $\tht+$ shock interfaces. By Lemma \ref{lm:intcoal}, $\geo\from{\tau(s')}\dir{M}{\tht}{+}$ goes strictly between $\gamma$ and $\tau$, on the time interval $(s',s'')$, and is initially distinct from $\geo\from{\tau(s')}\dir{L}{\tht}{+}$ and $\geo\from{\tau(s')}\dir{R}{\tht}{+}$. By  \eqref{shocksplitsgeo}, $\geo\from{\tau(s')}\dir{R}{\tht}{+}$ goes strictly right of $\tau$, which we know is itself strictly right of $\geo\from{\tau(s')}\dir{R}{\tht}{-}$, which in turn is initially strictly right of $\geo\from{\tau(s')}\dir{L}{\tht}{-}$, which goes strictly left of $\gamma$. Thus, by \eqref{no4stars}, $\geo\from{\tau(s')}\dir{R}{\tht}{-}$ and $\geo\from{\tau(s')}\dir{M}{\tht}{+}$ must initially proceed together.

Take $r\in(s',t)$ such that $\geo\from{\tau(s')}\dir{R}{\tht}{-}(r)=\geo\from{\tau(s')}\dir{M}{\tht}{+}(r)$. Denote this point by $q$. Such a time exists because we have just shown that the two geodesics proceed together initially.
By \eqref{geo:restart}, $\geo\from{\tau(s')}\dir{M}{\tht}{+}\big|_{[r,\infty)}=\geo\from{q}\dir{R}{\tht}{+}$. Since both this geodesic and $\geo\from{\tau(s')}\dir{R}{\tht}{-}$ go through $q$ and $\tau(t)$ (because they are both trapped between $\geo\from{\tau(s')}\dir{L}{\tht}{+}\big|_{[s',t]}=\geo\from{\tau(s')}\dir{L}{\tht}{-}\big|_{[s',t]}$ and $\tau|_{[s',t]}$), the extremality \eqref{p2prightmost} implies they must match between times $r$ and $t$. Thus, $\geo\from{\tau(s')}\dir{M}{\tht}{+}$ and $\geo\from{\tau(s')}\dir{R}{\tht}{-}$ cannot separate before time $t$ and we are done.
\end{proof}

The last lemma we need shows the existence of single shock points on the boundary of a stability island. At these points, the shock point has a middle geodesic that is distinct from the other two. See Figure \ref{fig:shockconfluenceexist}. 

\begin{lem}\label{aux:shockconfluenceexist}
    Let $\w\in\Omega_0$ and  $\tht\in\baddir$. Let $\tau:[s,t]\to\R^2$ be the right boundary of a stability island. Then, for any $r<r'$ in $(s,t)$, there exists a time $s'\in[r,r')$ such that $\tau(s')\notin\NU_1^{\tht-}$ but $\geo\from{\tau(s')}\dir{M}{\tht}{+}$ and $\geo\from{\tau(s')}\dir{R}{\tht}{+}$ are initially distinct and $\tau|_{(s',t)}\prec\geo\from{\tau(s')}\dir{M}{\tht}{+} \preceq \geo\from{\tau(s')}\dir{R}{\tht}{+}$. The analogous statement holds upon exchanging left and right and swapping $-$ and $+$.
\end{lem}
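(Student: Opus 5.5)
The plan is to mirror the proof of Lemma \ref{aux:-shocksalongRboundary}, working on the outside (right side) of the island instead of the inside. The right boundary $\tau$ lies on the $\tht+$ shock interface $\Upsilon\from{\tau(t)}\dir{R}{\tht}{+}$ by Lemma \ref{lm:islandboundary}\eqref{lm:islandboundary.a}. So we will produce a coalescence point of two $\tht+$ interfaces on $\tau$ and then read off the middle geodesic using Lemma \ref{lm:intcoal}.

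\emph{Step 1 (finding the coalescence point).} Fix $r<r'$ in $(s,t)$. Since $\tau(r)$ is in the relative interior of a $\tht+$ shock interface, \eqref{shocksplitsgeo} gives $\tau(u)<\geo\from{\tau(r)}\dir{R}{\tht}{+}(u)$ for $u\in(r,t)$. Choose $z$ with $(z,r')$ strictly between $\tau(r')$ and $\geo\from{\tau(r)}\dir{R}{\tht}{+}(r')$. By the duality \eqref{no-intersection} and continuity, the interface $\Upsilon\from{(z,r')}\dir{R}{\tht}{+}$ cannot cross $\geo\from{\tau(r)}\dir{R}{\tht}{+}$ on $(r,r')$, so it must hit $\tau$ at some time in $[r,r')$. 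Let $s'$ be the first such time (going down from $r'$); since $(z,r')\notin\tau$, we have $s'<r'$. By \eqref{Itree}, the two $\tht+$ interfaces coalesce at their first meeting point $\tau(s')$.

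\emph{Step 2 (the middle geodesic).} Lemma \ref{lm:intcoal} then says that $\geo\from{\tau(s')}\dir{M}{\tht}{+}$ is initially distinct from both $\geo\from{\tau(s')}\dir{L}{\tht}{+}$ and $\geo\from{\tau(s')}\dir{R}{\tht}{+}$, and runs strictly between $\tau$ and $\Upsilon\from{(z,r')}\dir{R}{\tht}{+}$ just after $s'$. In particular it starts strictly to the right of $\tau$. Since $\tau|_{(s',t)}$ is in the relative interior of a $\tht+$ shock interface, \eqref{no-intersection} forbids this $W^{\tht+}$-geodesic from touching $\tau$ at any time in $(s',t)$. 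Hence $\tau|_{(s',t)}\prec\geo\from{\tau(s')}\dir{M}{\tht}{+}$, and the ordering \eqref{geo:mono} gives $\geo\from{\tau(s')}\dir{M}{\tht}{+}\preceq\geo\from{\tau(s')}\dir{R}{\tht}{+}$.

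\emph{Step 3 ($\tau(s')\notin\NU_1^{\tht-}$).} Argue by contradiction and suppose $\tau(s')\in\NU_1^{\tht-}$. Then the following four geodesics all leave $\tau(s')$ and are initially pairwise distinct:
\begin{enumerate}[label={\rm(\roman*)}] \itemsep=1pt
\item $\geo\from{\tau(s')}\dir{L}{\tht}{-}=\geo\from{\tau(s')}\dir{L}{\tht}{+}$, which agree initially because $\tau(s')$ is a $\tht+$ hugging shock (Lemma \ref{lm:islandboundary}\eqref{lm:islandboundary.a} and Lemma \ref{lm:hugging}\eqref{lm:hugging.b});
\item $\geo\from{\tau(s')}\dir{R}{\tht}{-}$, which is initially distinct from the left one by assumption and stays strictly left of $\tau$ on $(s',t)$ by Lemma \ref{aux:R-goin};
\item $\geo\from{\tau(s')}\dir{M}{\tht}{+}$, which stays strictly right of $\tau$ by Step 2;
\item $\geo\from{\tau(s')}\dir{R}{\tht}{+}$, which is initially distinct from the middle one by Step 2.
\end{enumerate}
Geodesic (ii) is left of $\tau$ while (iii) is right of it, so they differ; the other separations are as noted. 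Four initially separating geodesics contradict \eqref{no4stars}. Finally, $s'\in[r,r')$ as required, and the left/right symmetric statement follows by reflection.

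The main point requiring care is Step 1: we need the first hitting point to be a genuine coalescence of two distinct $\tht+$ interfaces, so that Lemma \ref{lm:intcoal} applies. This relies on the tree property \eqref{Itree}, on $s'<r'$, and on $\tau(s')$ lying in the relative interior of the interface $\Upsilon\from{\tau(t)}\dir{R}{\tht}{+}$.
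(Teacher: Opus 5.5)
Your proposal is correct and follows essentially the same route as the paper: the right $\tht+$ interface from a point between $\tau(r')$ and $\geo\from{\tau(r)}\dir{R}{\tht}{+}(r')$ coalesces with $\tau$ at $\tau(s')$. Lemma \ref{lm:intcoal} and \eqref{no-intersection} then place a distinct middle $W^{\tht+}$-geodesic strictly right of $\tau$, and Lemma \ref{aux:R-goin} together with \eqref{no4stars} rules out $\tau(s')\in\NU_1^{\tht-}$; the paper phrases this last step directly rather than by contradiction.

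Two minor points. The inclusion $\tau\subset\Upsilon\from{\tau(t)}\dir{R}{\tht}{+}$ is most directly Lemma \ref{lem:islandtoshocks}. The hugging-shock remark in your item (i) is not needed, since $\geo\from{\tau(s')}\dir{L}{\tht}{-}\preceq\geo\from{\tau(s')}\dir{L}{\tht}{+}$ already lies strictly left of $\tau$ by \eqref{shocksplitsgeo}.
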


\begin{figure}[hpt]
    \includegraphics[width=3.25cm]{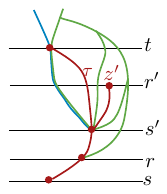}
    \caption{\small The proof of Lemma \ref{aux:shockconfluenceexist}.}
    \label{fig:shockconfluenceexist}
\end{figure}

\begin{proof}
  By Lemma \ref{lem:islandtoshocks}, $\tau \subset \Upsilon\from{\tau(t)}\dir{R}{\tht}{+}$. Thus, by \eqref{shocksplitsgeo}, $\geo\from{\tau(r)}\dir{R}{\tht}{+}$ goes strictly right of $\tau$ on $(r,t)$. Take $z'$ such that $(z',r')$ is strictly between $\tau(r')$ and $\geo\from{\tau(r)}\dir{R}{\tht}{+}(r')$. By \eqref{no-intersection}, the continuity of the paths, and \eqref{Itree}, $\Upsilon\from{(z',r')}\dir{R}{\tht}{+}$ must coalesce with $\tau$ at a time $s'\in[r,r')$. By Lemma \ref{aux:RbndryLgeo}, $\geo\from{\tau(s')}\dir{L}{\tht}{+}$ must proceed inside the island. By \eqref{shocksplitsgeo}, $\geo\from{\tau(s')}\dir{R}{\tht}{+}$ must proceed to the right of $\Upsilon\from{(z',r')}\dir{R}{\tht}{+}$ and, by Lemma \ref{lm:intcoal}, $\geo\from{\tau(s')}\dir{M}{\tht}{+}$ must proceed, on the time interval $(s',r')$, strictly between $\tau$ and $\Upsilon\from{(z',r')}\dir{R}{\tht}{+}$, distinct from $\geo\from{\tau(s')}\dir{R}{\tht}{+}$, as claimed. By \eqref{no-intersection}, $\geo\from{\tau(s')}\dir{M}{\tht}{+}$ remains strictly to the right of $\tau$ on the whole interval $(s',t)$. It remains to show that $\tau(s')$ is not a $\tht-$ shock.
  
  By Lemma \ref{aux:R-goin},  $\geo\from{\tau(s')}\dir{R}{\tht}{-}$ must proceed into the island, moving strictly left of $\tau$. Since $\geo\from{\tau(s')}\dir{M}{\tht}{+}$ and $\geo\from{\tau(s')}\dir{R}{\tht}{+}$ are initially distinct and both proceed strictly right of $\tau$, and since, by Lemma \ref{aux:RbndryLgeo}, $\geo\from{\tau(s')}\dir{L}{\tht}{-}$ proceeds strictly left of $\tau$, \eqref{no4stars} implies that $\geo\from{\tau(s')}\dir{L}{\tht}{-}$ and $\geo\from{\tau(s')}\dir{R}{\tht}{-}$ must initially proceed together and hence, by Definition \ref{def:shock}, $\tau(s')\notin\NU_1^{\tht-}$.
\end{proof}

\begin{proof}[Proof of Proposition \ref{prop:islandconfigs}]
We treat the right boundary; the left-boundary statements are analogous.
Parts \eqref{prop:islandconfigs.a} and \eqref{prop:islandconfigs.b} follow from Lemmas \ref{lem:islandtoshocks}, \ref{lm:snowbird}, and \ref{lm:islandboundary}\eqref{lm:islandboundary.c}. We prove \eqref{prop:islandconfigs.c}.

Before proceeding, we record several facts that we use repeatedly. Take any $r \in (s,t)$. By Lemmas \ref{lem:islandtoshocks} and \ref{lm:islandboundary}\eqref{lm:islandboundary.a}, $\tau(r)$ is a $\tht+$ hugging shock, and hence, by Lemma \ref{LRisolgeo}\eqref{LRisolgeo.b}, it cannot be a $\tht-$ hugging shock. Moreover, Lemma \ref{aux:RbndryLgeo} and \eqref{geo:restart} imply
\begin{align}\label{useful1}
\geo\from{\tau(r)}\dir{L}{\tht}{-}\big|_{(r,t)}
=
\geo\from{\tau(r)}\dir{L}{\tht}{+}\big|_{(r,t)}
\prec
\tau\big|_{(r,t)},\quad
\geo\from{\tau(r)}\dir{L}{\tht}{-}(t)
=
\geo\from{\tau(r)}\dir{L}{\tht}{+}(t)
=
\tau(t),\quad\text{and}\quad
\geo\from{\tau(r)}\dir{L}{\tht}{+}\big|_{[t,\infty)}
=
\geo\from{\tau(t)}\dir{R}{\tht}{+}.
\end{align}

We begin by showing that exactly three mutually exclusive alternatives---namely \eqref{prop:islandconfigs.ci}-\eqref{prop:islandconfigs.ciii}---can occur, and that each arises on a dense subset of $(s,t)$.

\medskip{\bf Case 1 (type (}\ref{prop:islandconfigs.ci}{\bf)).}
Since $\tau(r)$ is not a $\tht-$ hugging shock, $\geo\from{\tau(r)}\dir{R}{\tht}{-}$ cannot initially run with $\geo\from{\tau(r)}\dir{R}{\tht}{+}$. Then, if $r\in(s,t)$ is rational, Lemma \ref{lm:Duncan}\eqref{Duncan.b} implies $\geo\from{\tau(r)}\dir{M}{\tht}{+}=\geo\from{\tau(r)}\dir{R}{\tht}{+}$ and Lemma \ref{aux:R-goin} forces $\geo\from{\tau(r)}\dir{R}{\tht}{-}$ to run initially with $\geo\from{\tau(r)}\dir{L}{\tht}{-}$; hence $\tau(r)\notin\NU_1^{\tht-}$ and $\tau(r)$ is of type \eqref{prop:islandconfigs.ci}. Rational times are dense, so type \eqref{prop:islandconfigs.ci} occurs densely in $(s,t)$.

Now, consider $r \in (s,t)$ such that $\tau(r)$ is not of type \eqref{prop:islandconfigs.ci}. This means that either $\tau(r) \in \NU_1^{\tht-}$, or $\tau(r) \notin \NU_1^{\tht-}$ and $\geo\from{\tau(r)}\dir{M}{\tht}{+}$ is initially distinct from $\geo\from{\tau(r)}\dir{R}{\tht}{+}$. 

\medskip{\bf Case 2 (type (}\ref{prop:islandconfigs.cii}{\bf)).}
If $\tau(r)\in\NU_1^{\tht-}$ then Lemma \ref{aux:R-goin} implies that, up to time $t$, $\geo\from{\tau(r)}\dir{M}{\tht}{+}$ matches $\geo\from{\tau(r)}\dir{R}{\tht}{-}$ and both $\geo\from{\tau(r)}\dir{L}{\tht}{-}$ and $\geo\from{\tau(r)}\dir{R}{\tht}{-}$ lie strictly left of $\tau$. By \eqref{useful1}, the left one passes through $\tau(t)$, so the two $W^{\tht-}$ geodesics must coalesce by time $t$.
To show that $\tau(r)$ is of type \eqref{prop:islandconfigs.cii}, we must show this coalescence occurs strictly before $t$. 

Suppose, for contradiction, they coalesce at $\tau(t)$. Then by Lemma \ref{lm:geocoal} there is a $\tht-$ shock interface $\overline\tau$ from $\tau(t)$ running strictly between those two geodesics; because $\geo\from{\tau(r)}\dir{R}{\tht}{-}$ is strictly left of $\tau$ on $(r,t)$, the interface $\overline\tau$ also lies strictly left of $\tau$ on $(r,t)$. But the left boundary of the island is $\Upsilon_{\tau(t)}\dir{L}{\tht}{-}|_{[s,t]}$, which by Lemma \ref{lm:geosplitsshocks} is strictly left of $\geo\from{\tau(r)}\dir{L}{\tht}{-}|_{(r,t)}$. Hence $\overline\tau|_{(r,t)}$ would lie inside the island, contradicting Lemma \ref{lem:IGgodown} (since $\tau(t)\in\IG\tht$). Thus the coalescence is strictly before $t$, so $\tau(r)$ is of type \eqref{prop:islandconfigs.cii}. Lemma \ref{aux:-shocksalongRboundary} gives the density of such $r\in(s,t)$.

\medskip{\bf Case 3 (type (}\ref{prop:islandconfigs.ciii}{\bf)).}
The remaining possibility is
\begin{equation}\label{aux2462}
\tau(r)\notin\NU_1^{\tht-}\quad\text{and}\quad
\geo\from{\tau(r)}\dir{M}{\tht}{+}\ \text{is initially distinct from}\ \geo\from{\tau(r)}\dir{R}{\tht}{+}.
\end{equation}
The $M$-geodesic convention in Remark \ref{rk:Mgeo} implies the middle geodesic is also initially distinct from $\geo\from{\tau(r)}\dir{L}{\tht}{+}$. Since $\tau(r)\in\NU_1^{\tht+}$, Lemma \ref{aux:1651} ensures 
\begin{align}\label{useful2}
\geo\from{\tau(r)}\dir{M}{\tht}{+}\text{ coalesces with one of }\geo\from{\tau(r)}\dir{L}{\tht}{+}\text{ or }\geo\from{\tau(r)}\dir{R}{\tht}{+}\text{ strictly before it coalesces with the other.}
\end{align}

Before completing the proof of the claim that $\tau(r)$ is of type \eqref{prop:islandconfigs.ciii}, we first show that for any $r\in(s,t)$ satisfying \eqref{aux2462}, we have
    \begin{align}\label{aux2466}
    \tau|_{(r,t)}\prec\geo\from{\tau(r)}\dir{M}{\tht}{+}\big|_{(r,t)}.
    \end{align}
If not, continuity and \eqref{no-intersection} force  $\geo\from{\tau(r)}\dir{M}{\tht}{+}$ into the island, so it would coalesce with $\geo\from{\tau(r)}\dir{L}{\tht}{+}$ at some $s'\in(r,t]$. Lemma \ref{lm:geocoal} would then produce a $\tht+$ shock interface from that coalescence point which passes through $\tau(r)$ and (by Lemmas \ref{lem:stableshocks} and \ref{lem:movingonshocks}) is part of a $\tht-$ shock interface, contradicting $\tau(r)\notin\NU_1^{\tht-}$. This proves \eqref{aux2466}.    

To conclude that $\tau(r)$ is of type \eqref{prop:islandconfigs.ciii}, it remains to show that if $\geo\from{\tau(r)}\dir{M}{\tht}{+}$ first coalesces with $\geo\from{\tau(r)}\dir{L}{\tht}{+}$, that  coalescence must occur strictly after time $t$. 
Indeed, \eqref{useful1} says $\geo\from{\tau(r)}\dir{L}{\tht}{+}$ stays strictly left of $\tau$ on $(r,t)$ while \eqref{aux2466} places the middle geodesic strictly right of $\tau$ on the same interval; hence they cannot meet before $t$. 

Suppose the two geodesics coalesce exactly at $\tau(t)$. Fix a rational $r' \in (r,t)$. 
By \eqref{useful1} and the coalescence \eqref{geo:coal1}, $\geo\from{\tau(r')}\dir{L}{\tht}{-}$ lies strictly to the left of $\tau$, passes through $\tau(t)$, and then continues as $\geo\from{\tau(t)}\dir{L}{\tht}{-}$. 
Since $\tau$ lies on a $\tht+$ shock interface (Lemma \ref{lem:islandtoshocks}), \eqref{shocksplitsgeo} implies that $\geo\from{\tau(r')}\dir{R}{\tht}{+}$ lies strictly to the right of $\tau$ and, by \eqref{no-intersection} and \eqref{geo:coal1}, coalesces with $\geo\from{\tau(r)}\dir{M}{\tht}{+}$, thus going through $\tau(t)$, and thereafter following $\geo\from{\tau(t)}\dir{R}{\tht}{+}$. 
Since $\tau(t) \in \IG\tht$, Lemma \ref{def:geoinstabpt} states that $\geo\from{\tau(t)}\dir{L}{\tht}{-}$ and $\geo\from{\tau(t)}\dir{R}{\tht}{+}$ immediately separate. Now we have a contradiction with Lemma \ref{lm:Duncan}\eqref{Duncan.a}. 
Hence $\tau(t)<\geo\from{\tau(r)}\dir{M}{\tht}{+}(t)$, and therefore $\geo\from{\tau(r)}\dir{L}{\tht}{+}$ and $\geo\from{\tau(r)}\dir{M}{\tht}{+}$ must coalesce strictly after time $t$.

We have established that in the last case, where \eqref{aux2462} holds, $\tau(r)$ is of type \eqref{prop:islandconfigs.ciii}. Lemma \ref{aux:shockconfluenceexist} supplies a dense set of $r$ satisfying \eqref{aux2462}, so type \eqref{prop:islandconfigs.ciii} occurs densely as well.

To complete the proof of the proposition, we must determine, for points of type \eqref{prop:islandconfigs.ciii}, whether the middle geodesic first coalesces with the left or the right geodesic.

\begin{figure}[hpt]
    \includegraphics[width=3.5cm]{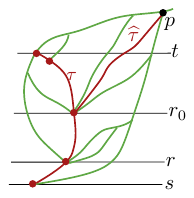}\qquad\qquad
    \includegraphics[width=4.5cm]{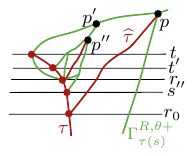}
    \caption{\small Left: Construction of $r_0$ and Cases 3.1 and (3.3.1). Right: Case 3.2.}
    \label{fig:islandconfigs-a}
\end{figure}

\medskip{\bf Refinement: existence of} $r_0$ {\bf and the coalescence pattern.}
We now construct the distinguished time $r_0\in(s,t)$ and describe the coalescence pattern for type \eqref{prop:islandconfigs.ciii} points. See the left panel of Figure \ref{fig:islandconfigs-a}.

Let $p$ be the coalescence point of $\geo\from{\tau(t)}\dir{R}{\tht}{+}$ and $\geo\from{\tau(s)}\dir{R}{\tht}{+}$. By Lemmas \ref{lem:islandtoshocks}, \ref{lm:snowbird} and \ref{lem:SBisisland}, $\tau(t)$ is the first separation point of $\geo\from{\tau(s)}\dir{R}{\tht}{-}$ and $\geo\from{\tau(s)}\dir{L}{\tht}{+}$, and Lemma \ref{topsnowbird} implies $p$ is at a time level strictly larger than $t$. By Lemma \ref{lm:geocoal} there is a $\tht+$ shock interface $\wh\tau$ from $p$ running strictly between $\geo\from{\tau(t)}\dir{R}{\tht}{+}$ and $\geo\from{\tau(s)}\dir{R}{\tht}{+}$. Continuity and \eqref{no-intersection} force $\wh\tau$ to meet $\tau$ at some time $r_0\in[s,t]$. We show that $r_0\in(s,t)$.\smallskip

Since $\tau$ is a $\tht+$ shock interface, \eqref{Itree} says that the intersection point is in fact a coalescence point of the two $\tht+$ shock interfaces. 
By Lemma \ref{lm:intcoal}, $\geo\from{\tau(r_0)}\dir{M}{\tht}{+}$ runs strictly between $\wh\tau$ and $\tau$, and $\geo\from{\tau(r_0)}\dir{R}{\tht}{+}$ strictly to the right of $\wh\tau$.
By Lemma \ref{lm:islandboundary}\eqref{lm:islandboundary.b}, $\tau(s)$ is a snowbird shock (Definition \ref{def:snowbird}). This and \eqref{no4stars} rule out distinct middle geodesics at $\tau(s)$. Therefore, we must have $r_0>s$. 

On the other hand, Theorem \ref{th:convanydir} says that along a subsequence $r'\nearrow t$, $\geo\from{\tau(r')}\dir{R}{\tht}{+}$ must converge, in overlap topology, to $\geo\from{\tau(t)}\dir{S}{\tht}{+}$ for some $S\in\{L,M,R\}$. But since, by \eqref{no-intersection}, $\geo\from{\tau(r')}\dir{R}{\tht}{+}$ cannot cross $\tau|_{(r',t)}$ and, by \eqref{geo:coal1}, it coalesces with $\geo\from{\tau(t)}\dir{R}{\tht}{+}$ as soon as the two meet, we have $\geo\from{\tau(r')}\dir{R}{\tht}{+}\succeq\geo\from{\tau(t)}\dir{R}{\tht}{+}$. This and the monotonicity \eqref{geo:mono} implies that the limit, as $r'\nearrow t$, must be $\geo\from{\tau(t)}\dir{R}{\tht}{+}$. Thus, for $r'\in(s,t)$ close enough to $t$, $\geo\from{\tau(r')}\dir{R}{\tht}{+}$ coalesces with $\geo\from{\tau(t)}\dir{R}{\tht}{+}$ before the latter reaches $p$. But then \eqref{no-intersection} and continuity force the interface $\wh\tau$ to remain right of $\geo\from{\tau(r')}\dir{R}{\tht}{+}$ and we get that $r_0<t$. We have established that $r_0\in(s,t)$. Now, we describe the coalescence pattern in three cases.
    
\medskip{\bf Case 3.1 (middle-with-right in} $(s,r_0)${\bf).} See the left panel in Figure \ref{fig:islandconfigs-a}. Consider $r\in(s,r_0)$ such that $\tau(r)$ is of type \eqref{prop:islandconfigs.ciii}. 
By \eqref{aux2466}, \eqref{shocksplitsgeo}, and the ordering \eqref{geo:mono}, we have
$\geo\from{\tau(r_0)}\dir{R}{\tht}{+}
\preceq
\geo\from{\tau(r)}\dir{M}{\tht}{+}
\preceq
\geo\from{\tau(r)}\dir{R}{\tht}{+}
\preceq
\geo\from{\tau(s)}\dir{R}{\tht}{+}$.
Since $\geo\from{\tau(s)}\dir{R}{\tht}{+}$ passes through $p$ and $\geo\from{\tau(r_0)}\dir{R}{\tht}{+}$ lies strictly to the right of $\wh\tau$ before reaching $p$, it follows that $\geo\from{\tau(r)}\dir{M}{\tht}{+}$ and $\geo\from{\tau(r)}\dir{R}{\tht}{+}$ either coalesce strictly before $p$ and hence before they coalesce with $\geo\from{\tau(t)}\dir{R}{\tht}{+}$, or the latter three geodesics coalesce simultaneously at $p$.
By \eqref{useful1}, the same conclusion holds if $\geo\from{\tau(t)}\dir{R}{\tht}{+}$ is replaced by $\geo\from{\tau(r)}\dir{L}{\tht}{+}$. However, \eqref{useful2} rules out the possibility that the three geodesics emanating from $\tau(r)$ coalesce simultaneously at $p$. Therefore, $\geo\from{\tau(r)}\dir{M}{\tht}{+}$ must coalesce with $\geo\from{\tau(r)}\dir{R}{\tht}{+}$ strictly before it coalesces with $\geo\from{\tau(r)}\dir{L}{\tht}{+}$.

\medskip{\bf Case 3.2 (middle-with-right in} $(r_0,t)${\bf).} See the right panel in Figure \ref{fig:islandconfigs-a}. Take any $s'<t'$ in $(r_0,t)$. By \eqref{shocksplitsgeo}, $\geo\from{\tau(s')}\dir{R}{\tht}{+}$ and $\geo\from{\tau(t')}\dir{R}{\tht}{+}$ both go strictly right of $\tau$ and, by \eqref{no-intersection}, the two must remain strictly left of $\wh\tau$. 
    By \eqref{geo:coal2},  $\geo\from{\tau(t')}\dir{R}{\tht}{+}$ must coalesce with $\geo\from{\tau(t)}\dir{R}{\tht}{+}$ at say a point $p'$, which is at or earlier than $p$. Similarly to the argument we made above to show that $r_0<t$, using Theorem \ref{th:convanydir} and geodesic ordering \eqref{geo:mono}, we see that by taking $s''\in(s',t')$ close enough to $t'$, we can guarantee that $\geo\from{\tau(s'')}\dir{R}{\tht}{+}$ and $\geo\from{\tau(t')}\dir{R}{\tht}{+}$ coalesce at a point $p''$ strictly before the latter gets to $p'$.
    Now, by Lemma \ref{lm:geocoal}, there exists a $\tht+$ shock interface $\wt\tau$ from $p''$, that runs strictly between the two geodesics. By \eqref{no-intersection}, continuity, and coalescence \eqref{Itree}, the interface $\wt\tau$ has to coalesce with $\tau$ at some time $r\in[s'',t']$. Then Lemma \ref{lm:intcoal} says  $\geo\from{\tau(r)}\dir{M}{\tht}{+}$ moves strictly between $\wt\tau$ and $\tau$, while $\geo\from{\tau(r)}\dir{R}{\tht}{+}$ moves strictly right of $\wt\tau$, making $\tau(r)$ a point of type \eqref{prop:islandconfigs.ciii}. By continuity, the ordering \eqref{geo:mono}, and the fact that all these geodesics remain strictly right of $\tau$,   $\geo\from{\tau(r)}\dir{M}{\tht}{+}$ and $\geo\from{\tau(r)}\dir{R}{\tht}{+}$ are sandwiched between $\geo\from{\tau(s'')}\dir{R}{\tht}{+}$ and $\geo\from{\tau(t')}\dir{R}{\tht}{+}$ and must thus coalesce by the time $p''$ is reached, after which they coalesce with $\geo\from{\tau(t)}\dir{R}{\tht}{+}$ when $p'$ is reached. By \eqref{useful1}, the same holds with $\geo\from{\tau(t)}\dir{R}{\tht}{+}$ replaced by $\geo\from{\tau(r)}\dir{L}{\tht}{+}$. 
     Thus, the middle $W^{\tht+}$-geodesic out of $\tau(r)$ coalesces with the right one, strictly before the left one.  Since $s'<t'$ were arbitrary in $(s,t)$, we get that this scenario happens for a dense set of times $r\in(r_0,t)$.
    
\medskip{\bf Case 3.3 (middle-with-left in} $[r_0,t)${\bf).} It remains to describe the set of times $r\in[r_0,t)$ where $\tau(r)$ is of type \eqref{prop:islandconfigs.ciii} and $\geo\from{\tau(r)}\dir{M}{\tht}{+}$ coalesces first with $\geo\from{\tau(r)}\dir{L}{\tht}{+}$. Denote this set of times by $\mathcal R$. We show: (3.3.1) $r_0\in \mathcal R$, (3.3.2) every point of $\mathcal R$ is isolated, and (3.3.3) $\mathcal R$ accumulates at $t$.

\medskip{\bf(3.3.1)} See the left panel in Figure \ref{fig:islandconfigs-a}. Since $r_0\in(s,t)$ and $\geo\from{\tau(r_0)}\dir{M}{\tht}{+}$ is distinct from $\geo\from{\tau(r_0)}\dir{R}{\tht}{+}$, we get that $\tau(r_0)$ is of type \eqref{prop:islandconfigs.ciii}. Since $\geo\from{\tau(r_0)}\dir{M}{\tht}{+}$ goes strictly left of $\wh\tau$ while $\geo\from{\tau(r_0)}\dir{R}{\tht}{+}$ goes strictly right of it, $\geo\from{\tau(r_0)}\dir{M}{\tht}{+}$ must either coalesce with $\geo\from{\tau(t)}\dir{L}{\tht}{+}$ before coalescing with $\geo\from{\tau(r_0)}\dir{R}{\tht}{+}$, or all three coalesce simultaneously at $p$. 
By \eqref{useful1}, the same holds with $\geo\from{\tau(t)}\dir{R}{\tht}{+}$ replaced by $\geo\from{\tau(r_0)}\dir{L}{\tht}{+}$. By \eqref{useful2}, the simultaneous three-way coalescence cannot happen and thus, the middle geodesic coalesces with the left one strictly before the right one. Consequently, $r_0\in\mathcal R$. 

\begin{figure}[hpt]
    \includegraphics[width=4.5cm]{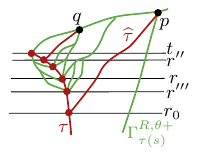}\qquad\qquad
    \includegraphics[width=4.5cm]{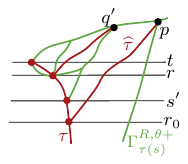}
    \caption{\small Left: Cases (3.3.2). Right: Case (3.3.3).}
    \label{fig:islandconfigs-b}
\end{figure}

\medskip{\bf(3.3.2)} See the left panel in Figure \ref{fig:islandconfigs-b}. Take $r\in \mathcal R$. Let $q$ denote the coalescence point of  $\geo\from{\tau(r)}\dir{M}{\tht}{+}$ and $\geo\from{\tau(t)}\dir{R}{\tht}{+}$. By \eqref{lm:geocoal}, there exists a $\tht+$ shock interface $\gamma$ out of $q$ that runs strictly between the two geodesics. By \eqref{no-intersection}, continuity, and the coalescence \eqref{Itree}, this interface must coalesce with $\tau$ at a point $\tau(r'')$ with $r''\in[r,t]$.  It cannot be that $r''=r$ since then $\tau(r)$ would be a coalescence point of two $\tht+$ shock interfaces, and Lemma \ref{lm:intcoal} would force $\geo\from{\tau(r)}\dir{M}{\tht}{+}$ to go strictly between $\tau$ and $\gamma$, which is not the case. Thus, $r''\in(r,t]$. Take any time $s''\in(r,r'')$. If $\tau(s'')$ is of type \eqref{prop:islandconfigs.ciii}, then \eqref{no-intersection} implies that $\geo\from{\tau(s'')}\dir{M}{\tht}{+}$ must remain strictly to the right of $\gamma$, which implies that $\geo\from{\tau(s'')}\dir{M}{\tht}{+}$ and $\geo\from{\tau(s'')}\dir{R}{\tht}{+}$ must coalesce together and with $\geo\from{\tau(r)}\dir{M}{\tht}{+}$ by the time $q$ is reached. This means the two coalesce  before (or at the same time as) coalescing with $\geo\from{\tau(t)}\dir{R}{\tht}{+}$. By \eqref{useful1}, the same holds when replacing $\geo\from{\tau(t)}\dir{R}{\tht}{+}$ with $\geo\from{\tau(s'')}\dir{L}{\tht}{+}$, and \eqref{useful2} prevents the simultaneous coalescence at $q$. Thus, $\geo\from{\tau(s'')}\dir{M}{\tht}{+}$ coalesces with $\geo\from{\tau(s'')}\dir{R}{\tht}{+}$  strictly before $\geo\from{\tau(s'')}\dir{L}{\tht}{+}$.  Hence, $s''\notin \mathcal R$ and $\mathcal R$ consists of right-isolated points.

    To see that the times in $\mathcal R$ are all left-isolated, it suffices to consider $r\in \mathcal R\setminus\{r_0\}$, since we have already established that $(s,r_0)\cap\mathcal R=\varnothing$.  By the same argument we used above to show that $r_0<t$, applying Theorem \ref{th:convanydir} and the ordering \eqref{geo:mono}, we get that for $r'''\in(r_0,r)$ close enough to $r$, $\geo\from{\tau(r''')}\dir{R}{\tht}{+}$ coalesces with $\geo\from{\tau(r)}\dir{R}{\tht}{+}$, before the latter coalesces with $\geo\from{\tau(t)}\dir{R}{\tht}{+}$. Then, for any $s''\in[r''',r)$, if $\tau(s'')$ is of type \eqref{prop:islandconfigs.ciii}, the ordering \eqref{geo:mono} sandwiches $\geo\from{\tau(s'')}\dir{M}{\tht}{+}$ between $\geo\from{\tau(s'')}\dir{R}{\tht}{+}$ and $\geo\from{\tau(r)}\dir{R}{\tht}{+}$ and forces all three to coalesce strictly before $\geo\from{\tau(t)}\dir{R}{\tht}{+}$ (and hence, by \eqref{useful1}, $\geo\from{\tau(s'')}\dir{L}{\tht}{+}$) is reached. Thus, we get that $s''\notin\mathcal R$ and so $\mathcal R$ consists of left-isolated points. 

\medskip{\bf(3.3.3)} See the right panel in Figure \ref{fig:islandconfigs-b}. To see $\mathcal R$ accumulates at $t$, pick any $s'\in(r_0,t)$ and let $q'$ be the coalescence point of $\geo\from{\tau(s')}\dir{R}{\tht}{+}$ with $\geo\from{\tau(t)}\dir{R}{\tht}{+}$. Lemma \ref{lm:geocoal} yields a $\tht+$ shock interface from $q'$ that runs strictly between the two geodesics and, by \eqref{no-intersection} and  continuity,  meets $\tau$ at a time $r\in[s',t]$. Since $\tau(t)$ is on the relative interior of the geodesic $\geo\from{\tau(s)}\dir{L}{\tht}{+}$, \eqref{no-intersection} also prevents $r=t$. Now, Lemma \ref{lm:intcoal}, \eqref{useful1}, and \eqref{useful2} force $\tau(r)$ to be of type \eqref{prop:islandconfigs.ciii} and to have the middle $W^{\tht+}$-geodesic coalesce first with the left one, so $r\in\mathcal R$. Since $s'$ was arbitrary in $(r_0,t)$, such $r$ values can be taken arbitrarily close to $t$, so $\mathcal R$ accumulates at $t$.

    \smallskip
We have shown that every $r\in(s,t)$ falls into exactly one of the types (\ref{prop:islandconfigs.ci}-\ref{prop:islandconfigs.ciii}), and that each type occurs on a dense set, with the additional structural description for type \eqref{prop:islandconfigs.ciii} given by the existence of $r_0\in(s,t)$ such that the middle geodesic first coalesces with the right one, for all $r\in(s,r_0)$ and for a dense set of $r\in(r_0,t)$, and it coalesces first with the left geodesic, a set of isolated times $r\in[r_0,t)$ that includes $r_0$ and that accumulates at $t$. This completes the proof of the proposition.
\end{proof}

\begin{proof}[Proof of Proposition \ref{prop:islandsdense}]
See Figure \ref{fig:islandsdense} for an illustration  of the proof.
 Let $B_\delta$ denote the open ball centered at $(x,s)$ with radius $\delta$. By continuity, there exists $r<s$ such that the interface $\Upsilon_{(x,s)}\dir{L}{\tht}{+}$ is contained in $B_\delta$ on $[r,s]$. Choose a rational time $s'\in(r,s)$ and let $(x',s')=\Upsilon_{(x,s)}\dir{L}{\tht}{+}(s')$. Then $(x',s')\in B_\delta$. 
Again by continuity, there exists $t>s$ such that the geodesics $\geo\from{(x',s')}\dir{S}{\tht}{\sig}$, for $S\in\{L,M,R\}$ and $\sigg\in\{-,+\}$, as well as $\geo\from{(x,s)}\dir{L}{\tht}{-}$, all remain inside $B_\delta$ up to time $t$.
  
By Theorem \ref{th:convanydir}, along a subsequence $s'' \nearrow s'$, the geodesics $\geo\from{\Upsilon_{(x,s)}\dir{L}{\tht}{+}(s'')}\dir{L}{\tht}{+}$ converge in the overlap topology to $\geo\from{(x',s')}\dir{S}{\tht}{+}$ for some $S \in \{L,M,R\}$. However, by \eqref{shocksplitsgeo}, we have $\geo\from{\Upsilon_{(x,s)}\dir{L}{\tht}{+}(s'')}\dir{L}{\tht}{+}(s') < (x',s')$, and then the ordering \eqref{geo:mono} forces this geodesic to lie weakly to the left of $\geo\from{(x',s')}\dir{L}{\tht}{+}$. Hence the only possible limit is $\geo\from{(x',s')}\dir{L}{\tht}{+}$.
Consequently, for $s'' \in (r,s')$ sufficiently close to $s'$, writing $p = \Upsilon_{(x,s)}\dir{L}{\tht}{+}(s'')$, the geodesic $\geo\from{p}\dir{L}{\tht}{+}$ remains inside $B_\delta$ on $[s'',t]$ and coalesces with $\geo\from{(x',s')}\dir{L}{\tht}{+}$ by time $t$.

Similarly, possibly taking $s''$ even closer to $s'$, we can ensure that $\geo\from{p}\dir{R}{\tht}{+}$ remains inside $B_\delta$ on $[s'',t]$ and coalesces with $\geo\from{(x',s')}\dir{R}{\tht}{+}$ by time $t$. Applying Theorem \ref{th:convanydir} to $\geo\from{\Upsilon_{(x,s)}\dir{L}{\tht}{+}(s'')}\dir{L}{\tht}{-}$ shows that these geodesics converge in overlap topology to $\geo\from{(x',s')}\dir{S}{\tht}{-}$ for some $S \in \{L,M,R\}$. Since all the limiting geodesics lie inside $B_\delta$ on $[s',t]$, choosing $s''$ sufficiently close to $s'$ also ensures that $\geo\from{p}\dir{L}{\tht}{-}$ remains inside $B_\delta$ on $[s'',t]$.

\begin{figure}[hpt]
    \includegraphics[width=6cm]{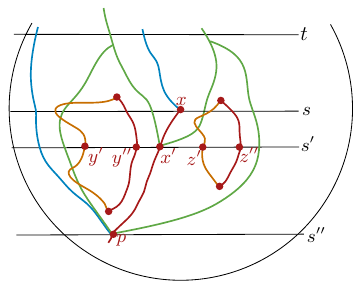}
    \caption{\small The proof of Proposition \ref{prop:islandsdense}. The arc is part of the ball $B_\delta$, centered at $(x,s)$.}
    \label{fig:islandsdense}
\end{figure}

By Lemma \ref{lem:IGgodown}, we have $(x',s') \in \IG\tht$, and by Lemma \ref{no-isolated}, this point is either not left-isolated or not right-isolated. The two cases are similar and we first treat the former. Then there exists a sequence $y_n \nearrow x'$ with $(y_n,s') \in \IG\tht$ for all $n$. 
Choose $n$ large enough so that $(y_n,s')$ lies strictly between $\geo\from{p}\dir{L}{\tht}{+}(s')$ and $(x',s')$. Let $a \in (y_n,y_{n+1})$ be rational and define
$y' = \sup\{\, y < a : (y,s') \in \IG\tht \,\}$. 
Since $\IG\tht$ is closed by Lemma \ref{lem:IGisclosed}, we have $(y',s') \in \IG\tht$. By \eqref{QnotIG}, $(a,s') \notin \IG\tht$, so $y' < a$, and the maximality of $y'$ implies that $(y',s')$ is right-isolated. Furthermore, $(y_n,s')\in\IG\tht$ implies $(y',s')\ge(y_n,s')>\geo\from{p}\dir{L}{\tht}{+}(s')$.
Similarly, defining
$y'' = \inf\{\, y > a : (y,s') \in \IG\tht \,\}$
yields a left-isolated point $(y'',s') \in \IG\tht$ with $y'' > a$ and $(y'',s')\le(y_{n+1},s')<(x',s')$. By construction, no point strictly between $(y',s')$ and $(y'',s')$ belongs to $\IG\tht$.

Consider the island $\island_{(a,s')}$. By the above, its left boundary must pass through $(y',s')$ and its right boundary passes through $(y'',s')$. By Lemma \ref{lm:islandboundary}\eqref{lm:islandboundary.c}, the right boundary lies on a $\tht+$ shock interface. Since $(y'',s') < (x',s')$, the duality \eqref{no-intersection} implies that this right boundary must remain strictly to the left of $\geo\from{(x',s')}\dir{L}{\tht}{+}$.
On the other hand, because $\geo\from{p}\dir{L}{\tht}{+}$ passes strictly to the left of $(y',s')$, the right boundary of the island must remain to the right of $\geo\from{p}\dir{L}{\tht}{+}$. Furthermore, Lemma \ref{lem:IGgodown} implies that $\Upsilon_{(x,s)}\dir{L}{\tht}{+}$ consists entirely of instability points and hence cannot enter the island. Since this interface passes through $(x',s')$, the right boundary of the island must also remain weakly to its left.
Combining these observations, we conclude that the right boundary of the island lies in the region bounded by $\geo\from{p}\dir{L}{\tht}{+}$, $\geo\from{(x',s')}\dir{L}{\tht}{+}$, and $\Upsilon_{(x,s)}\dir{L}{\tht}{+}$, which is contained in $B_\delta$. This also shows that the island is contained in the time zone $[s'',t]$.

The left boundary of the island lies on a $\tht-$ shock interface (by Lemma \ref{lem:IGgodown}). Since $\geo\from{p}\dir{L}{\tht}{-} \preceq \geo\from{p}\dir{L}{\tht}{+}$ and the latter passes strictly to the left of $(y',s')$, it follows that the left boundary of the island must remain  to the left of $\geo\from{p}\dir{L}{\tht}{-}$. Together with the fact that the left boundary lies to the left of the right boundary, this shows that the entire island is contained in $B_\delta$.

An analogous argument applies when $(x',s')$ is not right-isolated. In this case, we construct an island whose  right boundary is confined between $\geo\from{p}\dir{R}{\tht}{+}$, $\geo\from{(x',s')}\dir{R}{\tht}{+}$, and $\Upsilon_{(x,s)}\dir{L}{\tht}{+}$, and whose left boundary is constrained by the right boundary together with $\Upsilon_{(x,s)}\dir{L}{\tht}{+}$ and $\geo\from{(x,s)}\dir{L}{\tht}{-}$.
\end{proof}

We conclude the section with an observation that we do not use in this work but nevertheless find interesting, as it highlights the importance of stability islands. In words, the lemma says that $\tht-$ and $\tht+$ shock interfaces cannot intersect at a dust point and that, for $\sigg\in\{-,+\}$, $\tht\sigg$ shock interfaces can never coalesce at a dust point. 

\begin{lem}\label{shocksintersect}
Let $\w\in\Omega_0$, $\tht\in\baddir$, and $\sigg,\overline\sigg\in\{+,-\}$. Let $\tau$ and $\overline\tau$ be $\tht\sigg$ and $\tht\overline\sigg$ shock interfaces, respectively, and suppose that $(z,r)$ lies in the relative interior of both. 
If $(z,r)\in\IG\tht$, then it lies on the boundary of a stability island.
\end{lem}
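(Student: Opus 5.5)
We argue by contradiction. Suppose $(z,r)\in\IG\tht$ is a dust point. Then $(z,r)$ is not on the boundary of any island. By Proposition \ref{prop:geods-dust}, its geodesic configuration is one of three types: a pns point, a $\tht-$ single shock as in \eqref{prop:geods-dust.b}, or a $\tht+$ single shock as in \eqref{prop:geods-dust.c}. Since $(z,r)$ lies in the relative interior of a $\tht\sigg$ shock interface, Lemma \ref{lm:shockintallshocks} gives $(z,r)\in\NU_1^{\tht\sig}$, and likewise $(z,r)\in\NU_1^{\tht\overline\sig}$. A pns point lies in no $\NU_1^{\tht\pm}$, so case \eqref{prop:geods-dust.a} is excluded.

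Suppose first that $\sigg\ne\overline\sigg$. Then $(z,r)\in\NU_1^{\tht-}\cap\NU_1^{\tht+}$. But in cases \eqref{prop:geods-dust.b} and \eqref{prop:geods-dust.c} the point is a shock for exactly one sign, which is a contradiction.

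Now suppose $\sigg=\overline\sigg$; by symmetry take $\sigg=-$, so we are in case \eqref{prop:geods-dust.b}. We read the statement as saying that $\tau$ and $\overline\tau$ are distinct interfaces that coalesce at $(z,r)$; otherwise the claim is vacuous, since $\tau=\overline\tau$ is always allowed. The plan is to locate their coalescence point. Both interfaces are $\tht-$ interfaces through $(z,r)$, and by \eqref{Itree} they coalesce. If they separate in forward time before reaching $(z,r)$, or start at different points above it and first meet at $(z,r)$, then $(z,r)$ is a coalescence point of two $\tht-$ shock interfaces with start levels above $r$. Lemma \ref{lm:intcoal} then produces a distinct middle geodesic $\geo\from{(z,r)}\dir{M}{\tht}{-}$, initially strictly between $\geo\from{(z,r)}\dir{L}{\tht}{-}$ and $\geo\from{(z,r)}\dir{R}{\tht}{-}$. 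This contradicts case \eqref{prop:geods-dust.b}, which allows no distinct middle geodesics.

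The main obstacle is the remaining possibility: $\tau$ and $\overline\tau$ agree at and just above $r$ but merge at a point strictly below $r$. In that case $(z,r)$ is not itself the coalescence point. The coalescence point $q$ is then a point of $\IG\tht$ below $(z,r)$, by Lemma \ref{lem:IGgodown}. If "coalesce at $(z,r)$" is read as the hypothesis, this case does not arise. If one only assumes that $(z,r)$ lies in both relative interiors, I would push the question down to $q$. By Lemma \ref{lm:intcoal}, $q$ emits a distinct middle $W^{\tht-}$-geodesic, so by Proposition \ref{prop:geods-dust} again $q$ cannot be a dust point. Hence $q$ lies on an island boundary, by Proposition \ref{prop:islandconfigs}. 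This proves the coalescence form of the statement: coalescence points never lie in $\dust$.

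For the original form in which the interfaces merely share $(z,r)$, I expect the intended reading to be that two interfaces intersecting at $(z,r)$ with distinct behaviour there necessarily coalesce or cross there. Crossing of same-sign interfaces is impossible by the tree structure \eqref{Itree}, and the opposite-sign case was settled above by the single-sign nature of dust shocks.
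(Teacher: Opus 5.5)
Your opposite-sign case is the paper's argument. Your reading of the same-sign case as coalescence at $(z,r)$ is the intended one; it is also what the paper's proof assumes when it declares $(z,r)$ to be the coalescence point. Under that reading your same-sign argument is correct, but it takes a different route from the paper.

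\textbf{Comparison of the two routes.} The paper argues directly, for $\sigg=+$:
\begin{itemize}
\item by \eqref{shocksplitsgeo} and Lemma \ref{lm:intcoal}, the three $W^{\tht+}$-geodesics out of $(z,r)$ separate immediately;
\item \eqref{no4stars} and \eqref{geo:mono} then force $\geo\from{(z,r)}\dir{L}{\tht}{-}$ to start along $\geo\from{(z,r)}\dir{L}{\tht}{+}$;
\item Lemma \ref{lm:hugging} and Lemma \ref{lm:isotoisland} then place $(z,r)$ on an island boundary.
\end{itemize}
You argue by contradiction instead. You combine the distinct middle geodesic supplied by Lemma \ref{lm:intcoal} with the ``no distinct middle geodesics'' clause of Proposition \ref{prop:geods-dust}\eqref{prop:geods-dust.b}--\eqref{prop:geods-dust.c}. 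The two arguments are dual uses of the same ingredients, since that clause is itself proved via \eqref{no4stars} once hugging shocks are excluded at dust points. Yours is shorter and treats both cases uniformly through the classification. The paper's is constructive and gives more: the point is a $\tht\sigg$ hugging shock, so it lies on the right (for $\sigg=+$) or left (for $\sigg=-$) boundary of an island.

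\textbf{Drop the ``main obstacle'' paragraph.}
\begin{itemize}
\item The configuration it describes cannot occur. Shock interfaces merge going backward in time, so two interfaces that agree at and just above time $r$ agree at all earlier times. If they are distinct, their merge point $q$ lies above $(z,r)$, not below.
\item Lemma \ref{lem:IGgodown} propagates instability only downward, so it does not put such a $q$ in $\IG\tht$.
\item In any case, information about $q$ says nothing about $(z,r)$.
\end{itemize}
Finally, the literal reading is not vacuous but false. Take $\tau=\overline\tau$ through a dust single shock. Such points are dense in $\IG\tht$ by Proposition \ref{prop:geods-dust}, and they lie in relative interiors of shock interfaces by Lemma \ref{lm:shockintallshocks}.
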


\begin{proof}
    By parts \eqref{prop:geods-dust.b}-\eqref{prop:geods-dust.c} of Proposition \ref{prop:geods-dust}, a dust  point cannot be a double shock. This proves the claim in the case $\sigg\ne\overline\sigg$.\smallskip

    Suppose $\sigg = \overline\sigg$.  We treat the case $\sigg=+$, the case $\sigg=-$ being similar. By \eqref{Itree}, $(z,r)$ is the coalescence point of the two interfaces. As it is assumed to be in the relative interiors of both interfaces, we may restart the at a common time $s>r$ and, by continuity (and possibly renaming the paths), assume that $\tau$ remains strictly left of $\overline\tau$ on $(r,s]$. 
    Then the coalescence point $(z,r)$ is also the first intersection point.
    Now, by \eqref{shocksplitsgeo},  $\geo\from{(z,r)}\dir{L}{\tht}{+}$ proceeds strictly to the left of $\tau$ and $\geo\from{(z,r)}\dir{R}{\tht}{+}$ goes strictly to the right of $\overline\tau$, while by Lemma \ref{lm:intcoal}, $\geo\from{(z,r)}\dir{M}{\tht}{+}$ must proceed strictly between $\tau$ and $\overline\tau$. However, by the ordering \eqref{geo:mono}, $\geo\from{(z,r)}\dir{L}{\tht}{-}$ is left of $\geo\from{(z,r)}\dir{L}{\tht}{+}$ and then by \eqref{no4stars} the two geodesics must initially coincide. Then, Lemma \ref{lm:hugging}\eqref{lm:hugging.b} says $(z,r)$ is a $\tht+$ hugging shock and, by Lemma \ref{lm:isotoisland}, it must be on the boundary of a stability island.
\end{proof}


\section{Proofs of the main theorems}\label{sec:mainproofs}

\begin{proof}[Proof of Theorem \ref{main:geodesics}]
The theorem follows from Propositions \ref{prop:geodconfigstable}-\ref{prop:islandconfigs}.
\end{proof}

\begin{proof}[Proof of Theorem \ref{main:IG}]
We first show that Definition \ref{def:stht} is equivalent to \eqref{IGdef1}. Let $\whIG\tht$ denote the set whose complement is given by the right-hand side of \eqref{IGdef1}. 
Suppose $(x,s)\notin\whIG\tht$, and let $O$ be the corresponding open set in \eqref{IGdef1}. For $z<x$ and $y>x$ sufficiently close to $x$, we have $(z,s),(y,s)\in O$ and hence $W^{\tht-}(z,s;y,s)=W^{\tht+}(z,s;y,s)$. It follows from Definition \ref{def:stht} that $(x,s)\notin\IG\tht$.
Conversely, suppose $(x,s)\notin\IG\tht$, and let $O=\island_{(x,s)}$ be the stability island containing $(x,s)$. By Lemma \ref{lm:bdry}\eqref{lm:bdry.b}, $O$ is the region strictly between the interfaces $\gamma([t_1,t_2])$ and $\tau([t_1,t_2])$ defined there. For any $(z,r),(y,t)\in O$, we have $r,t\in(t_1,t_2)$,
$\gamma(r)<(z,r)<\tau(r)$, and $\gamma(t)<(y,t)<\tau(t)$.
Lemma \ref{I-prop}\eqref{I-prop.d}, together with the cocycle property \eqref{cocycle}, then yields $W^{\tht-}(z,r;y,t)=W^{\tht+}(z,r;y,t)$. Hence $(x,s)\notin\whIG\tht$. Thus, $\whIG\tht=\IG\tht$ and we may proceed with the remainder of the proof.\smallskip

Part \eqref{main:IG.closed} follows from Lemmas \ref{no-isolated} and \ref{lem:IGisclosed}. Part \eqref{main:IG.islands} is in Lemmas \ref{lm:islands} and \ref{lm:disjointislands}. Lemma \ref{lem:islandtoshocks} says that any stability island fits the setting of Lemma \ref{lem:shockstoisland}, which then gives the claim in part \eqref{main:IG.boundary} about the boundary of the island. Lemma \ref{lm:islandboundary} identifies the tip and bottom of the island. This proves part \eqref{main:IG.boundary}.  Proposition \ref{prop:islandsdense} implies part \eqref{main:IG.islandsdense}.\smallskip

Part \eqref{main:IG.tbi}.
Proposition \ref{prop:IGgoup} constructs, for each $(x,s)\in\IG\tht$ and $\sigg\in\{-,+\}$, a continuous bi-infinite space-time interface $\Ipath_{(x,s)}^{\tht\sig}$ that lies entirely in $\IG\tht$. In particular, $\IG\tht$ is bi-infinite in both time directions, and by part \eqref{IGgoup.j} of that proposition these interfaces are $\tht$-directed forward and backward in time. It remains to show that there are uncountably many pairwise disjoint such interfaces that run entirely through dust points.

By Proposition \ref{prop:IGgoup}\eqref{IGgoup.g}, any two distinct $\Ipath_{(x,s)}^{\tht+}$ interfaces are disjoint. By \eqref{Hausdorff}, there are uncountably many points in $\IG\tht\cap(\R\times\{0\})$ that are not right-isolated. For each such point $(x,0)$, part \eqref{IGgoup.e} of the same proposition implies that $(x,0)\in\Ipath_{(x,0)}^{\tht+}$, and hence these points generate uncountably many disjoint interfaces.
By Lemma \ref{lm:bdry}\eqref{lm:bdry.a}, the connected components of the region strictly between  $\Ipath_{(x,0)}^{\tht-}$ and $\Ipath_{(x,0)}^{\tht+}$ are all stability islands. Since there are only countably many such islands and the $(x,0)\in\Ipath_{(x,0)}^{\tht+}$ interfaces are disjoint, only countably many interfaces satisfy $\Ipath_{(x,0)}^{\tht-}\ne\Ipath_{(x,0)}^{\tht+}$. The remaining uncountably many interfaces avoid all island boundaries and hence lie entirely within the dust. This completes the proof of Part \eqref{main:IG.tbi}.\smallskip

Part \eqref{main:IG.xbi}. By Lemma \ref{no-isolated}, $\IG\theta$, and hence $\dust$, has no isolated points. 
Fix $a<b$ and suppose $(a,b)\times\{s\}$ contains a point $(x,s)\in\IG\theta$. Then $(x,s)$ is either not left- or not right-isolated. The two cases are similar and we treat the latter.
Then $(x,b)\times\{s\}$ contains infinitely many points of $\IG\theta$, and by Lemma \ref{lm:isodense}, infinitely many of these are not right-isolated. Choose $(y,s)<(z,s)$ among them. Let $r<s$ be rational and set
$(y',r)=\Ipath_{(y,s)}^{\theta+}(r)$ and $(z',r)=\Ipath_{(z,s)}^{\theta+}(r)$.
By parts \eqref{IGgoup.a}, \eqref{IGgoup.c}, \eqref{IGgoup.e}, \eqref{IGgoup.g} of Proposition \ref{prop:IGgoup}, we have $(y',r)<(z',r)$, both points lie in $\IG\theta$, and neither is right-isolated. Hence $\IG\theta\cap((y',z')\times\{r\})$ is infinite, and by \eqref{Hausdorff}, uncountable.

As in the proof of part \eqref{main:IG.tbi}, removing the countable set of points $(x',r)\in\IG\theta$ for which $\Ipath_{(x',r)}^{\theta+}\neq \Ipath_{(x',r)}^{\theta-}$ leaves an uncountable set for which the paths $\Ipath_{(x',r)}^{\theta+}$ are disjoint and entirely contained in $\dust$. The disjointness and continuity of these interfaces (part \eqref{IGgoup.a} of Proposition \ref{prop:IGgoup}) imply that the points $\Ipath_{(x',r)}^{\theta+}(s)$ are distinct and lie in $\IG\theta\cap((y,z)\times\{s\})$. Since they are all dust points, it follows that $\dust\cap((y,z)\times\{s\})$ is uncountable, and hence so is $\dust\cap((a,b)\times\{s\})$.

Finally, Lemma \ref{kS inf} shows that $\IG\theta\cap(\R\times\{s\})$ is unbounded in both directions, and the density of $\dust\cap(\R\times\{s\})$ implies the same for $\dust$.
\smallskip

Part \eqref{main:IG.directed}.
By Lemma \ref{no-isolated}, the point $(x,s)$ is either not left-isolated or not right-isolated. Hence, by Proposition \ref{prop:IGgoup}\eqref{IGgoup.e}, there exists $\sigg\in\{-,+\}$ such that the interface $\Ipath_{(x,s)}^{\tht\sig}$ passes through $(x,s)$. Parts \eqref{IGgoup.a} and \eqref{IGgoup.c} of that proposition show that the path is continuous and contained in $\IG\tht$. Its directedness follows from part \eqref{IGgoup.j} of the same proposition.
It is worth noting that, by Lemma \ref{lem:IGgodown} together with \eqref{shocksdir}, $\tht\pm$ shock interfaces emanating from $(x,s)$ provide alternative backward-in-time paths that are $\tht$-directed.
\smallskip

Part \eqref{main:IG.graph} follows from Lemmas \ref{lem:cmnNanc} and \ref{lem:cmnSanc}.
\end{proof}

\begin{proof}[Proof of Theorem \ref{main:IGgeods}]
One direction of part \eqref{IGgeods.a} follows from Proposition \ref{prop:geods-stability}, while the other follows from the absence of the configurations in question from Propositions \ref{prop:geods-dust} and \ref{prop:islandconfigs}. 
One direction of part \eqref{IGgeods.b}, together with the density statement, follows from Propositions \ref{prop:geods-dust} and \ref{prop:islandconfigs}, while the reverse direction follows from the absence of the relevant configurations in Proposition \ref{prop:geods-stability}.
\end{proof}

\begin{proof}[Proof of Theorem \ref{main:shocks}]
Part \eqref{shocks.a}. The first claim is from Proposition \ref{prop:geods-stability}. The second claim comes from Lemma \ref{def:geoinstabpt}.\smallskip

Part \eqref{shocks.b}. 
If $(x,s)$ is the tip of an instability island, then  Lemma \ref{lem:islandtoshocks} implies $\tau^-=\Upsilon_{(x,s)}^{L,\tht-}$ and $\tau^+=\Upsilon_{(x,s)}^{R,\tht+}$ are misordered and Lemma \ref{doubleisolareisol} implies that for any $r<s$, $\tau^-([r,s])\setminus\NU_1^{\tht+}\ne\varnothing$ and $\tau^+([r,s])\setminus\NU_1^{\tht-}\ne\varnothing$.

For the converse direction, suppose there exist shock interfaces $\tau^-$ and $\tau^+$ as in the statement. If $(x,s)\notin\IG\tht$, then, by Lemma \ref{lem:IGisclosed}, there exists an open neighborhood $O\subset\R^2\setminus\IG\tht$ with $(x,s)\in O$. By Lemma \ref{lem:stableshocks}, there is no sign distinction inside $O$. Then, for $r<s$ sufficiently close to $s$ and for both $\sigg\in\{-,+\}$, we have $\tau^\sig([r,s])\subset O$, and thus $\tau^\sig([r,s])\subset \NU_1^{\tht-}\cap\NU_1^{\tht+}$. This contradicts the defining properties of $\tau^-$ and $\tau^+$, so $(x,s)\in\IG\tht$.
Since $\tau^-$ and $\tau^+$ are misordered, the ordering \eqref{shocksmono} implies that the pair $\Upsilon_{(x,s)}^{L,\tht-}$ and $\Upsilon_{(x,s)}^{R,\tht+}$ is also misordered. Lemma \ref{lem:shockstoisland} then shows that $(x,s)$ is the tip of a stability island. Lemma \ref{lm:islandboundary}\eqref{lm:islandboundary.c} implies that $(x,s)$ is a pns point.
\smallskip

Part \eqref{shocks.c} follows from Lemmas \ref{lem:shockstoisland}, \ref{lem:islandtoshocks}, \ref{lem:SBisisland}, and  \ref{lm:islandboundary}\eqref{lm:islandboundary.b}.\smallskip

Part \eqref{shocks.d}. If $(x,s)$ lies on the boundary of a stability island and is neither its tip nor its bottom, then it is left-isolated when on the right boundary and right-isolated when on the left boundary. By Lemma \ref{LRisolgeo}\eqref{LRisolgeo.a}, it is therefore a $\tht+$ hugging shock in the former case and a $\tht-$ hugging shock in the latter.
Conversely, suppose $(x,s)$ is a $\tht-$ hugging shock (the argument being similar for a $\tht+$ hugging shock). By part \eqref{shocks.a}, $(x,s)\in\IG\tht$, and Lemma \ref{LRisolgeo}\eqref{LRisolgeo.a} implies that it is right-isolated. Hence, by Lemma \ref{lm:isotoisland}, $(x,s)$ lies on the left boundary of a stability island. By parts \eqref{shocks.b} and \eqref{shocks.c}, it is neither the tip nor the bottom of the island. This proves part \eqref{shocks.d}. \smallskip

Part \eqref{shocks.e}. The first claim follows from Proposition \ref{prop:geods-dust}. The second and third claims follow from parts \eqref{shocks.b}-\eqref{shocks.d}, which imply that single shocks do not occur on island boundaries and that the only pns point on the boundary of an island is its tip.
\smallskip

Part \eqref{shocks.f} follows from Proposition \ref{prop:islandconfigs}.
\end{proof}

\appendix
\section{Analysis facts}
\subsection{Connected components}\label{app:con}
We recall some standard facts about connected components of open sets. 
An open set $U\subset\R^2$ is \emph{connected} if it cannot be written as the union of two nonempty, disjoint open subsets. Open convex sets, and in particular open balls, are connected because they are path-connected. 

The union of pairwise intersecting connected open sets is connected: if $U_i$ are connected and $U_i\cap U_j\neq \varnothing$ when $i\ne j$, 
then $\bigcup_i U_i$ is connected. 
To see this note that  $\bigcup_i U_i$ is open and if $\bigcup_i U_i = O \cup O'$ with $O,O'$ open and disjoint, then each $U_i$ lies entirely in $O$ or $O'$. 
Since the sets $U_i$ are pairwise intersecting, they all must lie in the same set, forcing the other to be empty. 

A \emph{connected component} of an open set $U$ is a connected subset $O\subset U$ that is maximal: if $O'\subset U$ is connected and $O\subset O'$, then $O=O'$. Maximality and the previous paragraph imply that any two connected components are either equal or disjoint. 
Since $\R^2$ is locally path-connected, any connected subset is also path-connected. 
For $p\in U$, the connected component containing $p$ is
\[
O_p = \!\!\!\!\!\!\bigcup_{\substack{p\in O\subset U\\ O \text{ open and connected}}} \!\!\!\!\!\!\!\!\!\!\!\!\!\!\!\!O.
\]
Using separability (e.g., density of the rationals), one sees that $U$ is the union of countably many disjoint connected components. The connected components define an equivalence relation on $U$: $p\sim q$ when $O_p=O_q$.

We need the following result. We include its elementary proof for the sake of completeness. 

\begin{lem}\label{f<g:connected}
Let $f,g:\R\to\R$ be continuous functions such that $f(t)\le g(t)$ for all $t\in\R$.
Suppose that for some $s\in\R$, $f(s)<g(s)$.
Define $t_1 = \sup\{t<s : f(t)=g(t)\}$ and $t_2 = \inf\{t>s : f(t)=g(t)\}$,
with the conventions that $\sup\varnothing = -\infty$ 
and $\inf\varnothing = +\infty$.
\begin{enumerate} [label={\rm(\alph*)}, ref={\rm\alph*}] \itemsep=1pt 
\item\label{f<g:connected.a} The set $U = \{(x,t)\in\R^2 : f(t) < x < g(t),\ t_1 < t < t_2\}$
is connected. 
\item\label{f<g:connected.b} Let $\Gamma_f=\{(f(t),t):t\in\R\}$ and $\Gamma_g=\{(g(t),t):t\in\R\}$. Let $V$ be any open subset of $\R^2\setminus(\Gamma_f\cup\Gamma_g)$ that contains $U$ {\rm(}possibly $V=U${\rm)}. Then $U$ is a connected component of $V$.
\end{enumerate}
\end{lem}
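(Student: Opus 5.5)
For part (a), I would show that $U$ is path-connected, which suffices since path-connected sets are connected. Note first that $t_1<s<t_2$: by continuity $f<g$ on a neighborhood of $s$. By the definitions of $t_1,t_2$ and $f\le g$, we have $f(t)<g(t)$ for every $t\in(t_1,t_2)$, because an equality at such $t$ would contradict the sup/inf. Thus every horizontal slice $U_t=(f(t),g(t))\times\{t\}$ is a nonempty open segment.

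Given $(x,t),(x',t')\in U$ with $t\le t'$, I would connect them along the midpoint curve $m(r)=\bigl((f(r)+g(r))/2,r\bigr)$, $r\in[t,t']$. This curve is continuous and lies in $U$ since $f(r)<m(r)<g(r)$. The horizontal segment from $(x,t)$ to $m(t)$ stays inside the slice $U_t$, and similarly at time $t'$. Concatenating the three pieces gives a continuous path in $U$. Openness of $U$ is also needed for the component statement; it follows from continuity of $f,g$: near a point with $f(t)<x<g(t)$, $t\in(t_1,t_2)$, the strict inequalities persist.

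For part (b), $U$ is open and connected with $U\subset V$. Let $O$ be the component of $V$ containing $U$ (which exists by maximality, as recalled above). I would show $O=U$ by contradiction. Suppose $O\setminus U\neq\varnothing$. Then $O=U\cup(O\setminus \overline U)\cup(O\cap\partial U)$. The key claim is $\partial U\subset\Gamma_f\cup\Gamma_g$, hence $O\cap\partial U=\varnothing$ because $V$ avoids $\Gamma_f\cup\Gamma_g$. Granting this, $O$ is the disjoint union of the nonempty open sets $U$ and $O\setminus\overline U$, contradicting connectedness. (Here $O\setminus\overline U$ is nonempty since $O\setminus U\ne\varnothing$ and $O\cap\partial U=\varnothing$.)

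The main step, and the one needing care, is $\partial U\subset\Gamma_f\cup\Gamma_g$. Take $(x,t)\in\overline U\setminus U$, a limit of $(x_n,t_n)\in U$. Then $t\in[t_1,t_2]$ and $f(t)\le x\le g(t)$ by continuity. This includes the possibility $t_1=-\infty$ or $t_2=\infty$, in which case $t$ is finite and so interior to the range. There are two cases.

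If $t\in(t_1,t_2)$, then because $(x,t)\notin U$ we must have $x=f(t)$ or $x=g(t)$.

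If $t=t_1$ (finite), then by continuity and the definition of the supremum, $f(t_1)=g(t_1)$; the set where $f=g$ is closed and $t_1<s$. Hence $x=f(t_1)=g(t_1)$. The case $t=t_2$ is the same.

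In all cases $(x,t)\in\Gamma_f\cup\Gamma_g$, completing the argument.
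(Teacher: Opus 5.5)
Your proof is correct, and part (a) is essentially the paper's argument. The paper gets connectedness from the homeomorphism $\Phi_I(t,u)=\bigl((1-u)f(t)+ug(t),t\bigr)$ of $I\times(0,1)$ onto the strip, and your midpoint curve and horizontal segments are simply paths inside the image of that map.

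For part (b) your route is genuinely different. The paper splits $V$ into three disjoint open sets: the points to the right of $\Gamma_g$, those strictly between the graphs, and those to the left of $\Gamma_f$. It then splits the middle set into the open pieces $O_I$ lying over the components $I$ of $\{t:f(t)<g(t)\}$, and notes that $U=O_{(t_1,t_2)}$. So $U$ is clopen in $V$ by a two-level partition.

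You instead prove the single fact $\partial U\subset\Gamma_f\cup\Gamma_g$, i.e.\ $\overline U\cap V=U$, and conclude that $U$ is clopen in the component of $V$ containing it. Your argument is shorter and isolates the general principle: a nonempty connected open $U\subset V$ with $\partial U\cap V=\varnothing$ is a component of $V$.

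It also never has to describe the other pieces $O_I$. For a general $V$ these need not be the full region between the graphs over $I$. So the paper's assertion that every $O_I$ is connected is only needed, and only guaranteed, for $I=(t_1,t_2)$. In exchange, the paper's partition gives an explicit global picture of how $V$ sits relative to the two graphs.
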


\begin{proof}
Define 
\[O_1=\{(x,t)\in V : x > g(t)\},\ 
O_2=\{(x,t)\in V : f(t) < x < g(t)\},
\ \text{and}\ 
O_3= \{(x,t)\in V : x < f(t)\}.
\]
Because $V$ is open and the maps $(x,t)\mapsto x - g(t)$ and $(x,t)\mapsto x - f(t)$ are continuous,
each of the sets $O_i$, $i\in\{1,2,3\}$, is open.
They are pairwise disjoint and their union is $V$.  Therefore, if $O$ is a connected open subset of $V$, then it must be entirely inside one of the three open subsets. Since $U\subset O_2$, if $O$ intersects $U$, then it must be entirely inside $O_2$.

Next, let 
$H = \{t \in \R : f(t) < g(t)\}$.
Since $f$ and $g$ are continuous, $H$ is open in $\R$,
and thus is a union of disjoint open intervals (its connected components). 
For each connected component $I\subset H$, define
$O_I= \{(x,t)\in O_2 : t\in I\}$.
The sets $O_I$ are open, pairwise disjoint, and
\[O_2 = \bigcup_{I\text{ component of }H} O_I.\]
For a given component $I$,
\[
\Phi_I:I\times(0,1)\longrightarrow O_I,\qquad 
\Phi_I(t,u)=\bigl((1-u)f(t)+u g(t),\,t\bigr),
\]
provides a homeomorphism. Thus, $O_I$ is path-connected and therefore connected. This shows that 
the sets $O_I$ are precisely the connected components of $O_2$.

Now let $s$, $t_1$, and $t_2$ as in the statement. 
Because $f(s) < g(s)$ and both functions are continuous, we have $t_1 < s < t_2$. This and the definitions of $t_1$ and $t_2$ imply that $(t_1,t_2)$ is the connected component of $H$ containing $s$ and $U=O_{(t_1,t_2)}$.
Consequently, $U$ is connected.  Moreover, since any connected subset of $V$ that meets $U$ must lie in $O_2$, and within $O_2$ the connected components are exactly the $O_I$'s, it follows that $U$ is a connected component of $V$.
\end{proof}

\subsection{Points of increase}\label{app:ptinc}

Recall the Definition \ref{inc-pts} of  points of increase of a nondecreasing function $f:\R\to\R$.

\begin{lem}\label{ptinc}
Let $f:\R\to\R$ be nondecreasing. Let $I$ denote the set of its points of increase.
\begin{enumerate} [label={\rm(\alph*)}, ref={\rm\alph*}] \itemsep=1pt 
\item\label{ptinc.a} $I$ is closed.
\item\label{ptinc.b} If furthermore $f$ is continuous, then $x\in I$ is right-isolated in $I$ if and only if there exists $y>x$ such that $f(y)=f(x)$. Similarly, $x\in I$ is left-isolated in $I$ if and only if there exists $z<x$ such that $f(z)=f(x)$.
\item\label{ptinc.c} If $f$ is continuous, then each point $x \in I$ is either not left-isolated in $I$ or not right-isolated in $I$ {\rm(}or both{\rm)}.
\end{enumerate}
\end{lem}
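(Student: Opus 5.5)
Part (a) follows directly from Definition \ref{inc-pts}, by proving that the complement of $I$ is open. Suppose $x\notin I$. Then there are $z<x<y$ with $f(z)=f(y)$, since $f$ is nondecreasing and $f(z)<f(y)$ fails. Monotonicity then forces $f$ to be constant on $[z,y]$. Every $x'\in(z,y)$ has the same witnesses $z<x'<y$ with $f(z)=f(y)$, so $(z,y)\cap I=\varnothing$. Hence $\R\setminus I$ is open.

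For part (b) I treat the right-isolated case; the left-isolated case is the mirror image.

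\emph{Existence of a flat piece implies right-isolated.} Suppose $x\in I$ and there is $y>x$ with $f(y)=f(x)$. Then $f$ is constant on $[x,y]$. For any $x'\in(x,y)$, choose $z=x$ and $y$ as witnesses. This gives $f(z)=f(y)$, so $x'\notin I$. Thus $x$ is right-isolated, with $\ep=y-x$.

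\emph{Right-isolated implies a flat piece.} This is the direction where continuity is needed. Suppose $(x,x+\ep)\cap I=\varnothing$, and suppose for contradiction that $f(y)>f(x)$ for all $y>x$. Fix $y_0\in(x,x+\ep)$. Every point of $(x,y_0]$ is a non-increase point, so by the argument of part (a) each such point lies in an open interval on which $f$ is constant. Hence $f$ is locally constant on the open interval $(x,x+\ep)$. A locally constant function on a connected set is constant, so $f$ is constant on $(x,x+\ep)$. By continuity, $f(x)=f(y_0)$, which is the desired contradiction. So some $y>x$ satisfies $f(y)=f(x)$.

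The only subtle point in (b) is this local-to-global step on an open interval together with the use of continuity at the endpoint $x$. Without continuity it fails: $f$ could jump at $x$ and be flat just after.

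For part (c), suppose $x\in I$ were both left- and right-isolated. By (b) there would exist $z<x<y$ with $f(z)=f(x)=f(y)$. But $x\in I$ requires $f(z)<f(y)$, a contradiction.
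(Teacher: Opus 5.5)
Your proof is correct and is essentially the paper's argument: (a) via the flat interval $[z,y]$ around a non-increase point, (b) via local constancy of $f$ on the gap $(x,x+\varepsilon)$, connectedness of that interval, and continuity at $x$ (the paper phrases the connectedness step with the level set $U=\{u: f(u)=f((x+y)/2)\}$), and (c) as an immediate consequence of (b). One small remark: the contradiction framing in your converse direction of (b) is unnecessary, since your argument directly produces $f(y_0)=f(x)$.
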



\begin{proof}
Part \eqref{ptinc.a}. If $x\in\R\setminus I$, then there exists $\delta>0$ such that $f$ is constant on $(x-\delta,x+\delta)$.  
Then every point $y$ in $(x-\delta,x+\delta)$ also belongs to $\R\setminus I$.  
This proves $I$ is closed.\smallskip

Part \eqref{ptinc.b}. We prove the first claim, the second being symmetric. Assume first that there exists $y>x$ with $f(y)=f(x)$. Since $f$ is nondecreasing, $f$ must be constant on the interval $[x,y]$. In particular there is a right neighborhood of $x$ on which $f$ is constant, so no point of that neighborhood belongs to $I$. Thus, $x$ is right-isolated in $I$.

Conversely, assume $x\in I$ is right-isolated. Then there exists $y>x$ such that
$(x,y)\cap I=\varnothing$.
Thus every point $u\in (x,y)$ is not a point of increase, so for each such $u$ there is an open interval $O_u\subset(x,y)$ containing $u$ on which $f$ is constant. Now define the set 
\[U=\{u\in(x,y):f(u)=f((x+y)/2)\}.\]
Since for each $u\in U$, $O_u\subset U$, we have that $U$ is open. On the other hand, the continuity of $f$ implies that $(x,y)\setminus U$ is also open.
Since $(x,y)=U\cup\bigl((x,y)\setminus U)$ and $(x,y)$ is connected, it must be that one of the two sets is empty.
However, $(x+y)/2\in U$. Therefore, it must be that $U=(x,y)$ and $f$ is constant on this interval. By the continuity of $f$, it is constant on $[x,y]$, i.e.\ $f(y)=f(x)$. Part \eqref{ptinc.b} is proved.\smallskip

Part \eqref{ptinc.c}. Suppose $x\in I$ is an isolated point in $I$. Then by part \eqref{ptinc.b}, there exist $z<x$ and $y>x$ such that $f(z)=f(x)=f(y)$. This implies $x\not\in I$, a contradiction.
\end{proof}

\subsection{Sierpi\'nski's theorem}\label{sec:sierpinski}

\begin{thm}\label{thm:sierpinski-interval}
Let $a<b$ and let $\mathbb I\subset \mathbb N$ be nonempty. Suppose that $[a,b]=\bigcup_{i\in\mathbb I} F_i$,
where the sets $F_i$ are pairwise disjoint, nonempty, and closed. Then $\mathbb I$ is a singleton.
\end{thm}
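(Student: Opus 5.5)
Since $[a,b]$ is compact and the index set is countable, the plan is the classical Baire-category argument. Suppose for contradiction that $\mathbb I$ has at least two elements. The first step is to dispose of the finite case. If $\mathbb I$ is finite with at least two elements, then $F_{i_0}$ and $\bigcup_{i\ne i_0}F_i$ are two disjoint nonempty closed sets whose union is $[a,b]$. This contradicts the connectedness of $[a,b]$. So from now on we may assume $\mathbb I$ is countably infinite.

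In the infinite case, let $F=\bigcup_{i}\partial F_i$, where the boundary is taken relative to $[a,b]$. The claim is that $F$ is a nonempty closed subset of $[a,b]$. It is nonempty because, by connectedness, no $F_i$ is clopen, so each $\partial F_i\neq\varnothing$. For closedness, we use the complement: $[a,b]\setminus F=\bigcup_i \operatorname{int}F_i$. This set is open, because every point lies in some $F_i$, and a point not on $\partial F_i$ lies in $\operatorname{int}F_i$. Hence $F$ is closed and compact, and by the Baire category theorem $F$ is a Baire space.

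Now $F=\bigcup_i(F\cap F_i)$ is a countable union of closed sets, so Baire gives an index $j$ and an open interval $J$ with $\varnothing\neq J\cap F\subset F_j$. The key step, and the one I expect to be the main obstacle, is to show that $J$ meets some $\partial F_k$ with $k\ne j$; this would contradict $J\cap F\subset F_j$ and the disjointness of the $F_i$.

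To find such a $k$, pick $p\in J\cap\partial F_j$, which exists since $J\cap F\subset F_j$ and interior points of $F_j$ are not in $F$. Since $p$ is a boundary point, there are points of $J\setminus F_j$ arbitrarily close to $p$; choose one, $q$, with $[p,q]$ (or $[q,p]$) contained in $J$. Then $q\in F_k$ for some $k\neq j$. Because $J\cap F\subset F_j$, we have $q\in\operatorname{int}F_k$. Let $c$ be the endpoint nearest $p$ of the component interval of $\operatorname{int}F_k$ containing $q$. This $c$ lies between $p$ and $q$, so $c\in J$. Moreover $c\in F_k$ by closedness, and $c\in\partial F_k$ since $c$ is not an interior point of $F_k$. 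Thus $c\in J\cap F\setminus F_j$, which is the desired contradiction, and so $\mathbb I$ is a singleton.
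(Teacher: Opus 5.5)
Your proof is correct, but it takes a different route from the paper's. The paper avoids the Baire category theorem. It shows that any closed interval $I$ meeting at least two of the sets contains a closed subinterval $I'$ that still meets at least two of them but misses $F_{j_0}$, where $j_0$ is the smallest index of a set met by $I$. To do this, take $s\in I\cap F_{j_1}$ and the point $t\in I\cap F_{j_0}$ nearest to $s$, then stop at some $r$ between them that lies off $F_{j_1}$. Iterating gives nested intervals whose minimal indices $j_n$ strictly increase. A point of $\bigcap_n I_n$ lies in some $F_k$, which would force $j_n\le k$ for all $n$. That argument uses only nested compact intervals, closedness, and the well-ordering of $\mathbb N$, so it is fully self-contained.

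Yours is the classical category proof. You pass to the closed set $F$ of boundary points and use Baire to find a window $J$ in which $F$ sees only $F_j$. You then use the order structure of the line, via the endpoint of the component of $\mathrm{int}\,F_k$ containing $q$, to produce a boundary point of a different set inside $J$. This makes the role of countability transparent and is the one-dimensional analogue of the standard category proof for general continua, at the cost of invoking Baire.

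Two small steps deserve a line each:
\begin{itemize}
\item The identity $[a,b]\setminus F=\bigcup_i\mathrm{int}\,F_i$ also needs $\partial F_k\subset F_k$ together with disjointness, to see that $\mathrm{int}\,F_i$ misses $\partial F_k$ for $k\ne i$.
\item You have $c\ne p$ because $c\in F_k$ while $p\in F_j$. So $c$ is interior to $[a,b]$, and maximality of the component then gives $c\notin\mathrm{int}\,F_k$.
\end{itemize}
Finally, your separate treatment of finite $\mathbb I$ is unnecessary, since the Baire argument applies verbatim in that case.
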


\begin{proof}
Suppose, for contradiction, that $\mathbb I$ has cardinality at least two.
We first prove the following claim. Let $I\subset [a,b]$ be a nonempty closed interval and let $\mathbb J\subset \mathbb I$ have cardinality at least two such that
$I\cap F_j \neq \varnothing$ for all $j\in \mathbb J$ 
and $I\cap F_j = \varnothing$ for all $j\in \mathbb I\setminus \mathbb J$.
Then there exist a closed interval $I'\subset I$ and $\mathbb J'\subset \mathbb J$, still of cardinality at least two, such that
\[
\inf \mathbb J \notin \mathbb J', \qquad
I'\cap F_j \neq \varnothing \ \ \forall j\in \mathbb J', \qquad
I'\cap F_j = \varnothing \ \ \forall j\in \mathbb I\setminus \mathbb J'.
\]

Indeed, let $j_0=\inf \mathbb J$ and choose $j_1\in \mathbb J\setminus\{j_0\}$. 
Pick $s\in I\cap F_{j_1}$ 
and let $t\in I\cap F_{j_0}$ minimize $|t-s|$. 
Then $t\neq s$.
Assume $s<t$, the case $s>t$ being similar. 
Then $[s,t)\subset I\setminus F_{j_0}$.
Since $t\notin F_{j_1}$, we may choose $r\in (s,t)\setminus F_{j_1}$. Set $I'=[s,r]\subset I$ and $\mathbb J'=\{j\in \mathbb J: I'\cap F_j\neq \varnothing\}\subset\mathbb J$. Then $I'\cap F_{j_0}=\varnothing$, so $j_0\notin \mathbb J'$, and $j_1\in \mathbb J'$, since $s\in I\cap F_{j_1}$. 
Since $r\in I'\subset I$ and $r\notin F_{j_1}$, we have $r\in F_{j_2}$ for some $j_2\in \mathbb J\setminus\{j_1\}$, 
hence $j_2\in \mathbb J'$.
Thus, $\mathbb J'$ has cardinality at least two, and the claim follows.

Starting from $I_1=[a,b]$ and $\mathbb J_1=\mathbb I$, apply the claim inductively to obtain nested nonempty closed intervals $I_1\supset I_2\supset\cdots$ and nested index sets $\mathbb J_1\supset \mathbb J_2\supset\cdots$, each with cardinality at least two and such that $j_n=\inf \mathbb J_n\notin \mathbb J_{n+1}$ for all $n\in\N$. In particular, $j_n$ is strictly increasing with $n$.

Since the intervals $I_n$ are nested and nonempty, their intersection is nonempty; pick $x$ in this intersection. Then $x\in F_k$ for some $k\in \mathbb I$. For all $n\in\N$, $x\in I_n$ implies $k\in \mathbb J_n$, so $j_n\le k$. This contradicts the fact that $j_n$ is strictly increasing.
\end{proof}

\section{Proofs of auxiliary results}\label{sec:auxlems}

\begin{lem}\label{lm:geo-ordering}
The following holds on a full $\P$-probability event. 
For all $s$, $x<y<z$, and 
$\tht$ in $\R$, 
and all $\sigg\in\{-,+\}$ and $S\in\{L,M,R\}$:
\begin{align}
\geo\from{(x,s)}\dir{R}{\tht}{\sig}\preceq\geo\from{(y,s)}\dir{L}{\tht}{\sig}
\preceq\geo\from{(y,s)}\dir{M}{\tht}{\sig}
\preceq\geo\from{(y,s)}\dir{R}{\tht}{\sig}
\preceq\geo\from{(z,s)}\dir{L}{\tht}{\sig}
\quad\text{and}\quad
\geo\from{(x,s)}\dir{S}{\tht}{-}\preceq\geo\from{(x,s)}\dir{S}{\tht}{+}.\label{geo:mono-aux}
\end{align}
\end{lem}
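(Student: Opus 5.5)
The plan is to derive both families of inequalities in \eqref{geo:mono-aux} from the point-to-line extremality \eqref{p2lleftmost}--\eqref{p2lrightmost}, combined with a paths-crossing argument. Throughout I work on the full-probability event on which the update rule \eqref{eternal}, the monotonicity \eqref{W:mono}, the continuity of geodesics, and the extremality properties of \cite[Theorem 5.9]{Bus-Sep-Sor-24} all hold.

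For the middle chain at a single point $(y,s)$, the inequalities $\geo\from{(y,s)}\dir{L}{\tht}{\sig}\preceq\geo\from{(y,s)}\dir{M}{\tht}{\sig}\preceq\geo\from{(y,s)}\dir{R}{\tht}{\sig}$ follow directly from \eqref{p2lleftmost}--\eqref{p2lrightmost} with $r=s$. At every time $t$, each $W^{\tht\sig}$-geodesic from $(y,s)$ passes through a maximizer of $\mathcal L(y,s;\cdot,t)+W^{\tht\sig}(\cdot,t;x_0,s_0)$, by \eqref{geomaximizes}. The $L$ and $R$ geodesics pass through the leftmost and rightmost maximizers respectively. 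So every $W^{\tht\sig}$-geodesic lies between them at each time.

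For the comparison of $(x,s)$ with $(y,s)$ when $x<y$, suppose $\geo\from{(x,s)}\dir{R}{\tht}{\sig}(t)>\geo\from{(y,s)}\dir{L}{\tht}{\sig}(t)$ for some $t>s$. By continuity, the two paths cross at some intermediate time $u\in(s,t)$ at a common point $q$. Splicing yields two new $W^{\tht\sig}$-geodesics, since \eqref{rec} is preserved under concatenation at a common point by the cocycle property and \eqref{L<W}. The path following $\geo\from{(y,s)}\dir{L}{\tht}{\sig}$ up to $q$ and then $\geo\from{(x,s)}\dir{R}{\tht}{\sig}$ afterward contradicts \eqref{p2lleftmost} at time $t$, because it reaches a maximizer strictly to the left of $\geo\from{(y,s)}\dir{L}{\tht}{\sig}(t)$. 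The link $\geo\from{(y,s)}\dir{R}{\tht}{\sig}\preceq\geo\from{(z,s)}\dir{L}{\tht}{\sig}$ is the same statement with $(y,z)$ in place of $(x,y)$.

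For the sign ordering $\geo\from{(x,s)}\dir{S}{\tht}{-}\preceq\geo\from{(x,s)}\dir{S}{\tht}{+}$, take $S=L$; the case $S=R$ is symmetric, and $S=M$ is the delicate one. Write $a<b$ for the leftmost $W^{\tht+}$-maximizer and the leftmost $W^{\tht-}$-maximizer at time $t$, and suppose toward a contradiction that $b>a$. The difference $W^{\tht-}(\cdot,t;x_0,s_0)-W^{\tht+}(\cdot,t;x_0,s_0)$ is nonincreasing, by the cocycle property and \eqref{W:mono}. Comparing the two objective functions at $a$ and $b$ then forces $a$ to also be a $W^{\tht-}$-maximizer, which contradicts the minimality of $b$. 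This gives the inequality at each fixed time; since the $L$-geodesics are the leftmost ones at every time by \eqref{p2lleftmost}, the pathwise inequality follows.

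The step I expect to be the main obstacle is the $M$-geodesics, because they are defined only by the conventions in Remark \ref{rk:Mgeo} and carry no extremality of their own. For the chain through $(y,s)$, the sandwich argument above already handles them. For $\geo\from{(x,s)}\dir{M}{\tht}{-}\preceq\geo\from{(x,s)}\dir{M}{\tht}{+}$, I would use the immediate-splitting structure from $\Omega_6$ and the convention that $M$ equals $L$ for the minus sign and $R$ for the plus sign when only two geodesics exist. If both middle geodesics are distinct and they cross, then splicing at the crossing point produces a fourth immediately-separating geodesic or a bubble, and I would rule this out using \eqref{no4stars} and \eqref{no bubble}. If that fails, I would fall back on a crossing argument against the already-ordered $L$ and $R$ geodesics.
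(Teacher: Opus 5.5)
Your sandwich $\geo\from{(y,s)}\dir{L}{\tht}{\sig}\preceq\geo\from{(y,s)}\dir{M}{\tht}{\sig}\preceq\geo\from{(y,s)}\dir{R}{\tht}{\sig}$ is correct. So is your sign ordering for $S\in\{L,R\}$, via the nonincreasing difference $W^{\tht-}-W^{\tht+}$ and leftmost/rightmost maximizers; it is a self-contained replacement for the paper's citation of \cite[(5.22)]{Bus-Sep-Sor-24}.

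The spatial step $\geo\from{(x,s)}\dir{R}{\tht}{\sig}\preceq\geo\from{(y,s)}\dir{L}{\tht}{\sig}$, however, fails as written. Under your assumption, the splice that follows $\geo\from{(y,s)}\dir{L}{\tht}{\sig}$ to $q$ and then $\geo\from{(x,s)}\dir{R}{\tht}{\sig}$ is, at time $t$, strictly to the \emph{right} of $\geo\from{(y,s)}\dir{L}{\tht}{\sig}(t)$, not the left. The other splice, from $(x,s)$, ends strictly to the left of $\geo\from{(x,s)}\dir{R}{\tht}{\sig}(t)$. Neither contradicts extremality.

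No repair using \eqref{p2lleftmost}--\eqref{p2lrightmost} alone is possible. Applied with $r=u$, they say the two time-$t$ positions are the leftmost and the rightmost maximizers of the same function $\mathcal L(q;\cdot,t)+W^{\tht\sig}(\cdot,t;x_0,s_0)$, which is exactly the crossing picture. What excludes it is uniqueness of the geodesic past an interior point. Any of the following supplies it:
\begin{itemize}
\item the coalescence \eqref{geo:coal1};
\item the restart property \eqref{geo:restart};
\item the $\Omega_6$ property that distinct $W^{\tht\sig}$-geodesics from one point separate immediately. Your splice from $(y,s)$ violates this, since it agrees with $\geo\from{(y,s)}\dir{L}{\tht}{\sig}$ on $[s,u]$ but differs from it at time $t$.
\end{itemize}
With \eqref{geo:coal1}, the whole first chain is the paper's one-line argument: the paths are strictly ordered until they first meet, and coincide afterwards.

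For the middle geodesics, write $L^\pm,M^\pm,R^\pm$ for $\geo\from{(x,s)}\dir{S}{\tht}{\pm}$. Your sketch here is too vague, and the mechanism is off: a crossing at a later time does not, by splicing, produce a fourth geodesic separating immediately at $(x,s)$. The argument needs the following steps.
\begin{itemize}
\item If $M^-=L^-$, conclude from $L^-\preceq L^+\preceq M^+$. If $M^+=R^+$, conclude from $M^-\preceq R^-\preceq R^+$.
\item Otherwise, by the convention in Remark \ref{rk:Mgeo}, both signs have three distinct, pairwise immediately separating geodesics. Then \eqref{no4stars} forces $L^-=L^+$, $M^-=M^+$, $R^-=R^+$ on some initial interval $[s,s+\e]$.
\item Coalescence together with $L^-\preceq L^+$ gives $M^-(r)\le M^+(r)$ for all large $r$.
\item Hence any time at which $M^-$ is strictly right of $M^+$ would force the two to split after their common initial segment and later reintersect. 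This is exactly what \eqref{no bubble} forbids.
\end{itemize}
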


\begin{proof}
The first four inequalities are a consequence of the definitions of the geodesics $\geo\from{(x,s)}\dir{S}{\tht}{\sig}$, $S\in\{L,M,R\}$, the coalescence \eqref{geo:coal1}, and the continuity \eqref{geo:cont} of the paths. For $S\in\{L,R\}$, the last inequality in \eqref{geo:mono-aux} follows from \cite[(5.22)]{Bus-Sep-Sor-24}. We prove the last inequality for the case $S=M$. We distinguish three cases.

If $\geo\from{(x,s)}\dir{M}{\tht}{-}=\geo\from{(x,s)}\dir{L}{\tht}{-}$, then the already proved inequalities give $\geo\from{(x,s)}\dir{M}{\tht}{-}=\geo\from{(x,s)}\dir{L}{\tht}{-}\preceq\geo\from{(x,s)}\dir{L}{\tht}{+}\preceq\geo\from{(x,s)}\dir{M}{\tht}{+}$. Similarly, if $\geo\from{(x,s)}\dir{M}{\tht}{+}=\geo\from{(x,s)}\dir{R}{\tht}{+}$, then $\geo\from{(x,s)}\dir{M}{\tht}{-}\preceq\geo\from{(x,s)}\dir{R}{\tht}{-}\preceq\geo\from{(x,s)}\dir{R}{\tht}{+}=\geo\from{(x,s)}\dir{M}{\tht}{+}$.

Now suppose $\geo\from{(x,s)}\dir{L}{\tht}{-}\precneq\geo\from{(x,s)}\dir{M}{\tht}{-}$ and $\geo\from{(x,s)}\dir{M}{\tht}{+}\precneq\geo\from{(x,s)}\dir{R}{\tht}{+}$. In this case, by our convention when defining the geodesics \eqref{geodesics}, it must be that $\geo\from{(x,s)}\dir{L}{\tht}{-}\precneq\geo\from{(x,s)}\dir{M}{\tht}{-}\precneq\geo\from{(x,s)}\dir{R}{\tht}{-}$ and $\geo\from{(x,s)}\dir{L}{\tht}{+}\precneq\geo\from{(x,s)}\dir{M}{\tht}{+}\precneq\geo\from{(x,s)}\dir{R}{\tht}{+}$. Then by \eqref{no4stars}, there exists a $t>s$ such that 
\[\geo\from{(x,s)}\dir{L}{\tht}{-}(r)=\geo\from{(x,s)}\dir{L}{\tht}{+}(r)<\geo\from{(x,s)}\dir{M}{\tht}{-}(r)=\geo\from{(x,s)}\dir{M}{\tht}{+}(r)<\geo\from{(x,s)}\dir{R}{\tht}{-}(r)=\geo\from{(x,s)}\dir{R}{\tht}{+}(r)\]
for all $r\in[s,t]$. Then \eqref{no bubble} says that if $\geo\from{(x,s)}\dir{M}{\tht}{-}$ and $\geo\from{(x,s)}\dir{M}{\tht}{+}$ ever split, they will never reintersect. But the coalescence \eqref{geo:coal1}-\eqref{geo:coal2} and the ordering $\geo\from{(x,s)}\dir{L}{\tht}{-}\preceq\geo\from{(x,s)}\dir{L}{\tht}{+}$ imply that $\geo\from{(x,s)}\dir{M}{\tht}{-}(r)\le\geo\from{(x,s)}\dir{M}{\tht}{+}(r)$ for all sufficiently large $r$. Therefore, we must have  $\geo\from{(x,s)}\dir{M}{\tht}{-}\preceq\geo\from{(x,s)}\dir{M}{\tht}{+}$.
\end{proof}

\begin{proof}[Proof of Lemma \ref{lm:shockintallshocks}]
The inclusion $\subset$ comes from the definition of a $\tht\sigg$ shock interface. For the other direction, take $(x,s)\in\NU_1^{\tht\sig}$. Then, by \eqref{age}, $\geo\from{(x,s)}\dir{L}{\tht}{\sig}$ and $\geo\from{(x,s)}\dir{R}{\tht}{\sig}$ immediately separate and then reunite at time $s+\age^{\tht\sig}(x,s)$. Take any $t$ strictly between $s$ and $s+\age^{\tht\sig}(x,s)$. Then at this time, the two geodesics are still separated. Take $y$ such that $(y,t)$ is strictly between the two geodesics. Take any $\tht\sigg$ shock interface from $(y,t)$. By \eqref{no-intersection}, this interface cannot touch any of the two geodesics at a time $>s$ and, by continuity, has to pass through their starting point $(x,s)$.  Thus, $(x,s)$ lies on a $\tht\sigg$ shock interface.
\end{proof}

\begin{rmk}\label{rk:age}
Since the above construction produces a $\tht\sigg$ shock interface through $(x,s)$ for any $t\in(s,\,s+\age^{\tht\sig}(x,s))$, and since, by \eqref{shocksplitsgeo}, any such interface must remain strictly between $\geo\from{(x,s)}\dir{L}{\tht}{\sig}$ and $\geo\from{(x,s)}\dir{R}{\tht}{\sig}$ and cannot pass through their coalescence point at time $s+\age^{\tht\sig}(x,s)$, the $\tht\sigg$ shock at $(x,s)$ is said to have \emph{age} $\age^{\tht\sig}(x,s)$. See Figure \ref{fig:shock}, where the red path depicts the ``oldest'' shock interface, originating at the coalescence point and passing through $(x,s)$.
\end{rmk}

\begin{proof}[Proof of Lemma \ref{lm:geosplitsshocks}]
Take $r\in(s,t)$.
  By \eqref{geomaximizes}, $(y,t)=\gamma(t)$ maximizes the supremum 
  \[\sup_{y\in\R}\{\mathcal L(\gamma(r);y,t)+W^{\tht\sig}(y,t;\gamma(t))\}.\]
This says that both suprema in the definition \eqref{ddef} of $d_{\gamma(t)}(\gamma(r))$ (with boundary condition $f(y)=W^{\tht\sig}(y,t;\gamma(t))$ are achieved at $y$ such that $(y,t)=\gamma(t)$. This implies that $d_{\gamma(t)}(\gamma(r)) = 0$, which by \eqref{CIs} and the definition \eqref{Upsilondef} gives 
\[\Upsilon_{\gamma(t)}\dir{L}{\tht}{\sig}(r)\le\gamma(r)\le\Upsilon_{\gamma(t)}\dir{R}{\tht}{\sig}(r).\]
By \eqref{no-intersection}, $\tht\sigg$ shock interfaces cannot intersect $W^{\tht\sig}$-geodesics. This turns the weak inequalities into strict ones and proves the lemma.
\end{proof}

\begin{proof}[Proof of Theorem \ref{th:convanydir}]
Take $\Omega_0$ to be the intersection of $\Omega_{15}$ from Section \ref{sec:cif+shock} (page \pageref{Om15}) and the full $\P$-probability event in Theorem 1.18 in \cite{Bat-Gan-Ham-22}. Fix $\w \in \Omega_0$. 
There exists an $S' \in \{L,M,R\}$ such that $S_n = S'$ for infinitely many $n$. 
Restricting to this subsequence, we may assume without loss of generality that $S_n = S'$ for all $n$. 
By the coalescence \eqref{geo:coal1} and the continuity of the paths, we have that for any integers $m$ and $n$, either  
$\geo\from{(x_n,s_n)}\dir{S'}{\tht}{\sig}\preceq\geo\from{(x_m,s_m)}\dir{S'}{\tht}{\sig}$ or $\geo\from{(x_n,s_n)}\dir{S'}{\tht}{\sig}\succeq\geo\from{(x_m,s_m)}\dir{S'}{\tht}{\sig}$.
Therefore, we can pass to a subsequence along which $\geo\from{(x_n,s_n)}\dir{S'}{\tht}{\sig}$ is monotone. This implies that the limit of $\geo\from{(x_n,s_n)}\dir{S'}{\tht}{\sig}(r)$, as $n\to\infty$, exists (in $[-\infty,\infty]$) for each $r>s$. Denote this limit by $\gamma(r)$. We show that
\begin{align}\label{aux3226}
\forall r>s:\ 
\geo\from{(x,s)}\dir{L}{\tht}{\sig}(r)
\le\gamma(r)
\le\geo\from{(x,s)}\dir{R}{\tht}{\sig}(r).
\end{align}

Take a sequence $\underline x_m$ that is strictly increasing to $x$ and a sequence $\overline x_m$ that strictly decreases to $x$. We inductively define a sequence of times $r_m$ that strictly increases to $s$. 

By the continuity of the $\tht\sigg$ shock interfaces,
there exists a time $r_1<s$ such that $\Upsilon\from{(\underline x_1,s)}\dir{L}{\tht}{\sig}(r_1)<\Upsilon\from{(\overline x_1,s)}\dir{R}{\tht}{\sig}(r_1)$. Then the coalescence \eqref{Itree} implies $\Upsilon\from{(\underline x_1,s)}\dir{L}{\tht}{\sig}\big|_{[r_1,s]}
\prec\Upsilon\from{(\overline x_1,s)}\dir{R}{\tht}{\sig}\big|_{[r_1,s]}$.

Using again the coalescence \eqref{Itree} and the continuity of the $\tht\sigg$ shock interfaces, we can inductively find times $r_m>r_{m-1}$ such that $s-1/m<r_m<s$ and  
\begin{align}\label{3227}
\Upsilon\from{(\underline x_{m-1},s)}\dir{L}{\tht}{\sig}\big|_{[r_m,s]}\prec\Upsilon\from{(\underline x_m,s)}\dir{L}{\tht}{\sig}\big|_{[r_m,s]}\prec
\Upsilon\from{(\overline x_m,s)}\dir{R}{\tht}{\sig}\big|_{[r_m,s]}\prec\Upsilon\from{(\overline x_{m-1},s)}\dir{R}{\tht}{\sig}\big|_{[r_m,s]},
\end{align}
for all $m\ge2$.
Abbreviate $(\underline z_m,r_m)=\Upsilon\from{(\underline x_m,s)}\dir{L}{\tht}{\sig}(r_m)$ and $(\overline z_m,r_m)=\Upsilon\from{(\overline x_m,s)}\dir{R}{\tht}{\sig}(r_m)$.

By \eqref{shocksplitsgeo}, $\geo\from{(\underline z_{m},r_{m})}\dir{L}{\tht}{\sig}$ remains left of $\Upsilon\from{(\underline x_{m},s)}\dir{L}{\tht}{\sig}$ and $\geo\from{(\overline z_{m},r_{m})}\dir{R}{\tht}{\sig}$ goes right of $\Upsilon\from{(\overline x_{m},s)}\dir{L}{\tht}{\sig}$. Therefore, by the middle inequality in \eqref{3227}, the former geodesic is strictly left of the latter on $[r_m,s]$ and, by continuity, the region strictly between the two is an open subset of $\R^2$. Since $\underline x_{m}<x<\overline x_{m}$, this region contains $(x,s)$. Thus, there exists $n_m$ such that $(x_n,s_n)$ is in this region, for all $n\ge n_m$. By the coalescence \eqref{geo:coal1} and the continuity of the paths, we have $\geo\from{(\underline z_{m},r_{m})}\dir{L}{\tht}{\sig}\preceq\geo\from{(x_n,s_n)}\dir{S'}{\tht}{\sig}
\preceq\geo\from{(\overline z_{m},r_{m})}\dir{R}{\tht}{\sig}$,
for all $n\ge n_m$. Taking $n\to\infty$, we get, for all integers $m\ge1$ and all $r>s$,
\begin{align}\label{aux3249}
\geo\from{(\underline z_{m},r_{m})}\dir{L}{\tht}{\sig}(r)\le\gamma(r)
\le\geo\from{(\overline z_{m},r_{m})}\dir{R}{\tht}{\sig}(r).
\end{align}

On the other hand, the first and last inequalities in \eqref{3227} and the duality \eqref{no-intersection} imply that 
\[(\underline x_{m-1},s)\le \geo\from{(\underline z_{m},r_{m})}\dir{L}{\tht}{\sig}(s)\quad\text{and}\quad
\geo\from{(\overline z_{m},r_{m})}\dir{R}{\tht}{\sig}(s)\le(\overline x_{m-1},s).\]
Then, \eqref{geo:restart} and the ordering \eqref{geo:mono} give, for any $r>s$,
\[\geo\from{(\underline x_{m-1},s)}\dir{L}{\tht}{\sig}(r)\le \geo\from{(\underline z_{m},r_{m})}\dir{L}{\tht}{\sig}(r)\quad\text{and}\quad
\geo\from{(\overline z_{m},r_{m})}\dir{R}{\tht}{\sig}(r)\le\geo\from{(\overline x_{m-1},s)}\dir{R}{\tht}{\sig}(r).\]
Combining this with \eqref{aux3249}, then taking $m\to\infty$ and using the limits \eqref{geo:lim} proves \eqref{aux3226}. 

Using the monotonicity of $\geo\from{(x_n,s_n)}\dir{S'}{\tht}{\sig}$, the restart property \eqref{geo:restart}, and the limits \eqref{geo:lim}, we obtain that for any $r>s$, the limit $\gamma|_{[r,\infty)}$ of $\geo\from{(x_n,s_n)}\dir{S'}{\tht}{\sig}\big|_{[r,\infty)}$ is a $W^{\tht\sig}$-geodesic. In particular, $\gamma$ is continuous on $(s,\infty)$. By \eqref{aux3226}, $\gamma(r)\to(x,s)$ as $r\searrow s$, so defining $\gamma(s)=(x,s)$ extends $\gamma$ to a continuous space-time path on $[s,\infty)$.
Since $\gamma|_{[r,\infty)}$ is a $W^{\tht\sig}$-geodesic for every $r>s$, and both $\mathcal L$ and $W^{\tht\sig}$ are continuous, the defining property of a $W^{\tht\sig}$-geodesic (see \eqref{rec}) extends to the entire path $\gamma:[s,\infty)\to\R^2$. Hence $\gamma=\geo\from{(x,s)}\dir{S}{\tht}{\sig}$ for some $S\in\{L,M,R\}$.
Furthermore, by Dini's theorem \cite[Theorem 7.13]{Rud-76}, the convergence of $\geo\from{(x_n,s_n)}\dir{S'}{\tht}{\sig}$ to $\gamma=\geo\from{(x,s)}\dir{S}{\tht}{\sig}$ is uniform on $[r,t]$, for any $t>r>s$. 

Fix $t>r>s$. Take $K>0$ large enough so that $\geo\from{(x,s)}\dir{S}{\tht}{\sig}(r)$ and $\geo\from{(x,s)}\dir{S}{\tht}{\sig}(t)$ are in $(-K,K)\times\{r\}$ and $(-K,K)\times\{t\}$, respectively.
Then the same holds for $\geo\from{(x_n,s_n)}\dir{S'}{\tht}{\sig}(r)$ and $\geo\from{(x_n,s_n)}\dir{S'}{\tht}{\sig}(t)$, once $n$ is large enough.
Since $\{(a,r,b,t)\in\R^4:a,b\in[-K,K]\}$ is a compact subset of $\{(a,s',b,s'')\in\R^4:s'<s''\}$, we get from  Theorem 1.18 in \cite{Bat-Gan-Ham-22} that there exists $\e > 0$ such that for any 
$a_1, a_2, b_1, b_2 \in [-K,K]$, 
any geodesics from $(a_i,r)$ to $(b_i,t)$, $i=1,2$, that remain within distance $\e$ of each other must intersect. 
Applying this to 
$\bigl(\geo\from{(x_n,s_n)}\dir{S'}{\tht}{\sig}(r),
\geo\from{(x_n,s_n)}\dir{S'}{\tht}{\sig}(t)\bigr)$
and
$\bigl(\geo\from{(x,s)}\dir{S}{\tht}{\sig}(r),
\geo\from{(x,s)}\dir{S}{\tht}{\sig}(t)\bigr)$,
we find that if
\begin{align}\label{1156}
\sup_{t' \in [r,t]} 
\bigl|\geo\from{(x_n,s_n)}\dir{S'}{\tht}{\sig}(t')
- \geo\from{(x,s)}\dir{S}{\tht}{\sig}(t')\bigr|
< \e,
\end{align}
then $\geo\from{(x_n,s_n)}\dir{S'}{\tht}{\sig}$ 
intersects $\geo\from{(x,s)}\dir{S}{\tht}{\sig}$ 
at some $t' \in [r,t]$. 
By the coalescence property \eqref{geo:coal1}, 
the two geodesics then coincide on $[t',\infty)$, and hence on $[t,\infty)$.
By the aforementioned uniform convergence, there exists $n_0$ such that 
\eqref{1156} holds for all $n \ge n_0$. 
The theorem is proved.
\end{proof}

\begin{proof}[Proof of Lemma \ref{lm:shocksandwich}]
The ordering claim is trivial for $r=s$ since all interfaces emanate from $(x,s)$. Suppose $r<s$. 
Abbreviate $\gamma^S=\geo\from{\tau(r)}\dir{S}{\tht}{\sig}$, $S\in\{L,R\}$. Since these are both geodesics, \eqref{geomaximizes} says that
\[\sup_{y\in\R}\bigl\{\mathcal L(\tau(r);y,s)+W^{\tht\sig}(y,s;\tau(r))\bigr\}\]
is maximized when $(y,s)\in\{\gamma^L(s),\gamma^R(s)\}$.
By \eqref{shocksplitsgeo} and the continuity of the paths, $\gamma^L(s)\le(x,s)\le\gamma^R(s)$. Thus, the two suprema in the definition \eqref{ddef} of $d_{(x,s)}(\tau(r))$ are equal to the one in the above display and, consequently, $d_{(x,s)}(\tau(r))=0$. The inequalities \eqref{shocksmono} now follow from \eqref{CIs} and the definition \eqref{Upsilondef} of the left and right shock interfaces.

Directedness \eqref{shocksdir} follows from the ordering \eqref{shocksmono} and the directedness \eqref{Upsilondir}.
\end{proof}

\begin{proof}[Proof of Lemma \ref{lm:shockasymorder}]
The inequality \eqref{+<-} is trivially satisfied when $r=s$ because the interfaces start at $(x,s)$ and $(y,s)$ and $x<y$.
Suppose, for a contradiction, that there exists an $r<s$ such that $\tau^+(r)>\tau^-(r)$. Buy the continuity of the paths, the two interfaces must 
intersect at some $(u,t)$ with $t\in(r,s)$.
Take $z$ strictly between $\tau^-(r)$ and $\tau^+(r)$. By the ordering \eqref{geo:mono}, $\geo\from{(z,r)}\dir{L}{\tht}{-}(t)\le\geo\from{(z,r)}\dir{R}{\tht}{+}(t)$.
By \eqref{no-intersection} and the continuity of the paths, $\geo\from{(z,r)}\dir{L}{\tht}{-}(t)\ge\tau^-(t)$ and $\geo\from{(z,r)}\dir{R}{\tht}{+}(t)\le\tau^+(t)$. Now we have 
\[\tau^-(t)=\geo\from{(z,r)}\dir{L}{\tht}{-}(t)=\geo\from{(z,r)}\dir{R}{\tht}{+}(t)=\tau^+(t)=(u,t).\]
This contradicts \eqref{no-intersection}, which says that the relative interiors of geodesics and shock interfaces do not intersect. The inequality \eqref{+<-} is thus proved. The claim \eqref{+<=-} follows from \eqref{+<-} and the fact that, by \eqref{Itree}, $\sigg-$ shock interfaces all coalesce and $\sigg+$ shock interfaces all coalesce.
\end{proof}

\begin{proof}[Proof of Lemma \ref{lm:shocklims}]
\eqref{Icv} and the definition \eqref{Upsilondef} imply the existence of a sequence $z_n$ that strictly increases to $x$ and satisfies
\[\forall r\le t\,:\,\Upsilon_{(x,s)}\dir{L}{\tht}{\sig}(r)=\lim_{n\to\infty}\Upsilon_{(z_n,s)}\dir{R}{\tht}{\sig}(r).
\]
For $z$ strictly between $z_{n-1}$ and $z_{n+1}$ (which includes $z=z_n$), the continuity of the paths and the coalescence due to \eqref{Itree} gives that $\Upsilon_{(z_{n-1},s)}\dir{R}{\tht}{\sig}\preceq \Upsilon_{(z,s)}\dir{S}{\tht}{\sig}\preceq\Upsilon_{(z_{n+1},s)}\dir{R}{\tht}{\sig}$. This and the above convergence imply the first limit in \eqref{shocklims}. The second limit comes analogously.
\end{proof}

\begin{proof}[Proof of Lemma \ref{lm:intcoal}]
    Since the two interfaces coalesce at time $s<t\wedge t'$, they cannot be equal at any time in $(s,t\wedge t')$. By the continuity of the paths, one of the two interfaces has to be strictly to the left of the other, over this time interval. Renaming the interfaces, if necessary, we can assume that $\tau(r)<\tau'(r)$ for all $r\in(s,t\wedge t')$.

   Let $s_n$ be a sequence in $(s,t\wedge t')$ that strictly decreases to $s$. Let $x_n$ be such that for each $n$, $\tau(s_n)<x_n<\tau'(s_n)$. By Theorem \ref{th:convanydir}, there exists a subsequence, denoted again by $n$, such that $\geo\from{(x_n,s_n)}\dir{L}{\tht}{\sig}$ converges, in the overlap topology, to a $W^{\tht\sig}$-geodesic $\gamma$ out of $(x,s)$.
   
   By \eqref{no-intersection}, for each $n$, $\geo\from{(x_n,s_n)}\dir{L}{\tht}{\sig}$ has to remain strictly between $\tau$ and $\tau'$ over the time interval $[s_n,t\wedge t')$. Consequently, the limiting geodesic $\gamma$ has to be weakly between $\tau$ and $\tau'$ on the time interval $[s,t\wedge t')$. Applying \eqref{no-intersection} once again implies that $\gamma$ is in fact strictly between $\tau$ and $\tau'$, on $(s,t\wedge t')$. We have proved the existence of a geodesic $\gamma$ that goes strictly between $\tau$ and $\tau'$. We next prove its uniqueness.

   By \eqref{shocksplitsgeo}, $\geo\from{(x,s)}\dir{R}{\tht}{\sig}$ goes strictly right of $\tau'$ on the interval $(s,t\wedge t')$. Similarly, $\geo\from{(x,s)}\dir{L}{\tht}{\sig}$ goes strictly left of $\tau$ on the interval $(s,t\wedge t')$. Consequently, $\gamma$ is different from both $\geo\from{(x,s)}\dir{S}{\tht}{\sig}$, $S\in\{L,R\}$, and, since $\gamma$ is $W^{\tht\sig}$-geodesic, we are left with the unique remaining option,  $\gamma=\geo\from{(x,s)}\dir{M}{\tht}{\sig}$.

   Lastly, if there were three distinct $\tht\sigg$ shock interfaces $\tau\preceq\tau'\preceq\tau''$ all coalescing exactly at $(x,s)$, then we would have one geodesic out of $(x,s)$ that goes strictly between $\tau$ and $\tau'$ and another that would go strictly between $\tau'$ and $\tau''$, contradicting the uniqueness, we just proved, of the geodsic that goes strictly between $\tau$ and $\tau''$.
\end{proof}

\begin{proof}[Proof of Lemma \ref{lm:geocoal}]
  Since the two geodesics coalesce at time $s>r\vee r'$, they cannot be equal at any time in $(r\vee r',s)$. By the continuity of the paths, one of the two geodesics has to be strictly left of the other over this time interval. Renaming the geodesics, if necessarly, we can assume that $\gamma(t)<\gamma'(t)$ for all $t\in(r\vee r',s)$.

  Let $s_n\in(r\vee r',s)$ be strictly increasing to $s$. 
  Since $\gamma(s_n)<\gamma'(s_n)$, Lemma \ref{lm:geosplitsshocks} and \eqref{no-intersection} imply 
  \begin{align}\label{2276}
  \gamma(t)<\Upsilon\from{\gamma(s_n)}\dir{R}{\tht}{\sig}(t)<\gamma'(t)\quad\forall t\in(r\vee r',s_n).
  \end{align}
  
  Applying \eqref{2276} with $t=s_{n-1}$ we get that for all $n>1$,  $\Upsilon\from{\gamma(s_n)}\dir{R}{\tht}{\sig}(s_{n-1})>\gamma(s_{n-1})$ and then the coalescence \eqref{Itree} of $\Upsilon\from{\gamma(s_n)}\dir{R}{\tht}{\sig}$ and $\Upsilon\from{\gamma(s_{n-1})}\dir{R}{\tht}{\sig}$, together with continuity, implies that we must have $\Upsilon\from{\gamma(s_n)}\dir{R}{\tht}{\sig}\succeq\Upsilon\from{\gamma(s_{n-1})}\dir{R}{\tht}{\sig}$. 
 This and the second inequality in \eqref{2276} say that for any $t\in(r\vee r',s)$, once $n$ is large enough so that $s_n>t$, the sequence $\Upsilon\from{\gamma(s_n)}\dir{R}{\tht}{\sig}(t)$ is nondecreasing and bounded. 
It therefore converges to a finite limit, which we denote by $\tau(t)$.

    By the coalescence \eqref{Itree}, when $s_n>t$, $\Upsilon\from{\gamma(s_n)}\dir{R}{\tht}{\sig}\big|_{(-\infty,t]}=\Upsilon\from{\Upsilon\from{\gamma(s_n)}\dir{R}{\tht}{\sig}(t)}\dir{R}{\tht}{\sig}$. Then \eqref{shocklims} implies that these interfaces converge, pointwise, to $\Upsilon\from{\tau(t)}\dir{L}{\tht}{\sig}$. This implies that $\tau|_{(r\vee r',t]}= \Upsilon\from{\tau(t)}\dir{L}{\tht}{\sig}\big|_{(r\vee r',t]}$, for all $t\in(r\vee r',s)$. In particular, $\tau$ is continuous on $(r\vee r',s)$. Furthermore, it can be extended continuously to $(-\infty,s)$ by setting $\tau(s')=\Upsilon\from{\tau(t)}\dir{L}{\tht}{\sig}(s')$ for $s'\le r\vee r'$. Thus, for any $t\in(r\vee r',s)$, the extension $\tau|_{(-\infty,t]}$ is a $\tht\sigg$ shock interface.

     Taking $n\to\infty$ in \eqref{2276}, we get that 
\begin{align}\label{2285}
\gamma(t)\le\tau(t)\le\gamma'(t)\quad\forall t\in(r\vee r',s).
\end{align}
Since both $\gamma(t)$ and $\gamma'(t)$ converge to $(x,s)$ as $t\nearrow s$, we get that $\tau$ can be further extended continuously to $(-\infty,s]$ by setting $\tau(s)=(x,s)$.  Now, $\tau$ is a $\tht\sigg$ shock interface out of $(x,s)$. Furthermore, \eqref{2285} shows that $\tau$ remains weakly between $\gamma$ and $\gamma'$ on $(r\vee r',s)$ and, since we have established it is a $\tht\sigg$ shock interface, \eqref{no-intersection} implies that $\tau$ in fact remains strictly between the two geodesics.
\end{proof}

\begin{proof}[Proof of Lemma \ref{lem:shocksdense}]
Take $x<y$. By \eqref{geo:coal2}, the geodesics 
$\geo\from{(x,s)}\dir{R}{\tht}{\sig}$ and $\geo\from{(y,s)}\dir{L}{\tht}{\sig}$ 
coalesce at some time $t>s$. Choose $z$ 
such that $(z,\tfrac{s+t}2)$ is 
strictly between 
$\geo\from{(x,s)}\dir{R}{\tht}{\sig}(\tfrac{s+t}2)$ and 
$\geo\from{(y,s)}\dir{L}{\tht}{\sig}(\tfrac{s+t}2)$. 
By \eqref{no-intersection} and the continuity of the paths, the  interface $\Upsilon_{(z,\frac{s+t}{2})}^{L,\tht\sig}$ must remain between these two geodesics and therefore intersects the segment $[x,y]\times\{s\}$. 
The intersection point is a shock belonging to $\NU_1^{\tht\sig}\cap([x,y]\times\{s\})$.
\end{proof}

\bibliographystyle{aop-no-url}
\bibliography{firasbib2010}

\end{document}